 \newtheorem{theorem}{Theorem}[section]
 \newtheorem{lemma}[theorem]{Lemma}
 \newtheorem{corollary}[theorem]{Corollary}
 \newtheorem{remark}[theorem]{Remark}
\newtheorem{proposition}[theorem]{Proposition}
\newtheorem{problem}[theorem]{Problem}
\newtheorem{example}[theorem]{Example}
\newtheorem{exercise}[theorem]{Exercise}
\newtheorem{definition}[theorem]{Definition}
 \newenvironment{proof}{{\it Proof:\/}}{$\Box$\vskip 0.08in}
\newcommand{\mod}{{\mbox{ mod }}}
\newtheorem{conjecture}[theorem]{Conjecture}
\newtheorem{property}[theorem]{Property}
\newtheorem{formulla}[theorem]{}
\newcommand{\sgn}{{\mbox{ sgn\ }}}
\newcommand{\lk}{{\mbox{ lk }}}
\newcommand{\skein}{{\mbox{$\sim_c$}}}
\newcommand{\crs}[1]{{\mbox{ cr(#1) }}}
\newcommand{\row}[2]{{\mbox{$#1_1,#1_2,\ldots,#1_{#2}$}}} 
\newcommand{\kwad}{\#}
\newcommand{\pct}[1]{}
\newcommand{\Psfig}[1]{{\mbox{$\ \ $}}}
\newcommand\Z{{\mathbb Z}}
\def\Label#1{\label{#1} {\tiny $\backslash${\rm label\{#1\}}}}
\def\Label#1{\label{#1} {\tiny $\backslash${\rm label\{#1\}}}}
\begin{document}
\renewcommand{\thechapter}{\Roman{chapter}}

\thispagestyle{empty}
\
\vspace{0.4in}
 \begin{center}
 {\LARGE\bf KNOTS}\\
{\bf From combinatorics of knot diagrams to combinatorial topology
based on knots}
\end{center}

\vspace*{0.2in}

\centerline{Warszawa, November 30, 1984 -- Bethesda, September 7, 2012}
\vspace*{0.2in}
\begin{center}
                      {\LARGE \bf J\'ozef H.~Przytycki}
\end{center}

\vspace*{0.2in}

\ \\
{\LARGE  LIST OF CHAPTERS}:\ \\
\ \\
{\LARGE \bf Chapter I: \ Preliminaries }\\
\ \\
{\LARGE \bf Chapter II:\ History of Knot Theory}\\
{\bf e-print: http://arxiv.org/abs/math/0703096}\\
\ \\
{\LARGE \bf Chapter III:\ Conway type invariants  of links and Kauffman's method}\\
{\bf This e-print. Chapter III starts at page 3}\\
\ \\
{\LARGE \bf Chapter IV:\  Goeritz and Seifert matrices}\\
See {\bf e-print:\  http://front.math.ucdavis.edu/0909.1118}  \\
\ \\
{\LARGE \bf Chapter V:\ Graphs and links}\\
{\bf e-print: http://arxiv.org/pdf/math.GT/0601227}\\
\ \\
{\LARGE \bf Chapter VI:\ Fox $n$-colorings, Rational moves, Lagrangian tangles
and Burnside groups}\ \\
See {\bf e-print:\  http://front.math.ucdavis.edu/1105.2238}  \\
\ \\
{\LARGE \bf Chapter VII:\ Symmetries of links}\ \\
\ \\
{\LARGE \bf Chapter VIII:\ Different links with the same
Jones type polynomials}\ \\
\ \\
{\LARGE \bf Chapter IX:\ Skein modules} \\
{\bf e-print: http://arxiv.org/pdf/math.GT/0602264}\\
\ \\
{\LARGE \bf Chapter X:\ Khovanov Homology: categorification of the Kauffman
bracket relation}\\
{\bf e-print: http://arxiv.org/pdf/math.GT/0512630 }\\ \ \\
{\LARGE \bf Appendix I.\ }\ \\
\ \\
{\LARGE \bf Appendix II.\ }\\ \ \\
{\LARGE \bf Appendix III.\ }\\
\

\ \\
{\LARGE \bf Introduction}\\
\ \\
This book is
about classical Knot Theory, that is, about
the position of a circle (a knot) or of a number of disjoint circles
(a link) in the space $R^3$ or in the sphere $S^3$.
We also venture into Knot Theory in general 3-dimensional
manifolds.

The book has its predecessor in Lecture Notes on Knot Theory,
which were published in Polish\footnote{The
Polish edition was prepared for the ``Knot Theory" mini-semester
at the Stefan Banach Center, Warsaw, Poland, July-August, 1995.}
in 1995  \cite{P-18}.
A rough translation of the Notes (by Jarek Wi\'sniewski) was
ready by the summer of 1995. It differed from the Polish edition
with the addition of
the full proof of Reidemeister's theorem. While I couldn't find
time to refine the translation and prepare the final manuscript,
I was adding new material and rewriting existing
chapters. In this way I created a new book based on the Polish
Lecture Notes
but expanded 3-fold.
Only the first part of Chapter III (formerly Chapter II),
on Conway's algebras is essentially unchanged from the Polish book 
(except new Subsection \ref{Subsection III.1.1} on Monoid of Conway algebras), 
and is based on preprints \cite{P-1}.
\ \\   ... \ \\ SEE INTRODUCTION AND CHAPTER I OF THE BOOK.

\setcounter{chapter}{2}
\newpage
\chapter{Conway type invariants of links and Kauffman's method}\label{Chapter III}
\markboth{\hfil{\sc Conway type invariants }\hfil}
{\hfil{\sc Conway type invariants}\hfil}

 \vspace{0.2in}


\section{Conway algebras}\label{III.1}

While considering quick methods of computing Alexander 
polynomial\footnote{One can find this statement ironic because the classical 
Alexander method, which uses certain determinant (see Chapter IV), 
to compute the 
 Alexander polynomial has polynomial time complexity, the method 
developed by Conway has an exponential time complexity.}
(a classical invariant of links, compare Chapter $IV$ for another approach to the Alexander polynomial), 
Conway \cite{Co-1}
suggested a normalized form of it (now called the Conway 
or Alexander-Conway polynomial)
and he showed that the polynomial, $\bigtriangleup_L(z)$,  
satisfies the following two conditions:

\begin{enumerate}
\item [(i)] (Initial condition) \ If $T_1$ is the trivial knot then
$\bigtriangleup_{T_1}(z) = 1$.

\item [(ii)] (Conway's skein relation)\ 
$\bigtriangleup_{L_+}(z) - \bigtriangleup_{L_-}(z) = z\bigtriangleup_{L_0},$ 
where $L_+, L_-$ and $L_0$ are diagrams of oriented links
which are identical except for the part presented in Fig.~1.1. 

\end{enumerate}
The conditions $(i)$ and $(ii)$ define the Conway polynomial (or, maybe
more properly, Alexander-Conway polynomial)
$\bigtriangleup_L(z)$ uniquely, see \cite{Co-1,K-1,Gi,B-M}.
Alexander used the variable $t$ in his polynomial. For $z=\sqrt{t}-
\frac{1}{\sqrt{t}}$ we obtain the normalized version of the 
Alexander polynomial. The skein relation has now the form:
$$ (ii')\ \ \ \ \ \ \ \ \ \ \ \bigtriangleup_{L_+} - \bigtriangleup_{L_-} = 
(\sqrt{t}- \frac{1}{\sqrt{t}})\bigtriangleup_{L_0}.$$
In fact the un-normalized version of the
 formula $(ii')$ was noted by J.W.~Alexander in 
his original paper
introducing the polynomial \cite{Al-3}, in 1928. Alexander 
polynomial was defined up to invertible elements, $\pm t^i$,
 in the ring of Laurent polynomials, $Z[t^{\pm 1}]$), 
so the formula (ii) was not easily
available for a computation of the polynomial.

In May of 1984 V.~Jones, \cite{Jo-1,Jo-2},
showed that there exists an invariant $V$ of links which is a Laurent 
polynomial with respect to the variable $\sqrt{t}$ which satisfies
the following conditions:

\begin{enumerate}
\item[(i)]
$V_{T_1}(t) = 1,$

\item[(ii)]
${\displaystyle\frac{1}{t}V_{L_+}(t) - tV_{L_-}(t) = (\sqrt{t} - 
\frac{1}{\sqrt{t}})V_{L_0}(t).}$
\end{enumerate}

These two examples of invariants were a base for an idea
that there exists an invariant (of ambient isotopy) of 
oriented links which is a Laurent polynomial 
$P_L(x,y)$ of two variables\footnote{Most of us started from a polynomial of 3-variables 
$P_L(x,y,z)\in {\Z}[x^{\pm 1},y^{\pm 1},z]$ and then assuming, with only partial 
justification that $z$ can be assumed to be invertible, e.g. $z=1$.} 
and which satisfies the following conditions:

\begin{enumerate}
\item[(i)]
$P_{T_1}(x,y) = 1$

\item[(ii)]
$xP_{L_+}(x,y) + yP_{L_-}(x,y) = P_{L_0}(x,y).$
\end{enumerate}

Indeed, such an invariant exists and it was discovered a few months after
the Jones polynomial, in July-September of 1984, by four groups of
mathematicians: 
R.~Lickorish and K.~Millett, 
J.~Hoste, A.~Ocneanu as well as by  P.~Freyd and D.~Yetter \cite{FYHLMO}.
Independently, it was discovered in November-December\footnote{In fact, I had 
to stop thinking for few days on the idea of the proof of existence of 
generalized Jones polynomial because I had to prepare, by the end of November 
of 1984, the syllabus for (an early version) of this book.} 
of 1984 by J.~Przytycki and P.~Traczyk \cite{P-T-1}). 
We call this polynomial the Homflypt or Jones-Conway
 polynomial\footnote{HOMFLYPT (or, as I prefer to write: Homflypt) is the acronym after the initials
of the inventors: Hoste,
Ocneanu, Millett, Freyd, Lickorish, Yetter, Przytycki, and Traczyk.
We note also some other names that
are used for this invariant:
FLYPMOTH, HOMFLY, the generalized Jones polynomial,
two variable Jones polynomial,
twisted Alexander polynomial and skein-polynomial.}.
\ \\
\ \\
\begin{center}
\begin{tabular}{c} 
\includegraphics[trim=0mm 0mm 0mm 0mm, width=.5\linewidth]
{L+L-L0.eps}\\
\end{tabular}
\\
Fig.~1.1
\end{center}

Instead of looking for polynomial invariants of links related to Fig.~1.1
we can approach the problem from a more general point of view.
Namely, we can look for universal invariants of links which 
have the following property: 
a given value of the invariant for $L_+$ and $L_0$ determines
the value of the invariant for $L_-$, and similarly: 
if we know the value of the invariant 
for $L_-$ and $L_0$ we can find its value for $L_+$.
The invariants with this 
property are called Conway type invariants.
We will develop these ideas in the present chapter of the book
which is based mainly on a joint work of Traczyk and the author 
\cite{P-T-1,P-1}.

Let us consider the following general situation involving an 
abstract algebra $\cal A$; in our setting a set (called universe)
$A$ together with countable number of 0-argument operations (fixed elements)
$a_1,a_2,\ldots,a_n,\ldots$ 
and two 2-argument operations $\ |\ $ and
$\ \star\ $. We would like to construct an invariant $w$ of oriented links
with values in $A$
which satisfies the following conditions:

\begin{eqnarray*}
w_{L_+}&=&w_{L_-} | w_{L_0} \mbox{ \ and}\\
w_{L_-}&=& w_{L_+} \star w_{L_0} \mbox{\ and}\\
w_{T_n}&=& a_n\\
\end{eqnarray*}
where $T_n$ is a trivial link of $n$ components.

The operation $\ |\ $ is meant to recover values of the invariant
$w$ for $L_+$ from its values for $L_-$ and $L_0$ 
while the operation $\ *\ $ is supposed to recover values of $w$
for $L_-$ from its values for $L_+$ and $L_0$. 

\begin{definition}
We say that ${\cal A} = (A;a_1,a_2,\ldots,|,\star)$ is 
a Conway algebra if the following conditions are satisfied:

$$
\left.
\begin{array}{llll}
\mbox{C1} & a_n| a_{n+1} &=& a_n\\
\mbox{C2} & a_n \star a_{n+1}& =& a_n\\
\end{array}
\right\} 
\mbox{ initial values properties}
$$

$$
\left.
\begin{array}{llll}
\mbox{C3} & (a| b)| (c| d)&=&(a|
c)|(b| d) \\
\mbox{C4} &(a| b)\star(c| d)&=&(a\star
c)|(b\star d) \\
\mbox{C5} &(a\star b)\star(c\star d)&=&(a\star c)\star(b\star d) \\
\end{array}
\right\} 
\mbox{ transposition or entropy properties}
$$

$$
\left.
\begin{array}{llll}
\mbox{C6} &(a| b)\star b &=& a\mbox{\ \ \ \ \ \ \ } \\
\mbox{C7} &(a\star b)| b &=& a \\
\end{array}  
\right\} 
\mbox{ inversion properties.}
$$
\end{definition}

We will prove the following theorem which is the main result
of this chapter.

\begin{theorem}\label{2:1.2}
For a given Conway algebra ${\cal A}$ there exists a uniquely
determined invariant of oriented links $w$ which to any class $L$ of 
ambient isotopy 
of links associates an element $w_L\in A$ and satisfies the following
conditions:
$$
\begin{array}{lll}
(1)& w_{T_n} = a_n&\mbox{ -- initial conditions}\\
\end{array}$$
$$\left.
\begin{array}{lll}
(2)&w_{L_+} = w_{L_-}| w_{L_0}\\
(3)&w_{L_-} = w_{L_+}\star w_{L_0}\\
\end{array}
\right\} \mbox{ -- Conway relations}
$$
\end{theorem}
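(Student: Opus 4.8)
The plan is to define $w$ directly on link diagrams by a recursive unknotting procedure and then to verify that the result is a genuine ambient isotopy invariant; uniqueness will then be immediate. I would proceed by induction on the number of crossings of a diagram $D$, with a secondary induction on the number of crossings that must be switched in order to make $D$ descending.

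First, recall the basic geometric fact that by switching some subset of its crossings any diagram can be turned into a \emph{descending} diagram, and that a descending diagram represents a trivial link $T_k$, where $k$ is its number of components. To make this effective I would fix once and for all an ordering of the components of $D$ together with a base point on each component; traversing the diagram from the base points in the chosen order, call a crossing \emph{bad} if it is first met as an underpass, so that a diagram with no bad crossings is descending. The recursion is then: if $D$ has a bad crossing $p$, apply the appropriate skein relation at $p$ -- relation (2) or (3) according to the sign of $p$ -- to express $w_D$ in terms of $w_{D'}$, where $D'$ is $D$ with $p$ switched (same crossing number, one fewer bad crossing), and $w_{D_0}$, where $D_0$ is $D$ with $p$ smoothed (one fewer crossing). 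When $D$ is descending I set $w_D = a_k$. Axioms C1 and C2 are exactly what guarantees this base value is consistent: a descending crossing that can be removed forces $a_n = a_n\,|\,a_{n+1}$ (resp.\ $a_n = a_n\star a_{n+1}$), which is precisely C1 (resp.\ C2).

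The crux, and the main obstacle, is to show that the value produced by this recursion does not depend on the order in which the bad crossings are resolved. Resolving two bad crossings $p$ and $q$ in the two possible orders expresses $w_D$ as two different bracketings of the four values obtained by independently switching or smoothing $p$ and $q$; the equality of these two bracketings is \emph{exactly} one of the transposition (entropy) identities C3, C4, C5, the relevant one being selected by the signs of $p$ and $q$ (two positive crossings give C3, mixed signs give C4, two negative crossings give C5). The inversion axioms C6 and C7 play the complementary role of guaranteeing that the two skein relations (2) and (3) are mutually consistent, i.e.\ that $(w_{L_-}\,|\,w_{L_0})\star w_{L_0}=w_{L_-}$, so that it is immaterial which of the two operations one uses at a given crossing. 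With well-definedness for fixed base data established, I would then check independence of the choice of base points and component ordering, and invariance under the three Reidemeister moves, each case being reduced by the induction to the skein and entropy relations already in hand. This establishes existence.

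Finally, uniqueness requires no new work: any invariant satisfying (1)--(3) must take the value $a_k$ on descending diagrams and must obey the same crossing recursion at every bad crossing, so it is forced to coincide with $w$ on every diagram.
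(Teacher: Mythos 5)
Your outline follows the paper's own strategy almost verbatim: a primary induction on the crossing number, a secondary induction on the number of bad crossings, the value $a_k$ assigned by fiat to descending diagrams, the entropy axioms C3--C5 guaranteeing that resolving two crossings in either order gives the same result (with exactly the sign pattern you describe), and C6/C7 making the two skein relations mutually inverse; uniqueness is indeed immediate once existence is settled. So the architecture is right, and most of what you sketch matches the paper's proof step for step.

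The genuine gap is your final claim that independence of base points, independence of the component ordering, and Reidemeister invariance are ``each case being reduced by the induction to the skein and entropy relations already in hand.'' For the ordering of components this is false. Two orderings of the components produce two \emph{different} descending diagrams (different subsets of crossings get switched), and each is assigned the value $a_n$ by definition; no manipulation inside the algebra ${\cal A}$ can compare them. The paper closes this case (after the bad-crossing induction reduces it to a descending diagram, $b(L)=0$) by a purely geometric lemma, Lemma \ref{l:2.15}, proved via Lemmas \ref{l:2.16} and \ref{l:2.17}: for a suitable choice of base points, a descending diagram can be transformed into a diagram with strictly fewer crossings by a sequence of Reidemeister moves that \emph{never increase} the crossing number. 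The non-increasing requirement is essential and is what makes the lemma delicate: inside the double induction you only know Reidemeister invariance for moves staying within the current crossing bound, and you only know $w$ is defined and invariant on diagrams with fewer crossings, so you cannot simply isotope the descending diagram to a crossingless one (that generally requires first increasing the number of crossings, which the induction cannot see). Proving the lemma takes an innermost $2$-gon/$3$-gon argument --- choosing base points outside an innermost few-sided region so that every $3$-gon admits a type III move --- and nothing in C1--C7 substitutes for it. A milder instance of the same oversight occurs in your Reidemeister step: the hard cases are exactly those in which every bad crossing lies inside the disk of the move (for instance the type II move between two components where both crossings are bad and of opposite signs), and these require separate case analysis rather than a formal appeal to the recursion.
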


The theorem will be proved in the next section.

Now we briefly discuss geometric interpretation of conditions
$C1-C7$  in the definition of Conway algebra.
Conditions $C1$ and $C2$ are  reflecting 
relations between trivial links of $n$ and $n+1$ components.
The diagrams of the links, which are in these relations, are
pictured in Fig.~1.2.

\begin{center}
\begin{tabular}{c} 
\includegraphics[trim=0mm 0mm 0mm 0mm, width=.5\linewidth]
{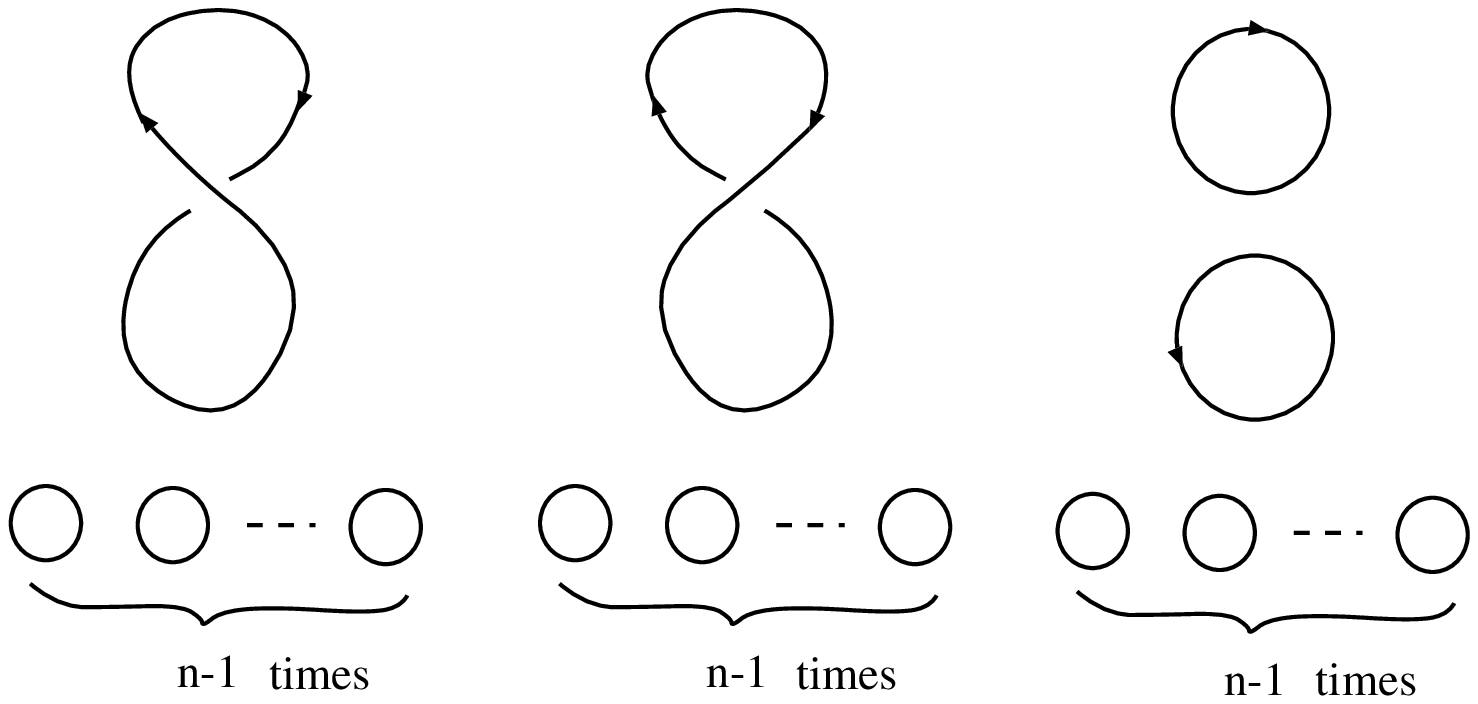}\\
\end{tabular}
\\
Fig.~1.2
\end{center}

Relations  $C3$, $C4$ and $C5$ are obtained when we perform a calculation of a link invariant at 
 two crossings of the diagram in  different order.
These relations will become apparent in Section 2. 

Relations $C6$ and $C7$ illustrate
the fact that we need the operations $|$ and $\star$
to be opposite in some sense (see Lemma \ref{Lemma III:1.4}(a) and 
Section \ref{Section III:2}).

Before giving examples (models) of Conway algebras and proving the main 
theorem (in Section 2), we show some elementary properties of Conway algebras.
In the definition of a Conway algebra we have introduced seven conditions.
It was mainly because of aesthetic and practical reasons 
(we wanted to display the symmetry between the two relations).
These conditions, however, are not independent one from another:

\begin{lemma}\label{Lemma III.1.3}
There are the following dependencies between conditions
$C1-C7$ in the definition of the Conway algebra. 
$$
\begin{array}{cccc}
(a)& C1\mbox{ and }C6 &\Rightarrow&C2 \\
(b)& C2\mbox{ and }C7 &\Rightarrow&C1 \\
(c)& C6\mbox{ and } C4&\Rightarrow&C7 \\
(d)& C7\mbox{ and }C4 &\Rightarrow&C6 \\
(e)& C6\mbox{ and }C4 &\Rightarrow&C5 \\
(f)& C7\mbox{ and }C4 &\Rightarrow&C3 \\
(g)& C5, C6\mbox{ and } C7&\Rightarrow&C4 \\
(h)& C3,C6\mbox{ and } C7 &\Rightarrow&C4 \\
\end{array}
$$
\end{lemma}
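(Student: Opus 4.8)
The plan is to verify each of the eight implications $(a)$–$(h)$ by direct algebraic manipulation, treating the seven axioms $C1$–$C7$ purely as identities and chaining them together. Each implication is self-contained and short; the work is to find the right substitutions that collapse one side of a target identity into the other. I would organize the proof as eight miniature derivations, grouped so that the easier inversion-type swaps come first and the entropy-type identities come last.

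First I would handle $(a)$ and $(b)$, the swaps between initial-value conditions. For $(a)$, starting from $C1$ we have $a_n = a_n \mid a_{n+1}$; applying $C6$ in the form $(x\mid y)\star y = x$ with $x=a_n$, $y=a_{n+1}$ gives $a_n \star a_{n+1} = (a_n\mid a_{n+1})\star a_{n+1} = a_n$, which is $C2$. Statement $(b)$ is the mirror argument using $C2$ and $C7$. Next I would do $(c)$ and $(d)$, the two directions showing that $C4$ lets one inversion property imply the other; here the idea is to feed the inversion axiom into $C4$ by choosing the four arguments $a,b,c,d$ so that one of the two composite operations degenerates via $C6$ (resp. $C7$), leaving the other inversion identity exposed. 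These four cases are the warm-up and should each take one or two lines.

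The main content — and where I expect the real obstacle — is the block $(e),(f),(g),(h)$, which trades entropy identities against one another with the help of the inversion properties. For $(e)$, to derive $C5$ from $C6$ and $C4$, the strategy is to take the $C4$ instance $(a\mid b)\star(c\mid d) = (a\star c)\mid(b\star d)$ and substitute arguments that are themselves of the form $(\,\cdot\,\star\,\cdot\,)$, then use $C6$ to cancel the outer $\mid$ against a $\star$, reassembling the result into the shape $(a\star b)\star(c\star d)=(a\star c)\star(b\star d)$. Case $(f)$ is the analogous reduction producing $C3$ from $C7$ and $C4$. The hardest pair is $(g)$ and $(h)$, the converse directions that recover $C4$ from an entropy axiom plus \emph{both} inversions: the difficulty is that $C4$ mixes $\mid$ and $\star$ on both sides, so neither $C3$ alone nor $C5$ alone has the right mixed shape, and one must introduce auxiliary elements (writing a given element as $(u\mid v)$ or $(u\star v)$ using $C7$ and $C6$ respectively) to convert a pure-$\mid$ or pure-$\star$ identity into the mixed one. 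I would attack $(g)$ by expressing the left-hand side $(a\mid b)\star(c\mid d)$, rewriting $a\mid b$ and $c\mid d$ through $C7$ as starred expressions so that $C5$ applies, then unwinding with $C6$/$C7$ to land on $(a\star c)\mid(b\star d)$; $(h)$ is the same scheme with the roles of $C3$ and the two inversions interchanged.

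Throughout, the only nontrivial bookkeeping is keeping track of which of the four slots in the entropy axioms receives which substituted expression, since a wrong pairing produces a correct-looking identity that is not the target. I do not expect any genuinely deep step — every implication is a finite equational deduction — so the proof reduces to exhibiting, for each of $(e)$ through $(h)$, the precise substitution that makes the inversion axioms cancel the unwanted operation. Once those four substitutions are identified, the remaining verifications are mechanical.
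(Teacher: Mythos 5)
Your plan is, in outline, the same as the paper's proof: pure equational deduction, with (a)--(d) as short warm-ups and (e)--(h) handled by substitutions that let the inversion axioms cancel the unwanted operation. (The paper writes out only (a), (c), (e), (g), the other four being mirror images under exchanging $|$ and $\star$, and it proves (e) and (g) simultaneously as a single chain of equivalences between $C4$ and $C5$.) Your explicit derivation of (a) is identical to the paper's, and your descriptions of (c), (d), (g), (h) do pin down workable derivations.

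There is, however, one concrete slip in (e) (and dually in (f)) that would stop a literal execution of your plan. After substituting $a:=x\star z$, $b:=z$, $c:=y\star w$, $d:=w$ into $C4$ you get $((x\star z)|z)\star((y\star w)|w)=((x\star z)\star(y\star w))|(z\star w)$, and the cancellation you need, $(x\star z)|z=x$, is an instance of $C7$, not of $C6$: $C6$ cancels only in the order $(u|v)\star v=u$, which has no purchase inside the left-hand slots here. Indeed, the only way $C6$ can act inside $C4$ is to degenerate it (take $b:=c|d$), and what that degeneration produces is precisely $C7$ --- that is the paper's proof of (c). So the step ``use $C6$ to cancel the outer $|$'' fails as stated, and (e) is rescued only by first invoking part (c) ($C6$ and $C4\Rightarrow C7$); after that, $C6$ is genuinely used just once, at the end, applying $\star(z\star w)$ to both sides of $x\star y=((x\star z)\star(y\star w))|(z\star w)$ to land on $C5$. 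This is exactly how the paper's chain works --- it freely uses both $C6$ and $C7$ when proving (e), which is legitimate only because (c) comes first --- so when you write out (e)--(h), cite (c)/(d) at the cancellation steps rather than the raw hypotheses.
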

We will prove, as examples, the implications (a), (c), (e), and (g)\\
$\begin{array}{lll}
(a)&C1&\Leftrightarrow\\
&a_n| a_{n+1} = a_n&\Rightarrow\\
&(a_n| a_{n+1})\star a_{n+1} = a_n\star
a_{n+1}&\stackrel{C6}{\Rightarrow}\\
&a_n = a_n\star a_{n+1}&\Leftrightarrow\\
&C2.&\\
(c)&C6&\Rightarrow\\
&(a|(b| a))\star(b| a) =
a&\stackrel{C4}{\Leftrightarrow}\\
 &(a\star b)|((b| a)\star a) =
 a&\stackrel{C6}{\Rightarrow}\\
 &(a\star b)| b = a&\Leftrightarrow\\
 &C7.&\\

\end{array}
$\\ \ \\

(e) and (g):\\
$\begin{array}{ccc}
&C5\ \ \Leftrightarrow \ \ \ 
(a\star b)\star(c\star d) = (a\star c)\star(b\star d)&\\
& \Downarrow C7\mbox{ \ \ \ \  } \Uparrow C6 
&\\
&(a\star b) = ((a\star c)\star(b\star d))|(c\star d)&\\
&C7\Updownarrow\mbox{ or }\Updownarrow C6\mbox{ and } C4&\\
&\ \ [((a\star c)| c)\star((b\star d)| d)] = ((a\star
c)\star(b\star d))|(c\star d)&\\
&\mbox{ we substitute }
\begin{array}{cccc}
a = x| c & \Downarrow C6& x= a*c & \Uparrow  \\
b = y| d & \Downarrow C6& y= b*d & \Uparrow \\
\end{array}&\\
&\ \ \ \ \ \ \ \ \ \ \ \ \ \ \ \ \ \ \ \ \ \ \ \ \ \ \ \ \ \ \ \ \ \ 
(x| c)\star(y| d) = (x\star y)|(c\star d)\ \ \Leftrightarrow\ \ \
C4.\\
\end{array}$

An interesting example of identity in Conway algebras is the following 
equality suggested by P.~Traczyk:
$$((a|b)*c)|d=((a|d)*c)|b$$
The following short proof of the identity is by  C.~Bowszyc:\\
$$((a|b)*c)|d \stackrel{C6}{=} ((a|b)*c)|((d|e)*e)  \stackrel{C4}{=} ((a|b)|(d|e))*(c|e)  \stackrel{C3}{=} $$
$$((a|d)|(b|e))*(c|e)  \stackrel{C4}{=} ((a|d)*c)|((b|e)*e)  \stackrel{C6}{=}((a|d)*c)|b.$$

\begin{lemma}\label{Lemma III:1.4}
\begin{enumerate}
\item[(a)]
Let ${\cal A}$ be a Conway algebra. For any $b\in A$ let us consider a map
$|_b: A\rightarrow A$ (respectively, $\star_b: A\rightarrow A$) 
defined by $|_b(a)= a| b$ (respectively, $\star_b(a)= a\star b$).
Then $|_b$ and  $\star_b$ are bijections on $A$ and one is the inverse
of the other, that is: $|_b\cdot\star_b = \star_b\cdot|_b = \mbox{ Id}$.
If $| =*$, we say that the Conway algebra in involutive.
\item[(b)] One can give equivalent definition of a Conway algebra 
using only one 2-argument operation, say $|$.
The axioms are as follows:\\
(i) $a_n|a_{n+1} = a_n$,\\
(ii) The map $|_b: A \to A$ is a bijection,\\
(iii) $(a|b)|(c|d) = (a|c)|(b|d)$.
\end{enumerate}
\end{lemma}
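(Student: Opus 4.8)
The plan is to prove part (a) by direct substitution into the inversion axioms C6 and C7, and then to bootstrap part (b) from part (a) together with the dependency relations already recorded in Lemma \ref{Lemma III.1.3}.

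For part (a), I would simply evaluate the two composites at an arbitrary $a\in A$. By definition $\star_b(|_b(a)) = (a|b)\star b$, which equals $a$ by axiom C6; and $|_b(\star_b(a)) = (a\star b)|b$, which equals $a$ by axiom C7. Thus $\star_b\circ|_b = |_b\circ\star_b = \mathrm{Id}$, so each of $|_b$ and $\star_b$ has a two-sided inverse and is therefore a bijection, the two maps being mutually inverse. No axioms beyond C6 and C7 are needed here.

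For the forward direction of part (b) --- that every Conway algebra satisfies (i), (ii), (iii) --- there is essentially nothing to do: (i) is exactly C1, (iii) is exactly C3, and (ii) is the content of part (a). The substance of part (b) is the converse: given a universe $A$ with fixed elements $a_1,a_2,\ldots$ and a single operation $|$ satisfying (i), (ii), (iii), one must produce a second operation $\star$ and verify all of C1--C7. The natural definition is to set $a\star b := |_b^{-1}(a)$, that is, to let $a\star b$ be the unique element $c$ with $c|b = a$; this is well defined precisely because (ii) guarantees $|_b$ is a bijection. With this definition C6 and C7 hold by construction, since they assert exactly that $\star_b$ and $|_b$ are inverse to one another, while C1 is hypothesis (i) and C3 is hypothesis (iii).

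It then remains to obtain C2, C4, and C5, and here I would invoke Lemma \ref{Lemma III.1.3} rather than recompute. From C1 and C6 we get C2 by part (a) of that lemma; from C3 together with C6 and C7 we get C4 by part (h); and finally from C6 and C4 we get C5 by part (e). This yields all seven axioms, so $(A;a_1,a_2,\ldots,|,\star)$ is a Conway algebra, and the two descriptions are equivalent. The only real subtlety I anticipate is ensuring the bootstrapping is acyclic --- that each dependency invoked relies only on axioms already established and does not secretly presuppose the relation being derived. The order C6, C7 $\Rightarrow$ C2, then C4, then C5 is chosen to respect this, using C4 only after it has itself been derived from the primitive data C3, C6, C7.
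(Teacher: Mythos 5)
Your proposal is correct and takes essentially the same route as the paper: part (a) is exactly the observation that C6 and C7 make $|_b$ and $\star_b$ mutually inverse two-sided bijections, and part (b) is handled, as in the paper, by defining $\star$ through the inverse of $|_b$ (so C6 and C7 hold by construction, and C1, C3 are hypotheses (i), (iii)) and then bootstrapping the remaining axioms from the dependencies in Lemma \ref{Lemma III.1.3}. The only cosmetic difference is that the paper writes out the computation C3, C6, C7 $\Rightarrow$ C4 explicitly (since implication (h) was not among the cases proved there), whereas you cite Lemma \ref{Lemma III.1.3}(h) directly; your explicit acyclic ordering (C6/C7 first, then C2, C4, C5) is, if anything, more complete than the paper's sketch.
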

\begin{proof}
Lemma 1.4(a)
follows from conditions $C6$ and $C7$.\\
Lemma 1.4(b) can be derived from Lemma 1.3. Let us show, for example 
that (iii) (i.e. $C3$) and (ii) (from which $C6$ and $C7$ follow 
immediately), imply $C4$. Namely,\\
By (iii) we have $((a*b)|b))|((c*d)|d) = ((a*b)|(c*d))|(b|d)$ 
and by $C7$ $((a*b)|b))|((c*d)|d) = a*c$, and by $C6$
$(((a*b)|(c*d))|(b|d))*(b|d) = (a*b)|(c*d)$. Combining it together we 
obtain $C4$: $(a|c)*(b|d) = (a*b)|(c*d)$.
In fact assuming $C6$ and $C7$ we have that $C3$, $C4$ and $C5$ 
are equivalent. For example $C4 \Rightarrow C3$:\\
By $C4$ we have $((a|b)*b)|((c|d)*d)= ((a|b)|(c|d))*(b|d)$ or 
equivalently\\
 $$(((a|b)*b)|((c|d)*d))|(b|d)= (((a|b)|(c|d))*(b|d))|(b|d).$$ 
This formula reduces, by $C6$ and $C7$ to $(a|c)|(b|d)= (a|b)|(c|d)$, as 
needed.
\end{proof}
\medskip

Now, let us discuss some examples of Conway algebras.

\begin{example}[Number of components]\label{Example III:1.5}
Let $A=N$ be the set of positive integers. We define $a_i=i$ and $i| j =
i\star j = i$.

Verification of conditions $C1-C7$ is immediate
(note that the first letter of each side of every relation is the same).

The invariant of a link defined by this algebra
(it exists according to Theorem III:1.2)
is equal to the number of components of the link.
\end{example}

\begin{example}\label{Example III:1.6}
Let us set $A={\Z}_3 = \{0,1,2\}$, $a_i\equiv i \mbox{ mod } 3$, 
and $a|b=a*b \equiv 1-a-b \mod 3$. In other words $\ | \ $ and $\ *\ $ are 
 both given by the following symmetric table:
\begin{center}
\begin{tabular}{c|ccc}
$|$&0&1&2\\ \hline
0&1&0&2\\
1&0&2&1\\
2&2&1&0\\
\end{tabular}
\end{center}
The invariant defined by this algebra distinguishes 
the trefoil knot from the trivial knot
(see Fig.~1.3.).

\begin{center}
\begin{tabular}{c} 
\includegraphics[trim=0mm 0mm 0mm 0mm, width=.5\linewidth]
{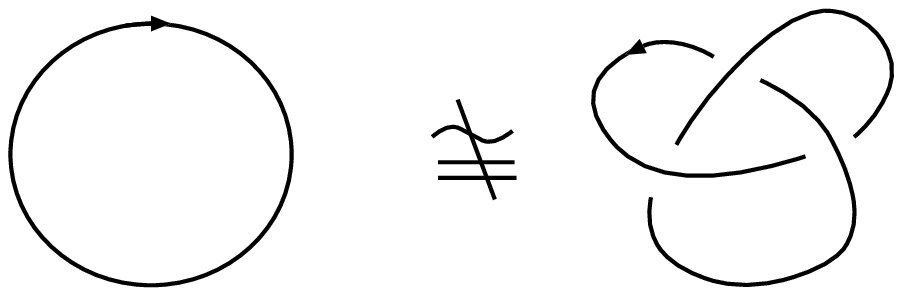}\\
\end{tabular}
\\
Fig.~1.3; For the trivial knot the value of the invariant is: $a_1=1$.
For the left-handed trefoil we have the value of the
invariant: $a_1*(a_2*a_1)=2$.
\end{center} 

\end{example}
The direct generalization of Example \ref{Example III:1.6} can be obtained by taking
$A={\Z}_n$, $a_i\equiv i \mbox{ mod } n$, and $a|b=a*b \equiv 2b-a-2 \mod n$.
\begin{example}\label{p:1.7}
Let us consider the universe  $A=\{ 0,1,2,s\}$ with the distinguished
elements $a_i\equiv i \mbox{ mod } 3$.
The operations  $\ |\ $ and $\ \star\ $ are given by the following tables
(notice that $\ |\ $ is, but $\ \star\ $ is not, a symmetric operation; 
for example $a_i*a_{i+1}=a_i$, but $a_{i+1}*a_i=s$).
$$
\mbox{
\begin{tabular}{c|cccc}
$|$&0&1&2&s\\ \hline
0&s&0&2&1\\
1&0&s&1&2\\
2&2&1&s&0\\
s&1&2&0&s\\
\end{tabular}
}\ \ \ \ \ \ \ 
\mbox{
\begin{tabular}{c|cccc}
$\star$&0&1&2&s\\ \hline
0&1&0&s&2\\
1&s&2&1&0\\
2&2&s&0&1\\
s&0&1&2&s\\
\end{tabular}
}
$$

The invariant defined by this algebra
distinguishes right-handed trefoil knot from the left-handed trefoil
(see Fig.~1.4).

\begin{center}
\begin{tabular}{c} 
\includegraphics[trim=0mm 0mm 0mm 0mm, width=.5\linewidth]
{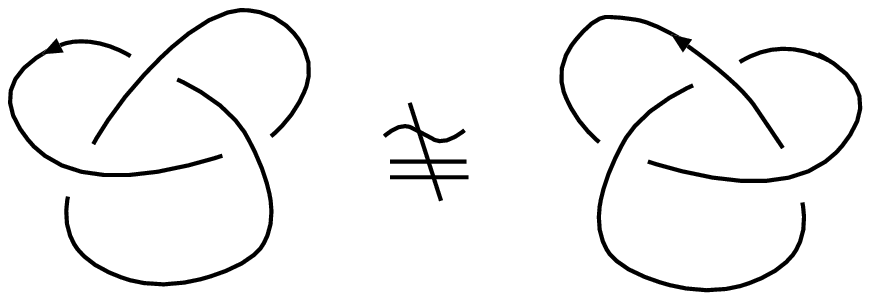}\\
\end{tabular}
\\
Fig.~1.4; For the left handed trefoil we have the value of the 
invariant: $a_1*(a_2*a_1)=0$. 
For the right-handed trefoil we have the value of the
invariant: $a_1|(a_2|a_1)=s$.
\end{center}
\end{example}

Using a computer, T.~Przytycka found all Conway algebras 
with up to five elements.  If we assume additionally that $a_1=1, a_2=2$, 
then (up to isomorphisms) we get\footnote{E. Biendzio, student of 
P.Traczyk, found in her master 
degree thesis all 6-element Conway algebras \cite{Bie}.}:

\begin{tabular}{lcccc}
The number of elements in a Conway algebra&2&3&4&5\\
The number of isomorphism classes of Conway algebras&2&9&51&204\\
\end{tabular}

\begin{example}[Homflypt (Jones-Conway) polynomial]\label{p:1.8}
We set:
$$A = \Z\left[ x^{\pm 1}, y^{\pm 1}\right],\ a_1 = 1,\
a_2=x+y,\ldots,a_i=(x+y)^{i-1},\ldots$$

The operations  $\ |\ $ and $\ \star\ $ are defined as follows:
$w_2| w_0 = w_1$ and $w_1\star w_0 = w_2$, where the polynomials
$w_1,\ w_2,\ w_0$ satisfy the equation:

\begin{formulla}\label{III:1.9}
$$xw_1 +yw_2 = w_0.$$
\end{formulla}

The invariant of links defined by this algebra is the Jones-Conway 
(Homflypt) polynomial
which we have mentioned at the beginning of this section.
In particular, if we substitute $x = 1/z$ and $y=-1/z$, then we get
the Conway polynomial, and after the substitution
$${\displaystyle x = \frac{-t}{\sqrt{t}-\frac{1}{\sqrt{t}}}, \ \ 
y = \frac{1}{t}\frac{1}{\sqrt{t}-\frac{1}{\sqrt{t}}},}$$
we obtain the Jones polynomial.

\end{example}
 
Now we shall prove that the algebra defined in Example~\ref{p:1.8} 
is a Conway algebra.

First we note that conditions $C1$ and $C2$ follow from 
the identity
$$x(x+y)^{n-1}+y(x+y)^{n-1} = (x+y)^n.$$
Next, conditions $C6$ and $C7$ follow from the fact that the operations $\ | $ 
and $\ \star\ $ were defined by the linear equation 1.9. 

We prove the condition $C3$ and the conditions $C4$ and $C5$ follow by 
Lemma 1.3.

We get:
\begin{formulla}\label{*}
\begin{eqnarray*}
&(a|b)|(c|d) &=\\
& \frac{1}{x}((c|d)-y(a|b))& =\\
&\frac{1}{x}(\frac{1}{x}(d-yc)-y\frac{1}{x}(b-ya))& =\\
&\frac{1}{x^2}d - \frac{y}{x^2}c -\frac{y}{x^2}b+\frac{y^2}{x^2}a,&\\
\end{eqnarray*}
\end{formulla}
and thus, because the coefficients of $b$ and $c$ are equal,
it follows that we can interchange $b$ and $c$ in the formula,
which proves the relation $C3$.

One may generalize the algebra from Example 1.8
by introducing a new variable $z$ and considering
instead of Equation 1.9
the equation
$$xw_1 + yw_2 = w_0 - z.$$
It turns out, however, that the invariant obtained this way is
not stronger than the Jones-Conway polynomial (see Exercise~\ref{Exercise III:3.43}).

\begin{example}[Global linking number]\label{p:1.10}
Let us set $A= N\times \Z$ and $a_i = (i,0)$, and moreover
$$
(a,b)|(c,d)=\left\{
\begin{array}{lcl}
(a,b+1)&\mbox{if}&a>c\\
(a,b)&\mbox{if}&a\leq c\\
\end{array}
\right.
$$

$$
(a,b)\star(c,d)=\left\{
\begin{array}{lcl}
(a,b-1)&\mbox{if}&a>c\\
(a,b)&\mbox{if}&a\leq c\\
\end{array}
\right.
$$

The invariant defined by this algebra is a pair, the first entry of which 
is the number of components of the link and the second entry is called
the global linking number (or index) (see Exercise 1.12 and Chapter IV).
\end{example}

Notice that we have $(a,b)|(a,b)=(a,b)=(a,b)*(a,b)$ that is idempotency condition holds, and that
idempotency condition and Conway (entropy) relations $C3-C5$ lead to distributivity (left and 
right distributivity).
For example, if we put $b=d$ in C4 we get:
$$(x|y)*(z|y) \stackrel{entr}{=} (x*z)|(y*y) \stackrel{idem}{=}  
(x*z)|y; \textrm{ right distributivity of $|$ with respect to $*$ }.$$
Such magmas $(A;|)$, satisfying idempotency condition, invertibility, and right distributivity are called Quandles
and the conditions reflect the Reidemeister moves \cite{Joy}. 
If idempotency conditions are not assumed, these magmas are called racks (introduced by 
Conway and Wraith in 1959 \cite{C-W}), and if only right self-distributivity 
is kept they are called right shelves, or just shelves 
(the word coined by Alissa Crans \cite{Cr}; compare \cite{P-40}).

Now we shall prove that the algebra from Example 1.11 
is a Conway algebra.

The proof of conditions $C1, C2, C6$ and $C7$ is not hard.
We will check condition $C3$ in more detail.
Because of our definition of the operation 
$\ |\ $ in ${\cal A}$ we get:

$$((a_1,a_2)|(b_1,b_2))|((c_1,c_2)|(d_1,d_2)) = \left\{
\begin{array}{cccc}
(a_1,a_2+2)&\mbox{if}&a_1>b_1&\mbox{and}\\
&&a_1>c_1&\\
(a_1,a_2+1)&\mbox{if}&a_1>b_1&\mbox{and}\\
&&a_1\leq c_1&\\
&\mbox{or}&a_1\leq b_1&\mbox{and}\\
&&a_1>c_1&\\
(a_1,a_2)&\mbox{if}&a_1\leq b_1&\mbox{and}\\
&&a_1\leq c_1&\\
\end{array}
\right.
$$

Now, if we change the positions of $b_1$ and $c_1$ then the result
will be the same. Therefore the relation $C3$ is satisfied.

The global linking number of an oriented link can be read directly from 
a diagram of a link. Namely, let us call a crossing of type
\includegraphics[trim=0mm 0mm 0mm 0mm, width=.02\linewidth]
{L+maly.eps} positive and a crossing of type
{\includegraphics[trim=0mm 0mm 0mm 0mm, width=.02\linewidth]
{L-maly.eps}} negative. 
We will write  $\mbox{sgn} p =+$ or $-$ depending on whether the
crossing $p$ is positive or negative.

\begin{exercise}\label{Exercise III:1.12}
Suppose that $D$ is a diagram of an oriented link.
Let us define 
$\mbox{lk}(D) = \frac{1}{2}\sum\mbox{sgn}(p)$, where the sum is taken over
all crossings $p$, between different components of the link.
Show that $\mbox{lk}(D)$ is equal to the global linking number
of the link.

Hint. Let us note that, if $L_+, L_-, L_0$ are diagrams
of links as on the Fig.~1.1, then 
$$\mbox{lk}(L_+) =
\left\{
\begin{array}{cl}
\mbox{lk}(L_-)+1&\mbox{if the crossing involves two 
different} \\ & \mbox{components of the link}\\
\mbox{lk}(L_+)&\mbox{if the crossing involves only one} \\ 
& \mbox{component of the link}
\end{array}
\right.
$$
Moreover, note that the number of components of $L_0$ is smaller by one than 
the number of components of $L_+$ if the crossing involves two different
components and it is bigger by one otherwise.
\end{exercise}

\begin{exercise}\label{Exercise III:1.13}
Consider the universe $A=N\times B$ for any set $B$. Let $f_{i,j}:B \to B$ be a bijection for any 
pair of natural numbers, and assume that for fixed $i$ the functions commute (i.e. $f_{i,j}f_{i,k}=f_{i,k}f_{i,j}$.
Check that $(A;|)$ with $(a_1,a_2)|(b_1,b_2)=(a_1,f_{a_1,b_1}(a_2))$ forms an entropic system 
with an invertible $|$. If we further assume that $f_{i,i+1}=Id$ then for a fixed $b_0\in B$, 
 $(A;a_1,a_2,...,|)$ with $a_i=(i,b_0)$ forms a Conway algebra.
\end{exercise}
We will write $L^p_+, L^p_-$ and $L^p_0$ instead of $L_+,L_-$ and $L_0$ 
if we want the crossing point $p$ to be explicitly specified.

\begin{definition}\label{Definition III.1.13}
Let $T$ be a binary tree of links, that is a binary tree 
each of whose vertices represents a link.
We draw $T$ with the  root at the top. The root represents 
the given link $L$.  The leaves in the bottom (vertices of degree one),
represent trivial links. Moreover we assume that
at each vertex which is not a leaf the situation is as follows:

\begin{center}
\begin{tabular}{c} 
\includegraphics[trim=0mm 0mm 0mm 0mm, width=.3\linewidth]
{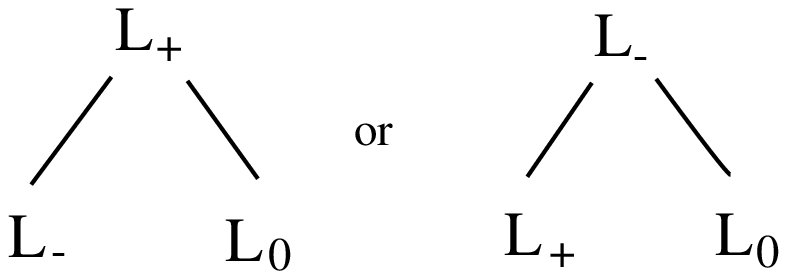}\\
\end{tabular}
\\
Fig.~1.5
\end{center}

The tree $T$ yields, in a natural way, an associated
tree with $a_i$ at each of leaves and signs $+$ or $-$ 
at any other vertex.
We call it a resolving tree of the link $L$ (see Fig. 1.7 for an example); compare \cite{F-M}.
\end{definition}

There exists a standard procedure for resolving a link.
It will be described in the subsequent section
and it will play an essential role in the proof of Theorem~\ref{2:1.2}.

\begin{example}\label{p:1.13}
Let $L$ be the figure-eight knot represented by the diagram on Fig.~1.6a.

\begin{center}
\begin{tabular}{c} 
\includegraphics[trim=0mm 0mm 0mm 0mm, width=.25\linewidth]
{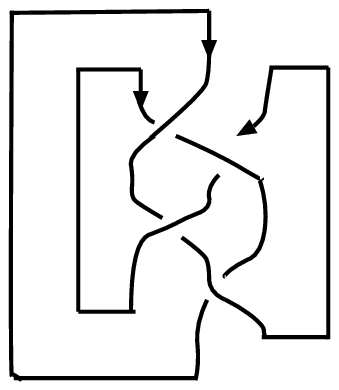}\\
\end{tabular}
\\
Fig.~1.6a
\end{center}

In order to determine $w_L$ (i.e.~the value of an invariant $w$
associated to a Conway algebra ${\cal A}$
on the link $L$) let us consider the following binary tree of 
links:
\begin{center}
\begin{tabular}{c} 
\includegraphics[trim=0mm 0mm 0mm 0mm, width=.5\linewidth]
{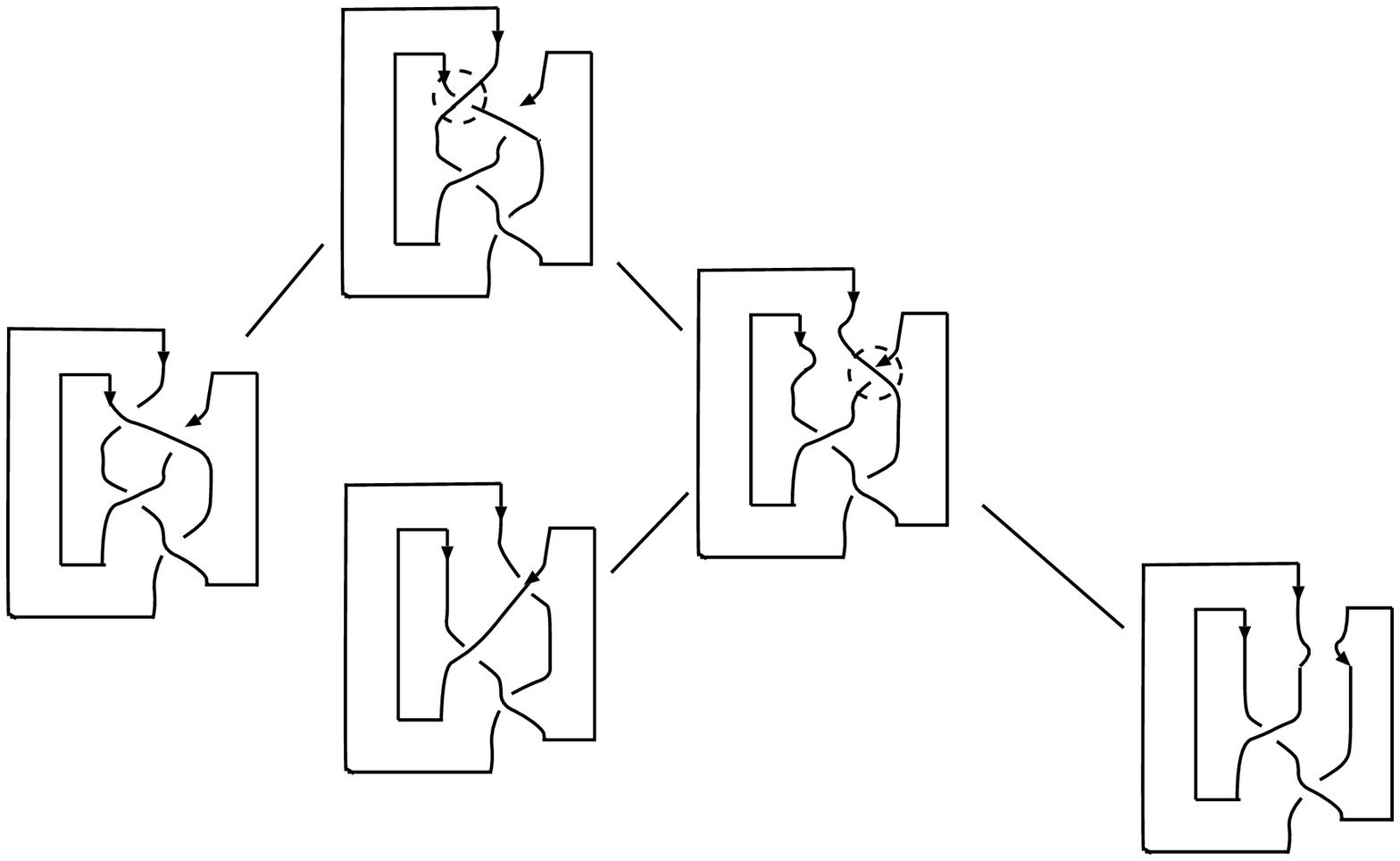}\\
\end{tabular}
\\
Fig.~1.6b
\end{center}
 
It can be easily seen that leaves of this tree
represent trivial links and the branching vertices
of the tree are related to the admissible operations on the diagram
of the tree at each of the marked crossing points.
The above tree defines the following resolving tree for the figure-eight
knot:

\begin{center}
\begin{tabular}{c} 
\includegraphics[trim=0mm 0mm 0mm 0mm, width=.15\linewidth]
{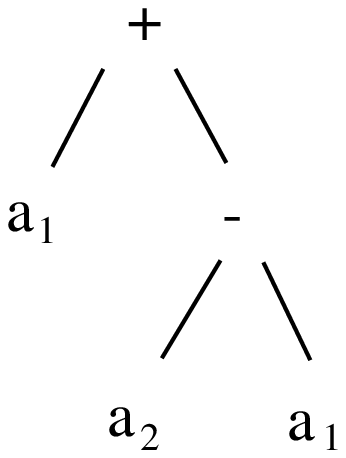}\\
\end{tabular}
\\
Fig.~1.7
\end{center}

Applying this tree we get $w_L = a_1|(a_2\star a_1)$.
\end{example}

\begin{exercise}\label{Exercise III.1.15}
\ \\
\begin{enumerate}
\item Show that the knot pictured on Fig.~1.5(b) is ambient 
isotopic to the one pictured on Fig.~I:0.1.

\item Show that the figure-eight knot is ambient isotopic
with its mirror image, that is

\begin{center}
\begin{tabular}{c} 
\includegraphics[trim=0mm 0mm 0mm 0mm, width=.5\linewidth]
{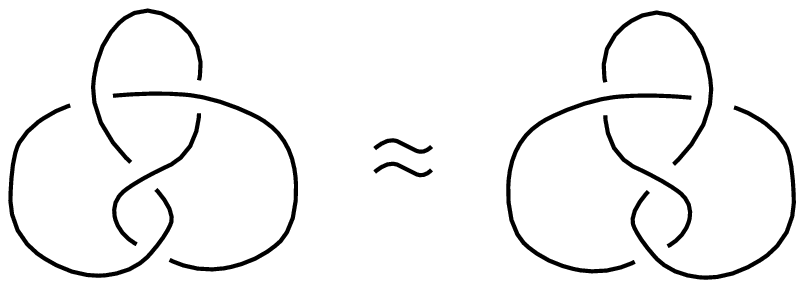}\\
\end{tabular}
\\
Fig.~1.8
\end{center}

\item Prove that the following identity holds in any Conway algebra
$$a_1\star(a_2|a_1)=a_1|(a_2\star a_1).$$ 
(This is an algebraic version of the property (2) of the figure-eight knot,
c.f.~Example \ref{p:1.13}.)

\item Prove that if $*$ is an invertible binary operation and entropy property holds 
(that is C3, C6, and C7) and there is $c$ such that $a*c=a$, then
$$(a*b)|a= a|(b*a).$$
\item Compute values of invariants from Examples
1.6, \ref{p:1.7}, \ref{p:1.8} and \ref{p:1.10} 
for the figure eight knot.
\end{enumerate}
\end{exercise}

\begin{exercise}
Let $K$ be a left handed trefoil knot and let
$\overline{K}$  be a right handed trefoil knot.
Prove that
$w_K = a_1\star (a_2\star a_1)$ and $w_{\overline{K}} =
a_1|(a_2|a_1)$ (c.f.~Fig.~1.4).
\end{exercise}

\begin{exercise}
Show that the figure-eight knot is isotopic neither with the
trefoil knot (left- and right-handed) nor with the trivial knot.
\end{exercise}

For a long boring evening we suggest the following exercise
(c.f.~Example \ref{p:1.7}). 

\begin{exercise}
Prove that there does not exist any three element Conway algebra
which distinguishes the right-handed trefoil knot from the left-handed
trefoil knot.
\end{exercise}

\subsection{Monoid of Conway algebras}\label{Subsection III.1.1} 

Recall that a {\it magma} $(X;*)$ is a set $X$ with a binary operation $*:X\times X \to X$.
For any $b\in X$ the adjoint map $*_b: X\to X$, is defined by $*_b(a)=a*b$.
Let $Bin(X)$ be the set of all binary operations on $X$.
\begin{proposition}\cite{P-40}\label{Proposition III:1.19} $Bin(X)$ is a monoid
(i.e. semigroup with identity) with  composition
$*_1*_2$ given by $a*_1*_2b= (a*_1b)*_2b$, and the identity $*_0$ being the right trivial operation, that is,
$a*_0b=a$ for any $a,b\in X$.
\end{proposition}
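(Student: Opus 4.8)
The statement asks us to show that $Bin(X)$, the set of all binary operations on $X$, equipped with the composition $a *_1*_2 b = (a*_1 b)*_2 b$ and with the right-trivial operation $*_0$ (where $a*_0 b = a$) as identity, is a monoid. So there are exactly three things to verify: that the composition is a well-defined binary operation on $Bin(X)$, that it is associative, and that $*_0$ is a two-sided identity.

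The cleanest approach is to pass through the adjoint maps. For $* \in Bin(X)$ and $b \in X$, write $*_b \colon X \to X$ for the map $*_b(a) = a*b$, as in Lemma \ref{Lemma III:1.4}. The key observation is that the composition of binary operations corresponds, fiber by fiber over each $b \in X$, to ordinary composition of the adjoint maps: indeed $(*_1*_2)_b(a) = (a*_1 b)*_2 b = (*_2)_b\bigl((*_1)_b(a)\bigr)$, so $(*_1*_2)_b = (*_2)_b \circ (*_1)_b$. This reduces every claim about $Bin(X)$ to the corresponding statement about the monoid $(X^X, \circ)$ of self-maps of $X$ under composition, taken separately for each $b$.

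Granting that, the three verifications are immediate. Closure holds because $(*_1*_2)_b = (*_2)_b \circ (*_1)_b$ is a well-defined self-map of $X$ for each $b$, so $*_1*_2 \in Bin(X)$. For associativity, I would compute directly
$$
\bigl((*_1*_2)*_3\bigr)_b = (*_3)_b \circ (*_1*_2)_b = (*_3)_b \circ (*_2)_b \circ (*_1)_b = \bigl(*_1(*_2*_3)\bigr)_b,
$$
the middle equality being associativity of ordinary function composition; since this holds for every $b$, the two operations $(*_1*_2)*_3$ and $*_1(*_2*_3)$ agree. Alternatively one expands both sides elementwise as $((a*_1 b)*_2 b)*_3 b$ and observes they coincide. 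For the identity, note $(*_0)_b = \mathrm{Id}_X$ for every $b$, so $(*_0 * )_b = *_b \circ \mathrm{Id} = *_b$ and $(* *_0)_b = \mathrm{Id} \circ *_b = *_b$, whence $*_0 * = * *_0 = *$.

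There is essentially no obstacle here: the content is entirely formal once one notices the translation into adjoint maps, and the only thing to be careful about is the order of composition (the outer operation $*_2$ supplies the outer map $(*_2)_b$, so the adjoints compose in the order $(*_2)_b \circ (*_1)_b$, opposite to the left-to-right order of the operations). I would state the adjoint-map identity $(*_1*_2)_b = (*_2)_b \circ (*_1)_b$ as the single lemma-like remark carrying the whole proof, and let associativity and the identity axiom fall out of the corresponding facts for $(X^X,\circ)$.
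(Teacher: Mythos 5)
Your proof is correct and follows essentially the same route as the paper: the paper likewise reduces associativity to the associativity of composition of the adjoint maps $*_b$ (and records the same elementwise computation $((a*_1b)*_2b)*_3b$), with the identity axiom for $*_0$ being immediate. Your treatment is slightly more complete in that you spell out closure and the two-sided identity explicitly, but the key idea is identical.
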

\begin{proof} Associativity follows from the fact that adjoint maps $*_b$
compose in an associative way, $(*_3)_b((*_2)_b(*_1)_b) = ((*_3)_b(*_2)_b)(*_1)_b$; we can write
directly: $a(*_1*_2)*_3b= ((a*_1b)*_2b)*_3b = (a*_1b)(*_2*_3)b= a*_1(*_2*_3)b$.
\end{proof}
The submonoid of $Bin(X)$ of all invertible elements in $Bin(X)$ is a group denoted by $Bin_{inv}(X)$.
If $* \in Bin_{inv}(X)$ then $*^{-1}$ is usually denoted by $\bar *$, however in the case of 
Conway algebras the inverse of $*$ was denoted by $|$. Consider the set $a\circ c = \{b\in X\ | \ a*b=c\}$.
If $*$ is invertible and $a\circ c$ has always exactly one element, we call the magma $(X;*)$ a quasigroup.
If $X$ is finite then the multiplication table for $*$ has a permutation in each row and column.
Such a table is called a latin square -- the objects studied for hundred of years (e.g. by Euler).


Probably Sushkevich\footnote{Anton Kazimirovich Sushkevich (1889-1961) a Ukrainian-Russian-Polish
 mathematician who spent most of his working life at Kharkov State University in the Ukraine. 
Ph.D. Voronezh State University 1922.
In the 1920s, he embarked upon the first systematic study of semigroups, placing him at the very 
beginning of algebraic semigroup theory and, arguably, earning him the title of the world's first semigroup theorist. 
Owing to the political circumstances under which he lived, however, his work failed to find a wide 
audience during his lifetime \cite{Hol}.} in 1937 \cite{Su} was the first to consider a binary operation satisfying 
entropic property $(a*b)*(c*d)=(a*b)*(c*d)$. He was motivated by the Burstin and Mayer paper of 1929, \cite{Bu-Ma}.
Soon after, Murdoch\footnote{David Carruthers Murdoch, PhD at University of Toronto 1937, Then at Yale and 
University of British Columbia.}
in \cite{Mur} and 
Toyoda\footnote{Koshichi Toyoda, worked till 1945 at Harbin Polytechnic University, Mituhisa Takasaki was his 
assistant there. Both perished after Soviet occupation of Harbin in 1945.}
 in a series of papers \cite{Toy-1,Toy-2,Toy-3,Toy-4},   
have established main properties of such magmas and, in the case $(X;*)$ is a quasigroup,  
they proved the result name after them, 
Murdoch-Toyoda theorem (Theorem \ref{Theorem III:1.20}).
This result is very important in our search for invariants of Conway type. The name used by Toyoda was, 
associativity. The word {\it entropic property} which I use was coined in 1949 by 
I.M.H.~Etherington\footnote{ Ivor Malcolm Haddon Etherington (1908--1994), English mathematician 
who started his work in general relativity under the supervision of Prof. E. T. Whittaker, and later on moved 
into the area of non-associative algebras, where he made very important contributions in connection with genetics.
He was actively involved in helping refugees to escape from Nazi Germany in the 1930s \cite{Kra}.}\cite{Et}. 
Other names for the property are medial, alternation, bi-commutative, bisymmetric, commutative, surcommutative 
and abelian. The word entropic refers to inner turning \cite{R-S-2}.

\begin{theorem}\cite{Mur,Toy-2}\label{Theorem III:1.20} 
If $(X;*)$ is an entropic quasigroup, then $X$ has an abelian group structure 
such that $a*b= f(a) + g(b) + c$ where $f,g: X\to X$ are commuting group automorphisms (equivalently 
$(X;*)$ is a $\Z[x_f^{\pm 1},x_g^{\pm 1}]$-module).
\end{theorem}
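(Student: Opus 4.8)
The plan is to reconstruct the abelian group hidden inside a medial quasigroup, following the classical Murdoch--Toyoda argument. First I would record the only place where the quasigroup axiom is used directly: for every $a \in X$ the left and right translations $L_a(x) = a*x$ and $R_a(x) = x*a$ are bijections of $X$, so they may be inverted freely and $*$ admits left and right division.

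Next, fix a basepoint $o \in X$ and define a new operation by $a \oplus b := R_o^{-1}(a) * L_o^{-1}(b)$. Unwinding the definition gives the master identity $x*y = (x*o) \oplus (o*y)$, which recovers $*$ from $\oplus$ and the two translations. The core of the proof is then to show that $(X,\oplus)$ is an abelian group. The identity element is $e = o*o$ (a one-line check using $R_o^{-1}(o*o) = o = L_o^{-1}(o*o)$), and inverses exist because $\oplus$ is built from the bijective translations. Commutativity and associativity of $\oplus$ are where the entropic law $(a*b)*(c*d) = (a*c)*(b*d)$ does its real work: after replacing each argument by its translation preimage, both identities reduce to repeated applications of the medial law together with cancellation. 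I expect this bracket-juggling to be the main obstacle, since the entropic identity must be applied in exactly the right pattern and there is no shortcut around the resulting case analysis.

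With $(X,\oplus)$ in hand, I would define $f(x) = R_o(x) \ominus R_o(e)$ and $g(y) = L_o(y) \ominus L_o(e)$ (where $\ominus$ is subtraction in $(X,\oplus)$), normalized so that $f(e) = g(e) = e$, and set $c = R_o(e) \oplus L_o(e)$. The master identity then rearranges to $x*y = f(x) \oplus g(y) \oplus c$. A further use of the medial law shows that $f$ and $g$ are additive, hence, being bijections again by the quasigroup property, group automorphisms of $(X,\oplus)$.

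Finally, commutativity $f\circ g = g\circ f$ falls out by substituting the normal form $x*y = f(x)\oplus g(y)\oplus c$ into the entropic law and comparing the two expansions: the cross terms yield $fg(b)\ominus gf(b) = fg(u)\ominus gf(u)$ for all $b,u$, and evaluating at $b=u=e$ forces $fg = gf$. This produces the desired presentation $a*b = f(a) + g(b) + c$ with $f,g$ commuting automorphisms of the abelian group $(X,\oplus)$. The module reformulation is then immediate: letting the variable $x_f$ act as $f$ and $x_g$ as $g$ makes $X$ a module over the Laurent polynomial ring $\Z[x_f^{\pm 1}, x_g^{\pm 1}]$, since $f$ and $g$ are commuting invertible endomorphisms.
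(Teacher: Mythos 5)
The first thing to say is that the paper does not prove this theorem at all: it is stated with citations to Murdoch and Toyoda, so your proposal can only be measured against the classical Bruck--Murdoch--Toyoda argument --- and your plan \emph{is} that argument: the principal isotope $a\oplus b=R_o^{-1}(a)*L_o^{-1}(b)$ with identity $e=o*o$, the master identity $x*y=(x*o)\oplus(o*y)$, the normal form $x*y=f(x)\oplus g(y)\oplus c$ with $f,g$ the translations normalized to fix $e$, and $fg=gf$ extracted by expanding the entropic law in normal form and evaluating at $e$. Every step you actually carry out is correct. The genuine gap is that the two statements carrying the entire weight of the theorem --- that the loop $(X,\oplus)$ is associative and commutative, and that $f$ and $g$ are additive --- are only asserted to ``reduce to repeated applications of the medial law together with cancellation''; you flag this as the main obstacle and then walk past it. Note also that bijectivity of the translations of $\oplus$ only makes $(X,\oplus)$ a quasigroup with identity (a loop); group-theoretic inverses come only after associativity, so nothing short of these computations produces the group. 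Since this is exactly the point where the entropic hypothesis must do nontrivial work (the rest of your write-up is bookkeeping that would go through for many false variants of the statement), a referee has to count it as a missing proof.

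The gap is fillable, and concretely here is the shortest way. Setting two of the four variables of the entropic law equal to $o$ gives three identities: (I) $(a*o)*(c*o)=(a*c)*e$, (II) $(o*b)*(o*d)=e*(b*d)$, and (III) $(o*b)*(c*o)=(o*c)*(b*o)$. For commutativity of $\oplus$, apply the injective map $R_e(z)=z*e$ to $a\oplus b$ and $b\oplus a$; by (I) both sides become expressions of the form $(o*\alpha)*(\beta*o)$ with $\alpha=L_o^{-1}(a)$, $\beta=L_o^{-1}(b)$, and the desired equality is exactly (III). For associativity, again apply $R_e$, use (I), the consequence $L_eR_o=R_eL_o$ of (I), and the rearranged form of (II), namely $L_e^{-1}(X*Y)=L_o^{-1}(X)*L_o^{-1}(Y)$; after one more application of the medial law both sides collapse to $\bigl(R_o^{-1}(a)*L_o^{-1}(b)\bigr)*\bigl(o*L_o^{-1}(c')\bigr)$ and cancellation finishes it. (Alternatively: show $\oplus$ itself satisfies the entropic law; then putting $c=e$ in $(a\oplus b)\oplus(c\oplus d)=(a\oplus c)\oplus(b\oplus d)$ gives associativity and putting $a=d=e$ gives commutativity, so the ``case analysis'' you feared is a single computation plus two one-line specializations.) Finally, additivity of $f$ and $g$ is the same trick one level up: substituting the master identity into the entropic law of $*$ gives $R_o(A\oplus B)\oplus L_o(C\oplus D)=R_o(A\oplus C)\oplus L_o(B\oplus D)$ for all $A,B,C,D$; setting $C=D=e$ and then $A=e$ lets you eliminate $L_o$ in favour of $R_o$ and yields $f(A\oplus B)=f(A)\oplus f(B)$, symmetrically for $g$. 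With these computations written out, your outline becomes a complete and correct proof of the theorem as stated.
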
 

From this follows partially A.~Sikora result \cite{Si-2} for invariants coming from quasigroup Conway algebras 
(Sikora result is general: no invariant coming from a Conway algebra can distinguish links with the same 
Homflypt polynomial).

\begin{definition}\label{Definition III:1.21}
\begin{enumerate}
\item[(1)]
We say that a subset $S \subset Bin(X)$ is an entropic set if all
pairs of elements $*_{\alpha},*_{\beta} \in S$ (we allow  $*_{\alpha}=*_{\beta}$) satisfy entropic property, that is:
$$ (a*_{\alpha}b)*_{\beta}(c*_{\alpha}d)= (a*_{\beta}c)*_{\alpha}(b*_{\beta}d).$$
\item[(2)] If $S \subset Bin(X)$ is an entropic set, and there are given elements $a_1,a_2,...$ in $X$ such 
that for any $*\in S$ we have $a_i*a_{i+1}=a_i$ then we call $S$ a pre-Conway set of operations.
\item[(3)] If $S \subset Bin(X)$ is a pre-Conway set of invertible operations then we call $S$ a Conway set of operations.
\end{enumerate}
\end{definition}

\begin{proposition}\label{Proposition III:1.22}
\begin{enumerate}
\item[(i)] If $S$ is an entropic set and $*\in S$ is invertible, then $S\cup \{\bar *\}$ is also an
entropic set.
\item[(ii)] If $S$  is an entropic set and $M(S)$ is the monoid generated by $S$ then $M(S)$ is an
entropic monoid.
\item[(iii)] If $S$  is an entropic set of invertible operations and $G(S)$ is the group generated by $S$, then
$G(S)$ is an entropic group.
\end{enumerate}
\end{proposition}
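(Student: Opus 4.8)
The plan is to reduce all three parts to two elementary computations: a \emph{cancellation} identity for inverse operations and a \emph{composition} lemma for entropic pairs. Throughout I use that, by Proposition~\ref{Proposition III:1.19}, $Bin(X)$ is a monoid whose identity $*_0$ is the right-trivial operation, and that for an invertible $*$ its inverse $\bar *$ satisfies $*\,\bar * = \bar *\,* = *_0$; unwinding the composition law this reads $(a*b)\,\bar *\,b = a$ and $(a\,\bar *\,b)* b = a$ for all $a,b\in X$. I also record the trivial but useful observation that the entropic relation is symmetric in its two operations: renaming $b\leftrightarrow c$ in $(a*_\alpha b)*_\beta(c*_\alpha d)=(a*_\beta c)*_\alpha(b*_\beta d)$ turns the relation for the ordered pair $(*_\alpha,*_\beta)$ into the one for $(*_\beta,*_\alpha)$, so an entropic set may be checked on unordered pairs.

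For part (i) I must verify the entropic relation only for the new pairs that involve $\bar *$, namely $(\bar *,*_\alpha)$ with $*_\alpha\in S$ and $(\bar *,\bar *)$. I will first treat $(\bar *,*_\alpha)$. Writing $a=(a\,\bar *\,b)* b$ and $c=(c\,\bar *\,d)* d$, substitute these into the right-hand side $(a*_\alpha c)\,\bar *\,(b*_\alpha d)$, apply the entropic relation for the pair $(*,*_\alpha)$ of $S$ to the inner expression, and finally cancel the trailing factor $b*_\alpha d$ via $(x*y)\,\bar *\,y=x$; what survives is exactly $(a\,\bar *\,b)*_\alpha(c\,\bar *\,d)$, the left-hand side. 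Taking $*_\alpha=*$ here (legitimate since $*\in S$) shows that $\bar *$ is entropic with $*$; using that instance of the entropic law in place of the one for $(*,*_\alpha)$, the identical substitute-then-cancel argument disposes of the remaining pair $(\bar *,\bar *)$.

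For part (ii) the engine is the following lemma: if $*_1$ and $*_2$ are each entropic with $*_3$, then the composite $*_1*_2$ is entropic with $*_3$. To prove it I expand $a(*_1*_2)b=(a*_1b)*_2b$, so the left-hand side of the target relation becomes $[(a*_1b)*_2b]*_3[(c*_1d)*_2d]$; one application of the entropic relation for $(*_2,*_3)$ moves the outer $*_3$ inside past $*_2$, and a second application, this time for $(*_1,*_3)$, rewrites the surviving $*_3$-term as $(a*_3c)*_1(b*_3d)$, after which recollecting the $*_2$ produces precisely $(a*_3c)(*_1*_2)(b*_3d)$. With this lemma and the symmetry observation, a straightforward double induction on word length finishes the part: since $*_0$ is entropic with every operation (both sides collapse to $a*_\gamma c$, resp.\ $a*_\gamma b$) and the generators are mutually entropic, every word $w\in M(S)$ is entropic with each generator; iterating once more, every $w$ is entropic with every word $w'\in M(S)$. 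As $M(S)$ is by construction a submonoid of $Bin(X)$, it is an entropic monoid.

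Part (iii) then follows formally. Applying part (i) repeatedly lets me adjoin to $S$ the inverse $\bar *$ of each of its elements while preserving entropicity---for infinite $S$ note that entropicity is a condition on pairs, each of which is secured after finitely many applications---so that $S'=S\cup\{\bar *:\ *\in S\}$ is an entropic set closed under inversion. By part (ii), $M(S')$ is an entropic monoid; but a submonoid of $Bin_{inv}(X)$ generated by a set closed under inversion is itself closed under inverses, hence a group, and it coincides with the group $G(S)$ generated by $S$. Thus $G(S)=M(S')$ is an entropic group. The only genuine computational content lies in the two manipulations---the cancellation step in (i) and the two-fold use of the entropic law in the lemma of (ii); everything else is bookkeeping, and I expect the mild care needed for the infinite case in (iii) to be the only point demanding attention.
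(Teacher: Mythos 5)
Your proof is correct and follows essentially the same route as the paper's: your substitute-then-cancel argument in (i) is the paper's computation read in the opposite direction, your composition lemma in (ii) is exactly the paper's two-fold application of the entropic law to $\bigl((a*_1b)*_2b\bigr)*_3\bigl((c*_1d)*_2d\bigr)$, and (iii) is deduced formally from (i) and (ii) just as in the paper. The only difference is that you make explicit the induction on word length, the entropicity of the identity $*_0$, and the bookkeeping for infinite $S$, all of which the paper leaves implicit.
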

\begin{proof} (i) We have proven already that if $*$ is a self-entropic operation then $\{\bar *\}$ 
is also  self-entropic (Lemma III:1.3 (e)(h)). We will show now, more generally, that if 
 $*,*' \in Bin(X)$ and $*$ is invertible and entropic with respect to $*'$, then
$\bar *$ is entropic with respect to $*'$. We start from 
$$(a*b)*'(c*d)= (a*'c)*(b*'d)$$
Then we substitute $x=a*b$ and $y=c*d$, or, equivalently $a=x\bar *b$ and $c=y\bar * d$ to get
$$x*'y = ((x\bar *b)*'(y\bar * d))*(b*'d) \mbox{ \ \  and eventually }$$
$$(x*'y)\bar * (b*'d)=(x\bar *b)*'(y\bar * d)$$
as needed.\\
(ii) We want to prove that any element of $M(S)$ is entropic with respect to any other element 
(including self-entropy of every element). To prove this
it suffice to show that if $*$ is entropic with respect to $*_1$ and $*_2$ then it is also
entropic with respect to $*_1*_2$.  We have
$$(a*_1*_2b)*(c*_1*_2d)\stackrel{def}{=} ((a*_1b)*_2b)*(c*_1d)*_2d)\stackrel{entr}{=}$$
$$((a*_1b)*(c*_1d))*_2(b*d))\stackrel{entr}{=}$$
$$((a*c)*_1(b*d))*_2(b*d))\stackrel{def}{=}$$
$$(a*c)*_1*_2(b*d), \mbox{ \ \  as needed}.$$
(iii) It follows directly from (i) and (ii).
\end{proof}
\begin{remark} Composition of entropic operations was considered in \cite{R-S-1} were 
it was observed, in particular Theorem 246, that if $S$ is an entropic set of 
idempotent operations then elements of $S$ commute. Namely, we have 
$a*_1*_2b=(a*_1b)*_2b \stackrel{idem}{=}  (a*_1b)*_2(b*_1b)\stackrel{entr}{=}  (a*_2b)*_1(b*_2b) 
\stackrel{idem}{=} (a*_2b)*_1b=a*_2*_1b$ as needed.
\end{remark}

\begin{proposition}
\begin{enumerate}
\item[(1)]
If $(A;*)$ satisfies entropic condition and idempotency conditions then it is (right and left) distributive.
\item[(2)] A right self-distributive quasigroup has all elements satisfying idempotent property.
\end{enumerate}
\end{proposition}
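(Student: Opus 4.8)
The plan is to handle the two parts separately, since they run in opposite directions: part (1) is obtained by specializing the entropic identity, while part (2) is where the quasigroup hypothesis does the real work.

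For part (1) I would argue exactly as in the discussion around Example~\ref{p:1.10}. Writing the entropic law as $(a*b)*(c*d)=(a*c)*(b*d)$, I first set $d=c$ and invoke idempotency $c*c=c$: the left-hand side collapses to $(a*b)*c$, so $(a*b)*c=(a*c)*(b*c)$, which is right distributivity. For left distributivity I instead set the first two arguments equal, taking the entropic instance $(a*a)*(b*c)=(a*b)*(a*c)$ and using $a*a=a$ to rewrite the left-hand side as $a*(b*c)$, yielding $a*(b*c)=(a*b)*(a*c)$. Both directions are thus immediate consequences of a single specialization of the entropy relation together with idempotency, so this part requires no genuine computation.

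For part (2) the arrow is reversed: I am given right self-distributivity $(a*b)*c=(a*c)*(b*c)$ together with the quasigroup hypothesis, and I must produce idempotency. The key step is to feed the diagonal $a=b=c$ into right distributivity, which gives $(c*c)*c=(c*c)*(c*c)$. Setting $d=c*c$, this reads $d*c=d*d$: the two elements $c$ and $d$ share the common left factor $d$ and yield the same product. Here I would invoke the defining property of the quasigroup — that for each fixed $a$ the set $a\circ v=\{x\mid a*x=v\}$ has exactly one element — to cancel the left factor $d$ and conclude $c=d=c*c$, which is precisely the desired idempotency.

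The only subtlety, and the point I would state carefully, is which cancellation the quasigroup furnishes. Mere invertibility of the adjoint maps $*_b$ only permits cancelling a common \emph{right} factor; the step $d*c=d*d\Rightarrow c=d$ cancels a common \emph{left} factor, and so it rests on the stronger requirement that $a\circ v$ be a singleton for every pair $a,v$ (the Latin-square, or left-division, property already recorded in the text). Once this distinction is flagged, both parts reduce to one-line verifications, and the main obstacle is simply invoking the correct half of the quasigroup axiom rather than performing any substantive calculation.
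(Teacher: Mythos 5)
Your proof is correct and takes essentially the same route as the paper: part (1) is the same one-line specialization of the entropic identity combined with idempotency, and part (2) is the same diagonal substitution $(a*a)*a=(a*a)*(a*a)$ followed by cancellation of the common left factor $a*a$. The paper states that cancellation step tersely (``so $a=a*a$''), while you correctly make explicit that it rests on the left-division half of the quasigroup axiom (each $a\circ c$ a singleton) rather than on invertibility of the adjoint maps $*_b$ --- a worthwhile clarification, but not a different argument.
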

\begin{proof} (1)
We have $(a*b)*(c*b)\stackrel{entr}{=} (a*c)*(b*b) \stackrel{idem}{=} (a*c)*b$  (right self-distributivity), and\\
 $(a*b)*(a*d)\stackrel{entr}{=} (a*a)*(b*d) \stackrel{idem}{=} a*(b*d$a (left self-distributivity).\\
(2) We have $(a*a)*a=(a*a)*(a*a) \mbox{ so } a=a*a.$
\end{proof}

A interesting example of an entropic algebra was proposes by D.~Robinson in 1962 \cite{Rob,R-S-2}).
Consider a group of nilpotent class two\footnote{The smallest
nilpotent groups of class 2 are the quaternion group $Q_8$, and the dihedral group $D_{2\cdot 4}$. 
Generally a finite group of a nilpotent class $2$
are products of  an abelian group and nonabelian $p$-groups of class two. In particular, any group of order $p^3$. 
There are, up to isomorphism, two nonabelian groups of order $p^3$:
for $p=2$ they are quaternion group $Q_8$ and dihedral group $D_{2\cdot 4}$; for $p>3$ they are 
Heisenberg $3\times 3$ matrices with $\Z_p$ entries (semidirect product of $\Z_{p^2}$ by $\Z_p$), and 
the group $\{a,b\ | \ a^{p^2}= b^p=1,\ ab=ba^{p+1}\}$ (semidirect product of $\Z_{p}\oplus \Z_p$  by 
$\Z_p$). For two generator p-groups of nilpotency class two see \cite{B-Ka,AMM}.
 An infinite example is the Heisenberg $3\times 3$ matrices group with entries in $\Z$; see 
\cite{Du-Fo,Ma-Bi}.}, 
that is $[[G,G],G]=0$ where 
$[G,G]$ is generated by commutators $[g,h]=g^{-1}h^{-1}gh$.
Then $(G;*)$ with $a*b=ba^{-1}b$ is an entropic magma (this operation was introduced by Brucks and
called by him a core \cite{Bruc-2}).
Similarly if we consider on $G$ a conjugacy operation $a*b=b^{-1}ab$; see Corollary \ref{Corollary III:1.25}.

We prove the above observations in more general situation:
\begin{proposition}\label{Proposition III:1.24} 
Let $(X;*)$ be a magma with invertible $*$ and $\tau$ an involution 
on $X$. Conditions (1)-(3) below are equivalent 
and (4) is their consequence.
\begin{enumerate}
\item[(1)] $(a*b)*c= (a*\tau(c))*\tau(b)$;
\item[(2)] $(a*b)\bar *c = (a\bar * \tau(c))*\tau(b)$
\item[(3)] $(a\bar *b)\bar *c = (a \bar * \tau(c))\bar *\tau(b)$
\item[(4)] $(a*b)*(c*d)= (a*c)*(b*d)$; that is entropic property holds.
\end{enumerate} 
\end{proposition}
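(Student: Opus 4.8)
The plan is to establish the three equivalences $(1)\Leftrightarrow(2)\Leftrightarrow(3)$ by a uniform ``apply a right translation and cancel'' device, and then to extract the entropic law $(4)$ from $(1)$. Throughout I would use only the two inversion identities coming from $*\,\bar{*}=\bar{*}\,*=*_0$, namely $(u*v)\bar{*}v=u$ and $(u\bar{*}v)*v=u$, together with $\tau\tau=\mathrm{id}$. The key observation is that each right translation $*_v$ is a bijection, so an identity $X=Y$ may be checked after composing on the right with a suitable $*_v$; this makes every implication reversible and reduces each equivalence to a one-line computation.

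For $(1)\Rightarrow(2)$ I would compose both sides of $(a*b)\bar{*}c=(a\bar{*}\tau(c))*\tau(b)$ with $*_c$. The left side collapses to $a*b$ by $(u\bar{*}c)*c=u$. For the right side, $\big((a\bar{*}\tau(c))*\tau(b)\big)*c$ has the form $(X*Y)*Z$, so $(1)$ rewrites it as $(X*\tau(c))*\tau(\tau(b))=\big((a\bar{*}\tau(c))*\tau(c)\big)*b$, and $(a\bar{*}\tau(c))*\tau(c)=a$ by the same inversion identity; hence the right side is also $a*b$. Cancelling $*_c$ gives $(2)$, and running the computation backwards (using that $(2)$ forces the two $*_c$-images to agree, then reparametrising $a=p*\tau(c)$ and replacing $b$ by $\tau(b)$) recovers $(1)$. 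The step $(2)\Leftrightarrow(3)$ is identical in spirit: compose with $*_{\tau(b)}$, reduce the right side of $(3)$ to $a\bar{*}\tau(c)$ by $(u\bar{*}\tau(b))*\tau(b)=u$, and reduce its left side to the same element by applying $(2)$ once in the form $(m\bar{*}c)*\tau(b)=(m*b)\bar{*}\tau(c)$ and then using $m*b=a$ with $m=a\bar{*}b$.

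The genuinely delicate point is the consequence $(4)$. The natural attempt is to feed $(1)$ into itself: writing $(a*b)*(c*d)$ as $(X*Y)*Z$ with $Z=c*d$, relation $(1)$ turns it into $\big(a*\tau(c*d)\big)*\tau(b)$, and likewise $(a*c)*(b*d)=\big(a*\tau(b*d)\big)*\tau(c)$. I expect the main obstacle to be precisely the appearance of $\tau(c*d)$ and $\tau(b*d)$: to match the two regrouped expressions one must understand how $\tau$ interacts with $*$, and not merely that $\tau\tau=\mathrm{id}$. In the motivating situations, the core $a*b=ba^{-1}b$ and the conjugation $a*b=b^{-1}ab$ on a class-two nilpotent group with $\tau$ the inversion, this compatibility is exactly what forces the two expressions to coincide, so the plan for $(4)$ is to exploit that structure of $\tau$ rather than to argue from $(1)$ in the abstract.
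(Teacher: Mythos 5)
Your handling of the equivalences $(1)\Leftrightarrow(2)\Leftrightarrow(3)$ is correct and is essentially the paper's own argument: the paper also proves $(1)\Rightarrow(2)$ by composing with the right translation by $c$ and cancelling via the inversion identities, and dismisses the remaining implications as similar.

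Your hesitation about $(4)$ is the substantive point, and you are right --- in fact more right than you claim. The paper's proof of $(4)$ is a chain of five equalities in which the middle three are justified by $(2)$, $(1)$, $(2)$, but the first and last carry no justification; both are instances of
$$X*(y*d)=((X\bar{*}d)*y)*d ,$$
(used with $X=a*b,\ y=c$ and with $X=a*c,\ y=b$). Writing $R_v(u)=u*v$ for the right translations, this identity reads $R_{y*d}=R_d\circ R_y\circ R_d^{-1}$, which is exactly right self-distributivity $(x*y)*d=(x*d)*(y*d)$. It does not follow from $(1)$--$(3)$: those conditions are identities among the maps $R_v$ with $v$ a free variable, and they never relate $R_{y*d}$ to $R_y$ and $R_d$. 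Concretely, take $X=\Z$, $a*b=a+b^2$ (so $a\bar{*}b=a-b^2$ and every $R_b$ is a bijection) and let $\tau$ be the identity map, $\tau(a)=a$, which the paper's conventions allow (the conjugation magma in Corollary III:1.25 also has $\tau(a)=a$). Then each side of $(1)$, $(2)$, $(3)$ equals $a\pm b^2\pm c^2$ with matching signs, so all three hold; but $(4)$ fails, since for $a=0,\ b=0,\ c=1,\ d=1$ one gets $(0*0)*(1*1)=0*2=4$ while $(0*1)*(0*1)=1*1=2$. So $(4)$ is not a consequence of $(1)$--$(3)$ in the stated generality, and the unjustified step above is precisely where the paper's proof breaks on this example.

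Consequently your refusal to derive $(4)$ abstractly is sound judgement rather than a defect, and your proposed repair --- exploit the structure of $*$ and $\tau$ in the motivating examples --- is the correct one, though one should phrase it as fixing the proposition rather than proving it: the core magma $a*b=ba^{-1}b$ and the conjugation magma $a*b=b^{-1}ab$ are quandles (right self-distributive over \emph{any} group, not only class two), so for them $R_{y*d}=R_dR_yR_d^{-1}$ does hold and the paper's chain becomes valid. Equivalently, Proposition III:1.24 becomes a true statement if right self-distributivity is added to its hypotheses, and with that amendment Corollary III:1.25, which is the only place the proposition is used, is unaffected.
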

\begin{proof} (1)-(3) We show, for example, that (2) follows from (1) (other cases are similar).
$(a*b)\bar *c = (a\bar * \tau(c))*\tau(b)$ it is equivalent to
$a*b=(((a\bar * \tau(c))*\tau(b))*c$. The right side by (1) is $((a\bar * \tau(c))*\tau(c))*b=(a*b)$ as 
needed.\\
(4) We have $$(a*b)*(c*d)= (((a*b)\bar *d)*c)*d \stackrel{(2)}{=}$$ 
$$(((a\bar *\tau(d))*\tau(b))*c)*d  \stackrel{(1)}{=} (((a\bar *\tau(d))*\tau(c))*b)*d \stackrel{(2)}{=}$$
$$(((a*c)\bar * d)*b)*d= (a*c)*(b*d) \mbox{ as needed.}$$

\end{proof}
\begin{corollary}\label{Corollary III:1.25} Let $G$ be group of a nilpotent class $2$, Then: 
\begin{enumerate} 
\item[(i)] The core magma $(X;*)$ with $a*b=ba^{-1}b$ and $\tau(a)=a^{-1}$ satisfies
conditions (1)-(4) of Proposition \ref{Proposition III:1.24}.
\item[(ii)] The conjugation magma $(X;*)$ with $a*b=b^{-1}ab$ and $\tau(a)=a$ 
satisfies
conditions (1)-(4) of Proposition \ref{Proposition III:1.24}.
\end{enumerate}
\end{corollary}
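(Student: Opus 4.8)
The plan is to reduce everything to Proposition~\ref{Proposition III:1.24}: in each of the two cases I will verify the two standing hypotheses of that proposition --- that $*$ is invertible and that $\tau$ is an involution --- together with condition (1). Once (1) holds, the equivalence of (1)--(3) and the implication to (4) are handed to us by the proposition, so conditions (2), (3), (4) come for free. Thus the only genuine work is checking condition (1), and the only tool I expect to need is the defining feature of nilpotency class $2$: every commutator $[x,y]=x^{-1}y^{-1}xy$ is central, whence $xy=yx[x,y]$ and, conjugation being trivial on commutators, $[x,y^{-1}]=[x,y]^{-1}$.

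For the conjugation magma (ii), $\tau=\mbox{id}$ is trivially an involution, and $*_b(a)=b^{-1}ab$ is conjugation, hence a bijection, so $*$ is invertible. Here condition (1) reads $(a*b)*c=(a*c)*b$, i.e. $c^{-1}b^{-1}abc=b^{-1}c^{-1}acb$. I would rewrite the outer pairs using $c^{-1}b^{-1}=[b,c]^{-1}b^{-1}c^{-1}$ (from $bc=cb[b,c]$) and replace the trailing $bc$ by $cb[b,c]$; since $[b,c]$ is central, the factors $[b,c]^{-1}$ and $[b,c]$ cancel and the two sides coincide. Proposition~\ref{Proposition III:1.24} then yields (2)--(4).

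For the core magma (i), $\tau(a)=a^{-1}$ is clearly an involution, and solving $ba^{-1}b=c$ for $a$ gives $a=bc^{-1}b$, so each $*_b$ is a bijection (indeed $*$ is its own inverse); hence $*$ is invertible. Writing out condition (1) with $x*y=yx^{-1}y$, the left side is $c(ba^{-1}b)^{-1}c=cb^{-1}ab^{-1}c$ and the right side is $b^{-1}(cac)b^{-1}=b^{-1}cacb^{-1}$, so (1) becomes $cb^{-1}ab^{-1}c=b^{-1}cacb^{-1}$. I would conjugate both sides by $b$, reducing the claim to $bcb^{-1}\cdot a\cdot b^{-1}cb=cac$, and then substitute $bcb^{-1}=c[c,b]^{-1}$ and $b^{-1}cb=c[c,b]$; the central factors $[c,b]^{-1}$ and $[c,b]$ annihilate, leaving $cac=cac$. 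Proposition~\ref{Proposition III:1.24} again finishes the argument.

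The main obstacle is nothing conceptual but rather the bookkeeping inside condition (1): one must track the direction of each commutator carefully and repeatedly invoke centrality so that the correction terms cancel in pairs. Everything beyond that is formal, since the passage from (1) to (2), (3), (4) is exactly what Proposition~\ref{Proposition III:1.24} provides.
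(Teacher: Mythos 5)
Your proof is correct and takes essentially the same route as the paper: both arguments verify condition (1) of Proposition \ref{Proposition III:1.24} for each magma by direct computation, using the key fact that commutators in a nilpotent group of class $2$ are central, and then invoke the proposition to obtain conditions (2)--(4). (One cosmetic slip: in case (i) the step you call ``conjugating by $b$'' is actually multiplying both sides on the left and on the right by $b$, but the resulting identity $bcb^{-1}\cdot a\cdot b^{-1}cb=cac$ is exactly what that operation yields, so the argument is unaffected.)
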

\begin{proof}
(i) In a core magma we have $(a*b)*c= (ba^{-1}b)*c= cb^{-1}ab^{-1}c$, and 
$(a*\tau(c))*\tau(b)= (a*c^{-1})*b^{-1}= b^{-1}cacb^{-1}= cb^{-1}[b^{-1},c]a[c^{-1},b]b^{-1}c$;
Thus $(a*b)*c=(a*\tau(c))*\tau(b)$ is equivalent to $a=[b^{-1},c]a[c^{-1},b]$ and further to 
$a^{-1}[b^{-1},c]a[c^{-1},b]=[a,[cb^{-1}]][[cb^{-1}],c^{-1}b]=1$ which holds in a group of class 2. 
Other conditions follow from Proposition \ref{Proposition III:1.24}.\\ 
(ii) In a conjugation magma we have 
 $(a*b)*c=(b^{-1}ab)*c=c^{-1}b^{-1}abc$ and $(a*\tau(c))*\tau(b)= (a*c)*b)=b^{-1}c^{-1}acb= 
= c^{-1}b^{-1}[b^{-1},c^{-1}]a[c^{-1},b^{-1}]bc $ Thus $(a*b)*c=(a*\tau(c))*\tau(b)$ is equivalent to 
$a= [b^{-1},c^{-1}]a[c^{-1},b^{-1}]$ and further to $a^{-1}[b^{-1},c^{-1}]a[b^{-1},c^{-1}]^{-1}= [a,[c^{-1},b^{-1}]]\in 
[[G,G],G]$ as needed. Other conditions follow from 
Proposition \ref{Proposition III:1.24}.
\end{proof}
The theory of homology of entropic magmas (analogous to that of homology of semigroups or shelves) is 
developed in \cite{Ni-P}.

\section{Proof of the main theorem}\label{Section III:2}

\begin{definition}\label{Definition III:2.1}(Descending diagram).\\
Assume that $L$ is an oriented link diagram of  
$n$ components together with $b= (b_1\ldots,b_n)$ 
which are base points
on $L$, each one chosen on a different link component of $L$
(the base points are outside of the crossings of the diagram).
Suppose that we move along the diagram $L$ according to the orientation
of $L$ so that we start from $b_1$, travel the first component ending in $b_1$
and then we travel the second component starting from $b_2$, etc... 
Every crossing is traveled in that way twice. 
We say that the diagram $L$ is descending with respect to $b$ 
if each crossing that we meet on our way first
is crossed  by an overcrossing (a bridge). We say that $L$ is 
ascending with respect to $b$ if its mirror image, $\bar L$ is 
descending with respect to $b$.

\end{definition}

It is not hard to see that for any diagram $L$ and any choice of 
basepoints $b= (b_1\ldots,b_n)$
there exists a resolving tree such that its leaves are descending
diagrams with respect to some appropriate choice of base points. 
Furthermore the diagram corresponding to the extreme left leaf has 
the same projection as $L$ and is descending with respect to $b$.

To show this
 we apply induction with respect to the number $k$ of crossings in
a diagram. The diagrams with no crossings are already descending.
Suppose that our claim is true for diagrams with fewer than $k>0$ crossings.
Now we can apply the following procedure for diagrams with $k$ crossings.
For any choice of  base points we start walking along the
diagram until we meet the first ``bad'' crossing $p$, 
i.e.~the first crossing on our way which is entered for the first time 
at an underpass (tunnel).
Then we begin to construct the tree changing the diagram at $p$.
If, for example, $\mbox{sgn} p = +$, then we get

\begin{center}
\begin{tabular}{c} 
\includegraphics[trim=0mm 0mm 0mm 0mm, width=.2\linewidth]
{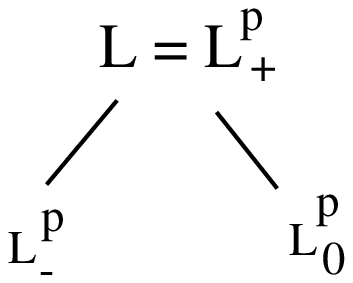}\\
\end{tabular}
\\
\end{center}

Next, we can apply our inductive assumption on the diagram $L_0^p$
and we continue the construction of the tree for the diagram
$L^p_-$ (i.e.~ we walk along the diagram looking for next ``bad''
crossings) until we eliminate all ``bad'' crossings. In $L^p_-$ we 
use the same base points as in $L$.
Later we will use similar reasoning several time so it is useful to 
formalize our double induction: having a diagram with a basepoints 
we induct on lexicographically ordered pair: 
(the number of crossings of $L$, the number of ``bad" crossings of $L$ 
with respect to the chosen basepoint). 

To prove Theorem~\ref{2:1.2} we will construct a value
function $w$ on diagrams of links. In order to prove
that $w$ is an invariant of ambient isotopy of oriented links
we will show that it is preserved by Reidemeister moves.

We will use the induction with respect to the number 
$\mbox{cr}(L)$ of crossing points in the diagram.
For any $k\geq 0$ we will define a function $w_k$ which assigns 
an element in $A$ to each oriented diagram $L$ with at most $k$ crossings.
Then we will define the function $w$ by setting $w(L)=w_k(L)$,
where $k\geq \mbox{cr}(L)$. 
Clearly, for this to work, the functions $w_k$ must satisfy certain coherence 
conditions.
Finally, the required properties of $w$ will be obtained from these
of $w_k$.

We begin by defining $w_0$.                                                                   

For a diagram $L$ with $n$ components and $\mbox{cr}(L)=0$ we put:
\begin{formulla}\label{e:2.2}
$$w_0(L)=a_n$$
\end{formulla}

To define $w_{k+1}$ and to prove its properties we will use induction
several times. To avoid misunderstandings the following will be called
the ``Main Inductive Hypothesis'' (abbreviated by MIH):
There is a function $w_k$ which associates $w_k(L)\in A$ 
to any diagram $L$ with $\mbox{cr}(L)\leq k$
and the function $w_k$ has the following properties:
\begin{formulla}
\begin{eqnarray*}\label{e:2.3}
w_k(U_n)&=&a_n\mbox{ where $U_n$ is a descending diagram
with $n$ components
}\\
&&\mbox{(with respect to some choice of base points).}\\
\end{eqnarray*}
\end{formulla}

\begin{formulla}\label{e:2.4}
$$w_k(L_+)=w_k(L_-)|w_k(L_0)$$
\end{formulla}

\begin{formulla}\label{e:2.5}
\ \ \ \ \ \ \ \ \ \ \ \ \ \ \ \ \ \ \ 
$w_k(L_-)=w_k(L_+)\star w_k(L_0)$ \\
\ \ \ \ ~~~~~~~~ \ \ \ for $L_+.L_-$ and $L_0$, being related
as usually.
\end{formulla}

\begin{formulla}\label{e:2.6}
\begin{eqnarray*}
w_k(L) &=&  w_k(R(L)),\mbox{ where $R$ is Reidemeister move
on $L$ such that} \\
&&\mbox{$\mbox{cr}(R(L))$ is at most $k$.}\\ 
\end{eqnarray*}
\end{formulla}

Then, as the reader may expect, we  want to make the Main Inductive Step,
abbreviated as MIS, to obtain the existence of a function
$w_{k+1}$ with analogous properties defined on diagrams with at most $k+1$
crossings. Before dealing with the task of making the MIS let us explain that
it will really end the proof of the theorem.
It is clear that the function $w_k$
satisfying MIS is uniquely determined by properties III.2.3, \ref{e:2.4},
 \ref{e:2.5}, and the fact that for every diagram there exists a resolving tree
with descending leaf diagrams. Thus the compatibility of the functions $w_k$ 
is obvious and they define a function $w$ defined on diagrams.

The function $w$ satisfies conditions (2) and (3) of Theorem~\ref{2:1.2},
because the functions $w_k$ satisfy such conditions.

If $R$ is a Reidemeister move on a diagram $L$, then $\mbox{cr}(R(L))$ equals
at most $k = \mbox{cr}(L)+2$, whence
$w_R(L)=w_k(R(L)), w_L = w_k(L)$ and by the properties of $w_k$
we have $w_k(L)=w_k(R(L))$, which implies $w_{R(L)} = w_L$. 

It follows that $w$ is an invariant of equivalence classes of oriented diagrams
and therefore also of the isotopy class of oriented links.

Now it is clear that $w$ has the required property (1) of Theorem~\ref{2:1.2},
since there exists a descending diagram $U_n$ in the same ambient 
isotopy class as $T_n$ (e.g. a link diagram without a crossing)
 and we have $w_{U_n} = a_n$.

The rest of the section \S 2 will be devoted to the proof of MIS.
For a given diagram $D$ with $\mbox{cr}(D)\leq k+1$ we will denote by ${\cal
D}$ the set of all diagrams which are obtained from $D$ by a finite number of 
operations of the kind 
{\includegraphics[trim=0mm 0mm 0mm 0mm, width=.02\linewidth]
{L+nmaly.eps}}$\rightarrow$ \includegraphics[trim=0mm 0mm 0mm 0mm, width=.02\linewidth]
{L-nmaly.eps}
or {\includegraphics[trim=0mm 0mm 0mm 0mm, width=.02\linewidth]
{L+nmaly.eps}}$\rightarrow${\includegraphics[trim=0mm 0mm 0mm 0mm, width=.02\linewidth]
{L0nmaly.eps}}.

Of course, once base points $b=(b_1,\ldots,b_n)$ are chosen on $D$, then the
same points can be chosen as base points for any $L\in{\cal D}$ provided that
$L$ is obtained from $D$ by operations of the first type only.

Let us define a function $w_b$ for a given $D$ and $b$ by assigning
an element of $A$ to each $L\in{\cal D}$.
If $\mbox{cr}(L)<k+1$, then we define 
\begin{formulla}\label{e:2.7}
$$w_b(L)=w_k(L).$$
\end{formulla} 

If $U_n$ is a descending diagram with respect to $b$ we put
\begin{formulla}\label{e:2.8}
$$w_b(U_n) = a_n\ \ \ \mbox{($n$ denotes the number of components).}$$
\end{formulla}

Now we proceed by induction with respect to the number $b(L)$ of 
bad crossings in $L$
(in the symbol $b(L)$ the letter $b$ 
works simultaneously for ``bad'' and for the choice 
$b=(b_1,\ldots,b_n)$). For a different choice of base points
$b'=(b_1',\ldots,b_n')$ we will write $b'(L)$). Assume that $w_b$
is defined for all $L\in{\cal D}$ such that $b(L)<t$ ($t>0$).
Then for the diagram $L$, $b(L)=t$, let $p$ be the first bad crossing
of $L$ (starting from $b_1$ and proceeding along the diagram).
Depending on the crossing $p$ being positive or negative 
we have $L=L^p_+$ or $L=L^p_-$. We define

\begin{formulla}\label{e:2.9}
$$
w_b(L) = \left\{
\begin{array}{ccc}
w_b(L^p_-)|w_b(L^p_0)&\mbox{if}&\mbox{sgn} (p) = +\\
w_b(L^p_+)\star w_b(LJ^p_0)&\mbox{if}&\mbox{sgn} (p) = -.\\
\end{array}
\right.
$$
\end{formulla}

We will show that $w_b$ is in fact independent of the choice of $b$ 
and that it has  the properties required of $w_{k+1}$.

\subsection{Conway Relations for $w_b$}

Let us begin with the proof that $w_b$ has Properties 2.4 
and 2.5.  

The considered crossing point will be denoted by $p$. We will restrict
our attention to the case $b(L^p_+)>b(L^p_-)$. The opposite case is quite
analogous.

We proceed by induction on $b(L^p_-)$. If $b(L^p_-) = 0$, then
$b(L^p_-) = 1$ and $p$ is the only one (hence the first) bad 
crossing of $L^p_+$. Hence by definition ~\ref{e:2.9} we get
$$w_b(L^p_+) = w_b(L^p_-)|w_b(L^p_0)$$
and further on using the properties of $C6$ we obtain
$$w_b(L^p_-)=w_b(L^p_+)\star w_b(L^p_0).$$

Let us assume now that every diagram $L$ such that $w_b(L^p_-)<t$, with $t\geq
1$, satisfies formulae ~\ref{e:2.4} and \ref{e:2.5} for $w_b$. Let us consider
the case $b(L^p_-)   =t$. 

By the assumption we have $w_b(L^p_+)\geq2$. Let $q$ be 
the first bad crossing of
the diagram $L^p_+$. If $q=p$, then by \ref{e:2.9} we have
$$ w_b(L^p_+) = w_b(L^p_-)   |w_b(L^p_0).$$

Let us now consider the case when $q\neq p$. 
Let  $\mbox{sgn}\ q = +$, for example. 
Then by \ref{e:2.9} we have
$$w_b(L^p_+)=w_b(L^{p\ q}_{+\ +}) =w_b(L^{p\ q}_{+\ -})|w_b(L^{p\ q}_{+\
0}).$$

Because $w_b(L^{p\ q}_{-\ -})<t$ and $\mbox{cr}L^{p\ q}_{+\ 0}\leq
k$, therefore by the inductive hypothesis and by MIH  we have
$$  w_b(L^{p\ q}_{+\ -}) = w_b(L^{p\ q}_{-\ -})|w_b(L^{p\ q}_{0\ -})$$
and $$  w_b(L^{p\ q}_{+\ 0}) = w_b(L^{p\ q}_{-\ 0})|w_b(L^{p\ q}_{0\ 0})$$
hence $$  w_b(L^p_+) = (w_b(L^{p\ q}_{-\ -}) |w_b(L^{p\ q}_{0\ -}))|(
w_b(L^{p\ q}_{-\ 0})|w_b(L^{p\ q}_{0\ 0})).$$

By the transposition property $C3$ we obtain
\begin{formulla}\label{e:2.10}
  $$w_b(L^p_+) = (w_b(L^{p\ q}_{-\ -}) |w_b(L^{p\ q}_{-\ 0}))|(
w_b(L^{p\ q}_{0\ -})|w_b(L^{p\ q}_{0\ 0})).$$
\end{formulla} 

On the other hand $b(L^{p\ q}_{-\ -})<t$ and $\mbox{cr}(L^p_0)\leq k$,  
so using once more the inductive hypothesis and MIH we obtain
\begin{formulla}\label{e:2.11}
$$w_b(L^p_-)=w_b(L^{p\ q}_{-\ +}) = w_b(L^{p\ q}_{-\ -})|w_b(L^{p\ q}_{-\
0})
\\
w_b(L^p_0)=w_b(L^{p\ q}_{0\ +}) = w_b(L^{p\ q}_{0\ -})|w_b(L^{p\ q}_{0\ 0})$$
\end{formulla}

Putting together III.2.11 and III.2.10 we obtain
$$w_b(L^p_+)=w_b(L^p_-)|w_b(L^p_0)$$
as required. If $\mbox{sgn}q =-$ we should use $C4$
instead of $C3$. This completes the proof of Conway Relations for $w_b$.
\subsection{Changing Base Points}

We will show now that $w_b$ does not depend on the choice of $b$,
provided the order of components of the diagram is not changed.
It amounts to the verification 
that we can replace $b_i$ by $b_i'$, taken from
the same component in such a way that $b_i$ lies after $b_i'$ and there is 
exactly one crossing point (say $p$) between them. Let
 $b'=(b_1,\ldots,b_i',\ldots,b_n)$. We want to show that
$w_b(L) = w_{b'}(L)$ for every diagram with $k+1$ crossings belonging to
${\cal D}$. Now we will consider the case $\mbox{sgn} p = +$; 
the case $\mbox{sgn}p = -$ is quite analogous.

We use induction with respect to $B(L):= \max(b(L),b'(L))$. 
We need to consider three cases.

\begin{description}
\item[CBP 1] Let us assume that $B(L)=0$. Then $L$ is a descending diagram
with respect to both choices of base points: $b$ just like $b'$. By~\ref{e:2.8}
we have
$$ w_b(L) = a_n = w_{b'}(L).$$
\item[CBP 2] Let us assume that $B(L)=1$ and $b(L)\neq b'(L)$.

This is possible only when $p$ is a self-crossing point of the $i$-th component 
of $L$. There are two subcases to  be considered.
\begin{description}
\item[CBP 2 (a)]: $b(L) = 1$ and $b'(L) = 0$. 

Then $L$ is a descending diagram with respect to $b'$  and by ~\ref{e:2.8} we have
$$w_{b'}(L) = a_n$$
thus by ~\ref{e:2.9} we obtain
$$w_b(L) = w_b(L^p_+) = w_b(L^p_-)|w_b(L^p_0)$$
(we have restricted our attention to the case $\mbox{sgn} p
= +$).

Now $w_b(L^p_-) = a_n$, because $b(L^p_-) = 0$, moreover $L^p_0$ is
a descending diagram with respect to a proper choice of base points.
Since $L^p_0$ has $n+1$ components, so
$w_b(L^p_0) = a_{n+1}$.

It follows that $w_b(L) = a_n|a_{n+1}$, and since by condition $C1$
we have $a_n|a_{n+1} = a_n$, so $w_b(L) = a_n = w_{b'}(L)$.

\item[CBP 2 (b)]: $B(L)=0$ and $b'(L)=1$.

We deal with this case just like with CBP 2(a).
\end{description}

\item [CBP 3] $B(L) = t>1$ or $B(L) = 1 = b(L) = b'(L)$.

We assume by induction that $w_b(K) = w_{b'}(K)$ for
$B(K)<B(L)$. Let $q$ be a bad crossing with respect to
$b$ and $b'$ as well. This time we will consider the case  $\mbox{sgn}q =
-$. The case $\mbox{sgn}q = +$ is analogous.

Using the already proven Conway relations for $w_b$ and $w_{b'}$
we obtain:
$$w_b(L) = w_b(L^q_-) = w_b(L^q_+)\star w_b(L^q_0)$$
$$w_{b'}(L) = w_{b'}(L^q_-) = w_{b'}(L^q_+)\star w_{b'}(L^q_0)$$

Since $B(L^q_+)<B(L)$ and $\mbox{cr}(L^q_0)\leq k$, so by the (local) inductive
hypothesis and by MIH we get
$$w_b(L^q_+) = w_{b'}(L^q_+)$$
$$w_b(L^q_0) = w_{b'}(L^q_0)$$
Which imply $w_b(L) = w_{b'}(L)$. This completes the CBP proof.

\end{description}  

Since $w_b$ has turned out to be independent of base point changes 
which preserve the order of components of the diagram, so now we can
consider a new function $w^0$ which associates an element of 
$A$ to any diagram $L$ with $\mbox{cr}(L)\leq k+1$ and with a fixed 
order of components.

\subsection{Independence of $w^0$ of Reidemeister Moves (IRM)}

When $L$ is a diagram with a fixed order of components and $R$
is a Reidemeister move on $L$, then $R(L)$ has a natural order of components
yielded by the order of components of $L$. Assuming that
$\mbox{cr}(R(L))\leq \mbox{cr}(L) \leq k+1$, we will show that 
$w^0(L) = w^0(R(L))$. 

We use induction with respect to  the number of bad crossings $b(L)$ 
for a  proper choice of base points $b=(b_1,\ldots,b_n)$. 
This choice must be compatible with the given 
order of components.
Let us choose the base points which lie 
outside of the part of the diagram involved in the considered 
Reidemeister move $R$, so that the same points could work for
the diagram $R(L)$ as well. 

We need to consider three standard types of Reidemeister moves (Fig.2.1).
\begin{center}
\begin{tabular}{c} 
\includegraphics[trim=0mm 0mm 0mm 0mm, width=.8\linewidth]
{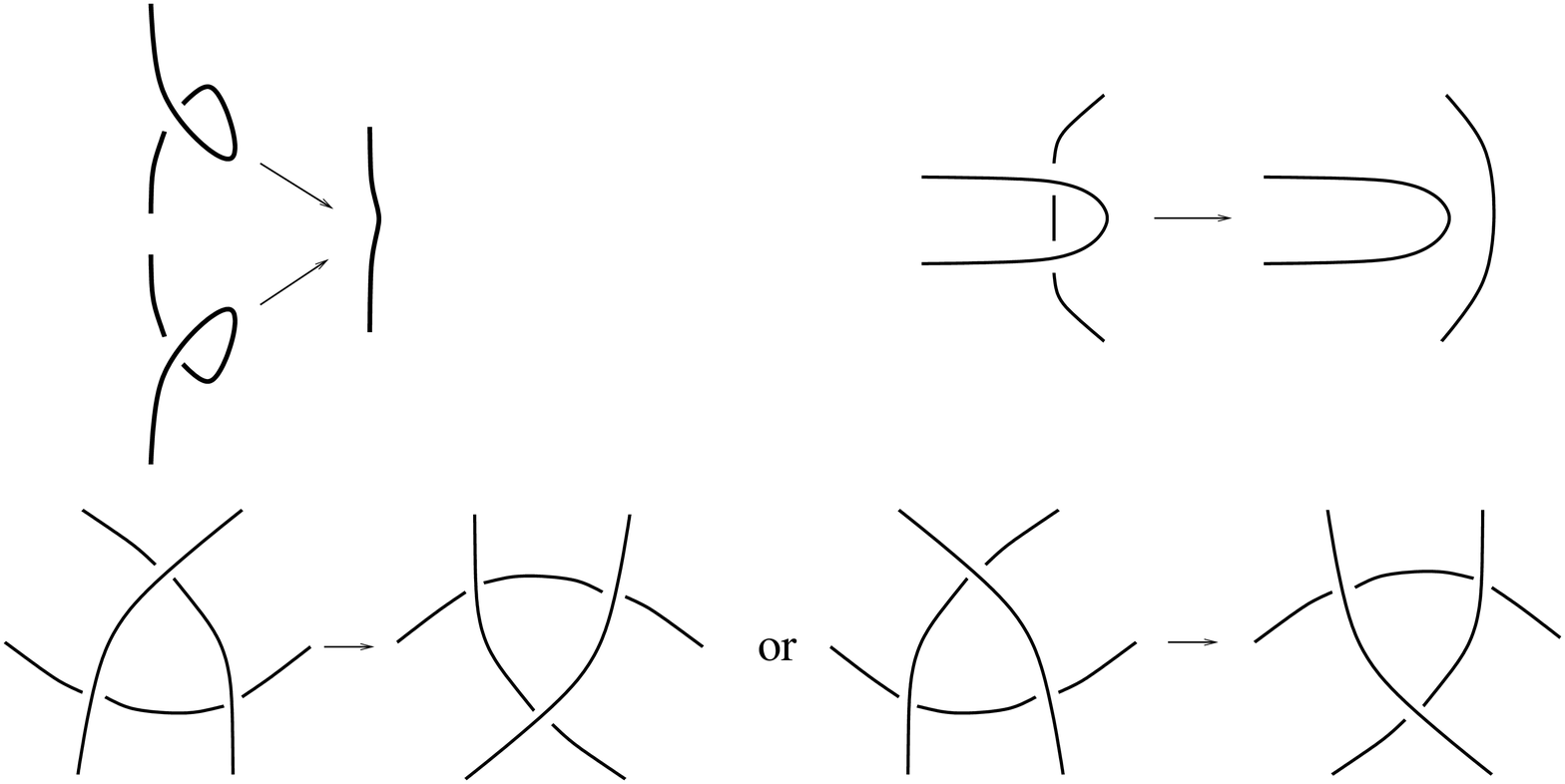}\\
\end{tabular}
\\
Fig.~2.1
\end{center}

Let us assume that $b(L) = 0$. It is clear then that also $b(R(L)) = 0$
and the number of components remains the same. Thus by ~\ref{e:2.8}
$$w^0(L) = w^0(R(L)).$$

Now we assume by induction that $w^0(L) = w^0(R(L))$ when $b(L)<t$.
Let us consider the case $b(L) = t$. Assume that there is a bad crossing
$p$ in $L$ which is different from all the crossings involved in
the considered Reidemeister move. Assume, for example, that 
$\mbox{sgn} p = +$. Then, by the inductive hypothesis,we have

\begin{formulla}\label{e:2.12}
$$w_0(L^p_-) = w^0(R(L^p_-)).$$
\end{formulla}
and by MIH we obtain
\begin{formulla}\label{e:2.13}
$$w^0(L^p_0) = w^0(R(L^p_0)).$$
\end{formulla}
Now by the Conway relation \ref{e:2.4}, which has been already verified for
$w^0$ we have 
$$w^0(L) = w^0(L^p_+) = w^0(L^p_-)|w^0(L^p_0)$$
$$w^0(R(L)) = w^0(R(L)^p_+) = w^0(R(L)^p_-)|w^0(R(L)^p_0).$$

Since $R(L^p_-) = R(L)^p_-$ and obviously $R(L^p_0)=R(L)^p_0$,
so by \ref{e:2.12} and \ref{e:2.13} we will get
$$w^0(L) = w^0(R(L)).$$

It remains to consider the case, when all the bad crossings of $L$,
lie in the part of diagram involved in the considered Reidemeister move.
Let us consider each of the three types of Reidemeister moves separately. 
The most complicated case is that of a Reidemeister move of the third type. 
Before we deal with it, let us formulate the following observation:

Whatever the choice of base points, 
the crossing point of the top arc and the bottom arc (e.g. $q$ in Fig. 2.2) 
cannot be the only bad point of the diagram.
\begin{center}
\begin{tabular}{c} 
\includegraphics[trim=0mm 0mm 0mm 0mm, width=.3\linewidth]
{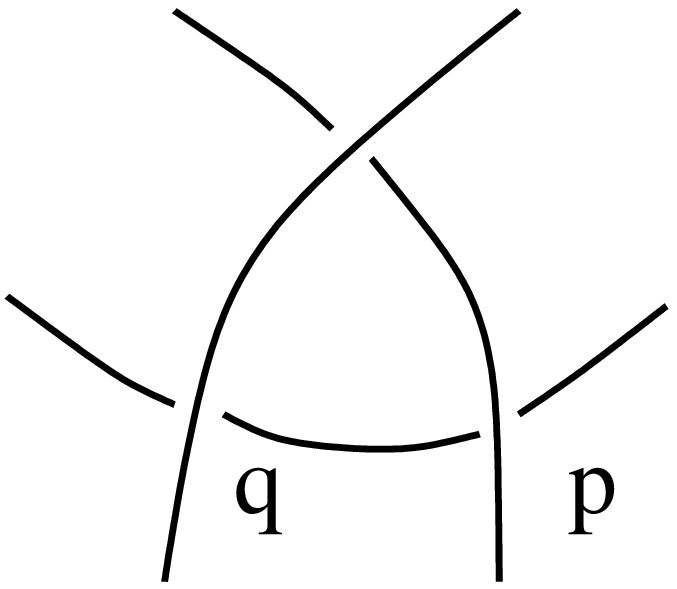}\\
\end{tabular}
\\
Fig.~2.2
\end{center}

The proof of the above observation amounts to an easy case by case checking.
This observation makes the following induction possible. 
We can assume that we have
a bad crossing between the middle arc and either the lower or the upper arc.
Let us consider, for example, the situation described by Fig. 2.2. 
Subsequently we need to consider two subcases, according to 
$\mbox{sgn} p = +$ or $-$. 

Let us assume that $\mbox{sgn}p = -$. Then by Conway relations:
$$w^0(L) =w^0(L^p_-) = w^0(L^p_+)\star w^0(L^p_0)$$
$$w^0(R(L)) = w^0(R(L)^p_-) = w^0(R(L)^p_+)\star w^0(R(L)^p_0).$$
By the inductive hypothesis and by the equation $R(L)^p_+ = R(L^p_+)$
we obtain:
$$w^0(L^p_+) = w^0(R(L)^p_+).$$
Since $R(L)^p_0$ is obtained from $L^p_0$ by two Reidemeister moves of the 
second type (Fig. 2.3), thus by MIH 
$w^0(R(L)^p_0) = w^0(L^p_0)$ and then follows the equality
$$w^0(L) = w^0(R(L)).$$

\begin{center}
\begin{tabular}{c} 
\includegraphics[trim=0mm 0mm 0mm 0mm, width=.5\linewidth]
{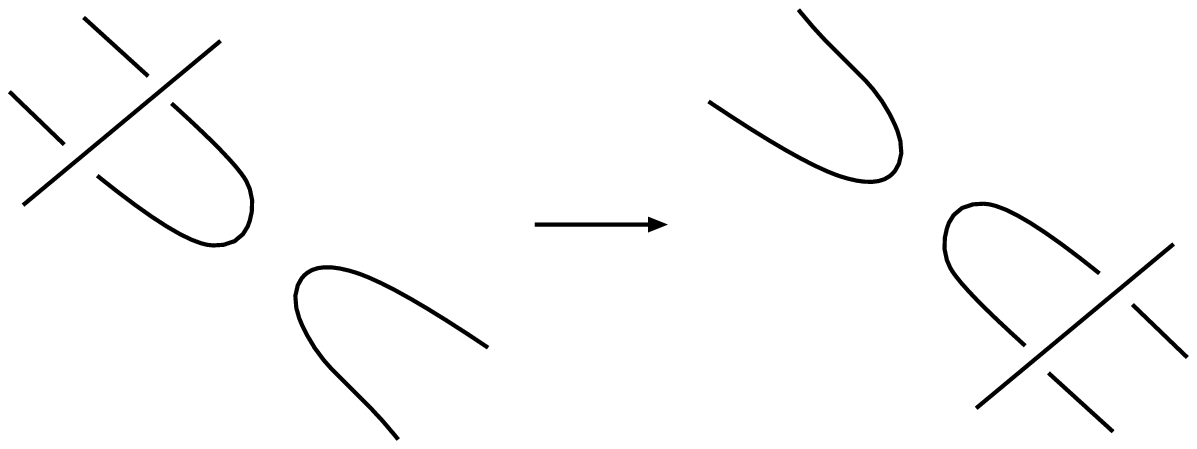}\\
\end{tabular}
\\
Fig.~2.3
\end{center}
Let us assume now that $\mbox{sgn}p=+$. Then by Conway relations
$$w^0(L) =w^0(L^p_+) = w^0(L^p_-)| w^0(L^p_0)$$
$$w^0(R(L)) = w^0(R(L)^p_+) = w^0(R(L)^p_-)| w^0(R(L)^p_0).$$
By the inductive hypothesis and equality $R(L)^p_- = R(L^p_-)$
we obtain:
$$w^0(L^p_-) = w^0(R(L)^p_-).$$

Now, $L^p_0$ and $R(L)^p_0$ are essentially the same diagrams
(Fig. 2.4), thus $w^0(L^p_0) = w^0(R(L)^p_0)$ and in the end we obtain
an equality.
$$w^0(L) = w^0(R(L)).$$
\begin{center}
\begin{tabular}{c} 
\includegraphics[trim=0mm 0mm 0mm 0mm, width=.5\linewidth]
{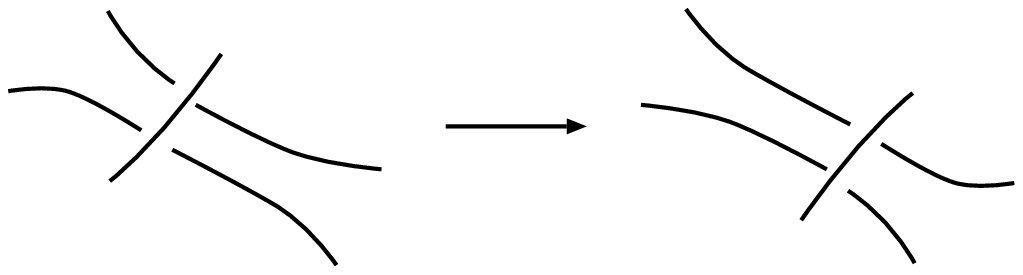}\\
\end{tabular}
\\
Fig. 2.4
\end{center}

{\em Reidemeister moves of the first type}.

The base points can always be chosen in such a way 
that the crossing point involved
in the move is good. Thus $b(L)=b(R(L))=0$ and $w^0(L)=w^0(R(L)$.

{\em Reidemeister moves of the second type}.

There is only one case when we cannot choose base points to secure that
the points involved in the move are good.
It happens when the arcs involved are parts of different components and
the lower arc is a part of the smaller component in the ordering.
In this case both the crossing points are bad and of the opposite signs.
Let us consider the case shown on Fig.2.5.
\vspace{0.5cm}
\begin{center}
\begin{tabular}{c} 
\includegraphics[trim=0mm 0mm 0mm 0mm, width=.7\linewidth]
{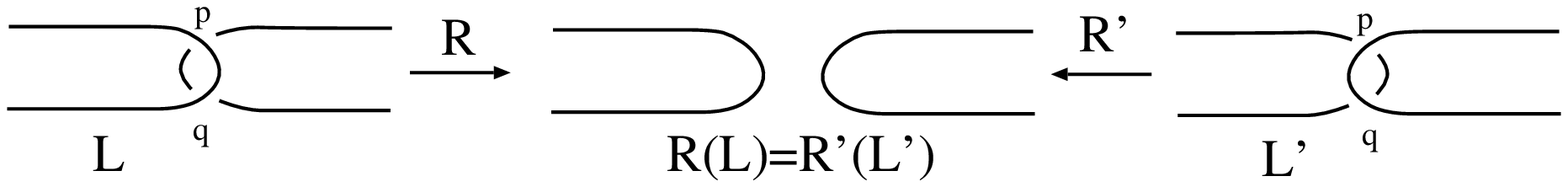}\\
\end{tabular}
\\
Fig. 2.5
\end{center}
\vspace{0.5cm}
By the inductive hypothesis we have
$$ w^0(L') = w^0(R'(L')) = w^0(R(L)).$$

Using the already proven Conway relations, conditions $C6$ 
and $C7$, and MIH, if necessary, it can be proved that
$w^0(L) = w^0(L')$. Let us discuss in detail the case involving MIH. 
It occurs when $\mbox{sgn} p = +$. Then we have
$$w^0(L) = w^0(L^q_-) = w^0(L^q_+)\star w^0(L^q_0) = (w^0(L^{q\ p}_{+\
-})|w^0(L^{q\ p}_{+\ 0}))\star w^0(L^q_0).$$
But $L^{q\ p}_{+\ -} = L'$ and by MIH it follows that $w^0(L^{q\ p}_{+\ 0}) =
w^0(L^q_0)$ (see Fig.2.6; here diagrams $L^{q\ p}_{+\ 0}$ and $L^q_0$
are both obtained from $K$ by a first Reidemeister move).
\begin{center}
\begin{tabular}{c} 
\includegraphics[trim=0mm 0mm 0mm 0mm, width=.5\linewidth]
{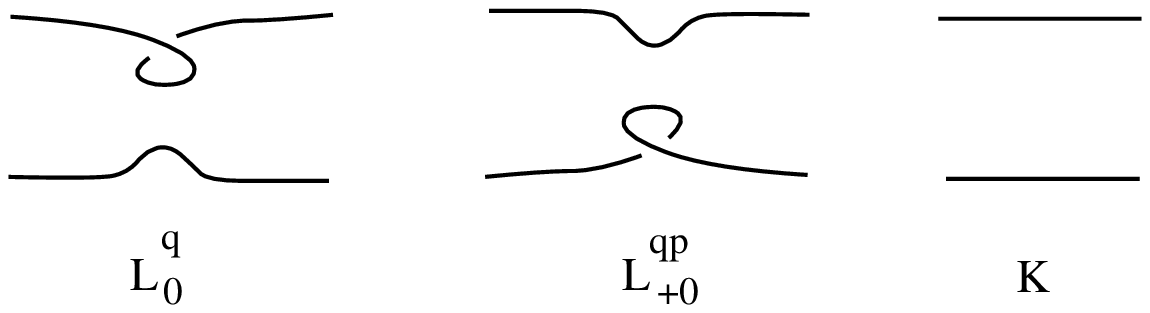}\\
\end{tabular}
\\
Fig. 2.6
\end{center}

By condition $C7$ we obtain
$$w^0(L) = w^0(L'),$$
and consequently $$w^0(L) = w^0(R(L)).$$
The case $\mbox{sgn}p=-$ is even simpler (see Fig.2.7) 
and we leave it for the reader.

\begin{center}
\begin{tabular}{c} 
\includegraphics[trim=0mm 0mm 0mm 0mm, width=.5\linewidth]
{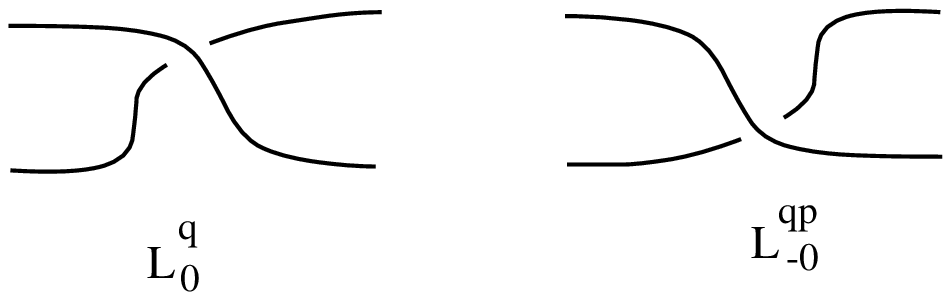}\\
\end{tabular}
\\
Fig. 2.7 (case $\sgn p = -1$).
\end{center}

This completes the proof of the independence of $w^0$ of Reidemeister moves.

\begin{remark}\label{Remark III.2.14}
If $L_i$ is a trivial component of a diagram $L$, i.e. $L_i$
has no crossing points, then the specific position of $L_i$
has no effect on $w^0(L)$. Equivalently, If $L$ and $L'$ differ by the 
position of $L_i$ as in Figure 2.8, then $w^0(L) = w_b(L)=w_b(L')=w^0(L')$.
This can be easily achieved by the induction with respect to
$b(L)$. Notice however that we cannot use IRM in this case because 
Reidemeister moves have to increase the number of crossings on 
the way from $L$ to $L'$.
\end{remark}

\begin{center}
\begin{tabular}{c}
\includegraphics[trim=0mm 0mm 0mm 0mm, width=.3\linewidth]
{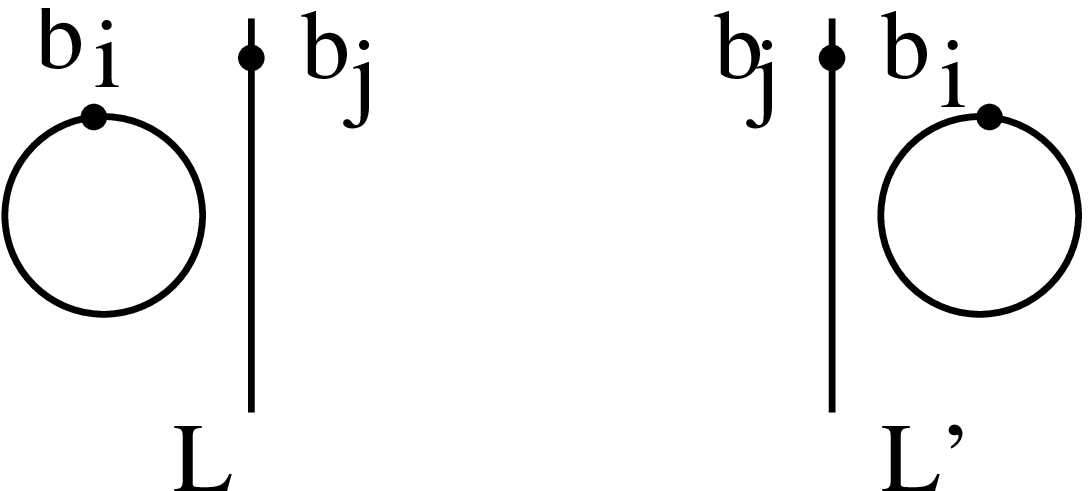}\\
\end{tabular}
\\
Fig.~2.8
\end{center}

To complete the Main Inductive Step it is enough to prove the
independence of $w^0$ of the order of components. Then we set
$w_{k+1} = w^0$. The required properties of $w_{k+1}$ have been
already checked.

\subsection{Independence of the Order of Components (IOC)}

It is enough to verify that for a given diagram $L$, $\mbox{cr}(L)\leq
k+1$, and for a fixed base points $b =
(b_1,\ldots,b_i,b_{i+1},\ldots,b_n)$ we have 
$$ w_b(L) = w_{b'}(L)$$
where $b'= (b_1,\ldots,b_{i+1},b_i,\ldots,b_n)$. By induction on $b(L)$
we can easily reduce it to the case of a descending diagram, $b(L)=0$. 
To deal with this case we will choose appropriate base points.

The proof will be concluded if we show, that our diagram can be
transformed into another one with a smaller number of crossings
by a series of Reidemeister moves not increasing the number of crossings.
To do it we can use IRM and MIH. This property of a descending diagram
is guaranteed by the following lemma.

\begin{lemma}\label{l:2.15}
Let $L$ be a diagram with $k$ crossings and a given ordering of components
$L_1,L_2,\ldots,L_n$. Then either $L$ has a trivial circle as a component
or there is a choice of base points $b = (b_1,\ldots,b_n)$, $b_i\in L_i$, 
such that a descending diagram $L^d$ associated with $L$ and $b$ (that is
all the bad crossings of $L$ are changed to good ones) can be changed
into a diagram with less than $k$ crossings by a sequence of Reidemeister
moves not increasing the number of crossings. 
\end{lemma}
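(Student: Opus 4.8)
The plan is to exploit the layered structure forced by descent and then locate a locally reducible configuration (a kink or a clasp-free bigon) on the topmost component.

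First I would dispose of the hypotheses. If some component of $L$ carries no crossing it is a trivial circle and we are in the first alternative, so I may assume every component meets at least one crossing. I then fix the given order $L_1,\ldots,L_n$ and any base points $b=(b_1,\ldots,b_n)$ compatible with it, and pass to the associated descending diagram $L^d$ (same projection as $L$, all bad crossings switched to good). The essential feature I would use is that in $L^d$ the component $L_1$, being traversed first, is the overcrossing at every crossing in which it participates together with a later component, and is descending with respect to its own self-crossings; hence $L_1$ lies entirely \emph{on top} of $L_2\cup\cdots\cup L_n$, and every arc of a later component that enters a region bounded by $L_1$ passes \emph{under} $L_1$. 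Since every component has a crossing, $L_1$ itself has at least one crossing, either with itself or with a later component.

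Next I would produce the reduction from an innermost loop of $L_1$. If $L_1$ has a self-crossing, I take a self-loop $\gamma\subset L_1$ based at a crossing $p$ that is innermost, i.e. $\gamma$ is simple and bounds a disc $D_\gamma$ meeting $L_1$ only along $\gamma$; if $L_1$ has no self-crossing it is an embedded circle and I let $\gamma=L_1$ bound one of its two complementary discs $D_\gamma$. In either case the interior of $D_\gamma$ meets only arcs of later components, all lying under $\gamma$. If no such arc enters $D_\gamma$, the loop at $p$ (or the whole circle $L_1$) is an empty monogon and a single Reidemeister~I move removes a crossing; if arcs do enter, an innermost-disc argument inside $D_\gamma$ should produce an arc of some $L_j$ that cuts off an empty bigon with $\gamma$, and because $\gamma\subset L_1$ is the overstrand at both of its crossings this bigon is removable by a Reidemeister~II move. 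Either way $L^d$ is carried to a diagram with fewer crossings by moves that never increase the crossing number, which is exactly the assertion of the lemma.

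The step I expect to be the main obstacle is the existence and honest reducibility of the innermost configuration. The innermost-disc argument must be made precise: inside $D_\gamma$ the admissible arcs may cross one another, so I must show that some arc cuts off a \emph{genuinely empty} bigon adjacent to $\gamma$ (so that Reidemeister~II applies without first performing crossing-increasing moves), and I must confirm that the over/under data coming from descent really places $L_1$ over the strand at both crossings of that bigon. If the cleanest innermost bigon one finds happens to lie between two lower strands rather than between $\gamma$ and a lower strand, I would fall back on the global descending property to see that that bigon is itself Reidemeister~II--reducible, or reselect the base points so that the reducible loop sits on the top component. Verifying that the required base-point choice always exists, and that at worst only crossing-preserving Reidemeister~III moves are needed to expose the kink or bigon, is where the real care lies; the remaining bookkeeping is routine.
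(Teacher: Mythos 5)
Your construction breaks down at its very first step: the innermost loop you require need not exist. You ask for a simple self-loop $\gamma\subset L_1$ based at a crossing whose disc $D_\gamma$ meets $L_1$ only along $\gamma$ (arcs of later components being allowed inside). Take $L$ to be the standard $3$-crossing projection of the trefoil, so $n=1$ and $L_1=L$: at each of the three crossings both self-loops pass through the other two crossings; each such loop is simple, but the complementary loop crosses it transversally in two points, so the disc it bounds always contains a piece of $L_1$. Thus no admissible $\gamma$ exists, and since there are no later components your fallbacks (bigons between lower strands, reselecting base points) have nothing to act on --- yet the lemma holds for this diagram, because the descending trefoil projection has a bigon \emph{face} (a petal) whose boundary arc coming from the earlier pass lies over the other arc at both corners, and one Reidemeister II move reduces it. This identifies the correct innermost object, which is the one the paper uses: not a self-loop of the top component, but an innermost $f$-gon --- a region bounded by at most two arcs, with at most two corners, which is allowed to contain further strands (including strands of $L_1$) in its interior. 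Such a region exists whenever the diagram has a crossing and no trivial circle component, whereas your $\gamma$ does not.

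The second gap is the one you flagged yourself, and your fallbacks do not close it. Even once a bigon is located (your $\gamma$--$\beta$ lens, or the paper's innermost $2$-gon $X$), taking it ``innermost'' does not make it empty: an arc can enter through one side and leave through the other, cutting the bigon into two triangles and creating no smaller bigon against $\gamma$. Emptying it therefore requires a sequence of Reidemeister III moves pushing crossings out, and an R3 move is impossible on a triangle whose three strands are cyclically ordered (each passing over the next). The paper's key idea --- and the reason the statement says ``there \emph{is a choice} of base points'' rather than ``for every choice'' --- is to pick the base points \emph{after} locating the innermost $f$-gon $X$, placing all of them outside $X$; this is possible because no component can lie entirely inside an innermost $f$-gon. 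In the resulting descending diagram every arc crossing $X$ lies entirely above or entirely below every other such arc, so cyclic triangles cannot occur inside $X$, and a combinatorial lemma (the paper's Lemmas \ref{l:2.16} and \ref{l:2.17}, proved by induction on the number of arcs) produces an empty $3$-gon adjacent to $\partial X$ on which R3 applies, strictly decreasing the number of crossings inside $X$; induction on that number terminates in an empty $1$- or $2$-gon and an R1 or R2 move. Your proposal fixes the base points before inspecting the geometry and so never secures this total height ordering; the R3 moves you would need to ``expose the kink or bigon'' may simply be unavailable.
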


Proof of lemma \ref{l:2.15}.
 
A closed region cut out of the plain by arcs of $L$
is called an $i$-gon, if it has $i$ vertices 
(only crossings can be vertices). See Fig.2.9.

\begin{center}
\begin{tabular}{c} 
\includegraphics[trim=0mm 0mm 0mm 0mm, width=.5\linewidth]
{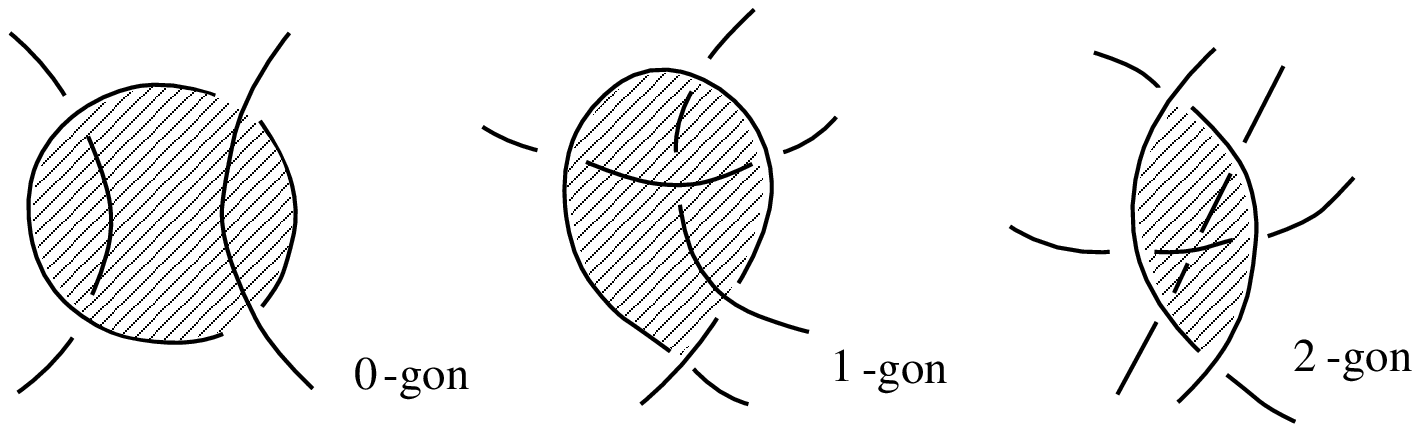}\\
\end{tabular}
\\
Fig. 2.9
\end{center}
 
Every $i$-gon with $i\leq 2$ is called an $f$-gon ($f$ stands for few).
Now let $X$ be an innermost $f$-gon, that is, an $f$-gon which
does not contain any other $f$-gon inside.

If $X$ is a $0$-gon then we are done because  $\partial X$
is a trivial circle. If $X$ is a $1$-gon then we are done too, because
$\mbox{int} X\cap L=\emptyset$ so on $L^d$ we can perform 
a first Reidemeister move
decreasing the number of crossings of $L^d$ (Fig.2.10).

\begin{center}
\begin{tabular}{c} 
\includegraphics[trim=0mm 0mm 0mm 0mm, width=.3\linewidth]
{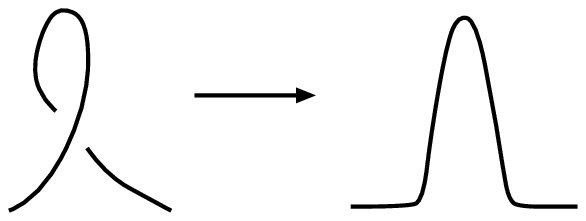}\\
\end{tabular}
\\
Fig. 2.10
\end{center}

Therefore we assume that $X$ is a $2$-gon. Each arc which cuts 
$\mbox{int} X$ goes from one edge to another.
Furthermore, since no component of $L$ lies fully in $X$ we can choose
base points $b=(b_1,\ldots,b_n)$ lying outside of $X$. This has important 
consequences. If $L^d$ is a descending diagram associated with $L$ 
and $b$ then each $3$-gon in $X$ 
allows for a Reidemeister move of the third type (i.e. the situation
of the Fig.2.11. is impossible).

\begin{center}
\begin{tabular}{c} 
\includegraphics[trim=0mm 0mm 0mm 0mm, width=.5\linewidth]
{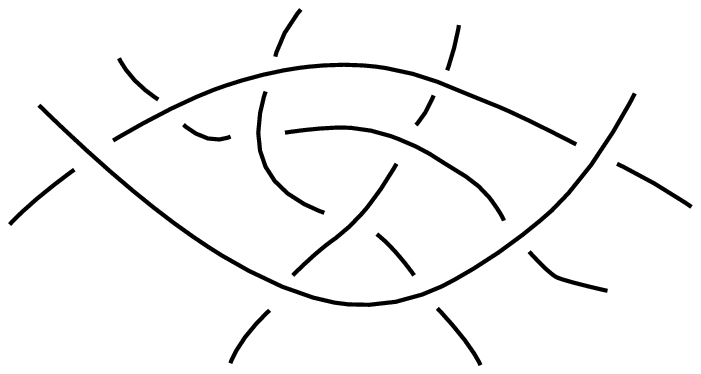}\\
\end{tabular}
\\
Fig. 2.11
\end{center}

Now we will prove Lemma \ref{l:2.15} by induction on the number of
crossings of $L$ contained in the $2$-gon $X$. We denote this number by $c$.

If $c=2$ then $\mbox{int} X\cap L=\emptyset$ and we are done 
thanks to the choice of base points. $2$-gon $X$ can be used to make 
the Reidemeister move of the second type on $L^d$ and to
reduce the number of crossings in $L^d$ in this way.

Assume that $L$ has $c>2$ crossings in $X$ and that Lemma 2.15 
has been proved for the number of crossings in $X$ smaller than $c$.

In order to make the inductive step we need the following lemma.

\begin{lemma}\label{l:2.16}
If $X$ is an innermost $2$-gon with $\mbox{int}X\cap L\neq\emptyset$,
then there is a $3$-gon $\bigtriangleup\subset X$ such that 
$\bigtriangleup\cap\partial
X\neq\emptyset$ and $\mbox{int}\bigtriangleup\cap L=\emptyset$.
\end{lemma}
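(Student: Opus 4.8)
The plan is to study the planar subdivision that the arcs of $L$ cut out inside the bigon $X$, and to exhibit an empty triangular face touching $\partial X$. Write $\partial X = e_1\cup e_2$ for the two edges of the $2$-gon, meeting at corners $v_L,v_R$. First I would record the local structure of $L\cap\mbox{int}X$: by the observation already made (each arc crossing $\mbox{int}X$ runs from one edge of $X$ to the other) together with innermostness of $X$ (which rules out a trivial circle, a $1$-gon, or a $2$-gon inside $X$), every component of $L\cap\mbox{int}X$ is a chord having one foot on $e_1$ and one foot on $e_2$; each foot is a genuine crossing of $L$, and since $\mbox{int}X\cap L\neq\emptyset$ at least one chord is present.

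Next I would split into two cases depending on whether these chords meet each other inside $X$. In Case B (no crossing point lies in $\mbox{int}X$) the chords are pairwise disjoint arcs joining $e_1$ to $e_2$, hence ``parallel'', and the region squeezed between the chord closest to $v_R$ and the corner $v_R$ is bounded by that chord, a sub-arc of $e_1$, and a sub-arc of $e_2$. This is a $3$-gon with vertices $v_R$ and the two feet of the chord; it contains no further chord (the chosen chord is the extreme one), so its interior misses $L$ and it meets $\partial X$ at $v_R$. In Case A (some crossing point lies in $\mbox{int}X$) I would fix a sweepout of $X$ by arcs parallel to $e_1$ (after isotoping $X$ to a round bigon so that ``distance to $e_1$'' is a genuine height function) and choose the crossing point $c$ nearest to $e_1$. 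The two chord-segments through $c$ descend to feet $p,p'\in e_1$ carrying no crossing below $c$; together with the sub-arc of $e_1$ from $p$ to $p'$ they bound a triangle $\bigtriangleup$ with vertices $p,c,p'$, whose base lies on $\partial X$.

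The main obstacle is to verify that $\bigtriangleup$ in Case A really satisfies $\mbox{int}\bigtriangleup\cap L=\emptyset$, and this is exactly where the extremal choice of $c$ is used. If some chord $\gamma$ entered $\mbox{int}\bigtriangleup$, then either its foot on $e_1$ lies in the open segment $(p,p')$, forcing $\gamma$ to leave $\bigtriangleup$ across one of the two descending segments through $c$ and hence creating a crossing strictly below $c$; or its foot lies outside $[p,p']$, forcing $\gamma$ to cross into $\bigtriangleup$ through one of those descending segments, again a crossing below $c$. Both contradict the minimality of $c$, so $\bigtriangleup$ is empty and meets $\partial X$, completing the construction. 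I expect the only delicate point to be the rigor of the height function on a curved bigon; this I would handle by the preliminary isotopy to a standard round $2$-gon (which changes neither the combinatorics of the chords nor the conclusion), or, to avoid heights altogether, by replacing ``lowest crossing'' with an innermost-triangle argument among all triangles bounded by two chord-segments and a sub-arc of $e_1$.
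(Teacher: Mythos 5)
Your Case B (no crossings in $\mbox{int}X$) is correct, and your overall idea --- an extremal choice producing an empty triangle based on $e_1$ --- is the right intuition. But the key step of Case A has a genuine gap: from ``$\gamma$ crosses one of the two segments joining $c$ to $e_1$'' you conclude ``hence a crossing strictly below $c$,'' and this implication is valid only if the chords are monotone with respect to the height function, which nothing in the hypotheses provides: a chord may climb far above $c$ before descending to its foot. Concretely, let $\gamma_1$ run from its foot $p_1$ up to height $5$, then down to a crossing $c$ at height $1$, then up to $e_2$; let $\gamma_2$ be the mirror-image chord through $c$ with foot $p_2$ to the right; and let a third chord $\gamma_3$, with foot $p_3$ to the left of $p_1$, cross the ascending arm of $\gamma_1$ at a point $x_1$ of height $2.5$, run inside $\bigtriangleup=(p_1,c,p_2)$ (dipping through the valley below $c$, where it crosses nothing), exit through the descending arm of $\gamma_2$ at a point $x_2$ of height $3$, and continue to $e_2$. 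One checks that every region of this arrangement inside $X$ is an $i$-gon with $i\geq 3$ (they are $3$-, $4$- and $5$-gons), so $X$ is a legitimate innermost $2$-gon, each chord runs from $e_1$ to $e_2$ without self-intersections, and $c$ is the lowest crossing; yet $\gamma_3$ meets $\mbox{int}\bigtriangleup$ while both of its crossings $x_1,x_2$ lie \emph{above} $c$, so your contradiction never materializes and your $\bigtriangleup$ is simply not empty. (The lemma still holds there: the $3$-gon $(p_3,p_1,x_1)$ is empty and touches $e_1$ --- it is just not what your recipe finds.) A lesser defect of the same kind: the two segments joining $c$ to $e_1$ can cross each other again, so $\bigtriangleup$ need not even be embedded.

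Neither of your proposed repairs closes this. Isotoping $X$ to a round bigon straightens $\partial X$, not the chords, so monotonicity is still unavailable; this is precisely the point of the paper's footnote after its proof, which says the statement would be intuitively obvious if the strands formed a braid (``any braid has the last symbol''), but that ``it requires a proof that we actually deal with braids.'' Your height-function argument silently assumes that braid structure, which is the actual content to be proven. The innermost-triangle fallback also fails as stated: in the configuration above, $\bigtriangleup=(p_1,c,p_2)$ contains no smaller triangle of your type (the empty one, $(p_3,p_1,x_1)$, lies outside it), so it is minimal under inclusion yet not empty; the trouble is that a chord can cross a triangle from one side to the other without putting a foot on its base, and then minimality produces no smaller triangle and no contradiction. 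The paper avoids all of this by proving the stronger Lemma III:2.17 about $3$-gons (with the $2$-gon as a degenerate case) by induction on the number of arcs: given the empty triangle $\bigtriangleup_0$ furnished by the first $k$ arcs, if the $(k+1)$-st arc misses $\bigtriangleup_0$ or cuts its base one is done, and if it cuts the two other sides one passes to the smaller $3$-gon it cuts off along $\overline{ab}$ and applies the inductive hypothesis there, since fewer arcs cross it. Some induction of this kind on the number of arcs, rather than a one-shot extremal choice, is what your argument is missing.
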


Before proving Lemma~\ref{l:2.16} we will show how Lemma~\ref{l:2.15}
follows from it.

Using an $3$-gon $\bigtriangleup$ from Lemma~\ref{l:2.16}
we can make a Reidemeister move of the third type and reduce
the number of crossings $L^d$ in $X$ (compare Fig.2.12).

\begin{center}
\begin{tabular}{c} 
\includegraphics[trim=0mm 0mm 0mm 0mm, width=.5\linewidth]
{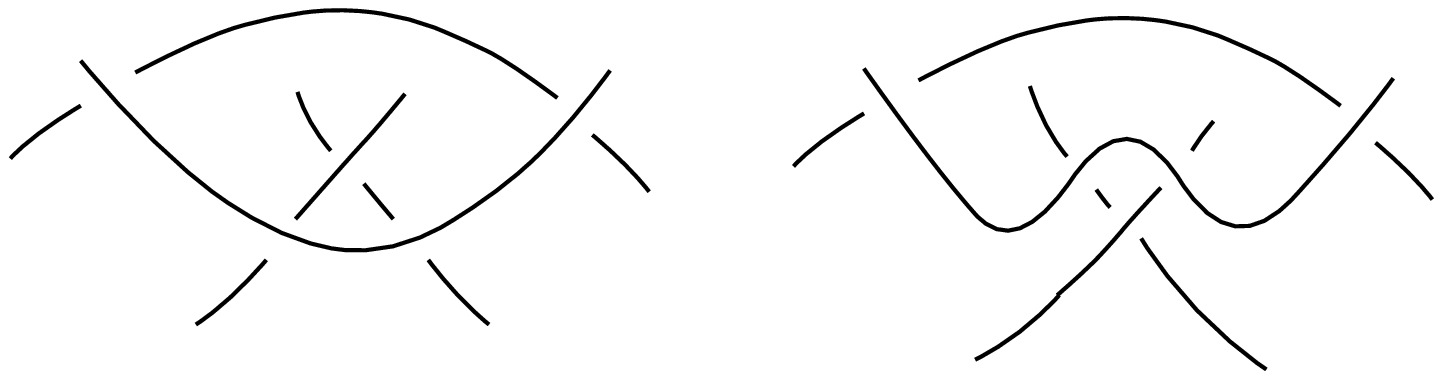}\\
\end{tabular}
\\
Fig.~2.12
\end{center}

Now either $X$ is an innermost $f$-gon with less than $c$ crossings
in $X$ or $X$ contains an innermost $f$-gon with less than $c$ 
crossings. In both cases we can use the inductive hypothesis to complete the
proof of the Lemma~\ref{l:2.15}.

Instead of proving Lemma~\ref{l:2.16} we will show a more general fact
of which Lemma~\ref{l:2.16} is a special case. 

\begin{lemma}\label{l:2.17}
Let us consider a $3$-gon $Y = (a,b,c)$ such that each arc which
cuts it goes from the $\overline{ab}$ edge to the $\overline{ac}$ edge
with no self-intersections. We allow that $Y$ is a $2$-gon considered
as a degenerated $3$-gon with an edge $\overline{bc}$ collapsed to a point
and moreover that there is no $f$-gon in $\mbox{int} X$. 
Furthermore let us assume that there is an arc which cuts $\mbox{int} X$. 
Then there is a $3$-gon $\bigtriangleup\subset Y$ such that 
$\bigtriangleup\cap\overline{ab}\neq\emptyset$ and $\mbox{int}\bigtriangleup$
is not cut by any arc.
\end{lemma}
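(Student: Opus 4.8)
The plan is to prove Lemma~\ref{l:2.17} by induction on the number of crossings contained in the interior of $Y$. The base case is when $\intr Y$ contains no crossings at all; then the single arc cutting $\intr Y$ together with a portion of $\overline{ab}$ and the vertex $a$ already bounds a $3$-gon $\bigtriangleup$ with $\bigtriangleup\cap\overline{ab}\neq\emptyset$ and $\intr\bigtriangleup$ not cut by any arc, so we are done immediately. For the inductive step I would assume the statement holds whenever the number of interior crossings is smaller, and then exhibit a strictly smaller sub-$3$-gon of the same shape to which the inductive hypothesis applies.

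First I would fix attention on the arcs cutting $\intr Y$. By hypothesis each such arc runs from $\overline{ab}$ to $\overline{ac}$ without self-intersection, so these arcs are ``parallel'' in the combinatorial sense that each separates $Y$ into two pieces, one containing the vertex $a$ and the other containing the edge $\overline{bc}$. Consider the arc $\alpha$ that is closest to the vertex $a$, i.e. the innermost such arc on the $a$-side. The region $Y_a$ cut off by $\alpha$ on the side of $a$ is itself a $3$-gon (or a $2$-gon) bounded by $\alpha$, a sub-segment of $\overline{ab}$, and a sub-segment of $\overline{ac}$, and it satisfies all the hypotheses of the lemma. The key point I must verify is that $Y_a$ contains strictly fewer crossings in its interior than $Y$, which holds because at least the endpoints of any other cutting arc lie outside $Y_a$; one must also rule out, using the assumption that $Y$ contains no $f$-gon and that the cutting arcs have no self-intersections, the degenerate possibility that $\alpha$ fails to cut off any crossings while $\intr Y_a$ is still cut by some other arc.

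The main obstacle, and the step requiring genuine care, is the combinatorial case analysis guaranteeing that the innermost arc $\alpha$ really produces a strictly smaller $3$-gon of the correct type, rather than a configuration containing a forbidden $f$-gon or an arc that crosses $\alpha$. Here I would use crucially the two standing hypotheses: that there is no $f$-gon in $\intr Y$ (which prevents two cutting arcs from bounding a bigon together with an edge of $Y$, and forbids $0$- and $1$-gons that would short-circuit the descent), and that every cutting arc goes cleanly from $\overline{ab}$ to $\overline{ac}$ without self-intersection (which forces the arcs to be nested and hence linearly ordered by proximity to $a$). With the arcs linearly ordered, the innermost region $Y_a$ inherits exactly the hypotheses of the lemma, and since $Y_a$ omits at least one cutting arc together with its endpoints, its interior meets $L$ in strictly fewer crossings.

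Applying the inductive hypothesis to $Y_a$ yields a $3$-gon $\bigtriangleup\subset Y_a\subset Y$ with $\bigtriangleup\cap\overline{ab}\neq\emptyset$ and $\intr\bigtriangleup$ not cut by any arc, which is precisely the conclusion sought for $Y$. Finally I would note that Lemma~\ref{l:2.16} is the special case in which $Y$ is the degenerate $3$-gon obtained from the innermost $2$-gon $X$ by collapsing $\overline{bc}$ to a point, so that the produced $\bigtriangleup$ automatically satisfies $\bigtriangleup\cap\p X\neq\emptyset$ and $\intr\bigtriangleup\cap L=\emptyset$; this closes the deduction of Lemma~\ref{l:2.16}, and hence of Lemma~\ref{l:2.15}, from Lemma~\ref{l:2.17}.
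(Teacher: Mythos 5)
There is a genuine gap, and it sits exactly at the heart of your argument: the claim that because each arc runs from $\overline{ab}$ to $\overline{ac}$ without self-intersection, the arcs ``are nested and hence linearly ordered by proximity to $a$.'' This is false. Two \emph{different} arcs may cross each other once inside $Y$ (the no-$f$-gon hypothesis only forbids them crossing twice, since a double crossing would bound a $2$-gon in $\mbox{int}\,Y$). Concretely, take $\alpha$ from a point $p_1\in\overline{ab}$ close to $a$ to a point $q_1\in\overline{ac}$ far from $a$, and $\beta$ from $p_2\in\overline{ab}$ far from $a$ to $q_2\in\overline{ac}$ close to $a$; they cross once at a point $x$, all hypotheses of the lemma hold, yet neither arc is ``innermost'': the $a$-side region of $\alpha$ contains part of $\beta$ and vice versa. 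Worse, the region $Y_a$ cut off by $\alpha$ does \emph{not} inherit the hypotheses of the lemma, because $\beta\cap Y_a$ is an arc running from a point of $\alpha$ to a point of $\overline{ac}$ -- it does not go from the ``$\overline{ab}$ edge'' of $Y_a$ to its ``$\overline{ac}$ edge.'' So the inductive step collapses. Note also an internal inconsistency: if the arcs really were pairwise disjoint (nested), then $\mbox{int}\,Y$ would contain no crossings at all, so your induction on interior crossings would never leave its base case; the fact that the induction has content is itself a sign that arcs must be allowed to cross. (The paper's footnote makes the point: the configuration is a braid from $\overline{ac}$ to $\overline{ab}$, not a family of parallel strands.)

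The paper's proof is built precisely to handle crossing arcs. It inducts on the \emph{number of arcs} cutting $\mbox{int}\,Y$ (base case: one arc, where your picture is correct). For the step, it removes one arc $\gamma$, applies the inductive hypothesis to the remaining $k$ arcs to get a $3$-gon $\bigtriangleup_0=(a_1,b_1,c_1)$ with $\overline{a_1b_1}\subset\overline{ab}$ and empty interior, and then reinstates $\gamma$. If $\gamma$ misses $\bigtriangleup_0$ or enters it through $\overline{a_1b_1}$, one is done; the real case is when $\gamma$ crosses $\bigtriangleup_0$ through the edges $\overline{a_1c_1}$ and $\overline{b_1c_1}$ (at points $u_1$, $w_1$), and then one passes to the smaller $3$-gon $u a_1 u_1$ (where $u=\gamma\cap\overline{ab}$), whose edge $\overline{a_1u_1}$ is crossed by no arc, and applies the inductive hypothesis to it. Your proposal has no mechanism corresponding to this step -- the case where a new arc slices through the triangle you have already found -- and that is the case that makes the lemma nontrivial.
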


Proof of Lemma~\ref{l:2.17}.

We proceed by induction on the number of arcs in $\mbox{int} Y\cap L$ 
(each such an arc cuts $\overline{ab}$ and $\overline{ac}$). For one arc
the Lemma is obvious (Fig.2.13).

\begin{center}
\begin{tabular}{c} 
\includegraphics[trim=0mm 0mm 0mm 0mm, width=.35\linewidth]
{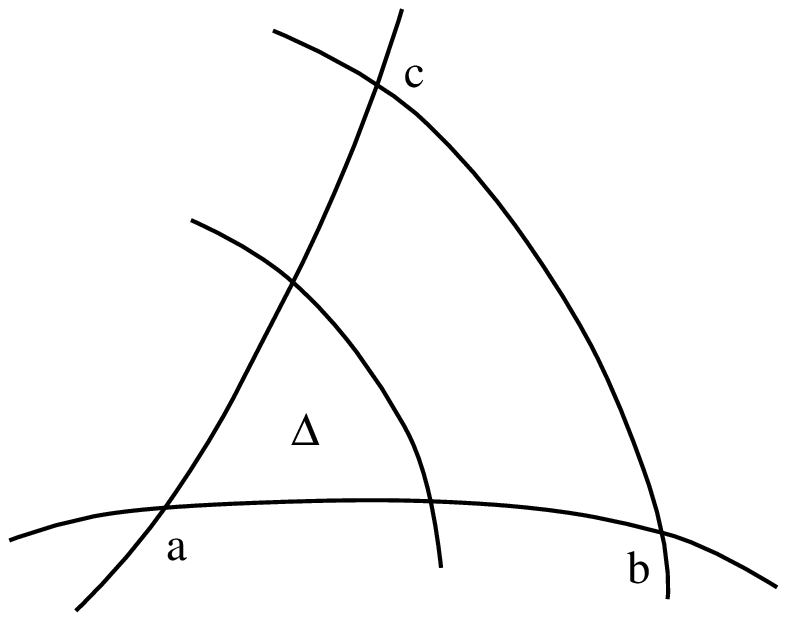}\\
\end{tabular}
\\
Fig.~2.13
\end{center}
        
Assume that the Lemma is true for $k$ arcs ($k\geq 1$) and let us consider
$(k+1)$-th arc $\gamma$. Let $\bigtriangleup_0=(a_1,b_1,c_1)$ be a $3$-gon
from the inductive hypothesis with a $\overline{a_1b_1}$ edge included
in $\overline{ab}$ (Fig.2.14).

\begin{center}
\begin{tabular}{c} 
\includegraphics[trim=0mm 0mm 0mm 0mm, width=.5\linewidth]
{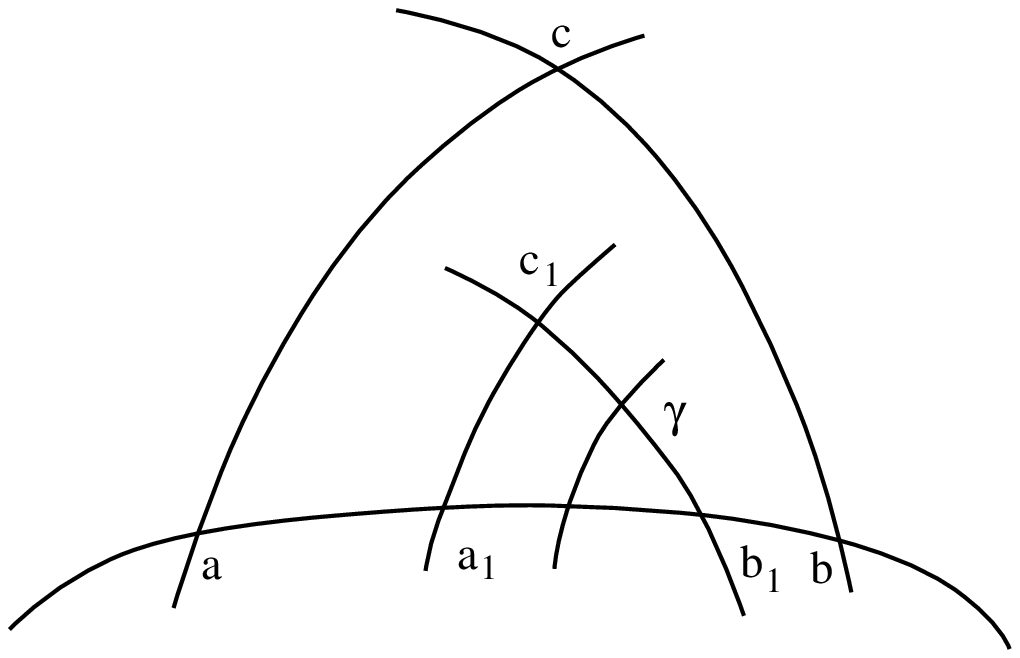}\\
\end{tabular}
\\
Fig.~2.14
\end{center}

If $\gamma$ does not cut $\bigtriangleup_o$ or cuts $\overline{a_1b_1}$,
then we are done (Fig.2.14). Therefore let us assume that $\gamma$ meets
$\overline{a_1c_1}$ (at $u_1$) and $\overline{b_1c_1}$ (at $w_1$). Let $\gamma$
cut $\overline{ab}$ at $u$ and $\overline{ac}$ at $w$ (Fig.2.15). 
Now we should consider two cases: 
\begin{enumerate}
\item $\overline{uu_1}\cap \mbox{int}\bigtriangleup_0=\emptyset$ (so
$\overline{ww_1}\cap \mbox{int}\bigtriangleup_0=\emptyset$);

\begin{center}
\begin{tabular}{c} 
\includegraphics[trim=0mm 0mm 0mm 0mm, width=.5\linewidth]
{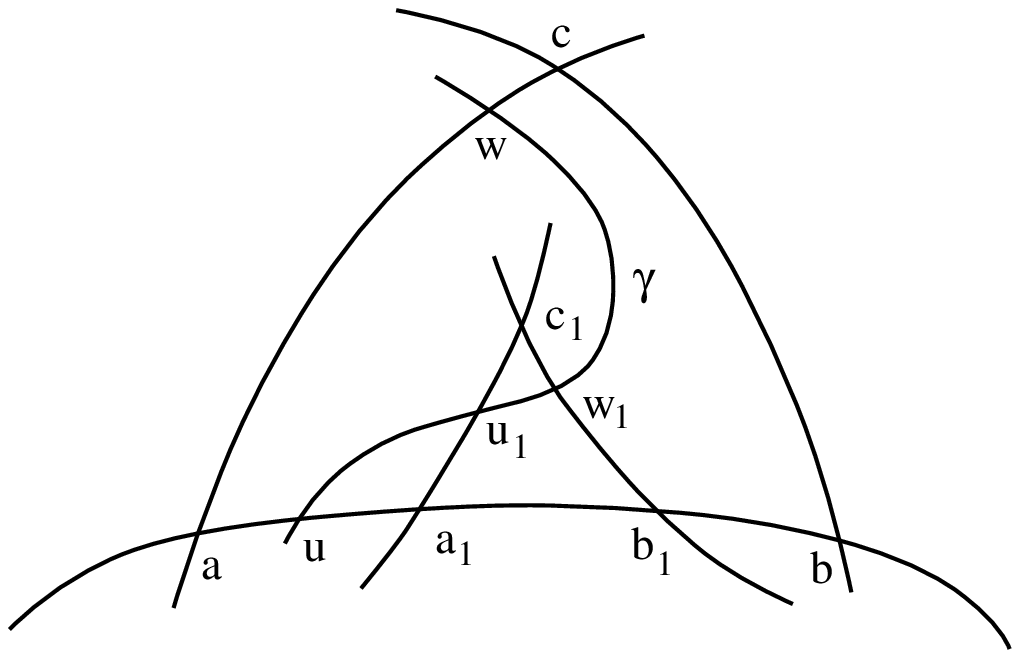}\\
\end{tabular}
\\
Fig. 2.15
\end{center}

Let us consider the $3$-gon $ua_1u_1$. No arc can cut the edge 
$\overline{a_1u_1}$ so each arc which cuts the $3$-gon
is cut by less than $k+1$ curves. Hence by the inductive hypothesis
there is a 3-gon $\bigtriangleup$ in $ua_1u_1$ with an edge on  
$\overline{ua_1}$ and the interior of  $\bigtriangleup$
is cut by no arc at all. In this way $\bigtriangleup$ is what we need
for Lemma~\ref{l:2.17}.

\item $\overline{uw_1}\cap \mbox{int}\bigtriangleup_0 = \emptyset$ 
( so $\overline{wu_1}\cap\mbox{int}\bigtriangleup_0=\emptyset$).\\
This case is analogous to Case 1.
\end{enumerate}

This completes the proof of Lemma~\ref{l:2.17} --- and hence the Lemma
~\ref{l:2.15} as well\footnote{A reader familiar with basic fact of the theory 
of braids may recognize that what we really analyze are braids from $\overline{ac}$ 
to $\overline{ab}$. In this language Lemma III:2.17 is intuitively obvious -- any braid has 
the last symbol. It however requires a proof that we actually deal with braids; here 
``descending" and lack of $f$-gons should be used as we did in the presented proof.}.

In this way we have completed the proof of IOC                                                   
and hence the Main Inductive Step and the proof of Theorem
~\ref{2:1.2}.




\section{Properties of Invariants of Conway Type}

We first observe that invariants coming from Conway algebras can be composed, as long as 
operations used in algebras form an entropic set (Proposition III:1.22). In particular,
having a Conway algebra $(A;|)$ we can form a new Conway algebra $(A;|^n)$. We look more carefully 
at the example giving a Homflypt polynomial (we consider more familiar skein relation, proposed 
by H. Morton $v^{-1}P_{L_+}-vP_{L_-}=zP_{L_0}$ with $P_{T_k}=(\frac{v^{-1}-v}{z})^{k-1}$.
Thus the Conway algebra yielding this invariant have $A=\Z[v^{\pm 1},z^{\pm 1}]$, $a_k=(\frac{v^{-1}-v}{z})^{k-1}$,
and $a|b= v^2a+vzb$. If we consider composite operation $|^n$ we get 
$$a|^nb= (a|^{n-1}b)|b= v^2(a|^{n-1}b)+vzb= v^{2n}a + (v+v^3+...+v^{2n-1})zb= $$ 
$$ v^{2n}a+\frac{v^{2n+1}-v}{v^2-1}zb=
v^{2n}a + v^n\frac{v^{n}-v^{-n}}{v-v^{-1}}zb.$$ 
The formula holds for any integer $n$ ($|^{-n}=*^n$).\\
The corresponding Conway skein relation has the form:
$$v^{-n}P_{L_+}-v^nP_{L_-}=\frac{v^{n}-v^{-n}}{v-v^{-1}}zP_{L_0}.$$
We can offer the following knot theoretical visualization of our composite relation: 
consider two positive antiparallel crossings as in Figure 3.0. 
The Conway skein relation yielded by $|$ and  
applied to $L_2$ gives $P_{L_2}= v^2P_{L_0}+vzP_{P_{\infty}}$. Now consider the picture with $2n$ 
antiparallel crossing as in Figure 3.0. Then in order to find $P_{L_{2n}}$ in terms of $P_{L_0}$ and $P_{L_{\infty}}$ 
we can use once $|^n$ in place of using  $n$ times $|$ to get 
$P_{L_{2n}}= v^{2n}P_{L_0}+v^n\frac{v^{n}-v^{-n}}{v-v^{-1}}zP_{L_{\infty}}$.
\\ \ \\
\begin{center}
\psfig{figure=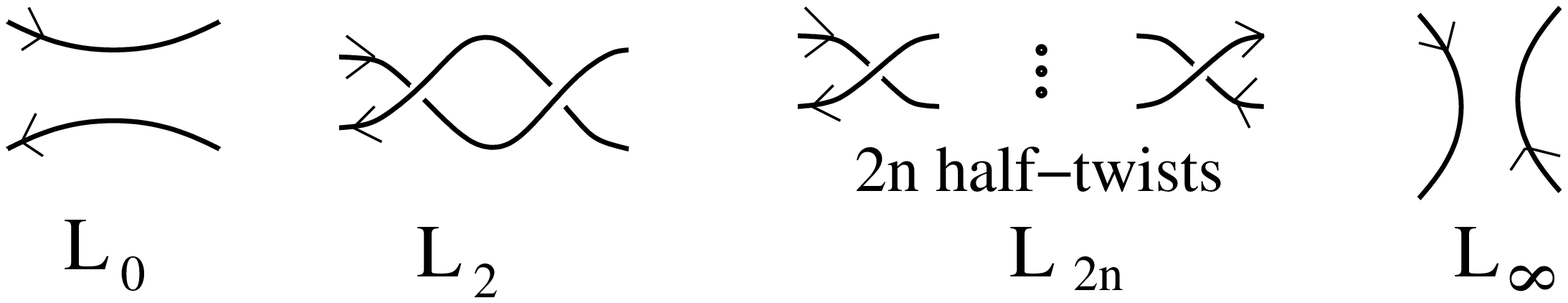,height=2.3cm}
\end{center}
\centerline{Fig. 3.0; Notation for $L_{0}$, $L_{2}$, $L_{2n}$, and $L_{\infty}$.}

Recall that an invariant of links is called an invariant of Conway time if the value of the invariant 
for $L_-$ and $L_0$ gives the value for $L_+$ and the value for $L_+$ and $L_0$ gives the value for $L_-$.  
We introduce now the relation on oriented links which
identifies links that can not be be distinguished by any invariant
of Conway type. This relation is called Conway skein equivalence
and it is denoted by $\sim_c$ \cite{Co-1}.

\begin{definition}
$\sim_c$ is the smallest equivalence relation on ambient 
isotopy classes of oriented 
links which satisfies the following condition:
Let $L_1'$ (respectively $L_2'$) be a diagram of a link $L_1$ (resp.
$L_2$) with a given crossing $p_1$ (resp. $p_2$) such that $\mbox{sgn} p_1 =
\mbox{sgn} p_2$ and  
$$(L_1')^{p_1}_{-\sgn p_1}\sim_c(L_2')^{p_2}_{-\sgn p_2}\ \ and
\ \ (L_1')^{p_1}_0\sim_c (L_2')^{p_2}_0$$
then $L_1\skein L_2$.
\end{definition} 

From the above definition it follows immediately:

\begin{lemma}\label{Lemma III.3.2}
Two oriented links are not (Conway) skein equivalent if there exists
a Conway type invariant which distinguishes them. In particular,
assigning the equivalence class of the Conway relation to a given link 
is an invariant of Conway type.

\end{lemma}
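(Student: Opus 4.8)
The plan is to derive both assertions from the single closure condition in the definition of $\sim_c$, invoking the minimality of $\sim_c$ for the first assertion and reading the closure condition directly for the second. For the first assertion, fix an arbitrary Conway type invariant $w$ and define a relation $R_w$ on ambient isotopy classes of oriented links by declaring $L_1\,R_w\,L_2$ precisely when $w_{L_1}=w_{L_2}$; since $w$ is an isotopy invariant this is well defined, and it is visibly reflexive, symmetric and transitive. The essential step is to check that $R_w$ satisfies the very closure condition that defines $\sim_c$. So I would take diagrams $L_1',L_2'$ with distinguished crossings $p_1,p_2$ of equal sign and suppose $(L_1')^{p_1}_{-\sgn p_1}\,R_w\,(L_2')^{p_2}_{-\sgn p_2}$ together with $(L_1')^{p_1}_0\,R_w\,(L_2')^{p_2}_0$. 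When $\sgn p_i=+$, the defining property of a Conway type invariant supplies a function $F$ with $w_{L_+}=F(w_{L_-},w_{L_0})$ on every skein triple; applying $F$ to each of the two triples and noting that the two pairs of arguments agree by hypothesis yields $w_{L_1}=w_{L_2}$, i.e. $L_1\,R_w\,L_2$. The case $\sgn p_i=-$ is identical, using instead the function expressing $w_{L_-}$ through $w_{L_+}$ and $w_{L_0}$.

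Since $R_w$ is then an equivalence relation obeying the closure condition, and $\sim_c$ is by definition the smallest such relation, minimality gives $\sim_c\,\subseteq\,R_w$. Consequently $L_1\sim_c L_2$ forces $w_{L_1}=w_{L_2}$, and contrapositively any Conway type invariant that distinguishes $L_1$ from $L_2$ certifies $L_1\not\sim_c L_2$, which is the first assertion. (The existence of a smallest such relation, realized as the intersection of all equivalence relations closed under the condition, is routine and already implicit in the definition of $\sim_c$.)

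For the second assertion I would set $c(L)=[L]$, the $\sim_c$-class of $L$, which is an ambient isotopy invariant because $\sim_c$ is defined on isotopy classes. That $c$ is of Conway type is now just the closure condition read verbatim: taking positive crossings $p_1,p_2$, the equalities $c((L_1')^{p_1}_-)=c((L_2')^{p_2}_-)$ and $c((L_1')^{p_1}_0)=c((L_2')^{p_2}_0)$ mean $(L_1')^{p_1}_-\sim_c(L_2')^{p_2}_-$ and $(L_1')^{p_1}_0\sim_c(L_2')^{p_2}_0$, whence $L_1\sim_c L_2$; this says exactly that $c(L_-)$ and $c(L_0)$ determine $c(L_+)$. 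Choosing negative crossings gives the symmetric statement that $c(L_+)$ and $c(L_0)$ determine $c(L_-)$, so $c$ is a Conway type invariant, and by the first assertion it is the universal one.

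The argument is short, so there is no single deep obstacle; the care needed is bookkeeping. One must use the phrase ``$L_-$ and $L_0$ give $L_+$'' in its precise form as a well-defined (possibly partial) function on pairs of values rather than a vague dependence, and one must correctly match the $-\sgn p$ resolution appearing in the definition of $\sim_c$ with the labels $L_+$ and $L_-$ in each of the two sign cases. I expect the verification of the closure condition for $R_w$ to be the crux, since that is precisely where the Conway type hypothesis on $w$ is consumed.
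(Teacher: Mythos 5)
Your proof is correct and is precisely the argument the paper leaves implicit: the paper offers no written proof, stating only that the lemma ``follows immediately'' from the definition of $\sim_c$. Your unfolding — that the equivalence relation $R_w$ induced by any Conway type invariant satisfies the defining closure condition, so minimality gives $\sim_c\subseteq R_w$, and that reading the closure condition as well-definedness of the skein operations on classes makes $L\mapsto [L]$ itself a Conway type invariant — is exactly the intended argument.
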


Conway relation can be also described as a ``limit" of the sequence
of relations. Namely:

\begin{enumerate}
\item[($\sim_0$)] $L_1\sim_0 L_2$ if $L_1$ is ambient isotopic to $L_2$.
$$\vdots$$
\item[$\sim_i$] is the smallest equivalence relation on ambient isotopy 
classes of  oriented links
which satisfies the condition:
 
let $L_1'$ (resp.~$L_2'$) be a diagram of a link $L_1$ 
(resp.~$L_2$) with a crossing $p_1$ (resp. $p_2$) such that
 $\sgn p_1=\sgn p_2$ and $(L_1')^{p_1}_{-\sgn
p_1}\sim_{i-1}(L_2')^{p_2}_{-\sgn p_2}$ and
$(L_1')^{p_1}_0\sim_{i-1}(L_2')^{p_2}_0$ then $L_1\sim_i L_2$.
\end{enumerate}

\begin{exercise}\label{Exercise III.3.3}
Show that the smallest relation which contains all the
$\sim_i$ relations is Conway equivalence relation. 

We can modify the $\sim_i$ relation without assuming each time that it is
an equivalence relation. Namely, by introducing relations 
$\approx_0,\approx_1,\ldots,\approx_i,...,\approx_{\infty}$ in the following way:

\begin{description}
\item[$\approx_0$] = $\sim_0$
$$\vdots$$
\item[($\approx_i$)] $L_1\approx_i L_2$
if there exist diagrams $L_1'$ for $L_1$
and $L_2'$ for $L_2$ with crossings $p_1$ and $p_2$ respectively such that
$\sgn p_1 = \sgn p_2$ and  $(L_1')^{p_1}_{-\sgn
p_1}\approx_{i-1}(L_2')^{p_2}_{-\sgn p_2}$ and
$(L_1')^{p_1}_0\approx_{i-1}(L_2')^{p_2}_0$
$$\vdots$$
\item[$\approx_{\infty}$] is defined to be the smallest equivalence relation 
on oriented links which contains all the $\approx_i$ relations.
\end{description}
\end{exercise}

\begin{problem}\label{Problem III.3.4}
\begin{enumerate}
\item[(1)] Are there links which are $\skein$ equivalent 
but not $\approx_{\infty}$ equivalent?
\item[(2)] Are there links which are $\sim_i$ equivalent but are not $\approx_i$
equivalent for any $i, i>0$?
\end{enumerate}
\end{problem}

\begin{exercise}\label{c:3.5}
One could try to obtain new invariants of links by resolving crossings
in pairs, for example, instead of doing it separately. Namely, one could 
deal with 5 diagrams connected with two crossings on a diagram of a link:

$$L^{p\ q}_{\varepsilon_1\ \varepsilon_2},L^{p\ q}_{-\varepsilon_1\
-\varepsilon_2},L^{p\ q}_{0\ -\varepsilon_2},L^{p\ q}_{-\varepsilon_1\
0},L^{p\ q}_{0\ 0}, (\varepsilon_i = +\mbox{ or }-)$$
and assume further that the value of an invariant for $L^{p\
q}_{\varepsilon_1\ \varepsilon_2}$ may be always found on the basis
of the value of an invariant for the remaining four diagrams.
Show that using this method we will not improve the invariants of the
Conway type.\footnote{When over 26 years ago I formulated 
Exercise~\ref{c:3.5} I was not aware of the fact that 
slight modification of the idea included there would have led to the
invariants of links called Vassiliev's invariants 
(or Vassiliev-Gusarov's invariants or the invariants 
of the finite type) \cite{Va-1,Bi-Li,Ba,Piu,P-9}.}

{\em Hint}. We can always add one crossing to the diagram without
changing the class of ambient isotopy of a link.
\end{exercise}

Now let us come back to invariants of the Conway type and to the $\skein$
equivalence. We start from examples of links which  are not ambient isotopic
but which are $\skein$ equivalent.

\begin{lemma}\label{l:3.6}
If $-L$ denotes the link obtained from the $L$ link by changing orientation
of each component of L then $$L\skein -L.$$ In particular 
$P_L(x,y) = P_{-L}(x,y)$ where
$P_L(x,y)$ is a Jones-Conway polynomial.
\end{lemma}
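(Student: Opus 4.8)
The plan is to prove $L\skein -L$ by the same double induction used in Section~\ref{Section III:2}, after recording two elementary facts that make the skein triple compatible with reversing all orientations. Write $\bar D$ for the diagram $D$ with the orientation of every component reversed, and let $-L$ be the link it represents. First I would check: (i) reversing both strands through a crossing leaves its sign unchanged, so $\sgn p$ is the same computed in $D$ and in $\bar D$; and (ii) the oriented smoothing commutes with total reversal, so that $\overline{D^p_0}=(\bar D)^p_0$ and trivially $\overline{D^p_\pm}=(\bar D)^p_\pm$. Both are immediate from the local pictures of Fig.~1.1: the crossing sign is the orientation of the over/under frame, which is multiplied by $(-1)(-1)=1$ under simultaneous reversal, and the orientation-compatible resolution is the same arc-joining before and after reversal.

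For the induction I fix base points $b$ on a diagram $D$ of $L$ and induct on the lexicographically ordered pair $(\mbox{cr}(D),b(D))$, where $b(D)$ is the number of bad crossings, exactly as in Section~\ref{Section III:2}. In the base case $b(D)=0$ the diagram $D$ is descending, so $L$ is a trivial link $T_n$ (a descending diagram represents an unlink, as used throughout the proof of the main theorem); since $-T_n$ is ambient isotopic to $T_n$ (flip each unknotted circle over), we have $L\sim_0 -L$ and hence $L\skein -L$.

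For the inductive step let $p$ be the first bad crossing of $(D,b)$ and, passing to the mirror case if necessary, assume $\sgn p=+$, so $D=D^p_+$. Changing $p$ yields $D^p_-$, with the same crossing number but one fewer bad crossing, and smoothing $p$ yields $D^p_0$, with strictly fewer crossings; in both branches the complexity strictly decreases, so the inductive hypothesis gives $L(D^p_-)\skein -L(D^p_-)$ and $L(D^p_0)\skein -L(D^p_0)$. Now I apply the definition of $\skein$ with $L_1=L$, diagram $D=D^p_+$, crossing $p_1=p$, and $L_2=-L$, diagram $\bar D=\bar D^p_+$, crossing $p_2=p$. By fact~(i) we have $\sgn p_1=\sgn p_2=+$, so the definition demands precisely $L(D^p_-)\skein L(\bar D^p_-)$ and $L(D^p_0)\skein L(\bar D^p_0)$. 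By fact~(ii) we have $\bar D^p_-=\overline{D^p_-}$ and $\bar D^p_0=\overline{D^p_0}$, so these read $L(D^p_-)\skein -L(D^p_-)$ and $L(D^p_0)\skein -L(D^p_0)$, the two relations just supplied by induction. Hence $L_1\skein L_2$, i.e.\ $L\skein -L$.

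The ``in particular'' then follows at once: the Jones--Conway polynomial is the invariant attached to the Conway algebra of Example~\ref{p:1.8}, hence a Conway type invariant, so by Lemma~\ref{Lemma III.3.2} it cannot distinguish $\skein$-equivalent links, giving $P_L(x,y)=P_{-L}(x,y)$. The one point that needs care --- and the reason I borrow the lexicographic measure rather than inducting on crossing number alone --- is well-foundedness: the crossing-change branch does not lower the crossing number, so one genuinely needs the secondary count of bad crossings, together with the choice of $p$ as the \emph{first} bad crossing, to force that branch to descend. Facts (i) and (ii), though routine, are exactly what align the two skein triples so that the single crossing $p$ can serve simultaneously for $L$ and for $-L$ with the matching sign $\sgn p_1=\sgn p_2$ required by the definition.
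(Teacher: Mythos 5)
Your proof is correct and follows essentially the same route as the paper's: the key observation that crossing signs are unchanged under reversing all orientations, combined with the same double induction on $(\mbox{cr}(D), b(D))$ used in the proof of Theorem~\ref{2:1.2}, so that one resolving tree serves simultaneously for $L$ and $-L$. The paper's (very terse) proof adds only the quantitative refinement $L\approx_{\crs{L}-1}-L$, which your argument could also extract but which the statement does not require.
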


Proof is immediate if one notices that the sign of a crossing is not changed
when we change $L$ to $-L$. So by building the resolving tree (the same for 
$L$ and $-L$) we show even more than Lemma \ref{l:3.6} says: 
$L\approx_{\crs{L}-1}-L$ where
$\crs{L}$ is a minimal number of crossings of diagrams representing $L$.
We leave to a reader further improvement to Lemma \ref{l:3.6} using $cr(L)$ and the number of 
``bad" crossings in $L$ (as in the proof of the main theorem).

\begin{example}\label{p:3.7}
The $L_1$ and $L_2$ links from Fig.3.1 are $\skein$-equivalent
 (one should resolve them at the crossings shown on Fig.3.1 and get
$L_1\approx_1 L_2$). To show that $L_1$ and $L_2$ are not isotopic
one should consider global linking numbers of all its sublinks.
Compare Exercise \ref{Exercise III:3.46}.

\begin{center}
\begin{tabular}{c} 
\includegraphics[trim=0mm 0mm 0mm 0mm, width=.7\linewidth]
{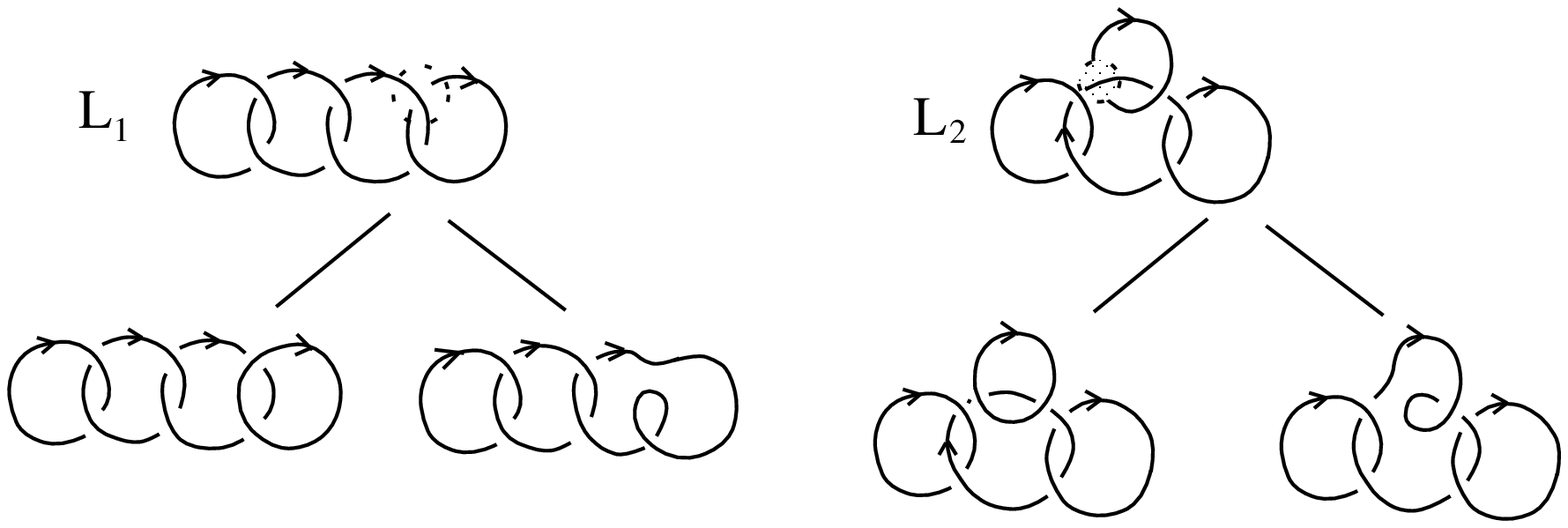}\\
\end{tabular}
\\
Fig. 3.1
\end{center}

\end{example}

For further examples we need the definition of a tangle and mutation \cite{Co-1}:

\begin{definition}\label{2:3.8}

\begin{enumerate}
\item[(1)] A tangle\footnote{We will consider later tangles with $n$ inputs 
and $n$ outputs, called $n$-tangles. Then our tangle will be called 
a 2-tangle.} is a part of a diagram of a link with two inputs and two
outputs (Fig.3.2(a)). It depends on an orientation of the diagram which
arcs are inputs and which ones are outputs. We distinguish tangles with
neighboring inputs and alternated tangles: Fig. 3.2 (b) i (c).

\begin{center}
\begin{tabular}{c} 
\includegraphics[trim=0mm 0mm 0mm 0mm, width=.7\linewidth]
{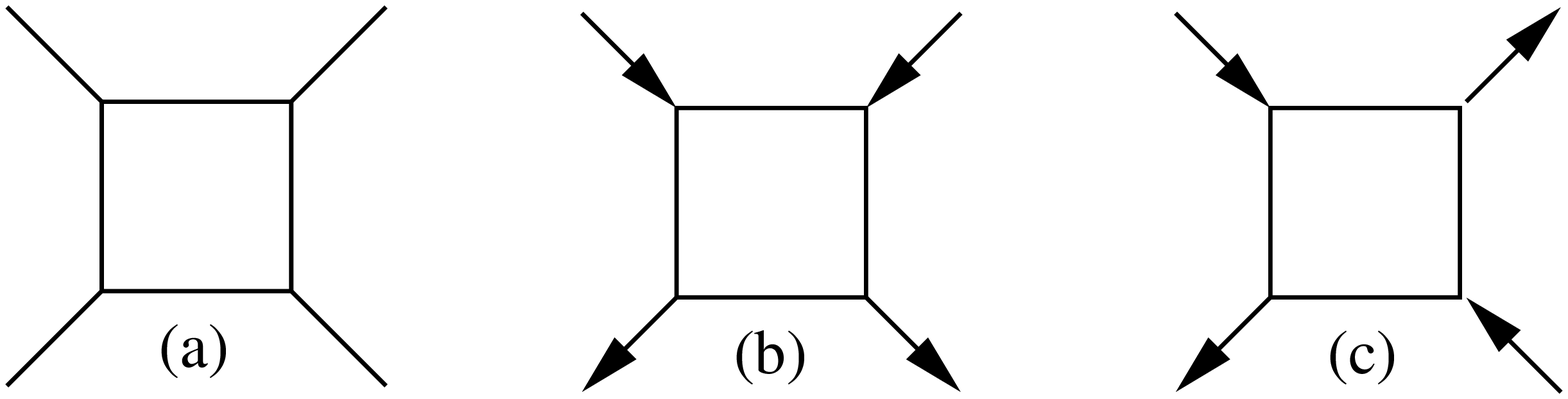}\\
\end{tabular}
\\
Fig. 3.2
\end{center}

\item[(2)] 
The change of an oriented diagram which concerns just one of its tangles
is called a mutation of an oriented diagram which can be of one of 
the following type (see Fig. 3.2(d)):
\begin{enumerate}
\item  $m_z$ is the $180^o$ rotation around the central axis, perpendicular
to the plane of the diagram (axis $z$),   
\item  $m_x$ is the $180^o$ rotation around the horizontal 
        symmetry axis of a square of
a tangle (axis $x$), or
\item  $m_y$ is the$180^o$ rotation around the vertical 
        symmetry axis of a square of a tangle (axis $y$). 
\end{enumerate}
Moreover, in all the cases, if the need be, we change the orientation
of all the components of a tangle into the opposite ones (so that at the
 change of a tangle the inputs and outputs are preserved). 
For example a $m_z$ rotation of a tangle (b) requires the 
change of the orientation of the components
of the tangle (compare Fig.3.2(b)). 
\end{enumerate}
\end{definition}

\begin{center}
\begin{tabular}{c}
\includegraphics[trim=0mm 0mm 0mm 0mm, width=.6\linewidth]
{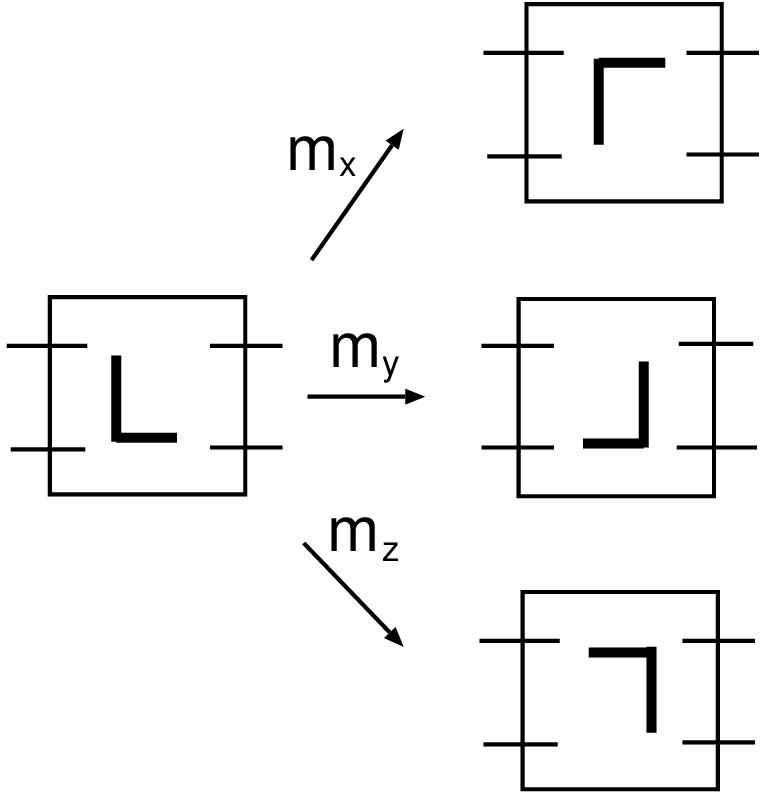}\\
\end{tabular}
\\
Fig. 3.2(d); three mutations, $m_x$, $m_y$, $m_z= m_xm_y$
\end{center}

\begin{lemma}[\cite{L-M-1,Hos-1,Gi}]\label{2:3.9}
Two links, $L_1$ and $L_2$, some diagrams of which differ only by
a mutation are $\skein$ equivalent. More precisely: if cr is the number
of crossings in a square of a mutated tangle, then 
$L_1\approx_{\mbox{cr}-1} L_2$. 
\end{lemma}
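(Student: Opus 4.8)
The plan is to deduce the statement from the inductive presentation of $\skein$ through the relations $\approx_i$ (Exercise~\ref{Exercise III.3.3}), by building one resolving tree that serves simultaneously for $L_1$ and for its mutant $L_2$. Since $m_z=m_xm_y$, it suffices to treat a single generating mutation $m$, which is a rigid $180^\circ$ rotation of the ball carrying the tangle (together, where required, with a reversal of the orientations of all strands of the tangle). First I would record the two facts that make the rotation harmless at the level of the skein calculus. (A) The mutation carries each crossing $q$ of the tangle to a crossing $m(q)$ of $L_2$ of the \emph{same sign}: for $m_z$ the planar picture is rotated by $180^\circ$ (an orientation-preserving planar move) with no exchange of over/under, while for $m_x,m_y$ the flip exchanges over- and under-strands and simultaneously reverses the planar orientation, the two sign reversals cancelling; the prescribed reversal of strand orientations, when it is needed to match inputs to outputs, reverses both strands at every crossing and so does not affect signs. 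Being a homeomorphism of the tangle ball, $m$ moreover commutes with the operations of changing and of smoothing a crossing, so $m\bigl((L_1)^q_{-\sgn q}\bigr)=(L_2)^{m(q)}_{-\sgn m(q)}$ and $m\bigl((L_1)^q_0\bigr)=(L_2)^{m(q)}_0$. (B) A crossingless $2$-tangle is either the $0$- or the $\infty$-tangle (up to trivial split circles), and each of $m_x,m_y,m_z$ permutes the four endpoints while preserving this connection type; hence if the tangle is crossingless then $m$ sends it to a tangle isotopic to it rel boundary, whence $L_1$ and $L_2$ are ambient isotopic, i.e. $L_1\approx_0 L_2$.

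Granting (A) and (B), I would argue as follows. By the descending-diagram technique of Section~\ref{Section III:2}, applied inside the tangle disk, the tangle $T$ of $L_1$ admits a resolving tree $\mathcal T$ all of whose leaves are crossingless (descending) tangles and which only resolves crossings situated in $T$. Applying the homeomorphism $m$ to every diagram in $\mathcal T$ produces a resolving tree $m(\mathcal T)$ for the mutant tangle of $L_2$, with the same shape, the same signs at every branching (by (A)), and crossingless leaves (the $m$-images of the leaves of $\mathcal T$). I then induct up the tree. At a leaf the two diagrams are ambient isotopic by (B), hence $\approx_0$-equivalent. At a branching vertex one resolves a crossing $q$ in $L_1$ and its image $m(q)$ in $L_2$; these have equal sign by (A), and by the inductive hypothesis the two pairs of descendants (the crossing-changed diagrams and the smoothed diagrams) are $\approx_{i-1}$-equivalent, so the defining clause of $\approx_i$ shows that the two diagrams at the vertex are $\approx_i$-equivalent. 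Propagating this to the root yields $L_1\approx_d L_2$, where $d$ is the depth of $\mathcal T$; the same estimate on the depth of a resolving tree used for Lemma~\ref{l:3.6} bounds $d$ by $cr-1$, where $cr$ is the number of crossings of the mutated tangle, which is exactly the refined conclusion $L_1\approx_{cr-1}L_2$ (and in particular $L_1\skein L_2$).

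The routine parts are the existence of a resolving tree of $T$ with crossingless leaves and the bookkeeping of the depth, both of which follow the pattern already established in Section~\ref{Section III:2} and in the proof of Lemma~\ref{l:3.6}. I expect the genuine content, and the main thing to get right, to be fact (A): that every one of the three rotations, once the compulsory reorientations are taken into account, preserves the sign of each crossing of the tangle. If signs were not matched across the two trees, the clause defining $\approx_i$ (which insists on $\sgn p_1=\sgn p_2$) could not be invoked and the induction would break down; the orientation reversals built into the definition of mutation are there precisely to guarantee this matching, and fact (B) — that crossingless tangles are sent to tangles of the same connection type — is what ensures the base case produces genuinely isotopic links rather than links differing by a $0\leftrightarrow\infty$ interchange.
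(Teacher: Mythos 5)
Your fact (A) (mutation preserves crossing signs and commutes with changing and smoothing a crossing) and your tree-comparison induction via the defining clause of $\approx_i$ are sound, and in spirit this is the paper's argument. The genuine gap is in fact (B) and in the tree you build on it: you assume the tangle admits a resolving tree \emph{all of whose leaves are crossingless}, implicitly identifying ``descending'' with ``crossingless''. This fails for a $2$-tangle whose strands join NW to SE and NE to SW: two arcs in a disk with interleaved boundary endpoints must intersect, so \emph{every} diagram of such a tangle has at least one crossing. Worse, a crossing change never alters which boundary points are joined by which strand, so along the all-change path of any resolving tree the diagonal connectivity persists forever; hence for such a tangle no resolving tree with only crossingless leaves exists --- the resolution you describe does not terminate on that branch. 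A descending $2$-tangle is isotopic rel boundary to one of \emph{four} tangles: the $0$-tangle, the $\infty$-tangle, and the two single-crossing tangles. This is precisely how the paper sets its base case (``if $\mathrm{cr}\le 1$ then the tangle is one of the four of Fig.~3.3''), and one must also check that mutation of the $\pm 1$-crossing tangles does not change the isotopy class of the link --- true, since the one-crossing tangle is invariant up to isotopy rel boundary under all three $180^{\circ}$ rotations, orientation reversals included, but this check is part of the content and you omit it. With the base case enlarged to the four $\mathrm{cr}\le 1$ tangles, your induction goes through and becomes essentially the paper's proof.

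A secondary weakness: the refined bound $L_1\approx_{\mathrm{cr}-1}L_2$ is not delivered by your construction even after the fix. You defer to ``the same estimate on the depth of a resolving tree used for Lemma~\ref{l:3.6}'', but the descending-diagram algorithm does not produce trees of depth $\mathrm{cr}-1$ in general: after a smoothing, the child diagram (with $\mathrm{cr}-1$ crossings) may have up to $\mathrm{cr}-1$ bad crossings, so root-to-leaf paths alternating crossing changes and smoothings can be longer than $\mathrm{cr}-1$. The paper organizes the estimate instead as a double induction on the pair (number of crossings in the tangle, number of bad crossings in the tangle), with the $\mathrm{cr}\le 1$ base case; if you want the quantitative statement and not merely $L_1\skein L_2$, that is the bookkeeping you need to carry out explicitly rather than cite.
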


Proof.

If $\mbox{cr}\leq 1$ then the tangle is (ambient isotopic to) 
one of the four of Fig.3.3 
(in the figure of the tangle we omit possible trivial circles).

\begin{center}
\begin{tabular}{c} 
\includegraphics[trim=0mm 0mm 0mm 0mm, width=.8\linewidth]
{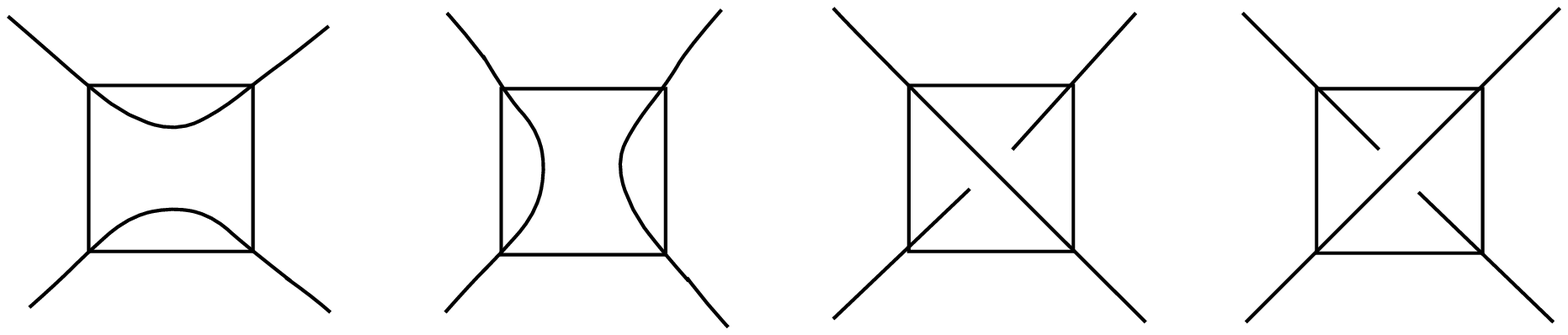}\\
\end{tabular}
\\
Fig.~3.3
\end{center}

Mutation of a link with respect to the tangles of Fig.3.3 does not change
the isotopy class of a link. Further on, to complete the proof, one should use
the standard induction with respect to cr 
and also on the number of bad crossings in the tangle,
just like in the proof of the main theorem ~\ref{2:1.2}).

The first nontrivial example of a mutation (i.e.~the mutation which changes
the isotopy class of a knot) was found for the diagrams of 11 crossings.

\begin{example}\label{2:3.10} The Conway knot (Fig.3.4) and the 
Kinoshita-Tere\-sa\-ka knot (Fig.3.5.) are mutants one of the other
(the mutated tangle is marked at Fig.3.4. and 3.5). So these knots are
\skein-equivalent, even more: they are ${\approx_1}$ 
equivalent. Let us begin resolving the knots from the marked crossings.
D. Gabai \cite{Ga-1} has shown that the above knots have different genera 
(see chapter 3) so they are not isotopic (R. Riley was the first to 
distinguish these knots \cite{Ri}).

\begin{center}
\begin{tabular}{c} 
\includegraphics[trim=0mm 0mm 0mm 0mm, width=.25\linewidth]
{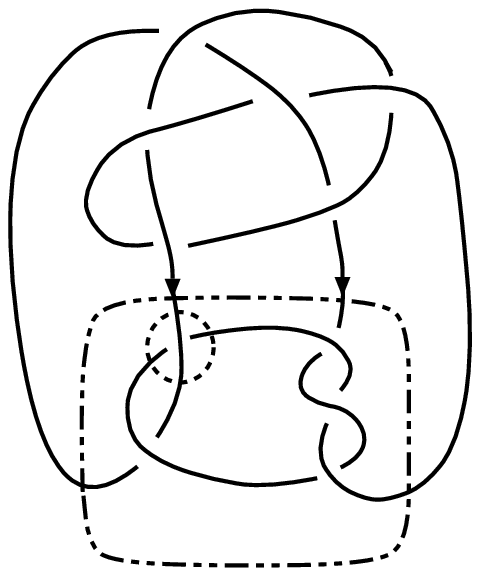}\\
\end{tabular}
\\
Fig. 3.4. The Conway knot.
\end{center}

\begin{center}
{\psfig{figure=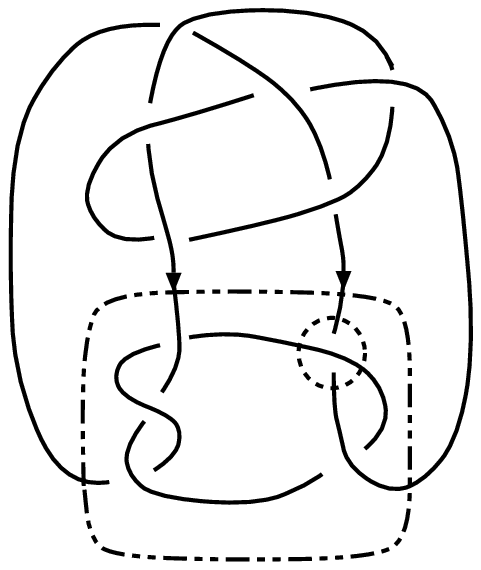,height=4.3cm}}
\end{center}
\begin{center}
Fig. 3.5. The Kinoshita-Terasaka knot.
\end{center}

\end{example}\label{2:3.11}

\begin{example}
Let $(p_1,p_2,\ldots,p_n)$ be a sequence of $n$ integers.
To this sequence we associate an unoriented
link $L(p_1,p_2,\ldots,p_n)$ which we call pretzel link
$L(p_1,p_2,\ldots,p_n)$ and which we present on Fig.~3.6.

\begin{center}
\begin{tabular}{c} 
\includegraphics[trim=0mm 0mm 0mm 0mm, width=.7\linewidth]
{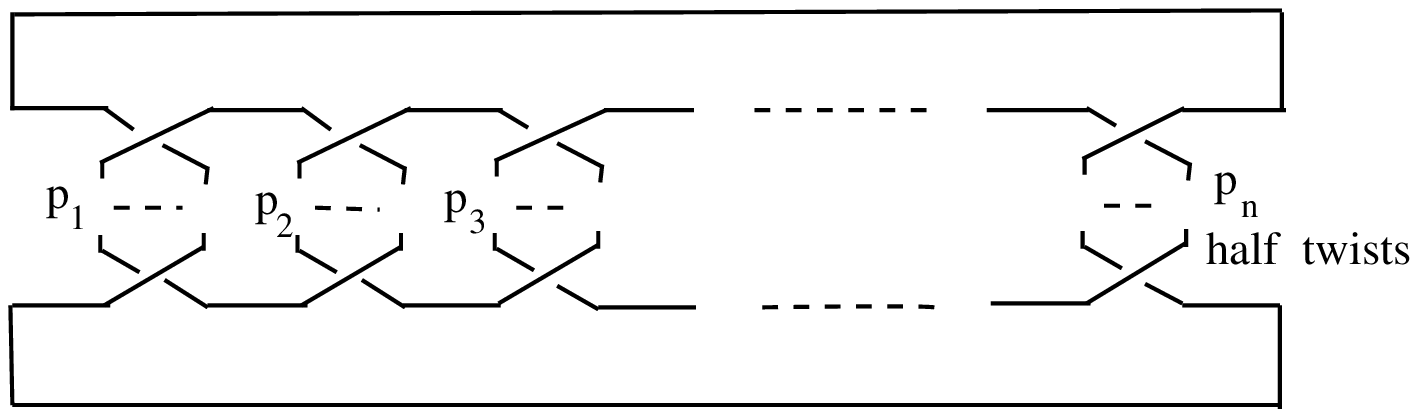}\\
\end{tabular}
\\
Fig.~3.6
\end{center}

Further, let $L(p^{\varepsilon(1)}_1,p^{\varepsilon(2)}_2,\ldots,p^{\varepsilon(n)}_n)$
denote the pretzel link $L(\row{p}{n})$ oriented in such a way that 
$\varepsilon(i)=1$ if all crossings in the $i$-th column 
are positive, and 
$\varepsilon(i)=-1$ if they are negative. To complete the definition
of the orientation of the link, we assume that the upper arc of the link
is oriented ``from the right- to the left-hand-side''
(c.f.~Fig.~3.7). Clearly, not all possible sequences
$p^{\varepsilon(1)}_1,p^{\varepsilon(2)}_2,\ldots,p^{\varepsilon(n)}_n$
can be realized by oriented pretzel links. For example, if 
all $p_i$ and $n$ are odd then $L(p_1,p_2,\ldots,p_n)$ is a knot and 
$\varepsilon(i) = -sgn(p_i)$.\ Because of Lemma~\ref{2:3.9}
it follows that for any permutation $\delta_n \in S_n$ we have
$$L(p^{\varepsilon(1)}_1,p^{\varepsilon(2)}_2,
\ldots,p^{\varepsilon(n)}_n)\skein
L(p^{\varepsilon(\delta(1))}_{\delta(1)},
p^{\varepsilon(\delta((2))}_{\delta(2)},\ldots,
p^{\varepsilon(\delta((n))}_{\delta(n)}).$$
namely, any permutation may be presented by a sequence of transpositions
of neighboring numbers and any such a transposition can be realized
as a mutation of the pretzel link.

In particular we can go from the pretzel link of two components
$L(3,5,-5^{-1},-3^{-1},-3^{-1})$ to its mirror image 
$L(-3^{-1},-5^{-1},-3^{-1},5,3,3)$ using finite number of mutations.
However these links are not ambient isotopic.

\begin{center}
\begin{tabular}{c} 
\includegraphics[trim=0mm 0mm 0mm 0mm, width=.7\linewidth]
{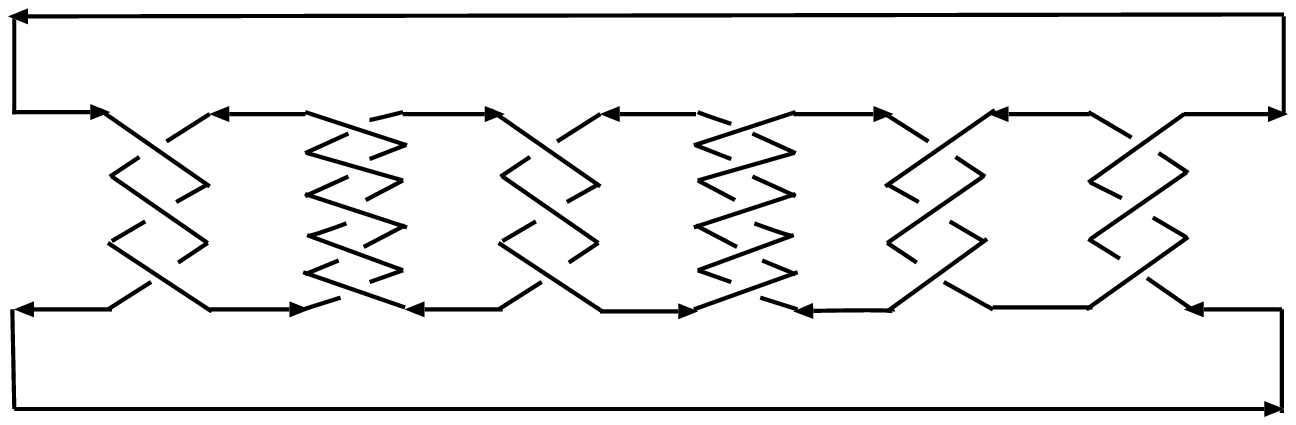}\\
\end{tabular}
\\
Fig. 3.7. $L(-3^{-1},-5^{-1},-3^{-1},5,3,3)$
\end{center}

\end{example}

\begin{example}\label{2:3.12}
Consider a diagram of a link $D$ with two alternating tangles 
inserted as in Fig. 3.9.
We use the following convention: $[2n]$ or $2n$ in a square 
denote tangle as in Fig.3.8.
\begin{center}
\begin{tabular}{c} 
\includegraphics[trim=0mm 0mm 0mm 0mm, width=.4\linewidth]
{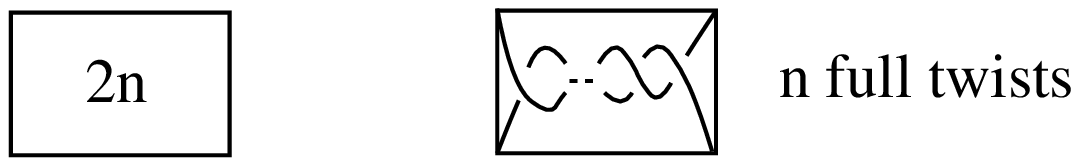}\\
\end{tabular}
\\
Fig. 3.8
\end{center}

Moreover, we  write 
 \parbox{0.9cm}{\psfig{figure=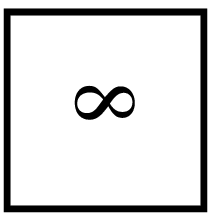,height=0.7cm}} 
for \parbox{1.1cm}{\psfig{figure=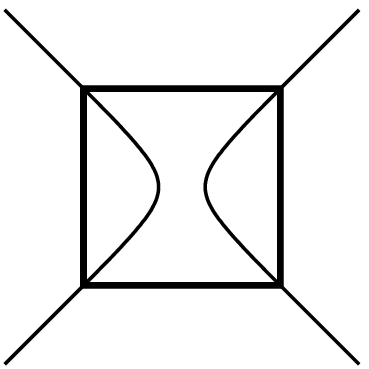,height=1.1cm}}.
Let $D_{p,q}$ denote the diagram obtained from $D$ by putting $2p$ 
into the first tangle and $2q$ into the second. Let us assume moreover that
the diagram $D_{\infty,q}$ is equivalent to $D_{p,\infty}$ for every 
$p$ i $q$. Then for $p+q=p'+q'$ we have $D_{p,q}\skein D_{p',q'}$. 
An example which satisfies the above condition was found by 
T.~Kanenobu \cite{Ka-1} (Fig.3.9).

\begin{center}
\begin{tabular}{c} 
\includegraphics[trim=0mm 0mm 0mm 0mm, width=.35\linewidth]
{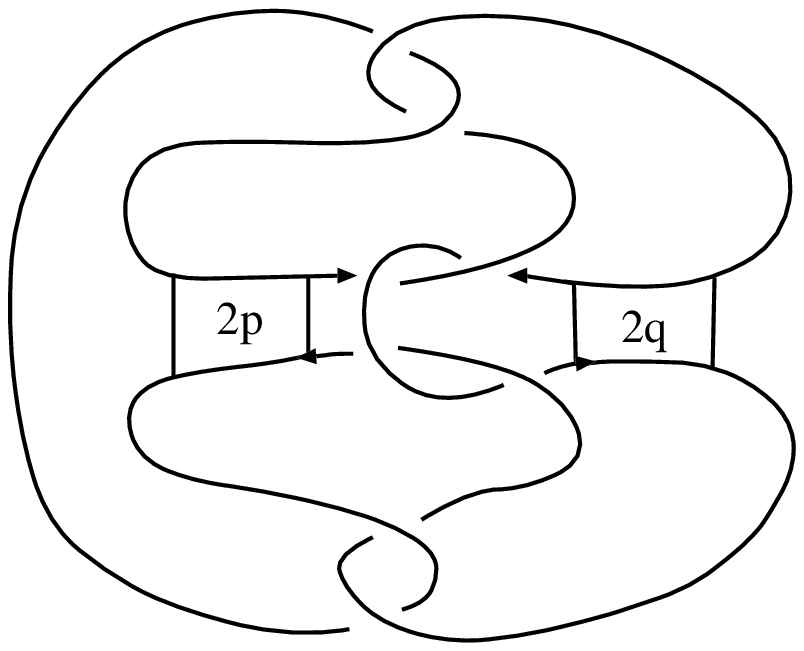}\\
\end{tabular}
\\
Fig.~3.9
\end{center}

It is easy to notice that in this example $D_{\infty,q}$ and
$D_{p,\infty}$ are trivial links of two components.
Kanenobu has shown that $D_{p,q}$ is isotopic to $D_{p',q'}$ if and only
if $(p,q)=(p',q')$ or $(q',p')$. That is how we obtain the next family
of examples of nonisotopic links, which are $\skein$-equivalent. 
Kanenobu proof uses the value of the Jones-Conway polynomial for $D_{p,q}$ 
(see example~\ref{c:3.13}) and the structure of the Alexander module for
$D_{p,q}$ (compare Chapter IV). 
\end{example}

Proof of the statement from Example 3.12 can be given by a standard
induction on $|p-p'|$ showing $D_{p,q}\approx_{|p-p'|} D_{p',q'}$.

\begin{example}\label{c:3.13}
 Compute the Jones-Conway polynomial for $D_{p,q}$ links of Fig.3.9
and show that $P_{D_{p,q}}(x,y) = P_{D_{p',q'}}(x,y)$ 
if and only if 
$p+q = p'+q'$.
\end{example}

\begin{exercise}\label{2:3.14}
Show that $D_{p,q}$ is isotopic to $D_{q,p}$ for links on Fig.3.9.

Hint. Show that Fig.3.10 pictures a link isotopic to $D_{p,q}$.

\begin{center}
\begin{tabular}{c} 
\includegraphics[trim=0mm 0mm 0mm 0mm, width=.35\linewidth]
{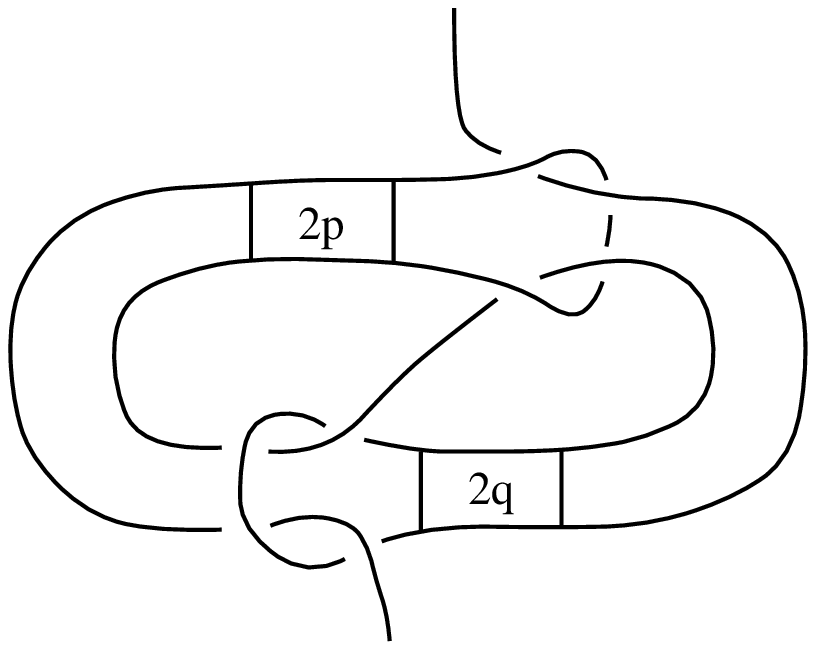}\\
\end{tabular}
\\
Fig. 3.10
\end{center}

\end{exercise}

\begin{exercise}\label{2.3.15}
Show that $D_{p,q}$ link (in Fig. 3.9) is amphicheiral 
(i.e.~isotopic with its mirror image) if and only if $p+q = 0$.
\end{exercise}

The Kanenobu example may be slightly extended if we allow odd numbers in 
tangles of the link in Fig.~3.9.
\cite{Ka-2}.

Let us denote a link the diagram of which is pictured at Fig.~3.9 
by $K(m,n)$, where numbers $m$ and $n$ appear in the tangles.
So we have $D_{p,q}=K(2p,2q)$. The orientation of
$K(m,n)$ is just  implied by the left tangle of Fig.3.9.

\begin{exercise}\label{2:3.16}
Show that $K(m,n)=-\overline{K}(-m,-n)$.
\end{exercise}

\begin{exercise}\label{2:3.17}
Show that $K(m,n)=-\overline{K}(-n,-m) = K(n,m)$.
\end{exercise}

\begin{exercise}\label{2:3.18}
Show that $K(m,n)\skein K(m',n')$ if and only if $m+n =
m'+n'$ and when $m+n$ is even then $m\equiv m' (\mbox{mod } 2)$.
\end{exercise}

With the help of a computer M.B. Thistlethwaite has shown that among the
12966 knots with at most 13 crossings there are 30 with the Conway
polynomial $1+2z^2+2z^4$. Examination of these failed to find a pair of knots
distinguished by the Homflypt  (Jones-Conway) polynomial but not by the Jones polynomial.
As a byproduct of these computations Lickorish and Millett \cite{L-M-1} 
have found the following example.

\begin{example}\label{Example III:3.19}
Let us consider three links pictured on Fig.~3.11  and denoted according
to \cite{Ro-1} by $8_8$ and $10_{129}$, and also $13_{6714}$ from \cite{This-1}.

\begin{center}
\begin{tabular}{c} 
\includegraphics[trim=0mm 0mm 0mm 0mm, width=.65\linewidth]
{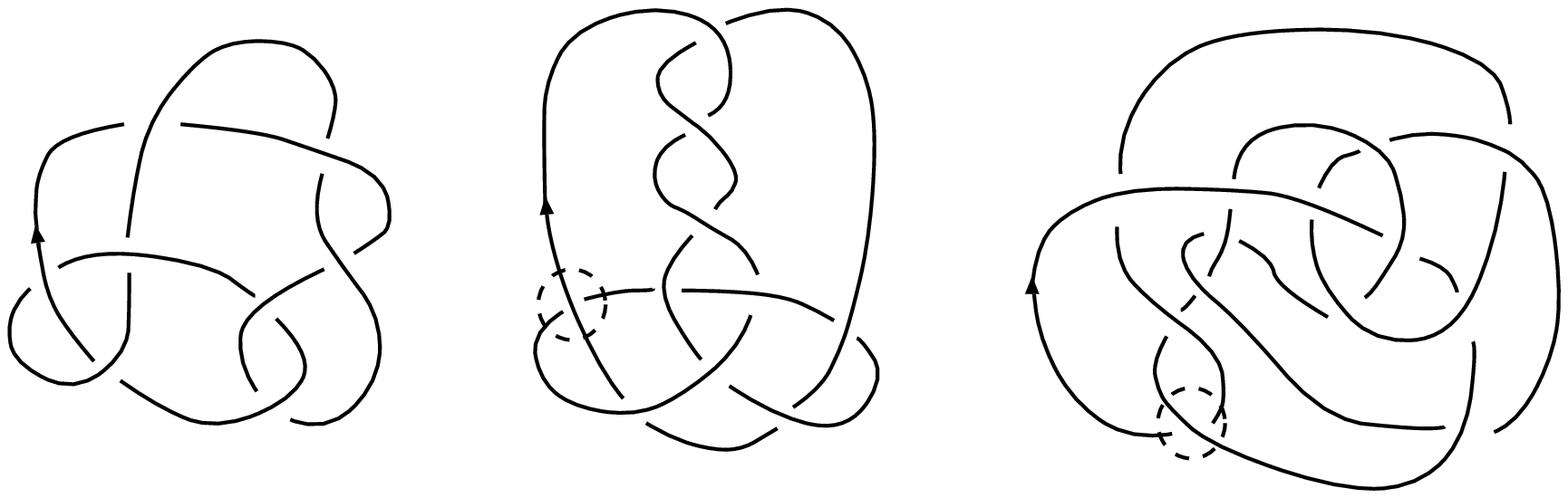}\\
\end{tabular}
\\
Fig. 3.11
\end{center}

Now changing the encircled crossing of $13_{6714}$ produces $10_{129}$ 
and smoothing that crossing produces $T_2$ the trivial
link of 2 components. Similarly, changing the encircled crossing in $10_{129}$
gives $8_8$ and smoothing it gives $T_2$. Hence we have triples 
$(13_{6714},10_{129},T_2)$ and $(8_8,10_{129},T_2)$, both of the form 
$L_+,L_-,L_0$. Therefore $8_8\skein 13_{6714}$. Lickorish and Millett 
found that $10_{129}$ and $\overline{8}_8$ (the mirror image of $8_8$) 
have the same Jones-Conway polynomial and they asked if these were $\skein$
equivalent (in \cite{L-M-1}). Kanenobu has given the positive answer to 
this question showing that knots $8_8,10_{129}$ and $13_{6714}$
are special cases of his $K(m,n)$ link \cite{Ka-2}.
\end{example}

\begin{exercise}\label{2:3.20}
Show that $8_8\approx K(0,-1)$, $10_{129}\approx K(2,-1)$ and
$13_{6714}\approx K(2,-3)$.
\end{exercise}

Examples which we have described so far have shown limitations
of invariants of Conway type. Still, it does not change the fact that
for example the Jones-Conway polynomial remains the best single invariant
of links. Only the new Kauffman polynomial (discovered in August 1985) 
may compete with it (compare \S 5).

For quite some time the question remained open whether the 
Jones-Conway polynomial is better then the classical Conway polynomial
(compare chapter 3 and 4) or the Jones polynomial. M.B. Thistlethwaite 
searched the tables of knots and found out \cite{L-M-1} that, for example,
a knot of 11 crossings ($11_{388}$ in tables of knots; compare 
\cite{Per} Fig.3.12), may be differentiated from its mirror image by
 the Jones-Conway polynomial but not by the Jones polynomial or by Conway
polynomial. Because we can compute that: 

\begin{center}
\begin{tabular}{c} 
\includegraphics[trim=0mm 0mm 0mm 0mm, width=.25\linewidth]
{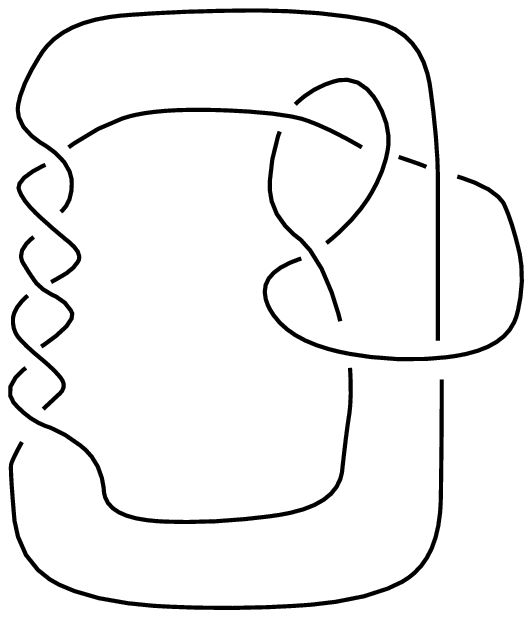}\\
\end{tabular}
\\
Fig. 3.12. Knot $11_{388}$.
\end{center}
\begin{eqnarray*}
&P_{\overline{11}_{388}}(x,y)& =\\
&5x^{-1}y-4x^{-1}y^{-1}+4x^{-2}y^{2}-10x^{_2}+x^{-2}y^{-2}&+\\
+&x^{-3}y^{3}-5x^{-3}y+6x^{-3}y^{-1}+x^{-4}-x^{-4}y^{-2}+3&\\
&V_{11_{388}}(t) &=\\
& t^{-2}-t^{-1}+1-t+t^2&=\\
&\nabla_{11_{388}}(z) &=\\
& 1-z^2-4z^4-z^6&\\
\end{eqnarray*}

and use the following lemma:

\begin{lemma}\label{III:3.21}
If $\overline{L}$ link is a mirror image of link $L$ then their
Jones-Conway polynomials satisfy equality:
$$P_{\overline{L}}(x,y) = P_L(y,x).$$
In particular, for Jones polynomials we have:
$$V_{\overline{L}}(t) = V_L(\frac{1}{t})$$
and for Conway polynomials:
$$\nabla_{\overline{L}}(z) = \nabla_L(-z).$$
\end{lemma}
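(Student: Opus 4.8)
The plan is to deduce the identity from the uniqueness part of Theorem~\ref{2:1.2}, applied to the Conway algebra of Example~\ref{p:1.8} that produces the Jones-Conway polynomial. The central geometric observation is that passing to the mirror image reverses every crossing: if $L_+, L_-, L_0$ are the three diagrams associated to a crossing $p$, then their mirror images satisfy $\overline{L_+} = (\overline{L})_-$ and $\overline{L_-} = (\overline{L})_+$, while $\overline{L_0} = (\overline{L})_0$, since the oriented smoothing involves no over/under choice. Thus mirroring interchanges the roles of $L_+$ and $L_-$ and leaves $L_0$ fixed. Note also that $\overline{T_n} = T_n$.

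First I would introduce the auxiliary invariant $Q_L(x,y) := P_{\overline{L}}(y,x)$, which is a well-defined ambient isotopy invariant of oriented links with values in $A = \Z[x^{\pm 1}, y^{\pm 1}]$, and verify that it satisfies the defining conditions of $P$. The initial condition is immediate: $Q_{T_n}(x,y) = P_{T_n}(y,x) = (y+x)^{n-1} = (x+y)^{n-1}$. For the skein relation I would start from the defining relation $x' P_{M_+}(x',y') + y' P_{M_-}(x',y') = P_{M_0}(x',y')$ of the Jones-Conway polynomial, apply it to $M = \overline{L}$ at $(x',y') = (y,x)$, obtaining
$$yP_{(\overline{L})_+}(y,x) + xP_{(\overline{L})_-}(y,x) = P_{(\overline{L})_0}(y,x).$$
Rewriting each term via the mirror identities above (so $P_{(\overline{L})_-}(y,x) = Q_{L_+}(x,y)$, $P_{(\overline{L})_+}(y,x) = Q_{L_-}(x,y)$, and $P_{(\overline{L})_0}(y,x) = Q_{L_0}(x,y)$) turns this into $xQ_{L_+}(x,y) + yQ_{L_-}(x,y) = Q_{L_0}(x,y)$, which is exactly the skein relation for $Q$, hence both Conway relations of the algebra of Example~\ref{p:1.8}. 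By the uniqueness asserted in Theorem~\ref{2:1.2}, $Q = P$, i.e. $P_{\overline{L}}(y,x) = P_L(x,y)$; renaming the variables gives the claimed $P_{\overline{L}}(x,y) = P_L(y,x)$.

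The two specializations then follow by tracking the substitutions recorded in Example~\ref{p:1.8}. For the Jones polynomial, with $x(t) = -t/(\sqrt{t}-\frac{1}{\sqrt{t}})$ and $y(t) = \frac{1}{t}\cdot 1/(\sqrt{t}-\frac{1}{\sqrt{t}})$, the key computation is that replacing $t$ by $1/t$ sends $\sqrt{t}-\frac{1}{\sqrt{t}}$ to its negative, whence $x(1/t) = y(t)$ and $y(1/t) = x(t)$. Therefore
$$V_{\overline{L}}(t) = P_{\overline{L}}(x(t),y(t)) = P_L(y(t),x(t)) = P_L(x(1/t),y(1/t)) = V_L(1/t).$$
For the Conway polynomial, with $x = 1/z$ and $y = -1/z$, the variable swap gives
$$\nabla_{\overline{L}}(z) = P_{\overline{L}}(1/z,-1/z) = P_L(-1/z,1/z) = \nabla_L(-z),$$
since $1/(-z) = -1/z$ and $-1/(-z) = 1/z$.

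I expect no serious obstacle here: the whole argument rests on the single structural fact that mirroring swaps $L_+ \leftrightarrow L_-$ while fixing $L_0$, together with uniqueness. The only place demanding genuine care is the bookkeeping of the variable interchange in the skein relation, and the verification that the Jones and Conway substitutions transform correctly under $t \mapsto 1/t$ and under $x \leftrightarrow y$; these computations are short but must be carried out precisely.
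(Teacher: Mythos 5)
Your proof is correct and rests on exactly the observation the paper uses — that passing to the mirror image reverses the sign of every crossing, so $L_+\leftrightarrow L_-$ are interchanged while $L_0$ and $T_n$ are fixed; the paper's proof is a one-line appeal to this fact, and your argument via the auxiliary invariant $Q_L(x,y)=P_{\overline{L}}(y,x)$ and the uniqueness in Theorem~\ref{2:1.2} is just the careful formalization of that same idea, with the specializations to $V$ and $\nabla$ checked correctly.
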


Proof of the lemma is an easy consequence of the observation 
that the sign of each crossing changes on the way from a 
diagram to its mirror image. 

Based on the same idea, Lemma~\ref{III:3.21} can be partially generalized
to other invariants yielded by a Conway algebra.

\begin{lemma}\label{2:3.22}
Let ${\cal A} = \{A,a_1,a_2,\ldots,|,\star\}$ be a Conway algebra such
that there exists an involution on $A$ (i.e.~a mapping, the square of which is 
identity) $\tau :A\rightarrow A$ satisfying:
\begin{enumerate}
\item[(i)] $\tau (a_i)  = a_i$,
\item[(ii)] $\tau (a|b) = \tau (a)\star\tau (b)$.
\end{enumerate}

Then the invariant of links $A_L$, assigned to this algebra satisfies:
$$A_{\overline{L}} = \tau (A_L).$$
\end{lemma}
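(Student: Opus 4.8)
The plan is to exploit the uniqueness clause of Theorem~\ref{2:1.2}: rather than tracking $A_L$ through a resolving tree, I would introduce the auxiliary assignment $B_L := \tau(A_{\overline{L}})$ and show that it satisfies exactly the three defining conditions of the invariant attached to $\mathcal{A}$. Since those conditions determine the invariant uniquely, this forces $B_L = A_L$ for every $L$, i.e. $\tau(A_{\overline{L}}) = A_L$; applying the involution once more then gives the claimed identity $A_{\overline{L}} = \tau(A_L)$, because $\tau^2 = \mbox{Id}$.

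First I would record the algebraic consequence of hypothesis (ii) that the argument actually needs, namely the dual relation $\tau(a\star b) = \tau(a)\,|\,\tau(b)$. This follows by substituting $a\mapsto\tau(a)$ and $b\mapsto\tau(b)$ in (ii) and using $\tau^2=\mbox{Id}$: indeed (ii) then reads $\tau(\tau(a)\,|\,\tau(b)) = a\star b$, and applying $\tau$ once more yields $\tau(a)\,|\,\tau(b) = \tau(a\star b)$. Thus $\tau$ intertwines the two operations of the Conway algebra, carrying $\,|\,$ to $\star$ and $\star$ to $\,|\,$.

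The geometric input is the observation already invoked for Lemma~\ref{III:3.21}: passing to the mirror image reverses the sign of every crossing while leaving the oriented smoothing unchanged. Concretely, if $L_+, L_-, L_0$ form a Conway triple at a crossing $p$, then the mirror diagrams form a triple at the corresponding crossing in which $\overline{L_+}$ plays the role of $L_-$, $\overline{L_-}$ plays the role of $L_+$, and $\overline{L_0}$ plays the role of $L_0$. Hence the Conway relations (2) and (3) of Theorem~\ref{2:1.2}, applied to this mirrored triple, read $A_{\overline{L_+}} = A_{\overline{L_-}}\star A_{\overline{L_0}}$ and $A_{\overline{L_-}} = A_{\overline{L_+}}\,|\,A_{\overline{L_0}}$.

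The verification for $B$ is then immediate. For the initial condition, $\overline{T_n} = T_n$ (the trivial link is its own mirror), so $B_{T_n} = \tau(A_{T_n}) = \tau(a_n) = a_n$ by (i). For relation (2), I apply $\tau$ to $A_{\overline{L_+}} = A_{\overline{L_-}}\star A_{\overline{L_0}}$ and use the dual relation to obtain $B_{L_+} = \tau(A_{\overline{L_-}}\star A_{\overline{L_0}}) = \tau(A_{\overline{L_-}})\,|\,\tau(A_{\overline{L_0}}) = B_{L_-}\,|\,B_{L_0}$; relation (3) follows symmetrically by applying $\tau$ to the second mirrored identity together with hypothesis (ii). Since $B$ is manifestly well defined on ambient isotopy classes (it is built from $A$, mirroring, and the fixed map $\tau$, all of which respect isotopy), uniqueness in Theorem~\ref{2:1.2} forces $B = A$ and completes the proof. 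The only point demanding care — and where an explicit picture is worth drawing — is the bookkeeping of the preceding paragraph: checking that mirroring genuinely interchanges the $+$ and $-$ roles of a crossing while fixing the smoothing, independently of the local orientations. Everything else is purely formal.
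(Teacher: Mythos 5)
Your proof is correct, but it takes a different formal route from the paper's. The paper disposes of Lemma~\ref{2:3.22} by the same device as Lemma~\ref{III:3.21}: since passing to the mirror image reverses the sign of every crossing and leaves the oriented smoothing unchanged, one and the same resolving tree computes $A_L$ and $A_{\overline{L}}$ simultaneously, with the operations $|$ and $\star$ interchanged at every internal node and with the leaves (trivial links) fixed by $\tau$; an induction up the tree then yields $A_{\overline{L}}=\tau(A_L)$ directly. You instead package that entire induction inside the uniqueness clause of Theorem~\ref{2:1.2}: you check that $B_L:=\tau(A_{\overline{L}})$ satisfies the initial condition and both Conway relations --- your derivation of the dual identity $\tau(a\star b)=\tau(a)\,|\,\tau(b)$ from (ii) and $\tau^2=\mbox{Id}$ is exactly the algebraic ingredient needed, and your bookkeeping that mirroring exchanges the $L_+$ and $L_-$ roles while fixing $L_0$ is the same geometric fact the paper invokes --- and then uniqueness forces $B=A$, whence $A_{\overline{L}}=\tau(A_L)$. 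Your version is shorter and avoids re-running any induction, which is a genuine gain in cleanliness; the paper's tree-based argument has the compensating advantage that it tracks the computation explicitly and therefore can yield quantitative refinements of such statements (compare the remark after Lemma~\ref{l:3.6}, where the identical tree argument for orientation reversal gives the sharper conclusion $L\approx_{\mbox{cr}(L)-1}-L$, something a pure uniqueness argument cannot see). Both proofs ultimately rest on the same observation about crossing signs under mirroring.
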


In Examples  1.5 and 1.6 the involution $\tau$ is the identity.
In Example \ref{p:1.8} (defining the  Jones-Conway polynomial) the involution
is the change of positions of $x$ and $y$. On the other hand in Example
1.11 $\tau (n,z)=(n,-z)$; the algebra of Example
1.7 has no involution.

It is worthwhile to note that in a Conway algebra build of 
terms (i.e.~``reasonable'' words build by the 
alphabet consisting of $a_1,a_2,\ldots$ and $|,\star$) 
the involution $\tau$ exists and it is 
uniquely determined by the conditions (i) and (ii). It is because
$\tau$ preserves axioms of the Conway algebra. 
The Conway algebra build of terms is universal for Conway algebras,
that is, there is exactly one homomorphism from it 
into any other Conway algebra.

\begin{remark}\label{2:3.23}
Suppose that ${\cal A}$ is a Conway algebra.
It may happen that for every pair
$u,v\in A$ there exists exactly one element $w\in A$ such that
$v|w=u$ and $u\star w=v$. Then we can introduce a new operation 
$\circ: A\times A \rightarrow A$ putting $u\circ v = w$ (it occurs in 
Examples  1.6, \ref{p:1.7} i \ref{p:1.8} but not in examples
 \ref{Example III:1.5} and \ref{p:1.10}). Then $a_n =
a_{n-1}\circ a_{n-1}$. If operation $\circ$ is well defined then we have
an easy formula for invariants of connected and disjoint sums of links.
In agreement with a general terminology we call a Conway algebra 
for which $\circ$ is well defined a Conway quasigroup. 
\end{remark}

\begin{definition}\label{2:3.24}
A link $L$ is called a splittable link ( splittable to $L_1$ and $L_2$) if $L$ is
a union of two non-empty sublinks $L_1$ and $L_2$, and moreover
there exist two disjoint 3-dimensional balls $B_1,B_2\subset S^3$ such that
$L_1\subset B_1$ and $L_2\subset B_2$. 
In such a case we say that $L$ is a 
disjoint sum of $L_1$ and $L_2$, and we write 
$L=L_1\sqcup L_2$. 
\end{definition}

\begin{theorem}\label{Theorem III:3.25}
If $L=L_1 \sqcup L_2$ then
$$P_{L_1\sqcup L_2}(x,y) = (x+y)P_{L_1}(x,y)P_{L_2}(x,y)$$ 
where $P_L(x,y)$ denotes the Jones-Conway polynomial of a link $L$.
\end{theorem}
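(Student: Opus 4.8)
The plan is to exploit two special features of the Conway algebra of Example~\ref{p:1.8} that yields $P_L(x,y)$: that its two operations are \emph{homogeneous linear}, and that its initial values satisfy $a_{n+1}=(x+y)a_n$. Recall from Formula~\ref{III:1.9} that $a\,|\,b=\frac1x(b-ya)$ and $a\star b=\frac1y(b-xa)$; both are linear in $(a,b)$ with no constant term, so $(\lambda a)\,|\,(\lambda b)=\lambda(a\,|\,b)$ and $(\lambda a)\star(\lambda b)=\lambda(a\star b)$ for every $\lambda\in\Z[x^{\pm1},y^{\pm1}]$. By Theorem~\ref{2:1.2} and the construction in Section~\ref{Section III:2}, every diagram $D$ admits a resolving tree whose leaves are trivial links and at whose branch vertices $P$ is computed by $|$ or $\star$; because these operations are homogeneous linear, an easy induction on the tree shows that the root value is a linear combination $P_D=\sum_i c_i v_i$ of the leaf values $v_i$, with coefficients $c_i\in\Z[x^{\pm1},y^{\pm1}]$ depending only on the shape of the tree and the operation chosen at each vertex. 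In particular, if one keeps the tree shape and operations fixed but scales every leaf value by a common factor $\lambda$, the root value is scaled by $\lambda$.

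Next I would treat the disjoint circle: $P_{T_1\sqcup D}=(x+y)P_D$ for every diagram $D$. Take a resolving tree for $D$ and carry the extra trivial circle along, untouched; since it meets no crossing, the tree shape and all operations are unchanged, while each leaf $T_{m_i}$ becomes $T_1\sqcup T_{m_i}=T_{m_i+1}$, of value $a_{m_i+1}=(x+y)a_{m_i}$. Thus every leaf value is scaled by the single factor $x+y$, and the homogeneity remark gives $P_{T_1\sqcup D}=(x+y)P_D$; iterating, $P_{T_k\sqcup D}=(x+y)^kP_D$.

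For the theorem itself, choose diagrams $D_1,D_2$ for $L_1,L_2$ and present $L_1\sqcup L_2$ by the diagram $D_1\sqcup D_2$ placed side by side, in which no crossing joins the two pieces. I would resolve only the crossings of $D_1$, carrying $D_2$ along unchanged; the signs of the crossings of $D_1$ are unaffected by the disjoint $D_2$, so the resulting tree has the same shape and operations, hence the same coefficients $c_i$, as a resolving tree for $D_1$ alone. Its leaves are the diagrams $T_{m_i}\sqcup D_2$, with $P_{T_{m_i}\sqcup D_2}=(x+y)^{m_i}P_{D_2}$ by the previous step, whereas the leaves of $D_1$ have value $a_{m_i}=(x+y)^{m_i-1}$. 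Every leaf value is thus scaled by the \emph{same} factor $(x+y)P_{D_2}$, so homogeneity gives
$$P_{D_1\sqcup D_2}=\sum_i c_i(x+y)^{m_i}P_{D_2}=(x+y)P_{D_2}\sum_i c_i(x+y)^{m_i-1}=(x+y)P_{L_1}P_{L_2},$$
using $P_{D_j}=P_{L_j}$ by invariance, as claimed.

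The point needing care, rather than a genuine obstacle, is the bookkeeping that a resolving tree for $D_1\sqcup D_2$ really coincides in shape and operations with one for $D_1$, so that the identical coefficients $c_i$ occur. A naive induction on a single crossing fails, because a crossing change $L_+\leftrightarrow L_-$ does not lower the crossing number; this is exactly why the global resolving-tree structure, combined with the linear form of the Homflypt operations, is needed. Finally, I would note that this linearity is special to $P$: for a general Conway algebra no such product formula holds, in agreement with Remark~\ref{2:3.23}.
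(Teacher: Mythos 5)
Your proof is correct, but it is organized differently from the paper's. The paper proves the formula for splittable diagrams by a lexicographic double induction on the pair $(\mbox{cr}(L), b(L))$ (number of crossings, number of bad crossings), taking the product formula itself as the inductive hypothesis: one picks a bad crossing $p$ of $L$ (lying in either factor), applies the skein relation to $L^p_+$, $L^p_-$, $L^p_0$, and pulls the common factor $(x+y)P_{L_2}$ out of the two terms supplied by the inductive hypothesis; the base case $b(L)=0$ is a descending, hence trivial, diagram, where the formula is checked directly on $a_n=(x+y)^{n-1}$. You instead keep $D_2$ untouched, transport a fixed resolving tree of $D_1$ to $D_1\sqcup D_2$, and argue globally: since the Homflypt operations are homogeneous linear, the root value is a fixed linear combination of the leaf values, so multiplying every leaf by the common factor $(x+y)P_{D_2}$ multiplies the root by that factor --- which requires your auxiliary lemma $P_{T_k\sqcup D}=(x+y)^kP_D$, itself proved by the same scaling device. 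What your route buys is conceptual clarity: it isolates the actual reason the formula holds (linearity of the skein relation), and it explains why no such formula is available for a general Conway algebra, in agreement with the extra structure ($\circ$, $\phi_w$) demanded in Lemma \ref{III:3.28}. What the paper's route buys is lighter bookkeeping and a reusable engine: the identical $(\mbox{cr}(L),b(L))$ induction is invoked again, essentially verbatim, in the proof of Theorem \ref{2:3.32}(i). One small correction: your closing claim that a ``naive induction on a single crossing fails'' because a crossing switch does not lower the crossing number, so that the global tree structure is \emph{needed}, is not accurate --- the switch of a bad crossing lowers the number of bad crossings while keeping the crossing number fixed, and that is precisely what the paper's lexicographic induction exploits; the tree formalism is a convenience, not a necessity.
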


Proof.

There is a diagram $L$ in which $L_1$ may be separated
from $L_2$ by an ordinary closed curve. We call it a splittable diagram.
Now we will prove Theorem \ref{Theorem III:3.25} for splittable diagrams.

We use the induction with respect to lexicografically ordered
pairs $(\crs{L}, b(L))$, where 
$\crs{L}$ denotes the number of crossings in a diagram
and $b(L)$ stands for 
the minimal number of bad crossings over all possible choices of base
points.

If $b(L) = 0$ then the theorem~\ref{Theorem III:3.25} holds because $L$
is a trivial link with $n(L)$ components and $L_1$ and $L_2$ are trivial links
of $n(L_1)$ and $n(L_2)$ components respectively. Thus by the definition

\begin{eqnarray*}
&P_L(x,y)&=\\
&(x+y)^{n(L)-1}&=\\
&(x+y)(x+y)^{n(L_1)-1}(x+y)^{n(L_2)-1}&=\\
&(x+y)P_{L_1}(x,y)P_{L_2}(x,y).&\\
\end{eqnarray*}

Let us assume that we have already proved Theorem~\ref{Theorem III:3.25} for 
splittable diagrams satisfying $(\crs{L},b(L))<(c,b)$.
Let $p$ be a bad crossing of diagram $L$ (for example, let $\sgn
p = +$). Then for $L^p_-$ and $L^p_0$ the theorem is true 
by the inductive hypothesis.
Let us assume, for example, that $p\in L_1$. Then

\begin{eqnarray*}
&P_L(x,y)&=\\
&P_{L^p_+}(x,y)&=\\
&\frac{1}{x}(P_{L^p_0}(x,y) - yP_{L^p_-}(x,y))&=\\
&\frac{1}{x}((x+y)P_{(L_1)^p_0}(x,y)\cdot P_{L_2}(x,y) -
y(x+y)P_{(L_1)^p_-}(x,y)\cdot P_{L_2}(x,y))&=\\
&(x+y)P_{(L_2)^p_0}(x,y)\cdot(\frac{1}{x}P_{(L_1)^p_0}(x,y) -
yP_{(L_1)^p_-}(x,y)))&=\\
&(x+y)P_{L_2}(x,y)\cdot P_{L_1}(x,y)&\\
\end{eqnarray*}
which completes the proof.

In the other cases we proceed similarly.

\begin{definition}\label{2:3.26}
An oriented link $L$ is a connected sum of two links
$L_1$ and $L_2$ (we denote it $L = L_1\# L_2$) if there exists
a sphere $S^2\subset S^3$ which divides $S^3$ into two 3-dimensional
balls $B_1$ and $B_2$ in such a way that $S^2$ meets $L$ transversally in 
two points, and if $\beta$ is an arc in $S^2$ joining these two points
then $(B_1\cap L)\cup\beta$ (resp.~$(B_2\cap L)\cup\beta$) 
is isotopic to $L_1$ (resp.~$L_2$).
\end{definition}

In Chapter IV we analyze a connected sum in detail and in particular 
we will show that it is uniquely defined for knots (with the sum knots form an abelian semigroup 
with cancellation property).

\begin{corollary}\label{2:3.27}
If $L = L_1\# L_2$ then
$$ P_L(x,y) = P_{L_1}(x,y)\cdot P_{L_2}(x,y).$$
\end{corollary}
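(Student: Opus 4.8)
The plan is to reuse, almost verbatim, the double induction from the proof of Theorem~\ref{Theorem III:3.25}, only replacing the disjoint sum $\sqcup$ by the connected sum $\#$ throughout. First I would fix a convenient diagram of $L$: by Definition~\ref{2:3.26} we may take diagrams $D_1$ of $L_1$ and $D_2$ of $L_2$ placed in disjoint regions of the plane and joined, along the arc $\beta$, by a band carrying no crossings. Then every crossing of the combined diagram lies either in the $D_1$--region or in the $D_2$--region, and, crucially, a crossing change or a $0$--smoothing performed at a crossing $p$ of the $D_2$--region is entirely local to that region; hence $L^p_{\varepsilon}=L_1\#(L_2)^p_{\varepsilon}$ for $\varepsilon\in\{+,-,0\}$ (and symmetrically if $p$ lies in $D_1$). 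This locality is the observation on which the whole argument rests.

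Then I would induct on the lexicographically ordered pair $(\crs{L},b(L))$, exactly as in Theorem~\ref{Theorem III:3.25}. For the base case $b(L)=0$ the diagram is descending, so $L$, and with it the sub-diagrams $D_1$ and $D_2$, are trivial links; writing $L_1=T_a$ and $L_2=T_b$ we have $L=T_{a+b-1}$, and then
\[
P_L=(x+y)^{a+b-2}=(x+y)^{a-1}(x+y)^{b-1}=P_{L_1}\cdot P_{L_2},
\]
using $P_{T_n}=a_n=(x+y)^{n-1}$. Note that here, unlike in Theorem~\ref{Theorem III:3.25}, no factor $(x+y)$ survives, precisely because connected sum along one component produces $a+b-1$ rather than $a+b$ components.

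For the inductive step I would pick any bad crossing $p$; it lies in $D_1$ or in $D_2$, say in $D_2$ with $\sgn p=+$, so $L=L^p_+$. Both $L^p_-$ and $L^p_0$ precede $L$ in the order $(\crs{L},b(L))$ and, by the locality observation, equal $L_1\#(L_2)^p_-$ and $L_1\#(L_2)^p_0$; so the inductive hypothesis gives $P_{L^p_-}=P_{L_1}P_{(L_2)^p_-}$ and $P_{L^p_0}=P_{L_1}P_{(L_2)^p_0}$. Feeding these into the skein relation $xP_{L_+}+yP_{L_-}=P_{L_0}$ of Formula~\ref{III:1.9},
\[
P_L=\frac{1}{x}\bigl(P_{L^p_0}-yP_{L^p_-}\bigr)=P_{L_1}\cdot\frac{1}{x}\bigl(P_{(L_2)^p_0}-yP_{(L_2)^p_-}\bigr)=P_{L_1}\cdot P_{(L_2)^p_+}=P_{L_1}\cdot P_{L_2}.
\]
The cases $\sgn p=-$, and $p$ lying in $D_1$, are identical after interchanging the roles of the two summands.

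I expect the only real obstacle to be the geometric bookkeeping rather than the algebra: one must check that $L$ genuinely admits a connected--sum diagram with a crossing--free joining band, and that smoothing or switching a crossing in one summand leaves the other summand and the band untouched, so that the decomposition $L_1\#(\cdot)$ is preserved at every node of the resolving tree. This rests on the well--definedness of $\#$ (established in Chapter~IV) and on the band meeting the diagram only in the two arcs of $\beta$. Once this is secured, the induction is a line-for-line transcription of the split--sum proof of Theorem~\ref{Theorem III:3.25}.
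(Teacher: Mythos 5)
Your proof is correct, but it follows a genuinely different route from the paper's. The paper does not re-run any induction: starting from a connected-sum diagram (Fig.~3.13), it rotates the $L_2$-factor by $180^{\circ}$ once clockwise and once counterclockwise, obtaining diagrams $L_+$ and $L_-$ that are both ambient isotopic to $L$ itself, while the oriented smoothing $L_0$ of the crossing so created is the disjoint sum $L_1\sqcup L_2$; the skein relation then gives $(x+y)P_{L_1\# L_2}=P_{L_1\sqcup L_2}$, and combining this with Theorem~\ref{Theorem III:3.25} and cancelling $x+y$ yields the corollary. You instead transplant the double induction on $(\crs{L},b(L))$ from the proof of Theorem~\ref{Theorem III:3.25} to band-joined connected-sum diagrams, exploiting locality of crossing changes and smoothings within one factor. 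Each approach buys something. The paper's is three lines long, and its rotation trick is reused immediately afterwards: it is exactly what gives $A_{L_1\sqcup L_2}=A^2_{L_1\# L_2}$ and Lemma~\ref{III:3.28} for Conway algebras admitting the operation $\circ$; on the other hand, it silently cancels the non-unit $x+y$, which is legitimate only because $\Z[x^{\pm 1},y^{\pm 1}]$ is an integral domain, and so it does not make sense in an arbitrary Conway algebra. Your argument is self-contained (it never invokes Theorem~\ref{Theorem III:3.25}), avoids that cancellation, and your base case correctly isolates the one point where $\#$ differs from $\sqcup$: the descending diagram represents $T_{a+b-1}$ rather than $T_{a+b}$, so no stray factor of $x+y$ appears. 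The cost is redoing the induction bookkeeping, including the two geometric claims you flag yourself --- that $L=L_1\# L_2$ admits a band-joined diagram with crossing-free band, and that a descending band-joined diagram has descending (hence trivial) factors; both are true and are at the same level of rigor that the paper itself leaves implicit in its proof of Theorem~\ref{Theorem III:3.25}.
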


Proof.

We can find a diagram of $L$ as presented on Fig.3.13. Let us rotate $L_2$ 
by $180^o$ twice: once clockwise, next counterclockwise to get two diagrams
$L_+$ and $L_-$ (Fig.3.14). Of course $L_+$ and $L_-$ are isotopic to
$L$ and the diagram $L_0$ (Fig. 3.14) is the disjoint sum of $L_1$ and $L_2$, 
so:

\begin{center}
\begin{tabular}{c} 
\includegraphics[trim=0mm 0mm 0mm 0mm, width=.3\linewidth]
{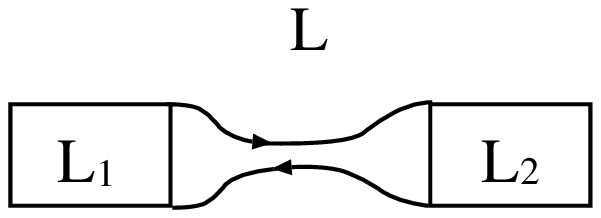}\\
\end{tabular}
\\
Fig.~3.13
\end{center}

\begin{center}
\begin{tabular}{c} 
\includegraphics[trim=0mm 0mm 0mm 0mm, width=.65\linewidth]
{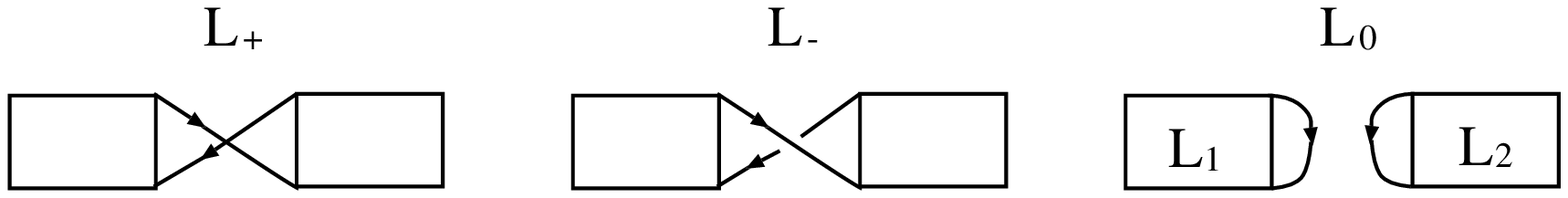}\\
\end{tabular}
\\
Fig.~3.14
\end{center}

$$xP_L(x,y)+yP_L(x,y) = P_{L_1\sqcup L_2}(x,y),$$
and therefore
$$ (x+y)P_{L_1\kwad L_2}(x,y) = P_{L_1\sqcup L_2}(x,y),$$
This formula and Theorem \ref{Theorem III:3.25} gives us Corollary~\ref{2:3.27}.

Theorem~\ref{Theorem III:3.25} and Corollary~\ref{2:3.27} may be generalized to cover
the case of $A_L$ invariants yielded by Conway algebras with the operation
$\circ$. First, let us observe that adding a trivial knot to the given link
$L$ changes $A_L$ for $A_L \circ A_L$ (in short $A^2_L$); Fig.3.15. 
In particular we obtain a known equality $a^2_i = a_{i+1}$. 
Considering Fig.3.14 we get more general formula:
$$A_{L_1\sqcup L_2} = A^2_{L_1\kwad L_2}.$$

\begin{center}
\begin{tabular}{c} 
\includegraphics[trim=0mm 0mm 0mm 0mm, width=.65\linewidth]
{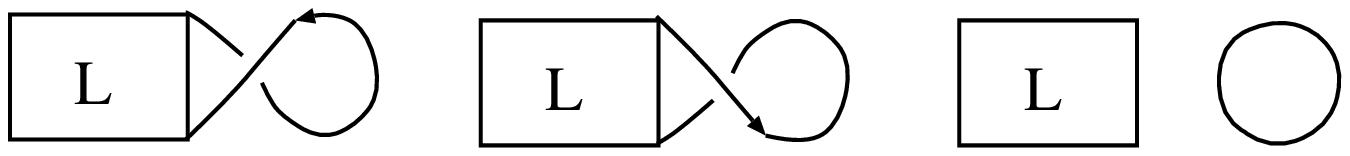}\\
\end{tabular}
\\
Fig. 3.15
\end{center}

Using a method similar to that of Theorem~\ref{Theorem III:3.25} and Corollary
~\ref{2:3.27} we can prove the following:

\begin{lemma}\label{III:3.28}
If a  Conway algebra ${\cal A}$ admits the operation $\circ$ and for each $w\in A$ 
there is a homomorphism $\phi_w:A\rightarrow A$ such that 
$$\phi_w(a_1) = w, \phi_w(a_2)=w^2,\phi_w(a_3) =
w^4,\ldots,\mbox{etc}$$
then
$$A_{L_1\kwad L_2} = \phi_{A_{L_1}}(A_{L_2}) = \phi_{A_{L_2}}(A_{L_1})$$
$$A_{L_1\sqcup L_2} = (\phi_{A_{L_1}}(A_{L_2}))^2 = (\phi_{A_{L_2}}(A_{L_1}))^2$$
\end{lemma}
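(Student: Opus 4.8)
The plan is to reduce everything to the single connected-sum identity $A_{L_1\# L_2}=\phi_{A_{L_1}}(A_{L_2})$. Once this is established, the symmetric equality $\phi_{A_{L_1}}(A_{L_2})=\phi_{A_{L_2}}(A_{L_1})$ is immediate from commutativity of the connected sum, $L_1\# L_2=L_2\# L_1$, which forces $A_{L_1\# L_2}=A_{L_2\# L_1}$. The disjoint-sum formulas then follow by squaring, since the operation $\circ$ is well defined (the algebra $\cal A$ admits it by hypothesis) and the observation recorded just before the lemma gives $A_{L_1\sqcup L_2}=A_{L_1\# L_2}^{2}$.

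To prove the connected-sum identity I would fix $L_1$, write $w=A_{L_1}$, and compare two functions on oriented links: $B_L:=A_{L_1\# L}$ and $C_L:=\phi_w(A_L)$. First I would check that each obeys the Conway relations $F_{L_+}=F_{L_-}|F_{L_0}$ and $F_{L_-}=F_{L_+}\star F_{L_0}$. For $C$ this is immediate, because $\phi_w$ is a homomorphism, hence commutes with $|$ and $\star$, and $A$ already satisfies these relations. For $B$ the point is that a crossing of $L$ is also a crossing of $L_1\# L$, lying outside the $L_1$ summand, so the skein triple $(L_+,L_-,L_0)$ produces precisely the triple $(L_1\# L_+,\ L_1\# L_-,\ L_1\# L_0)$; applying the Conway relations for $A$ to this triple yields those for $B$.

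Next I would verify that $B$ and $C$ agree on every trivial link. By hypothesis $C_{T_n}=\phi_w(a_n)=w^{2^{n-1}}$. On the other side, connecting $L_1$ to one component of $T_n$ returns $L_1$ (connected sum with an unknot is trivial) and leaves $n-1$ free circles, so $L_1\# T_n=L_1\sqcup T_{n-1}$; iterating the ``adding a trivial circle squares the invariant'' observation $n-1$ times gives $B_{T_n}=A_{L_1\sqcup T_{n-1}}=w^{2^{n-1}}=C_{T_n}$. I would then invoke the determinacy behind Theorem~\ref{2:1.2}: any function on diagrams obeying the Conway relations is computed from its values at the leaves of a resolving tree by repeated application of $|$ and $\star$. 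Since $B$ and $C$ obey these relations and coincide at all trivial-link leaves, they coincide at the root, so $B_L=C_L$ for all $L$; taking $L=L_2$ gives $A_{L_1\# L_2}=\phi_{A_{L_1}}(A_{L_2})$.

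The main obstacle is exactly this determinacy step. As stated, Theorem~\ref{2:1.2} asserts uniqueness only for the standard initial values $a_n$, whereas here $B$ and $C$ realize the shifted leaf values $w^{2^{n-1}}$, so I would have to observe that the resolving-tree argument of Section~\ref{Section III:2} proves more: two Conway-type functions that agree on all trivial links agree everywhere, whatever those common leaf values are. A safer alternative, closer to the author's own suggestion, is a direct double induction on the lexicographically ordered pair $(\crs{L_2},b(L_2))$ exactly as in Theorem~\ref{Theorem III:3.25}; one resolves a bad crossing of $L_2$, applies the Conway relation on both sides of $A_{L_1\# L_2}=\phi_w(A_{L_2})$, and matches the resulting expressions using that $\phi_w$ respects $|$ and $\star$, the base case $b(L_2)=0$ being the trivial-link computation above.
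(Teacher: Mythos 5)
Your proposal is correct and is essentially the paper's own argument: the paper justifies the lemma only by remarking that one uses ``a method similar to that of Theorem \ref{Theorem III:3.25} and Corollary \ref{2:3.27}'', i.e., precisely the double induction on $(\mbox{cr}(L_2),b(L_2))$ that you give as your ``safer alternative'', combined with the observations $A_{L\sqcup T_1}=A_L\circ A_L$ and $A_{L_1\sqcup L_2}=A^2_{L_1\# L_2}$ recorded just before the lemma, which you also invoke. Your primary packaging---two link invariants obeying the Conway relations and agreeing on all trivial links must coincide, by the resolving-tree argument---is the same induction reorganized, and your reading of the uniqueness part of Theorem \ref{2:1.2} as in fact giving this stronger determinacy is the correct one.
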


\begin{exercise}\label{2:3.29}
Show that the algebras of Examples 1.6, \ref{p:1.7} and
\ref{p:1.8} satisfy the assumptions of Lemma~\ref{III:3.28}.
\end{exercise}

\begin{problem}\label{2:3.30}

\begin{enumerate}
\item [(i)] 
Let us consider the equation $a|x = b$ in the universal Conway algebra.
Is it possible for this equation to have more than one solution? 
(the equation $a_1|x = a_2$  may have no solution at all).

\item[(ii)] Let us assume that for certain diagrams of $L$ and $L'$ 
and for certain crossings we have $L_+ \skein L_+'$ 
and $L_- \skein L_0'$ are true.
Is then $L_0 \skein L_0'$ true as well?
\end{enumerate}
\end{problem}

Corollary~\ref{2:3.27} can be generalized in the following way:

\begin{definition}\label{2:3.31}

\begin{enumerate}
\item [(i)] Consider an alternating tangle $A$. There are two methods of
obtaining a link from the tangle $A$. They are marked $N(A)$ (numerator) 
and $D(A)$ (denominator) according to Fig.3.16.

Let $A^N$ denote $P_{N(A)}(x,y)$ and $A^D = P_{D(A)}(x,y)$ (i.e.~the respective
value of the  Jones-Conway polynomial).

\begin{center}
\begin{tabular}{c} 
\includegraphics[trim=0mm 0mm 0mm 0mm, width=.35\linewidth]
{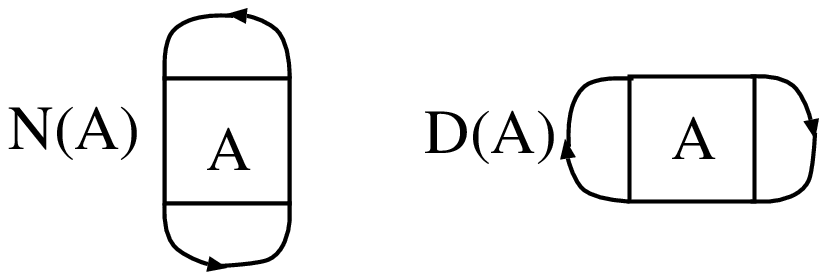}\\
\end{tabular}
\\
Fig. 3.16
\end{center}

\item [(ii)]
Having two alternating tangles $A$ and $B$ we can define their sum
just like on Fig.3.17. Let us notice that $D(A+B) = D(A)\kwad D(B)$.

\begin{center}
\begin{tabular}{c} 
\includegraphics[trim=0mm 0mm 0mm 0mm, width=.35\linewidth]
{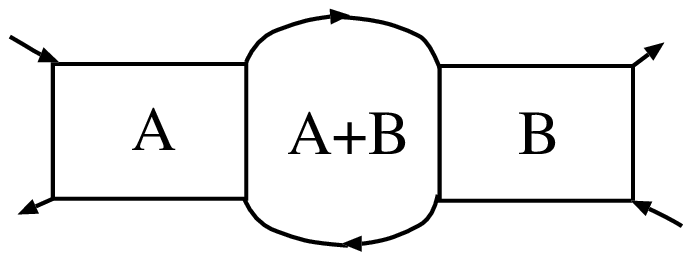}\\
\end{tabular}
\\
Fig. 3.17
\end{center}

\end{enumerate}
\end{definition}

Lickorish and Millett \cite{L-M-1} have generalized 
Conway's result \cite{Co-1}
concerning Conway polynomial by showing:

\begin{theorem}\label{2:3.32}

\begin{enumerate}
\item $(1-(x+y)^2)(A+B)^N = (A^NB^D+A^DB^N)-(x+y)(A^NB^N+A^DB^D)$
\item $(A+B)^D = A^D \cdot B^D$
\end{enumerate}
\end{theorem}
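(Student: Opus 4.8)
Write $s=x+y$, so that $a_2=s$ and adjoining a disjoint trivial circle to a diagram multiplies its Jones--Conway polynomial by $s$. The plan is to pass to the two trivial $2$-tangles, express an arbitrary alternated tangle as a combination of them in the Conway skein module, and reduce both assertions to one linear-algebra identity.

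Part (2) is essentially immediate: by construction $D(A+B)=D(A)\# D(B)$, so Corollary~\ref{2:3.27} gives $(A+B)^D=P_{D(A)\# D(B)}=P_{D(A)}\,P_{D(B)}=A^D\,B^D$. For part (1) I would work in the oriented Conway skein module of alternated $2$-tangles. Let $[0]$ be the trivial horizontal tangle and $[\infty]$ the trivial vertical one. The first step is to show that every alternated tangle $T$ satisfies $T\equiv\alpha_T[0]+\beta_T[\infty]$ in this module, for suitable $\alpha_T,\beta_T\in\Z[x^{\pm1},y^{\pm1}]$. This is exactly the double induction used for Theorem~\ref{2:1.2}: on the lexicographically ordered pair (number of crossings of $T$, number of bad crossings of $T$), one applies $x P_{L_+}+yP_{L_-}=P_{L_0}$ at a bad crossing, which decreases the number of bad crossings on the crossing-change branch and the number of crossings on the smoothing branch, until $T$ is crossingless, at which point $T$ is $[0]$ or $[\infty]$ up to disjoint trivial circles that merely contribute powers of $s$.

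Next I record the four closure values. Tracing the closures gives $N([0])=T_2$, $D([0])=T_1$, $N([\infty])=T_1$, $D([\infty])=T_2$, hence $[0]^N=s$, $[0]^D=1$, $[\infty]^N=1$, $[\infty]^D=s$, and therefore for every tangle
$$T^N=s\alpha_T+\beta_T,\qquad T^D=\alpha_T+s\beta_T .$$
I also need the behaviour of the tangle sum on the basis, which a direct trace supplies: $[0]+[0]=[0]$, $[0]+[\infty]=[\infty]+[0]=[\infty]$, and $[\infty]+[\infty]=s\,[\infty]$ (the two inner vertical strands close into one disjoint oriented circle). By bilinearity of the sum this yields $\alpha_{A+B}=\alpha_A\alpha_B$ and $\beta_{A+B}=\alpha_A\beta_B+\alpha_B\beta_A+s\,\beta_A\beta_B$.

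It then remains to compute. For (2), $(A+B)^D=\alpha_{A+B}+s\beta_{A+B}=(\alpha_A+s\beta_A)(\alpha_B+s\beta_B)=A^D B^D$, reproving it. For (1), $(A+B)^N=s\alpha_{A+B}+\beta_{A+B}=s\alpha_A\alpha_B+\alpha_A\beta_B+\alpha_B\beta_A+s\beta_A\beta_B$; expanding the claimed right-hand side in the coordinates $\alpha,\beta$ (via $A^N=s\alpha_A+\beta_A$, $A^D=\alpha_A+s\beta_A$, and likewise for $B$) gives
$$(A^NB^D+A^DB^N)-s(A^NB^N+A^DB^D)=(1-s^2)\bigl(s\alpha_A\alpha_B+\alpha_A\beta_B+\alpha_B\beta_A+s\beta_A\beta_B\bigr),$$
which is precisely $(1-s^2)(A+B)^N$.

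The step I expect to require the most care is not the algebra but the orientation bookkeeping: one must check that for an alternated tangle both $[0]$ and $[\infty]$ and both closures $N$, $D$ are consistently orientable, and that the oriented smoothing $L_0$ preserves the boundary orientation pattern, so that the intermediate tangles produced during the reduction still admit well-defined numerators and denominators. Granting this, the argument is purely formal.
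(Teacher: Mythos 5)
Your proof is correct --- the closure values, the multiplication table for the tangle sum, and the final coordinate computation all check out, and you rightly flag the orientation bookkeeping (for alternated tangles both $[0]$ and $[\infty]$ admit compatible orientations, and crossing changes and oriented smoothings preserve the boundary pattern, so every tangle occurring in the resolving tree has well-defined closures). But your route is genuinely different from the paper's. The paper keeps $A$ arbitrary and unresolved, and runs the double induction on the pair $(\mbox{cr}(B),\, b(B))$ \emph{directly on the identity}: the resolving tree of $B$ terminates in the two trivial tangles (trivial circles are discarded, since they appear identically in $A+B$), the identity of part (1) is verified by hand for $B=[0]$ and $B=[\infty]$, and the inductive step is the remark that both sides of the identity are linear in the triple $(B_+,B_-,B_0)$ under the skein relation $xP_{L_+}+yP_{L_-}=P_{L_0}$, so its validity for two members of a skein triple forces it for the third. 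You instead prove a stronger structural lemma --- every alternated tangle decomposes as $\alpha[0]+\beta[\infty]$ compatibly with every closure (in later language, the Homflypt skein module of the oriented $2$-tangle is spanned by $[0]$ and $[\infty]$) --- after which both parts of the theorem are formal identities in the coordinates $(\alpha,\beta)$. The paper's argument is more economical: only $B$ is resolved, no sum table is needed, and $A$ is never decomposed. Yours buys generality: it is a uniform machine producing any bilinear formula of this type (Conway's fraction formula, stated as the corollary following the theorem, drops out of the same coordinates), it reproves part (2) via Corollary \ref{2:3.27} as a consistency check, and it anticipates the skein-module viewpoint developed in Chapter IX. Both arguments ultimately rest on the same double induction borrowed from the proof of Theorem \ref{2:1.2}.
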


Proof.

Part (ii) has been already proved in Corollary ~\ref{2:3.27}.

Part (i) will be proved by induction with respect to ordered pairs
$(\crs{B},b(B))=$
(number of crossings in the tangle $B$, minimal number of bad crossings
in $B$), similarly as in the proof of Theorem~\ref{Theorem III:3.25}. 
For a tangle $B$ we can find
a resolving tree leaves of which are the tangles of Fig.3.18,
possibly with a certain number of trivial circles. 

\begin{center}
\begin{tabular}{c} 
\includegraphics[trim=0mm 0mm 0mm 0mm, width=.5\linewidth]
{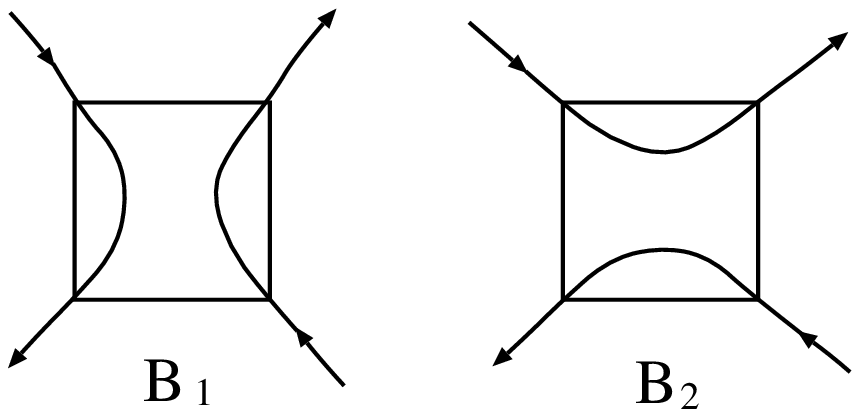}\\
\end{tabular}
\\
Fig. 3.18
\end{center}

Because the trivial circles appear in $A+B$ as well, they can be omitted
in further considerations. As $N(B_1)$ and $D(B_2)$ are trivial knots
and both $D(B_1)$ and $N(B_2)$ are trivial links of two components,
and moreover $N(A+B_1) =  D(A)$ and $N(A+B_2) =  N(A)$ so that $B^N_1 = B^D_2 = 1,
B^D_1 = B^N_2 = (x+y), (A+B_1)^N = A^D$ as well as $(A+B_2)^N = A^N$. 
It follows that $$(1-(x+y)^2)(A+B_1)^N = (1-(x+y)^2)A^D = (A^N+(x+y)+A^D) -
(x+y)(A^N+A^D(x+y))$$
similarly
$$(1-(x+y)^2)(A+B_2)^N = (1-(x+y)^2)A^N = (A^N+(x+y)+A^D) -
(x+y)((x+y)A^N+A^D).$$

Thus we have proved Theorem~\ref{2:3.32}(i) for $A+B_1$ and
$A + B_2$. 

Now the immediate verification shows that if the formula holds for
$B_-$ and $B_0$, then it is true for $B_+$ as well, and similarly if it holds
for $B_+$ and $B_0$, then it is true for $B_-$. This allows the inductive
 step and completes the proof of Theorem ~\ref{2:3.32}(i).

\begin{exercise}
Show that the Jones-Conway polynomial distinguishes square knot and granny
knot (Fig.3.19). 

\begin{center}
\begin{tabular}{c} 
\includegraphics[trim=0mm 0mm 0mm 0mm, width=.65\linewidth]
{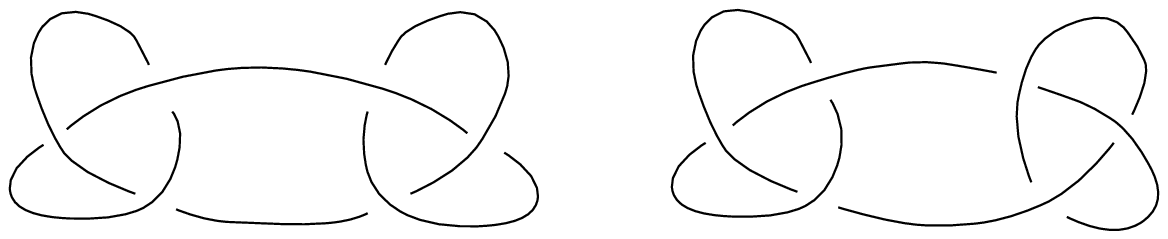}\\
\end{tabular}
\\
Fig.~3.19
\end{center}

\end{exercise}

\begin{corollary}[\cite{Co-1}.] 
Let us define the fraction of a tangle $A$ as follows:
$$F(A) = \frac{\nabla_{N(A)}(z)}{\nabla_{D(A)}(z)}$$
where $\nabla (z)$ is a Conway polynomial. We do not cancel common factors of
numerator and denominator. 

Then $F(A+B) = F(A) + F(B)$.
\end{corollary}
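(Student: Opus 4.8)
The plan is to deduce the corollary directly from Theorem~\ref{2:3.32} by specializing the Jones-Conway polynomial to the Conway polynomial. Recall from Example~\ref{p:1.8} that the Conway polynomial is obtained from $P_L(x,y)$ by the substitution $x = 1/z$, $y = -1/z$, i.e.\ $\nabla_L(z) = P_L(1/z, -1/z)$. The key observation driving the whole argument is that under this substitution $x+y = \tfrac1z - \tfrac1z = 0$, so both the factor $1-(x+y)^2$ and every term carrying a factor $(x+y)$ in Theorem~\ref{2:3.32}(i) collapse.

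First I would specialize the two identities of Theorem~\ref{2:3.32}. Writing $N_A := \nabla_{N(A)}(z)$ and $D_A := \nabla_{D(A)}(z)$ (and similarly for $B$ and for $A+B$), part (i), which reads $(1-(x+y)^2)(A+B)^N = (A^NB^D+A^DB^N)-(x+y)(A^NB^N+A^DB^D)$, becomes at $x+y=0$ simply
$$N_{A+B} = N_A D_B + D_A N_B,$$
since the coefficient $1-(x+y)^2$ equals $1$ and the final bracket is annihilated by its factor $(x+y)$. Part (ii), $(A+B)^D = A^D B^D$, becomes
$$D_{A+B} = D_A D_B.$$

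It then remains only to form the fraction and split it:
$$F(A+B) = \frac{N_{A+B}}{D_{A+B}} = \frac{N_A D_B + D_A N_B}{D_A D_B} = \frac{N_A}{D_A} + \frac{N_B}{D_B} = F(A) + F(B).$$
I would stress that the convention ``we do not cancel common factors of numerator and denominator'' is exactly what makes the identity hold on the nose: the specialized formulas show that the numerator $N_{A+B}$ of $F(A+B)$ equals the numerator $N_A D_B + D_A N_B$ produced by the usual rule for adding the formal ratios $N_A/D_A$ and $N_B/D_B$, and likewise the denominators agree, so the two formal fractions are literally identical term by term. Interpreted in the field of fractions $\mathbb{Q}(z)$ of $\mathbb{Z}[z]$ (assuming the denominators $D_A,D_B$ are nonzero, e.g.\ that the denominator closures are not split), the displayed chain is an equation of rational functions.

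There is essentially no obstacle here beyond bookkeeping; the one point requiring care is verifying that the specialization $x=1/z$, $y=-1/z$ really does send $P_L$ to $\nabla_L$ and forces $x+y=0$, which is precisely the content of Example~\ref{p:1.8}. Everything else is the mechanical substitution into Theorem~\ref{2:3.32} and the one-line fraction manipulation above.
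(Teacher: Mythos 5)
Your proposal is correct and is exactly the intended derivation: the paper states this result as an immediate corollary of Theorem~\ref{2:3.32} (with the proof left to the citation of Conway), and the only route from that theorem is the specialization $x=1/z$, $y=-1/z$ of Example~\ref{p:1.8}, under which $x+y=0$ kills the correction terms, leaving $\nabla_{N(A+B)}=\nabla_{N(A)}\nabla_{D(B)}+\nabla_{D(A)}\nabla_{N(B)}$ and $\nabla_{D(A+B)}=\nabla_{D(A)}\nabla_{D(B)}$. Your observation that these two identities reproduce verbatim the numerator and denominator of the formal sum $F(A)+F(B)$ (which is why the ``no cancellation'' convention makes the statement hold on the nose, even when a denominator vanishes) is precisely the point of the corollary.
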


\begin{example}
Let $A$ be a tangle pictured at Fig.~3.20. Then $F(A) = \frac{z}{1}$.

\begin{center}
\begin{tabular}{c} 
\includegraphics[trim=0mm 0mm 0mm 0mm, width=.65\linewidth]
{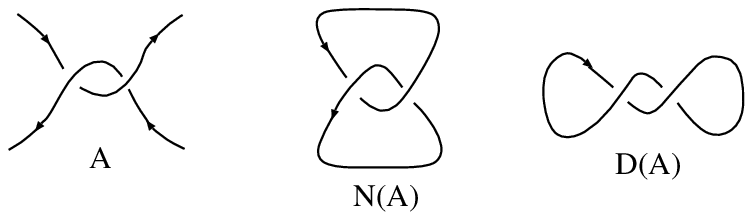}\\
\end{tabular}
\\
Fig. 3.20
\end{center}

\end{example}

\begin{problem}
Let ${\cal A}$ be a Conway algebra with the operation $\circ$ and the
homomorphism $\phi_w$. Find the formula for value of the invariant
yielded by the algebra for the numerator of a sum of two tangles.
\end{problem}

In the subsequent chapters we will show how to use theorem ~\ref{2:3.32} 
for computing the Jones-Conway polynomial of certain classes of links;
namely, links with two bridges, pretzel links and Montesinos links.

J.~Birman \cite{Bi-2} (and, independently, M. Lozano and H.Morton 
\cite{Lo-Mo}) has found examples of knots which are not isotopic
but have the same Jones-Conway polynomials. In \cite{L-M-1} it was noticed
that these links are not $\skein$ equivalent for they have different 
signatures (see \S 5 and chapter III).

In \cite{P-T-2} it was proved that these links are not algebraically
equivalent (i.e.~they can not be distinguished by any invariant yielded by 
a Conway algebra).
Fig.3.21. shows the simplest pair of Birman knots.

\begin{center}
\begin{tabular}{c} 
\includegraphics[trim=0mm 0mm 0mm 0mm, width=.5\linewidth]
{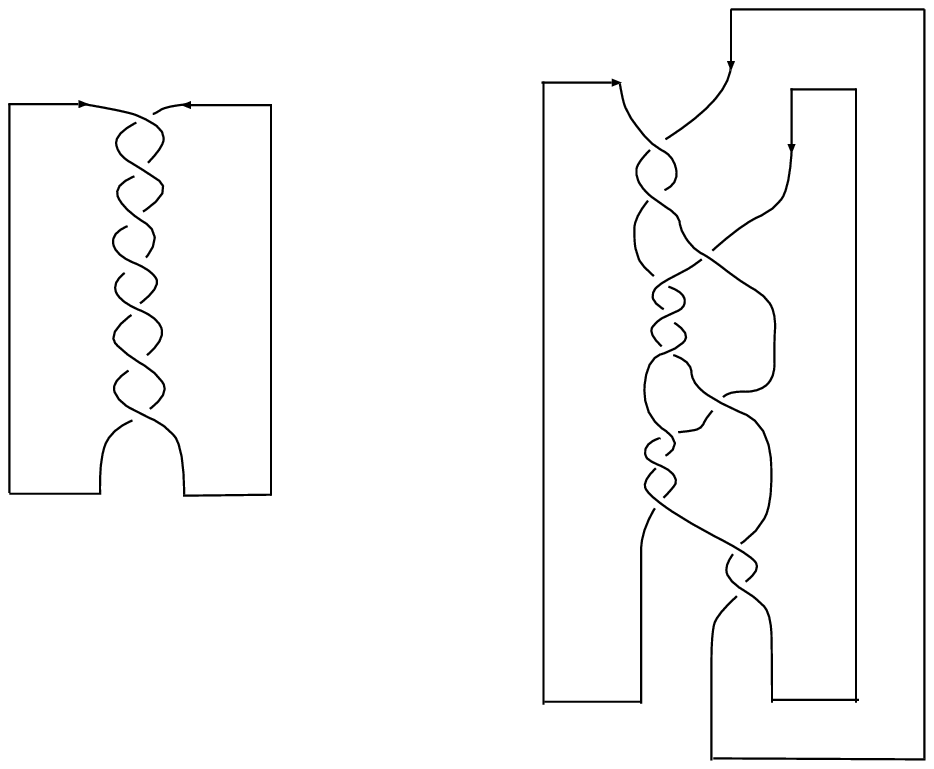}\\
\end{tabular}
\\
Fig. 3.21
\end{center}

\begin{problem}\label{Problem III.3.37}

\begin{enumerate}
\item [(i)] Do there exist links which have the same Jones-Conway polynomial
but are not algebraically equivalent?

\item [(ii)]
Do there exist links which are not algebraically equivalent, still they
have the same value of an invariant yielded by any finite Conway algebra?

\end{enumerate}
\end{problem}

Many algebraic properties of the Jones-Conway polynomial are known. 
Mostly these which relate its special substitutions with old 
invariants of links \cite{L-M-1,L-M-2,Mur-1,Mo-3} or \cite{F-W}. 
We will discuss these properties 
in respective chapters later on. Now we will present two quite elementary
properties of Jones-Conway polynomials.

\begin{lemma}\label{Lemma III:3.38}

\begin{enumerate}
\item [(i)] 
If $L$ is a link of odd number of components then all the monomials
of $P_L(x,y)$ polynomial are of even degree. If $L$ is a link of an even
number of components then these monomials are of an odd degree.

\item[(ii)]
 For every link $L$, $P_L(x,y)-1$ is divisible by
$x+y-1$, in particular, the Jones-Conway polynomial of a link cannot be
identically zero. 

\item [(iii)]
$P_L(x,y)+(-1)^{\mbox{com }(L)}$ is divisible by $x+y+1$;
$\mbox{com }(L)$ here means the number of components of link $L$.

\item [(iv)] $P_L(x,y) - (x+y)^{\mbox{com }(L)-1}$ is divisible by 
$(x+y)^2-1$.
\end{enumerate}
\end{lemma}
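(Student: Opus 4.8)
The plan is to run all four parts off the two defining data of the Jones--Conway polynomial from Example~\ref{p:1.8}: the values on trivial links, $P_{T_n}(x,y)=(x+y)^{n-1}$, and the skein relation $xP_{L_+}+yP_{L_-}=P_{L_0}$; together with the fact, established in Section~\ref{Section III:2}, that every link admits a resolving tree whose leaves are trivial links. Each part will be proved by induction along such a tree (equivalently on the pair $(\crs{L},b(L))$, as in the proof of Theorem~\ref{2:1.2}): the leaves give the base case, and an internal vertex is handled by rewriting the skein relation in one of the two equivalent forms $P_{L_+}=\frac{1}{x}(P_{L_0}-yP_{L_-})$ or $P_{L_-}=\frac{1}{y}(P_{L_0}-xP_{L_+})$. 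The only geometric input I will use is that a crossing change preserves the number of components while a smoothing changes it by exactly one, so that $\mbox{com}(L_+)=\mbox{com}(L_-)$ and $\mbox{com}(L_0)=\mbox{com}(L_\pm)\pm1$.

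For (i) I track the parity of the total degree of monomials, and the invariant to propagate is that every monomial of $P_L$ has total degree of the same parity as $\mbox{com}(L)-1$. On a leaf $T_n$ every monomial of $(x+y)^{n-1}$ has total degree $n-1$, so the base case holds. For the step, multiplication by $\frac{1}{x}$ lowers total degree by one while multiplication by $\frac{y}{x}$ preserves it; since $\mbox{com}(L_0)$ has parity opposite to $\mbox{com}(L_+)$, one checks that in $P_{L_+}=\frac{1}{x}(P_{L_0}-yP_{L_-})$ the two terms contribute monomials of the same parity, namely that of $\mbox{com}(L_+)-1$. The symmetric computation handles $P_{L_-}$, and this is exactly assertion~(i).

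For (ii) and (iii) I pass to the quotient rings $\Z[x^{\pm1},y^{\pm1}]/(x+y-1)$ and $\Z[x^{\pm1},y^{\pm1}]/(x+y+1)$, in which $y$ becomes the invertible element $1-x$, respectively $-1-x$, and divisibility by $x+y-1$ (resp.\ $x+y+1$) is literally reduction to zero. On a trivial link $(x+y)^{n-1}$ reduces to $1$, respectively to $(-1)^{n-1}$. Running the same induction, and using that $x$ is invertible in each quotient in order to solve for $P_{L_+}$ (or $P_{L_-}$), one gets $P_L\equiv 1$ modulo $x+y-1$, which is (ii), and $P_L\equiv(-1)^{\mbox{com}(L)-1}$ modulo $x+y+1$; since $(-1)^{\mbox{com}(L)-1}=-(-1)^{\mbox{com}(L)}$ this is (iii). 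The ``in particular'' clause of (ii) follows at once: were $P_L$ identically $0$, then $x+y-1$ would divide the unit $-1$, which is impossible.

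Finally (iv) is a formal consequence of (ii) and (iii). Modulo $x+y-1$ we have $(x+y)^{\mbox{com}(L)-1}\equiv1\equiv P_L$, and modulo $x+y+1$ we have $(x+y)^{\mbox{com}(L)-1}\equiv(-1)^{\mbox{com}(L)-1}\equiv P_L$, so each of $x+y-1$ and $x+y+1$ divides $P_L-(x+y)^{\mbox{com}(L)-1}$. Because these two linear elements remain non-associate irreducibles in the UFD $\Z[x^{\pm1},y^{\pm1}]$ (any common factor would be a unit $\pm x^iy^j$), they are coprime, and hence their product $(x+y)^2-1$ divides the difference. I expect the only genuinely delicate points to be this last coprimality/UFD step and, in (ii)--(iii), the discipline of arguing inside the Laurent ring where $x,y$ stay invertible, rather than performing a naive substitution $y=1-x$ that would leave that ring; the inductive bookkeeping itself is routine.
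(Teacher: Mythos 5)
Your proof is correct, and for parts (i)--(iii) it is exactly the argument the paper intends: the paper's own proof is a two-line sketch saying precisely that one checks the conditions on trivial links and then verifies that they propagate through the skein relation from $(L_-,L_0)$ to $L_+$ and from $(L_+,L_0)$ to $L_-$; your parity bookkeeping for (i) and your quotient-ring formulation of (ii) and (iii) are faithful, carefully written realizations of that sketch. The one place you genuinely diverge is (iv): the paper treats it by the same induction as the other parts (for trivial links the difference is $0$, and in the inductive step the extra term is $(x+y)^{c_0-1}-(x+y)^{c}$, which either vanishes or equals $(x+y)^{c-2}\bigl(1-(x+y)^2\bigr)$ and so is divisible by $(x+y)^2-1$), whereas you deduce (iv) formally from (ii) and (iii) by observing that $x+y-1$ and $x+y+1$ are non-associate primes in the UFD $\Z[x^{\pm1},y^{\pm1}]$, hence their product divides anything both divide. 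Both routes are sound: yours avoids a fourth induction at the cost of the localization/UFD facts (which you correctly flag as the delicate point -- note the two linear factors differ by $2$, a non-unit, so the coprimality does need the argument you give rather than a naive ``their difference is invertible'' shortcut), while the paper's uniform induction stays entirely elementary and self-contained within the skein calculus.
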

\begin{proof}
It is easy to check that the conditions (i)-(iv) are true for trivial
links. Then, it is enough to establish that if they are true
for $L_-$  $L_0$ (resp. $L_+$ and $L_0$) then they are true for $L_+$ 
(resp. $L_-$) as well.
\end{proof}
In fact one can improve a little Lemma \ref{Lemma III:3.38} using the same inductive proof.
For this one should consider the skein relation in slightly more general form:
$$xP_{L_+} + y P_{L_+} = zP_{L_0}$$ and do not assume that $z$ is invertible. Then the 
divisibility of $P_L - P_{T_{com(L)}}$ for knots can be formulated in more general manner 
than for any link (we did observe this already in the case of the Jones polynomial 
(see Chapter I). The value of the invariant for a trivial link of $n$ components is 
$P_{T_n}=(\frac{x+y}{z})^{n-1}$, thus if $z$ is not necessarily invertible we should work with invariants 
in the ring $A=\Z[x^{\pm 1},y^{\pm 1},z, \frac{x+y}{z}]$ or more formally\\
 $A=\Z[x^{\pm 1},y^{\pm 1},z,d]/(zd-(x+y))$.
The idea of working with this ring of invariants is used and developed when we work with periodic link, Chapter VIII.
With the above notation we have: 
\begin{proposition}\label{Proposition III:3.39}\ 
\begin{enumerate}
\item[(i)] For a link $L$ of $n$ components $P_L(x,y,z)-P_{T_n}(x,y,z)$ is divisible by $(x+y)\frac{x+y}{z}- z$.
\item[(ii)] For the Jones polynomial, that is $x=t^{-1},y=-t$, and $z=\sqrt{t}- \frac{1}{\sqrt{t}}$, we 
get: $V_L(t)-V_{T_n}(t)$ is divisible by $t^3-1$.
\item[(iii)] For a knot $K$, $P_K(x,y,z)-1$ is divisible by $(x+y)^2-z^2$. 
\item[(iv)] For the Jones polynomial of a knot $K$, $V_K(t)-1$ is divisible by $(t-1)(t^3-1)$.
\end{enumerate}
\end{proposition}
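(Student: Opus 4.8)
The four parts share one engine: the double induction on the lexicographically ordered pair $(\crs{L},b(L))$ used in the proof of Theorem~\ref{Theorem III:3.25}, where one resolves the first bad crossing $p$ so that $L^p_{\mp}$ has fewer bad crossings and $L^p_0$ has strictly fewer crossings. Throughout I would write $d=\frac{x+y}{z}=P_{T_2}$, so that $P_{T_n}=d^{\,n-1}$ and the single identity $x+y=zd$ holds in $A$. For part (i) I set $Q_L=P_L-d^{\,n(L)-1}$ and aim to show that $D:=(x+y)d-z=z(d^2-1)$ divides $Q_L$. The base case is immediate since $Q_{T_n}=0$. For the inductive step I substitute $P=Q+d^{\,n-1}$ into $xP_{L_+}+yP_{L_-}=zP_{L_0}$; writing $n=n(L_+)=n(L_-)$, the trivial-link parts contribute $(x+y)d^{\,n-1}=zd^{\,n}$ on the left, which cancels $z\,d^{\,n(L_0)-1}$ on the right exactly when $p$ is a self-crossing ($n(L_0)=n+1$), and leaves the remainder $-d^{\,n-2}D$ when $p$ joins two components ($n(L_0)=n-1$, so $n\ge 2$). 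In either case one gets $xQ_{L_+}+yQ_{L_-}=zQ_{L_0}-\varepsilon\,d^{\,n-2}D$ with $\varepsilon\in\{0,1\}$; since two of the three $Q$'s are divisible by $D$ by induction and $x,y$ are invertible in $A$, so is the third.

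Parts (ii) and (iv) I would obtain as formal consequences of (i) and (iii) under the Jones substitution, realized as the ring homomorphism $\phi\colon A\to\Z[t^{\pm 1/2}]$ with $\phi(x)=t^{-1}$, $\phi(y)=-t$, $\phi(z)=\sqrt t-\frac{1}{\sqrt t}$; one checks $\phi(z)\phi(d)=t^{-1}-t=\phi(x)+\phi(y)$, so $\phi$ is well defined and forces $\phi(d)=-(\sqrt t+\frac{1}{\sqrt t})$. Then $\phi(P_L)=V_L$ and $\phi(P_{T_n})=V_{T_n}$. A direct computation gives $\phi(D)=t^{-3/2}(t^3-1)$ and $\phi((x+y)^2-z^2)=t^{-2}(t-1)(t^3-1)$; as $t^{-3/2}$ and $t^{-2}$ are units, applying $\phi$ to $P_L-P_{T_n}=D\cdot R_L$ and to $P_K-1=((x+y)^2-z^2)\cdot S$ turns the divisibilities of (i) and (iii) into divisibility of $V_L-V_{T_n}$ by $t^3-1$ and of $V_K-1$ by $(t-1)(t^3-1)$.

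The real work is part (iii): for a knot the divisor must improve from $D=z(d^2-1)$ to $zD=(x+y)^2-z^2$, i.e. one must extract an extra factor of $z$. Conceptually this is the $z$-refinement of the parity phenomenon of Lemma~\ref{Lemma III:3.38}(i), governed by the ring involution $\sigma$ of $A$ fixing $x,y$ and sending $z\mapsto-z$, $d\mapsto-d$ (well defined since $(-z)(-d)=x+y$); the same resolving-tree induction shows $\sigma(P_L)=(-1)^{\,n(L)-1}P_L$, so $\sigma(P_K)=P_K$ while $\sigma(D)=-D$. The plan is to work inside $A$, which is an integral domain because $zd-(x+y)$ is irreducible over the UFD $\Z[x^{\pm1},y^{\pm1}]$. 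By (i) I write $Q_K=P_K-1=D\,R_K$ with $R_K\in A$ uniquely determined, and observe that $R_L$ is then a genuine link invariant obeying $xR_{L_+}+yR_{L_-}=zR_{L_0}-\varepsilon\,d^{\,n-2}$ (the $Q$-recursion divided by $D$). I would then reduce modulo $z$: in $A/(z)$ one has $y=-x$, and the self-crossing instance of this recursion collapses to $\overline{R}_{L_+}=\overline{R}_{L_-}$ because the $L_0$-term carries the coefficient $z$. Since every knot diagram can be converted to the unknot by self-crossing changes alone, and $\overline{R}_{T_1}=0$, it follows that $\overline{R}_K=0$, that is $z\mid R_K$ in $A$. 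Writing $R_K=zS$ yields $P_K-1=zD\,S=((x+y)^2-z^2)\,S$, which is (iii).

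The step I expect to be the main obstacle is exactly this last one. Knowing only the parity identity $\sigma(R_K)=-R_K$ does not by itself give $z\mid R_K$ in $A$, since the normal form of $R_K$ may carry $d$-monomials that are not visibly divisible by $z$; the argument must instead exploit that the class $\overline{R}_L$ modulo $z$ is insensitive to self-crossing changes and hence vanishes on knots. The two points needing care are verifying that $R_L=Q_L/D$ is unambiguous (for which the domain property of $A$ is essential) and confirming that the $L_0$-term genuinely decouples modulo $z$ in the self-crossing recursion; the remainder is the standard skein bookkeeping already rehearsed for Theorem~\ref{Theorem III:3.25}.
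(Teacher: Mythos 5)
Your proposal is correct and takes essentially the same route as the paper: the paper proves the proposition by the same induction it uses for Lemma III:3.38 (verify on trivial/descending diagrams, then propagate divisibility through the skein relation $xP_{L_+}+yP_{L_-}=zP_{L_0}$), carried out in the ring $A=\Z[x^{\pm 1},y^{\pm 1},z,d]/(zd-(x+y))$, with (ii) and (iv) obtained from (i) and (iii) by the Jones substitution exactly as you do. Your mod-$z$ packaging of part (iii) — passing to $R_L=Q_L/D$ and showing it is insensitive to self-crossing changes — is only a reformulation of the paper's inductive step: in both arguments the extra factor of $z$ for knots comes from the coefficient $z$ in front of the $P_{L_0}$ term, combined with part (i) applied to the two-component link $L_0$.
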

\begin{exercise}\
\begin{enumerate}
\item[(i)] Show that for a positive Hopf link, $H_+$ we have:
$$\frac{P_{H_+}-P_{T_2}}{(x+y)\frac{x+y}{z}- z}=\frac{1}{x}.$$
\item[(ii)] Show that for a negative Hopf link, $H_-$ we have:
$$\frac{P_{H_-}-P_{T_2}}{(x+y)\frac{x+y}{z}- z}=\frac{1}{y}.$$
\item[(iii)] Show that for a positive (right handed) trefoil knot, $\bar 3_1$ we have:
$$\frac{P_{\bar 3_1}-1}{((x+y)^2-z^2)}=-\frac{1}{x^2}.$$
\item[(iv)]  Show that for a negative (left handed) trefoil knot, $3_1$ we have:
$$\frac{P_{3_1}-1}{((x+y)^2-z^2)}=-\frac{1}{y^2}.$$
\item[(v)]  Show that for a figure eight knot, $4_1$ we have:
$$\frac{P_{4_1}-1}{((x+y)^2-z^2)}=-\frac{1}{xy}.$$
\item[(v)] Let $K_n$ be a twist knot of $n+2$ crossings (e.g. $K_2=4_1$ and $K_{-2}=\bar 3_1$).
 Show that for $n=2k$ we have:
$$\frac{P_{K_{2k}}-1}{((x+y)^2-z^2)}=-\frac{y^{k}-(-1)^{k}x^{k}}{xy^{k}(x+y)}.$$
Show that the formula holds also for negative $k$ and 
find the formula in the case of twist knots with $n$ odd.
\end{enumerate}
\end{exercise}

In the definition of the  Jones-Conway polynomial one could try to replace
the equation 1.9 by
\begin{formulla}\label{Formula III:3.41}
$$xw_1+yw_2 = zw_0 -v.$$
\end{formulla}
or, in the case $z$ is invertible, by
 
\begin{formulla}\label{e:3.39}
$$xw_1+yw_2 = w_0 -z.$$
\end{formulla}

Indeed, it leads to the invariant of links in 
$\Z\left[ x^{\pm 1},y^{\pm 1},z\right]$ \cite{P-T-1}, however this 
polynomial does not distinguish links better than the Jones-Conway 
polynomial (it was noticed in the Spring of 1995 
by the referee of \cite{P-T-1}, and later, independently
by O.Ya.Viro \cite{Vi}).\footnote{This is the case as long as we assume that a coefficient of $w_0$ 
is invertible.} Namely:

\begin{exercise}\label{Exercise III:3.43}
\begin{enumerate}
\item [(i)] 
Show that the following algebra is a Conway algebra.
${\cal A} = \{ A,a_1,a_2,\ldots,|,\star \}$ where $A = \Z\left[ x^{\mp
1},y^{\pm 1},z\right], a_1 =
1,a_2=x+y+z,\ldots a_i=(x+y)^{i-1}+z(x+y)^{i-2}+\cdots+z(x+y)+z,\cdots$.
We define the $|$ operation and $\star$ as follows:
$w_2|w_0 = w_1$ and $w_1\star w_0=w_2$ where
$$xw_1 + yw_2 = w_0-z; w_1, w_2,w_0\in A.$$

\item [(ii)] 
Show that the invariant of links $w_L(x,y,z)$, defined by a Conway algebra
of (i) satisfies  
$$w_L(x,y,z) = w_L(x,y,0) + z(\frac{w_L(x,y,0)-1}{x+y-1})$$
and that  
$$w_L(x,y,0) = P_L(x,y).$$
\end{enumerate}

Hint for (ii). Notice that $$a_i = (x+y)^{i-1} + 
z(\frac{(x+y)^{i-1}-1}{x+y-1}).$$
\end{exercise}

\begin{definition}\label{Definition III:3.44}
Every invariant of links can be used to build a better one. This will be called
a weighted simplex of the invariant. Namely, if $w$ is an invariant and
$L$ is a link of $n$ components $\row{L}{n}$, then we consider an 
$(n-1)$-dimensional simplex $\bigtriangleup^{n-1} = (\row{q}{n})$. 
To each face $(q_{i_1},\ldots,q_{i_k})$ of a simplex $\bigtriangleup^{n-1}$ 
we assign value $w_{L'}$, where $L' = L_{i_1}\cup\cdots\cup L_{i_k}$.

We say that two weighted simplexes are equivalent if there exists a permutation 
of their vertices which preserves weights of faces.
 
Of course, the weighted simplex of an invariant of ambient isotopy classes 
of oriented links is also an invariant of ambient 
isotopy classes of oriented links.

\end{definition}

\begin{example}[\cite{Bi-2}.]
The links of Fig.3.22 have the same Jones-Conway polynomial (as well as the 
signature --- see \S 4  and Chapter IV); still they are easy distinguishable
by the weighted simplex of the global linking numbers. 
(Example~\ref{p:1.10}).

\begin{center}
\begin{tabular}{c} 
\includegraphics[trim=0mm 0mm 0mm 0mm, width=.5\linewidth]
{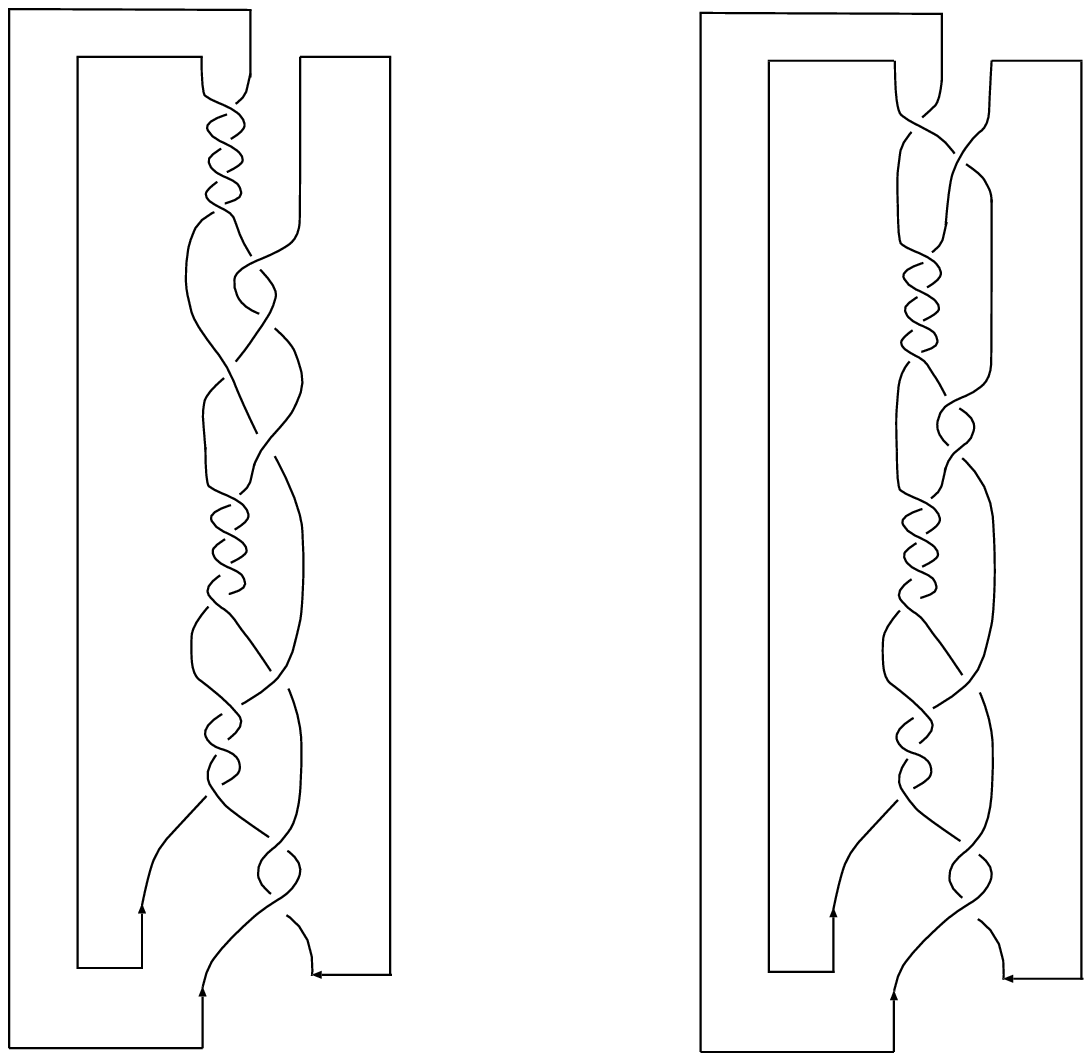}\\
\end{tabular}
\\
Fig. 3.22
\end{center}

\end{example}

\begin{exercise}\label{Exercise III:3.46}
Show that the links $L_1$ and $L_2$ of example~\ref{p:3.7}  
can be distinguished by the
weighted simplex of global linking numbers.
\end{exercise}


\section{Partial Conway Algebras.}

To get a Conway type link invariant with values in a set $A$ it is not 
necessary for the operations $|$ and $\star$ to be defined on the whole product
$A\times A$. Similarly there is no need for the relations $C3-C5$ to be
satisfied for all quadruples of $A\times A\times A\times A$. It is enough for
the operations to be defined and for the relations to be satisfied
merely in the case when 
the geometrical situation requires that. The above observation leads to
the definition of geometrically sufficient partial Conway algebras which define
the invariants of links of Conway type. These invariants can be more subtle 
than the ones obtained by  Conway algebras (e.g.~signature). The results 
we present here are based on  \cite{P-T-1,P-T-2,P-1}.

\begin{definition}\label{d:4.1}
A partial Conway algebra is a quadruple $(A,B_1,B_2,D)$ where $B_1$ and
$B_2$ are subsets of $A\times A$ and $D$ is a subset of
$A\times A\times A\times A$, together with 0-argument
operations $a_1,a_2,\ldots$ and two 2-argument operations $|$ and
$\star$ which are defined on $B_1$ and $B_2$, respectively. The operations
$|$ and $\star$ are assumed to satisfy equalities $C1-C7$ of Definition 1.1,
provided both sides of the respective equality are defined, and additionally 
$(a,b,c,d)\in D$ in the case of equalities $C3-C5$.
\end{definition}

\begin{definition}\label{d:4.2}
We say that a partial Conway algebra\\
 ${\cal A} =
(A,B_1,B_2,D,a_1,a_2,\ldots,|,\star)$ is geometrically sufficient if and
only if the following two conditions are satisfied.

\begin{enumerate}
\item[(i)] For every resolving tree of a link all the operations that are 
necessary to compute the root value are admissible, that is, all 
the intermediate values are in the sets where $|$ and $\star$ are defined.
\item[(ii)] Let $p_1, p_2$ be two different crossings of a diagram $L$. 
Let us consider the diagrams $L^{p_1\ p_2}_{\varepsilon_1\
\varepsilon_2},L^{p_1\ p_2}_{\varepsilon_1\ 0},L^{p_1\ p_2}_{0\
\varepsilon_2},L^{p_1\ p_2}_{0\ 0}$, where 
 $\varepsilon_i = -\sgn p_i$
and let us choose for them the resolving trees 
$T_{p_1,p_2},T_{p_1,0},T_{0,p_1},T_{0,0}$ respectively. 
Denote the root values of these trees by $w_{p_1,p_2},  w_{p_1,0},
w_{0,p_2}, w_{0,0}$ respectively. Now, we assume that in the above case
always \\
 $(w_{p_1,p_2},  w_{p_1,0},w_{0,p_2}, w_{0,0})\in D$.
\end{enumerate}
\end{definition}

Condition (ii) means that the resolving trees of $L$ of Fig.4.1 
give the same values at the roots of the trees.

\begin{center}
\begin{tabular}{c} 
\includegraphics[trim=0mm 0mm 0mm 0mm, width=.65\linewidth]
{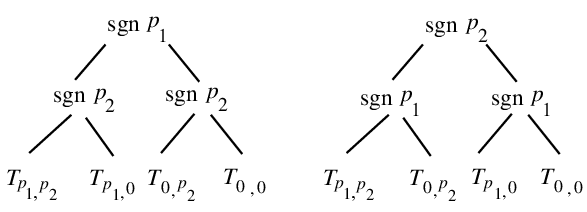}\\
\end{tabular}
\\
Fig. 4.1
\end{center}

The proof of Theorem~\ref{2:1.2}  can be used, without changes, for the
case of a geometrically sufficient partial Conway algebra.

\begin{theorem}\label{t:4.3}
Let ${\cal A}$ be a geometrically sufficient partial Conway algebra.
Then there exists a unique invariant $w$ which associates an element
from $A$ to any skein equivalence class of links and the invariant $w$
satisfies the following conditions:
\begin{enumerate}
\item $w_{T_n} = a_n$
\item $w_{L_+} = w_{L_-}|w_{L_0}$
\item $w_{L_-} = w_{L_+}\star w_{L_0}$.
\end{enumerate}
\end{theorem}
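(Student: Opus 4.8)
The plan is to re-run the entire argument of Section~\ref{Section III:2} verbatim, checking at each step that the operation applications and algebraic identities it invokes remain legitimate in the partial setting. The skeleton of the construction is unchanged: I would set $w_0(L)=a_n$ on crossing-free diagrams and build $w_{k+1}$ from $w_k$ through the Main Inductive Step, introducing the auxiliary functions $w_b$ (indexed by a choice of base points) via the recursion~\ref{e:2.9} on the number of bad crossings. The point where the partial setting could break down is that $|$ is only defined on $B_1$, $\star$ only on $B_2$, and $C3$--$C5$ only hold on the quadruples in $D$. The existence of every root value of a resolving tree is exactly what condition~(i) of geometric sufficiency (Definition~\ref{d:4.2}) supplies: each time the recursion applies $|$ or $\star$, its two arguments are root values of strictly smaller resolving subtrees, so by~(i) the pair lies in $B_1$ (respectively $B_2$) and the operation is admissible. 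Hence $w_b(L)$ is well defined for every $L\in{\cal D}$.

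Next I would reproduce the three independence arguments --- the Conway relations for $w_b$, Changing Base Points, and Independence of Reidemeister Moves --- exactly as written, the only new obligation being to justify each appeal to $C1$--$C7$. The initial and inversion identities $C1,C2,C6,C7$ are assumed to hold whenever both sides are defined, and condition~(i) guarantees that the terms appearing are admissible, so these steps go through unchanged (for instance the use of $C1$ in case CBP~2(a) and of $C6,C7$ in the base-point and second-Reidemeister analyses). The genuinely delicate point is the use of the transposition identities $C3$--$C5$, which in the original proof is concentrated in the derivation of formula~\ref{e:2.10} (via $C3$, and via $C4$ in the case $\sgn q=-$) while establishing the Conway relations for $w_b$. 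There the two crossings $p$ and $q$ are resolved in both orders, and the four arguments of the identity are precisely the root values obtained from the diagrams $L^{p\,q}_{\varepsilon_1\,\varepsilon_2},L^{p\,q}_{\varepsilon_1\,0},L^{p\,q}_{0\,\varepsilon_2},L^{p\,q}_{0\,0}$. This is exactly the configuration of condition~(ii) of geometric sufficiency, so the relevant quadruple lies in $D$ and the identity is licensed.

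From there I would pass to $w^0$ (independent of base points once the order of components is fixed) and then prove Independence of the Order of Components using Lemmas~\ref{l:2.15}--\ref{l:2.17}. These lemmas are purely diagrammatic: they only rearrange crossings by Reidemeister moves that do not increase the crossing number, invoking the algebra solely through the already-established invariance, so they transfer with no change. Setting $w_{k+1}=w^0$ and checking compatibility of the family $\{w_k\}$ as in the original completes the Main Inductive Step and yields $w$ on diagrams; conditions (1)--(3) of the theorem read off from the construction as before, and since $w$ satisfies the Conway relations it is automatically constant on skein-equivalence classes, matching the statement. Uniqueness holds because any invariant satisfying (1)--(3) is forced on every diagram by its resolving tree.

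I expect the main obstacle to be the bookkeeping implicit in the second paragraph: one must verify that \emph{every} invocation of $C3$--$C5$ throughout the reused proof genuinely matches the two-crossing pattern of condition~(ii), so that the quadruple is guaranteed to lie in $D$. This reduces to inspecting the (few) such steps and, in each, identifying the pair of crossings $p_1,p_2$ whose four joint resolutions produce the four arguments; once that identification is made, geometric sufficiency does the rest, and the proof of Theorem~\ref{2:1.2} applies without further modification.
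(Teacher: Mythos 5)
Your proposal is correct and is exactly the paper's approach: the paper's entire ``proof'' of Theorem~\ref{t:4.3} is the single remark that the proof of Theorem~\ref{2:1.2} can be used, without changes, for geometrically sufficient partial Conway algebras. Your write-up simply makes explicit the verification the paper leaves implicit --- that condition~(i) of Definition~\ref{d:4.2} licenses every application of $|$ and $\star$ in the resolving-tree recursion, and that condition~(ii) supplies membership in $D$ precisely at the two-crossing transposition steps (formula~\ref{e:2.10} and its $C4$ variant) where $C3$--$C5$ are invoked.
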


Similarly as in the case of Conway algebras, the conditions $C1-C7$ 
for partial Conway algebras are not independent.
Namely, we have the following result, the proof of which is left to the reader (but see the comment 
after the lemma).

\begin{lemma}\label{l:4.4}
Let $(A,B_1,B_2,a_1,a_2,\ldots,|,\star)$ 
be a partial algebra such 
that:
\begin{enumerate}
\item The property  (i) in Definition~\ref{d:4.2} is satisfied.

\item The property (ii) in Definition~\ref{d:4.2} is satisfied for each pair 
of crossings of positive sign, i.e.~the resolving trees of $L$ from Fig.~4.1
give the same values at their roots if $\sgn p_1=\sgn p_2 = +$.

\item The conditions $C1$, $C6$ and $C7$ are satisfied if both sides
of the equations are defined.
\end{enumerate}

If $D$ is a subset of $A\times A\times A\times A$ for which
the condition $C3$ is satisfied then ${\cal A} = 
(A,B_1,B_2,D,a_1,a_2,\ldots,|,\star)$ is a geometrically
sufficient partial Conway algebra.
\end{lemma}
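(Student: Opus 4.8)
The plan is to check the three conditions that together make the quadruple $(A,B_1,B_2,D)$ a geometrically sufficient partial Conway algebra: the axioms $C1$--$C7$ of Definition~\ref{d:4.1} (with membership in $D$ required for $C3$--$C5$), condition (i) of Definition~\ref{d:4.2}, and condition (ii) of Definition~\ref{d:4.2} for \emph{every} pair of crossings. Condition (i) is hypothesis 1 verbatim. Among the axioms, $C1,C6,C7$ are hypothesis 3, and $C3$ holds on $D$ by the definition of $D$, so only $C2$ and $C4,C5$ remain. I would obtain $C2$ from $C1$ and $C6$ exactly as in Lemma~\ref{Lemma III.1.3}(a), the only extra care being that whenever the two sides of $C2$ are defined the single intermediate term $a_n\star a_{n+1}$ is defined too.

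The substance of the proof is condition (ii) for the sign patterns other than $(+,+)$, and the mechanism is the ``peeling'' of a changed crossing by the inversion law $C6$. Fix crossings $p_1,p_2$ and abbreviate by $a,b,c,d$ the root values of resolving trees for $L^{p_1p_2}_{--}$, $L^{p_1p_2}_{-0}$, $L^{p_1p_2}_{0-}$, $L^{p_1p_2}_{00}$. Applying hypothesis 2 to the all-positive diagram $L^{p_1p_2}_{++}$ gives exactly $(a,b,c,d)\in D$, that is $(a|b)|(c|d)=(a|c)|(b|d)$. For a general pattern $(\sigma_1,\sigma_2)$ the four diagrams of condition (ii), taken at $\varepsilon_i=-\sigma_i$, have root values gotten from $a,b,c,d$ by the Conway relations at $p_1$ and $p_2$; by the remark following Definition~\ref{d:4.2} the content of condition (ii) is that the two orders of resolving $p_1,p_2$ agree. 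When $(\sigma_1,\sigma_2)=(+,-)$ this agreement reads $((a|b)\star b)|((c|d)\star d)=((a|b)|(c|d))\star(b|d)$; applying $C6$ to the two factors on the left turns the left side into $a|c$, and by $C6,C7$ the resulting equation $a|c=((a|b)|(c|d))\star(b|d)$ is a rewriting of the positive-positive identity $(a|c)|(b|d)=(a|b)|(c|d)$ already in hand.

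I would treat the remaining patterns the same way: $(-,+)$ is the mirror of $(+,-)$, and for $(-,-)$ the relevant base tuple is $\big((a|b)|(c|d),\,b|d,\,c|d,\,d\big)$ and the required entropy identity collapses, after using the $(+,+)$-identity once and $C6$ several times, to the tautology $a=a$. These very collapses also deliver $C4$ and $C5$ on the tuples that actually occur: on a tuple produced from a $D$-tuple by the crossing-change maps $|_t,\star_t$ the relations $C3,C4,C5$ are interchangeable, because $C6,C7$ make $|_t$ and $\star_t$ mutually inverse (Lemma~\ref{Lemma III:1.4}(a)), so $C3$ for $(a,b,c,d)$ forces the $C4$- and $C5$-shaped identities demanded elsewhere.

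The hard part is bookkeeping, not invention. In the partial setting one must track the definedness of each intermediate expression so that every use of $C6,C7$ is licensed by hypothesis 1, and one must resist arguing tuple-wise that $C3\Rightarrow C4$ on an abstract member of $D=\{\,C3\text{ holds}\,\}$, since that implication genuinely fails without the geometric link: it is only for tuples built from $(a,b,c,d)$ by crossing changes that the collapse via $C6$ is available. Anchoring every reduction at the diagram $L^{p_1p_2}_{++}$ and hypothesis 2 is therefore essential. Once condition (ii) is secured for all sign patterns, the proof of Theorem~\ref{2:1.2} applies without change, as noted just before Theorem~\ref{t:4.3}, and we set the invariant to be the resulting $w$.
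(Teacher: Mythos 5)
Your proof is correct and takes essentially the same route as the paper: anchor everything at the all-positive diagram $L^{p_1 p_2}_{+\,+}$, read off the $C3$ identity for the leaf tuple from hypothesis 2, and collapse the mixed-sign and doubly-negative agreements onto it using $C6$, $C7$, with definedness of each intermediate expression supplied by property (i). The paper's own argument (which follows Lemma~\ref{Lemma III.1.3} while staying inside the domain of the partial algebra) is only sketched for the mixed-sign case and left to the reader; yours is a completed write-up of that same argument.
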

In our proof, we follows that of Theorem III:1.3 but we should be sure that we 
are moving inside the domain of our partial algebra.
Thus assume (3), (6) and (7) and we will prove (4). Consider a link diagram $L$ 
and its two crossing of different signs, $p_1$ and $p_2$. We can assume that $L= L_{+\ \ -}^{p_1 p_2}$ 
Then when resolving first $p_1$ and then $p_2$ we have allowed expression:
$$(w_{ L_{-\ \ +}^{p_1 p_2}}* w_{L_{-\ \ 0}^{p_1 p_2}})|( w_{L_{0\ \ +}^{p_1 p_2}}*w_{ L_{0\ \ 0}^{p_1 p_2}})$$
If we resolve first $p_2$ and then $p_1$ we get another allowed expression 
$$(w_{ L_{-\ \ +}^{p_1 p_2}}| w_{L_{0\ \ +}^{p_1 p_2}})*( w_{L_{-\ \ 0}^{p_1 p_2}}|w_{ L_{0\ \ 0}^{p_1 p_2}})$$
and our goal is to show that they are equal.
Now $(w_{ L_{-\ \ +}^{p_1 p_2}}* w_{L_{-\ \ 0}^{p_1 p_2}})$ is from the definition equal to
 $w_{ L_{-\ \ -}^{p_1 p_2}}$ 
and $w_{L_{-\ \ 0}^{p_1 p_2}}|w_{ L_{0\ \ 0}^{p_1 p_2}})$ is equal to $w_{ L_{-\ \ +}^{p_1 p_2}}$

Now we will construct two examples of geometrically sufficient partial Conway
algebras 
and we will discuss the invariant of links which are defined by these 
two algebras. Let us begin with an example which leads to a direct 
generalization of the  Jones-Conway polynomial.
Instead of equations \ref{III:1.9} or \ref{e:3.39} we use other equations
depending on the number of components of $L_+$, $L_-$ and $L_0$.

\begin{example}\label{2:4.5}
The following partial algebra ${\cal A}$ is a geometrically sufficient
partial Conway algebra:

\begin{eqnarray*}
A&=&N\times
\Z\left[
{y_1^{\pm 1}, x'^{\pm 1}_2, z'^{\pm 1}_2,x_1^{\pm 1}}, z_1, x_2^{\pm 1},  z_2, 
x_3^{\pm 1}, z_3,\ldots\right]\\
B_1 = B_2&=&\left\{ ((n_1,w_1),(n_2,w_2))\in A\times A:
|n_1-n_2|=1\},D=A\times A\times A\times A\right\}\\
a_1&=&(1,1)\\
a_2&=&(2,x_1+y_1+z_1)\\
&\vdots&\\
a_n&=&(n,\prod^{n-1}_{i=1}(x_i+y_i) +
z_1\cdot\prod^{n-1}_{i=2}(x_i+y_i)+\cdots+z_{n-2}(x_{n-1}+y_{n-1})+z_{n-1})\\
&\vdots&\\
\end{eqnarray*}
where $y_i = x_i\cdot\frac{y_1}{x_1}$. To define operations $|$ and
$\star$ we consider the following system of equations:
\begin{eqnarray*}
(1)&x_1w_1 + y_1w_2 = w_0 - z_1&\\
(2)&x_2w_1 + y_2w_2 = w_0 - z_2&\\
(2')&x_2'w_1 + y_2'w_2 = w_0 - z_2'&\\
(3)&x_3w_1 + y_3w_2 = w_0 - z_3&\\
(3')&x_3'w_1 + y_3'w_2 = w_0 - z_3'&\\
&\vdots&\\
(i)&x_iw_1 + y_iw_2 = w_0 - z_i&\\
(i')&x_i'w_1 + y_i'w_2 = w_0 - z_i'&\\
\end{eqnarray*}
where $y_i' =\frac{x_i'y_1}{x_1}$, $x_i' = \frac{x_2'x_1}{x_{i-1}}$ and
$z_i'$ is defined inductively so that it satisfies the equality
$$\frac{z_{i+1}'-z_{i-1}}{x_1x_2'} = (1+\frac{y_1}{x_1})(\frac{z_i'}{x_i'}
- \frac{z_i}{x_i}).$$

Now we define $(n,w) = (n_1,w_1)|(n_2,w_2)$ and, respectively,
$(n,w)=(n_1,w_1)\star (n_2,w_2)$ in the following way: we set $n:= n_1$ and
further
\begin{enumerate}
\item if $n_1 = n_2-1$ then we use equation $(n)$ ($n_1=n$)
to determine
$w$, namely $x_nw + y_nw_1 = w_2 -z_n$ (respectively, $x_nw_1 +
y_nw = w_2 - z_n$), 

\item if $n_1 = n_2+1$ then we use equation $(n')$ to determine
$w$, namely $x'_nw + y_n'w_1 = w_2 -z_n'$ (respectively, $x_n'w_1 +
y_n'w = w_2 - z_n'$).
\end{enumerate}
\end{example}

We shall prove that such ${\cal A}$ is a geometrically sufficient partial
Conway algebra.

It is easy to check that first coordinates of elements from $A$
satisfy relations $C1-C7$ (they define the number of components in the link
c.f.~Example III.1.5. Also, it is not hard to check that $A$ satisfies
relations $C1$, $C2$, $C6$ and $C7$. Therefore we concentrate on relations
$C3-C5$.

It is convenient to use the following notation: if $w\in A$
then  $w = (|w|,F_w)$ and for 
$$w_1|w_2 = (|w_1|, F_{w_1})|(|w_2|,F_{w_2}) = (|w|,F_w) = w$$
we write 
$$
F_w =
\left\{
\begin{array}{lll}
F_{w_1}|_n F_{w_2}&\mbox{if}&n=|w_1| = |w_2|-1\\
F_{w_1}|_{n'} F_{w_2}&\mbox{if}&n=|w_1| = |w_2|+1.\\
\end{array}
\right.
$$

We also use a similar notation for the operation $\star$. 

In order to verify  $C3-C5$ we have to consider three main cases:
\begin{enumerate}

\item $|a| =  |c|-1 = |b|+1 = n;\ a,b,c,d\in A$.

Both sides of relations $C1-C5$ are defined if and only if 
$|d| = n$. The relation $C3$ is then reduced to the following equation:
$$ (F_a|_{n'}F_b)|_n (F_c|_{n'}F_d) =(F_a|_{n}F_b)|_{n'}
(F_c|_{n}F_d).$$
From this we get:

\begin{eqnarray*}
&\frac{1}{x_nx_{n+1}'}\cdot F_d - \frac{y_{n+1}'}{x_nx_{n+1}'}\cdot
F_c-\frac{y_{n}}{x_nx_{n}'}\cdot F_b+\frac{y_ny_{n}'}{x_nx_{n}'}\cdot
F_a&-\\
&\frac{z_{n+1}'}{x_nx_{n+1}'} -
\frac{z_n}{x_n}+\frac{y_{n}z_n'}{x_nx_{n}'}&=\\
&\frac{1}{x_n'x_{n-1}}\cdot F_d - \frac{y_{n-1}'}{x_n'x_{n-1}}\cdot
F_b-\frac{y_{n}'}{x_nx_{n}'}\cdot F_c+\frac{y_ny_{n}'}{x_nx_{n}'}\cdot
F_a&-\\
&\frac{z_{n-1}}{x_n'x_{n-1}} -
\frac{z_n'}{x_n'}+\frac{y_{n}'z_n}{x_nx_{n}'}&\\
\end{eqnarray*}
and subsequently
\begin{enumerate}
\item $x_{n-1}x_{n}' = x_nx_{n+1}'$

\item $\frac{y_{n+1}'}{x_{n+1}'} = \frac{y_n'}{x_n'}$

\item $\frac{y_n}{x_n} = \frac{y_{n-1}}{x_{n-1}}$

\item $\frac{z_{n+1}'}{x_nx_{n+1}'} +
\frac{z_n}{x_n}-\frac{y_{n}z_n'}{x_nx_{n}'} = \frac{z_{n-1}}{x_n'x_{n-1}} +
\frac{z_n'}{x_n'}-\frac{y_{n}'z_n}{x_nx_{n}'}$.
\end{enumerate}
Relations $C4$ and $C5$ give the same conditions (a)--(d).

\begin{enumerate}
\item $|a| =  |b|-1 = |c|-1 = n$.

\item $|d| = n$.

The relation $C3$ can be reduced to the following equation:
$$ (F_a|_{n}F_b)|_n (F_c|_{(n+1)'}F_d) =(F_a|_{n}F_c)|_{n}
(F_b|_{(n+1)'}F_d).$$

After simple calculations we find out that the above equality is equivalent
to:
\end{enumerate}
\setcounter{enumii}{4}
\item $\frac{y_n}{x_n} = \frac{y_{n+1}'}{x_{n+1}'}$

Relations $C4$ and $C5$ can be reduced to the condition (e) as well.

\begin{enumerate}\setcounter{enumiii}{1}
\item $|d| = n+2$.

In this case relations $C3-C5$ are reduced to the condition (c).
\end{enumerate}


\item  $|a| =  |b|+1 = |c|+1 = n$.
\begin{enumerate}
\item $|d| = n-2$.
\item $|d| = n$.
\end{enumerate}

After simple calculations we find out that in the cases 3(i) and 3(ii) 
the relations $C3-C5$ follow from the conditions (c) and (e). 
\end{enumerate}

The conditions (a)--(e) are equivalent to conditions satisfied by 
$x_i',y_i,y_i'$ and $z_i'$ in Example~\ref{2:4.5}. Therefore we have 
proved that ${\cal A}$ is a geometrically sufficient partial
Conway algebra.

The partial algebra discussed above in Example~\ref{2:4.5} defines
an invariant of links, the second component of which is a polynomial
of infinite number of variables. It is natural to ask how is this polynomial
related to other known invariants of links and whether it generalizes
the Jones-Conway polynomial.

\begin{problem}
\ \\
\begin{enumerate}
\item Do there exist two oriented links which have the same 
Jones-Conway polynomial but they can be distinguished by 
a polynomial of infinitely many variables?
\footnote{Adam Sikora (then a student of P.~Traczyk) proved that the answer
for the question 4.6.1 is negative,  \cite{Si-1}.
If the coefficient of $w_0$ in equations (1)--(i') 
was not equal to 1 but was non-invertible then the polynomial 
of infinite number of variables would distinguish some of Birman's links
which can not be distinguished by the  Jones-Conway polynomial; 
\cite{P-6}}

\item Do there exists two algebraically equivalent links (i.e. links which have always the same invariants 
coming from Conway algebras) which can be
distinguished by a polynomial of infinite variables?
\end{enumerate}
\end{problem}


It is quite possible that our partial algebra of polynomials of infinite number of variables
can be extended to a Conway
algebra (in that case Traczyk result would follows from Toyoda Theorem \cite{Toy-1,Toy-2,Toy-3,Toy-4})
At any rate, Birman's examples \cite{Bi-2}
cannot be distinguished one from the other
by a polynomial of infinite variables.
In particular we have:
\begin{exercise}
Prove that the links from Fig.~3.21 have the same polynomials of infinite
variables. The same for links pictured at Fig.~3.22.
\end{exercise}

The next example of a geometrically sufficient partial Conway algebra 
defines important classical invariants of links: classical signature, Tristram-Levine 
signature and its 
generalization --- supersignature (see \cite{P-T-3}; we were unable to prove there an 
existence of a supersignature, which would solve Milnor's unknotting conjecture. The dream 
of ``supersignature" was realized with invention of Khovanov homology (see Chapter X) and 
Rasmussen s-signature \cite{Kh-1,Ras}).
An advantage of signature (especially in view of the case of polynomial of 
infinite variables which we have just discussed) is the existence of examples
of algebraically equivalent links which can be distinguished by the signature
(e.g.~knot pictured at Fig.~3.21).
This implies that there exist algebraically equivalent links which 
are not skein equivalent. It is somewhat uncomfortable, though,
that until now we know no purely combinatorial proof that signature and
supersignature are invariants of ambient isotopy classes of links
(compare Chapter IV).

\begin{definition}\label{d:4.8}(Supersignature Algebra)\\
For a pair of real number $u$, $v$, such that $u\cdot v>0$,
we define a partial Conway algebra ${\cal A}_{u,v}$ which we 
call the supersignature algebra. We define it  as follows:
$A = (R\cup iR)\times (\Z\cup\infty)$, $B_1 = B_2 = \{
(r_1,z_1),(r_2,z_2)\in A\times A\ | \  \mbox{ if } r_1\in R  \mbox{ then 
$r_2\in iR$, if $ r_1\in iR$ then $r_2\in R$, if $z_1,z_2\neq\infty $ 
then $ |z_1-z_2| = 1$, 
and $r_i = 0$} \\ 
\mbox{ if and only if } z_i = \infty \}$.

The operations $|$ and $\star$ are defined as follows:

The first coordinates of elements
in $A=(R\cup iR)\times (\Z\cup\infty)$ are related by the equality
$$-uw_1 +vw_2 = iw_0, \mbox{ where } w_1,w_2,w_0 \in R\cup iR$$
which is similar to the case of the Jones-Conway polynomial (it is enough to substitute
$u = -xi$ and $v=yi$ to get the equation~\ref{III:1.9}). 
In particular, the first coordinate of the result of an operation
depends only on the first coordinates of arguments, so we will write simply
 $w_1 = w_2| w_0$ and $w_2 = w_1\star w_0$. 

In order to describe the second coordinate of the result of operations $|$ and
$\star$ we write $(r,z) = (r_1,z_1)|(r_0, z_0)$ or
$(r_1,z_1)\star (r_0, z_0)$ where $z$ is determined by the following conditions:
\begin{enumerate}
\item $i^z = \frac{r}{|r|}$ if $r\neq 0$,
\item
$|z-z_0| = 1$
if $r\neq 0$ and $r_0 \neq 0$,
\item $z=z_1$ if $r_0 = 0$,
\item $z = \infty$ if $r=0$.
\end{enumerate}
The $0$-argument operations (constants or unary operations) $a_i$ are defined as follows:
$$a_1 = (1,0),\ldots, a_k= \Biggr((\frac{v-u}{i})^{k-1},
\left\{
\begin{array}{lll}
-(k-1)&\mbox{if }& u<v\\
\infty&\mbox{if }&       u=v\\
k-1&\mbox{if }&u>v
\end{array}
\right. \Biggr) ,\ldots .$$

And finally, $D$ is the subset of $A\times A\times A\times A$ consisting of elements
for which the relations $C3--C5$ are satisfied.
\end{definition}

\begin{problem}[\cite{P-T-2}]\label{III:4.9}
For which values of $(u,v)$
is ${\cal A}_{u,v}$ a geometrically sufficient partial Conway algebra?
\end{problem}

For the pair $u,v$ for which the answer is positive, the algebra 
${\cal A}_{u,v}$ defines an invariant of links, the second component of which
we call the supersignature and we denote it by $\delta_{u,v}(L).$

In Chapter IV we  show that the answer for Problem~\ref{III:4.9} 
is affirmative for
$u =v\in  (-\infty,-\frac{1}{2}]\cup[\frac{1}{2},\infty)$ and then
for such $u$ and $v$ the supersignature coincides with 
Tristram-Levin signature \cite{Tr,Le,Gor} (unless it is equal $\infty$). 
In particular, for $u=v=\frac{1}{2}$ we get the classical signature.\footnote{T.Przytycka 
found (in the Spring of 1985) the values of $u,v$, for which the supersignature
is not well defined. In her examples $u\neq v,v^{-1}$. 
P.~Traczyk and M.~Wi\'sniewska \cite{Wis} have found a neighborhood of 
the origin
on the plane consisting of pairs $(u,v)$for which the supersignature does not
exist (Problem~\ref{III:4.9} has negative answer). There is a possibility that there exists a supersignature related
to the Jones polynomial, i.e.~defined for pairs $(u,v)$ such that 
$v=\frac{-t}{{\sqrt  t}-{\sqrt t}^{-1}}=t^2u$ where $t$ is a negative real number.} 
The proof of this fact goes beyond simple purely combinatorial
methods and therefore we postpone it until the next chapter.
Now, let us look what the obstructions for a direct solution
of Problem~\ref{III:4.9} are.

\begin{enumerate}
\item Suppose that the condition (i) in Definition~\ref{d:4.2} 
is satisfied. This implies that, given any resolving tree of a link,
all operations needed to compute the value of the invariant in the root
are admissible. This is because the first coordinate of elements of the supersignature
algebra (which is known to be an invariant of links as it is a
variant of the  Jones-Conway polynomial) assumes real values for links with
an odd number of components and purely imaginary values for links with 
an even number of components.

\item  We would like to prove the condition (ii) of Definition~\ref{d:4.2}.
The condition asserts that different resolving trees of a given link yield
the same value in their roots (equivalently, conditions $C3-C5$ are satisfied
when needed).
The first attempt of the proof is to check whether the condition $C3$ 
is true whenever both sides of the equality are well defined
(this is enough because of Lemma~\ref{l:4.4}) --- the idea is similar
to Example~\ref{2:4.5}. This time, however, this is not the case, as we see
from the following example.
\end{enumerate}

\begin{example}\label{III:4.10}
Let us consider the condition $C3$ for a supersignature algebra ${\cal A}_{u,v}$
$$((r_a,z_a)|(r_b,z_b))|((r_c,z_c)|(r_d,z_d))=((r_a,z_a)|(r_c,z_c))|((r_b,z_b)|(r_d,z_d))$$
Because of the definition of the operation $|$ we get:
\begin{enumerate}
\item $r_a|r_b = \frac{1}{u}(ir_b+vr_a)$
\item $r_a|r_c = \frac{1}{u}(ir_c+vr_a)$
\item $r_c|r_d = \frac{1}{u}(ir_d+vr_c)$
\item $r_b|r_d = \frac{1}{u}(ir_d+vr_b)$
\item
$(r_a|r_b)|(r_c|r_d)=(r_a|r_c)|(r_b|r_d)=\frac{1}{u^2}_d+ivr_c+ivr_b+v^2r_a)$

Now suppose that
\item $u,v>0;r_a,r_d\in R;r_b,r_c\in iR;z_a = -2, z_b=z_c = -1, z_d =
0$.

Because of these properties and in view of the condition (1) of Definition
~\ref{d:4.8}) we get:

\item $r_a<0, r_d>0,ir_b>0, ir_c>0$

Further, suppose that 

\item $ir_b+vr_a>0$, $ir_c+vr_a<0$, $m -r_d+ivr_c<0$, $-r_d+ivr_b>0$

and

\item $-r_d+ivr_c+ivr_b+v^2r_a < 0$.
\end{enumerate}

The conditions 6--10 may be satisfied (even for $u=v=\frac{1}{2}$),
yet the value of the second coordinate computed in left-hand-side of
the condition $C3$ is equal 2, while on the right-hand-side we get -2.
\end{example}

\begin{exercise}\label{2:4.11}
If, for some values of $u,v$, the answer for Problem ~\ref{III:4.9} 
is affirmative then, in view of the Example~\ref{III:4.10}, 
we can use the algebra to find bounds on Jones-Conway polynomials of links.
Find these bounds.
\end{exercise}

\begin{lemma}\label{III:4.12}
The supersignature $\delta_{u,v}$ (for these  $u,v$ for which it exists)
satisfies the following conditions:
\begin{enumerate}
\item[(a)] $\delta_{u,v}(L) = -\delta_{v,u}(\overline{L})$,
\item [(b)] $\delta_{u,v}(L_1\kwad L_2) = \delta_{u,v}(L_1)+\delta_{u,v}(L_2)$
\item[(c)] $\delta_{u,v}(L_1\sqcup L_2) =
\delta_{u,v}(L_1)+\delta_{u,v}(L_2)+ \varepsilon(u,v)$ where

$$\varepsilon(u,v) = \left\{
\begin{array}{lll}
1&\mbox{if}u>v\\
\infty&\mbox{if}&u=v\\
-1&\mbox{if}&u<v\\
\end{array}
\right.
$$

\item[(d)] $\delta_{u,v}(L_+)\leq \delta_{u,v}(L_-)$ if
$\delta_{u,v}(L_+)\neq\infty$ and $u,v>0$.
\end{enumerate}
\end{lemma}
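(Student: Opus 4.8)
The plan is to treat all four parts with the same inductive machinery used to build the invariant, exploiting that the first coordinate of ${\cal A}_{u,v}$ is, after the substitution $x=ui$, $y=-vi$, exactly the Jones--Conway polynomial, so that $r_L^{u,v}=P_L(ui,-vi)$. Thus the behaviour of first coordinates under mirror image, connected sum and disjoint sum is already supplied by Lemma~\ref{2:3.22}, Corollary~\ref{2:3.27} and Theorem~\ref{Theorem III:3.25}. In each part the second coordinate $z$ carries the real content: its residue modulo $4$ is forced by $i^{z}=r/|r|$, while the condition $|z-z_0|=1$ of Definition~\ref{d:4.8} promotes these residues to genuine integers as one moves up a resolving tree. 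I would therefore phrase every statement as a first-coordinate assertion (known) plus a lifting of integers, the latter proved by induction in the style of the proof of Theorem~\ref{2:1.2}.

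For (a) the key observation is that $r^{v,u}_{\overline L}=\overline{r^{u,v}_L}$, the complex conjugate: since $u,v$ are real, conjugating $P_L(ui,-vi)$ gives $P_L(-ui,vi)=P_{\overline L}(vi,-ui)=r^{v,u}_{\overline L}$, where the middle equality is the mirror identity $P_{\overline L}(x,y)=P_L(y,x)$ of Lemma~\ref{III:3.21}. Consequently $i^{z^{v,u}_{\overline L}}=r^{v,u}_{\overline L}/|r^{v,u}_{\overline L}|=\overline{i^{z^{u,v}_L}}=i^{-z^{u,v}_L}$, so the two second coordinates are negatives modulo $4$. The base case is the constants, where $\overline{T_k}=T_k$ and Definition~\ref{d:4.8} gives $\delta_{u,v}(T_k)=-\delta_{v,u}(T_k)$ directly; an induction over the crossings of $L$, using that mirroring interchanges $L_+\leftrightarrow L_-$ (hence $|\leftrightarrow\star$) and thus sends the lifting $|z-z_0|=1$ to its negative, propagates $\delta_{u,v}(L)=-\delta_{v,u}(\overline L)$ to the root. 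The case $u=v$, where $\delta=\infty$, is immediate.

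For (b) and (c) I would induct on the lexicographically ordered pair $(\crs{L},b(L))$ exactly as in the proof of Theorem~\ref{Theorem III:3.25}, beginning from a connected-sum diagram (Fig.~3.13--3.14) respectively a splittable diagram. In the base case $b(L)=0$ the sublinks are trivial and the identities reduce to a count of components inside the constant $a_n$ of Definition~\ref{d:4.8}: for $u>v$ its second coordinate is $\mbox{com}(L)-1$, which is additive under $\#$ because $\mbox{com}(L_1\#L_2)=\mbox{com}(L_1)+\mbox{com}(L_2)-1$, and additive-up-to-$+1$ under $\sqcup$ because $\mbox{com}(L_1\sqcup L_2)=\mbox{com}(L_1)+\mbox{com}(L_2)$, matching $\varepsilon(u,v)=1$; the cases $u<v$ and $u=v$ are symmetric and trivial. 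For the inductive step I resolve a bad crossing $p$ lying in $L_1$, apply the Conway relations of Theorem~\ref{t:4.3}, and combine the inductive hypothesis with the first-coordinate multiplicativity of Corollary~\ref{2:3.27} / Theorem~\ref{Theorem III:3.25}; the latter fixes $r/|r|$ and hence the residue of $z$, after which $|z-z_0|=1$ transports the additive identity from $(L_1)^p_-,(L_1)^p_0$ to $(L_1)^p_+$.

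The hard part will be (d), because it is an inequality and so cannot be deduced from the first coordinate alone. Writing $w_{L_\pm}=(r_\pm,z_\pm)$ and $w_{L_0}=(r_0,z_0)$, the two Conway relations $(r_+,z_+)=(r_-,z_-)\,|\,(r_0,z_0)$ and $(r_-,z_-)=(r_+,z_+)\star(r_0,z_0)$ place both $z_+$ and $z_-$ within distance $1$ of $z_0$, so that $|z_+-z_-|\le 2$, while $r_+/|r_+|$ and $r_-/|r_-|$ fix $z_+$ and $z_-$ modulo $4$. These residues are constrained by the linear relation $-ur_++vr_-=ir_0$ together with the dichotomy that $r$ is real for an odd number of components and imaginary for an even number. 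The core of the proof is then a finite case analysis on the possible arguments of $r_+$ and $r_-$ under the standing hypotheses $u,v>0$ and $z_+\neq\infty$ (equivalently $r_+\neq 0$), showing that only the configurations with $z_+\le z_-$ are compatible. This is the combinatorial reflection of the monotonicity of the Tristram--Levine signature under a positive-to-negative crossing change, and it is exactly where the genuine geometry of the supersignature must be used rather than the formal skein calculus.
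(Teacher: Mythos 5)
Your proposal is correct, and for parts (a) and (b) it is essentially the paper's proof: a standard induction on diagrams (crossings, then bad crossings), with the first coordinates controlled by Lemma~\ref{III:3.21} for (a) and Corollary~\ref{2:3.27} for (b), and the integer lift of the residue mod $4$ forced by the adjacency condition $|z-z_0|=1$ of Definition~\ref{d:4.8}. The genuine differences are in (c) and (d). For (c) the paper does not re-run the splittable-diagram induction at all: it writes $L_1\sqcup L_2=(L_1\kwad T_2)\kwad L_2$ (Fig.~4.2) and observes that $\varepsilon(u,v)=\delta_{u,v}(T_2)$ straight from the constants $a_k$, so (c) is a one-line corollary of (b); your direct induction also works (the extra factor $x+y=(u-v)i$ appearing in Theorem~\ref{Theorem III:3.25} accounts exactly for the shift $\varepsilon(u,v)$), but it repeats a page of argument that the paper's decomposition makes unnecessary. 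For (d) the paper again invokes only the ``standard induction on crossings,'' whereas your argument is purely local and arguably cleaner: since the invariant is assumed to exist, Theorem~\ref{t:4.3} supplies both Conway relations, so $z_+$ and $z_-$ each lie at distance $1$ from $z_0$, and the unique configuration with $z_+>z_-$, namely $z_+=z_0+1$, $z_-=z_0-1$, forces $r_+/|r_+|=i\,r_0/|r_0|$ and $r_-/|r_-|=-i\,r_0/|r_0|$; substituting into $-ur_++vr_-=ir_0$ gives $-u|r_+|-v|r_-|=|r_0|>0$, impossible when $u,v>0$. You should spell out this computation (you only announce the case analysis) and record the degenerate cases ($r_0=0$ gives $z_+=z_-$ by condition (3); $r_-=0$ gives $z_-=\infty$), but with that done your (d) requires no induction at all, which is a real simplification over the route the paper indicates.
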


\begin{proof}

In the proof of conditions (a), (b) and (d) we use a standard induction
with respect to the number of crossings for a choice of base points in the diagram.
Moreover in the proof of (a) we apply Lemma~\ref{III:3.21} which implies
equality
$$r_L(u,v) = r_{\overline{L}}(-v,-u) = 
\left\{
\begin{array}{l}
 r_{\overline{L}}(v,u)\mbox{ if $L$ has an odd number of components}\\
 -r_{\overline{L}}(v,u)\mbox{ if $L$ has an even number of components}
\end{array}
\right.
$$
and in the proof of (b) we apply Corollary~\ref{2:3.27} which implies 
$$r_{L_1\kwad L_2}(u,v) = r_{L_1}(u,v)\cdot_{L_2}(u,v).$$

The condition (c) follows from (b) once we note that $L_1\sqcup L_2$
can be obtained as a connected sum $(L_1\kwad T_2)\kwad L_2$ where $T_2$
is a trivial link with two components (Fig.~4.2) and
$\varepsilon(u,v) = \delta_{u,v}(T_2)$.

\end{proof}
\begin{center}
\begin{tabular}{c} 
\includegraphics[trim=0mm 0mm 0mm 0mm, width=.3\linewidth]
{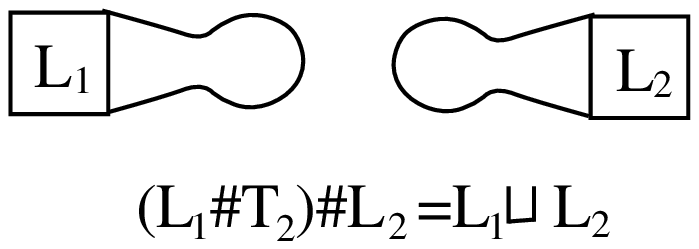}\\
\end{tabular}
\\
Fig.~4.2
\end{center}

\begin{exercise}\label{2:4.13}
Show that knots $8_8$ and $\overline{8}_8$ (Fig.~3.11) can be distinguished
by the supersignature $\delta_{u,v}$ for some values of $u$ and $v$; however 
if $u=v$ then the supersignature of both knots is equal to zero.

Sketch of the argument.

\begin{enumerate}

\item  $$r_{8_8}(u,v) =
-uv^{-1}+2+v^{-2}+u^{-1}v-2u^{-1}v^{-1}-u^{-2}v^2-2u^{-2}+u^{-2}v^{-2}+u^{-3}v+u^{-3}v^{-1}$$
Hence we can compute that:

\begin{enumerate}
\item $r_{8_8}(u,u)>0$
\item $r_{8_8}(u,v)<0$ if $u>>v\approx c$ or
$v>>u\approx c$
\item $r_{8_8}(u,v)>0$ if $u<<v\approx c$ or
$v<<u\approx c$,
\end{enumerate}
where $c$ is a constant number.

\item The removal (smoothing) of a crossing in some diagram of the knot 
$8_8$ yields a trivial knot of two components (c.f.Example ~ \ref{Example III:3.19}).
\end{enumerate}

In view of the definition of the supersignature, the above observations imply that for
$u,v>0$ we have:
$$\sigma_{u,v}(\overline{8}_8) = \left\{
\begin{array}{lll}
0&\mbox{if}&u<<v\\
0&\mbox{if}&u=v\\
-2&\mbox{if}&u>>v\\
\end{array}
\right.$$

Now applying Lemma~\ref{III:4.12}(a) we show easily the values of 
$u$ and $v$ for which $\sigma_{u,v}(8_8)\neq\sigma_{u,v}(\overline{8}_8)$; see Fig. 4.3.

\begin{center}
\begin{tabular}{c} 
\includegraphics[trim=0mm 0mm 0mm 0mm, width=.5\linewidth]
{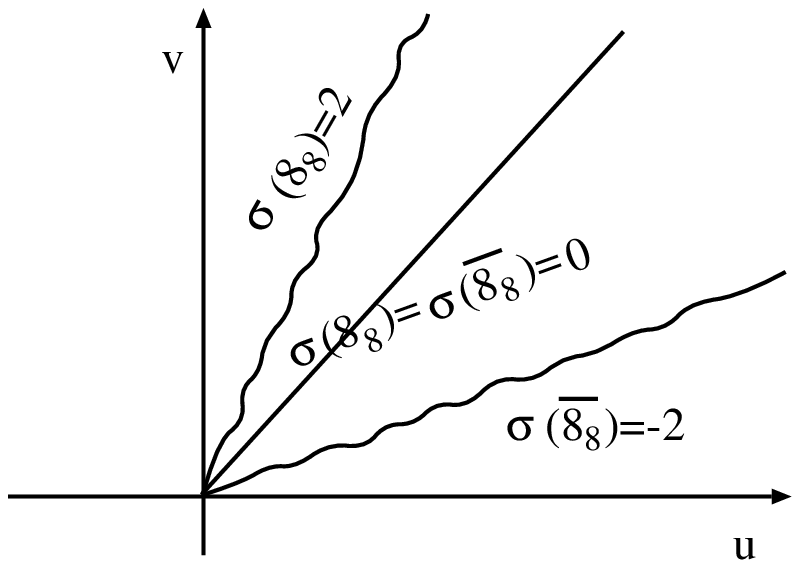}\\
\end{tabular}
\\
Fig.~4.3
\end{center}

\end{exercise}

\begin{remark}\label{2:4.14}
An important problem in knot theory concerns so-called unknotting (or Gordian)
number of a link which --- by definition --- is the minimal number of 
tunnel-to-bridges changes
which have to be done to modify a given link to the trivial
one (see Chapter IV). 
The supersignature may seem to be very useful to bound the unknotting 
number since its values for $L_+$ and $L_-$ differ by at most 2, unless
one of them is $\infty$. However, $\infty$ was assigned for our convenience
only, i.e.~to simplify the description. One may try to find another value 
of $\sigma_{u,v}(L)$ (different from $\infty$) for links with 
$r_L(u,v) = 0$. Useful information can be found in Lemma~\ref{Lemma III:3.38}(ii)
which reveals that the Jones-Conway polynomial, and thus also the polynomial 
$r_L (u,v)$, is not identically zero. Thus, even if for some $u_0$ and $v_0$ 
we have $r_L(u_0,v_0)=0$, then in some neighborhood of $(u_0, v_0)$
the polynomial $r_L(u_0,v_0)$ assumes non-zero values.
Subsequently, one may try to change the value of $\sigma_{u_0,v_0}(L)$
replacing $\infty$ with the mean value of $\sigma_{u,v}\neq 0$.
We leave this idea to the reader as a research problem.
\end{remark}

\begin{exercise}\label{2:4.15}
Prove that, if for a pair $(u,v)$ there exists the supersignature
$\sigma_{u,v}(L)$ and it is not equal $\infty$, then the minimal depth
of the resolving tree of  $L$ (i.e.~the distance from the root to the
farthest leaf) is not smaller than $$\frac{|
\sigma_{u,v}(L) |}{2} - \varepsilon (L),$$ where
$$\varepsilon = \left\{
\begin{array}{lll}
0&\mbox{if}&u=v\\
n(L)-1&\mbox{if}& u\neq v\\
\end{array}
\right.
;$$
and $n(L)$ denotes the number of components of $L$.
\end{exercise} 

\begin{example}\label{2:4.16}
The equivalence classes of the relation $\sim_c$ of oriented links
make geometrically sufficient partial Conway algebra.
Elements $a_i$ of this algebra are classes of trivial links with 
$i$ components, the operation $|$ (resp.~$\star$) is defined on a pair
of classes of links if they can be represented by diagrams of type
$L_-$ and $L_0$ (resp.~$L_+ $ and $L_0$) and the result is equal 
to the class of $L_+$ (resp.~$L_-$). The definition of $\sim_c$ equivalence
is chosen in such a way that axioms $C1-C7$ are satisfied when it is needed.
Let us note that this partial algebra is a universal geometrically sufficient
partial Conway algebra, that is there exits the unique homomorphism of this
algebra onto any geometrically sufficient partial Conway algebra.
\end{example}

As we have mentioned in  Remark \ref{2:3.23}, some Conway algebras admit an operation 
$\circ$ which allows to find the value of an invariant for $L_0$, provided
we know its value for $L_+$ and $L_-$. Geometrically
sufficient partial Conway algebras described in \ref{2:4.5} and \ref{d:4.8} 
allow for such an operation, it is an open question, however,  whether
the universal Conway algebra admits such an operation. More precisely,
it is an open question whether the equations $a|x=b$ and $a*x=b$  
can have at most one solution --- see Problem \ref{2:3.30}).

The involution $\tau$ from Lemma~\ref{2:3.32} is realized  
in the universal geometrically sufficient partial 
Conway algebra as the operation of replacing a given \skein-equivalence
class by the class of its mirror image.

\begin{exercise}\label{2:4.17}
\ \\
\begin{enumerate}
\item Suppose that $\tau$ is an involution of a geometrically sufficient partial 
Conway algebra.
What properties have to be satisfied $\tau$ in order to satisfy the condition
$$\tau(w_L) = w_{\overline{L}}?$$
\item Find such an involution $\tau$ for the algebra from Example~\ref{2:4.5}.

\item Prove that there exists no such involution for the algebra of the 
supersignature $\sigma_{u,v} (u\neq v)$ but if we modify the algebra 
so that new elements are quadruples 
$(r_L(u,v), r_L(v,u), \sigma_{u,v}(L), \sigma_{v,u}(L) )$, 
then the involution $\tau$ can be defined (c.f.~Lemma \ref{III:4.12}(a)).
\end{enumerate}
\end{exercise}

Also Lemma \ref{III:3.28}, which allows to find out the invariant of the
connected sum and disjoint union of links, can be partially extended
to the case of geometrically sufficient partial Conway algebras
(c.f.~Lemma \ref{III:4.12}(b) and (c)).



\section{Kauffman approach}

It is a natural question to ask whether the three
diagrams, $L_+$, $L_-$, and $L_0$, which have been used  
to build Conway type invariants can be replaced by other diagrams.
In fact, at the turn of December and January of 1984/85,
Krzysztof Nowi\'nski suggested considering another diagram
apart from $L_+$, $L_-$ and $L_0$, namely, the diagram obtained
by smoothening $L_+$ without preserving the orientation of $L_+$
(Fig.~5.1)
--- at that time, however, we did not make any effort to 
exploit this idea to get new invariants of links; likely we were discouraged by 
a lack of a natural orientation on $L_{\infty}$.

\begin{center}
\begin{tabular}{c} 
\includegraphics[trim=0mm 0mm 0mm 0mm, width=.3\linewidth]
{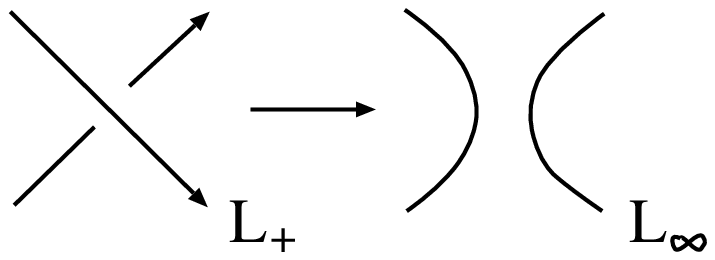}\\
\end{tabular}
\\
Fig.~5.1 
\end{center} 

In the early Spring of 1985 
R.Brandt, W.B.R.Lickorish i K.C.Millett \cite{B-L-M}
and, independently, C.F.Ho \cite{Ho} 
proved that four diagrams of unoriented 
links pictured in Fig.~5.2 
(the $+$ sign in $L_+$ does not denote the sign of the crossing,
even if the sign is defined) can be used to construct invariants of unoriented
links.
\ \\

\begin{center}
\begin{tabular}{c} 
\includegraphics[trim=0mm 0mm 0mm 0mm, width=.85\linewidth]
{L+L-L0Linf.eps}\\
\end{tabular}
\\
Fig.~5.2 
\end{center} 
 
\begin{theorem}\label{2:5.1}
There exists a uniquely determined invariant  $Q$ which to any
(ambient) isotopy class of invariant links associates an element of
$\Z[x^{\pm 1}]$. The invariant $Q$ satisfies the following conditions:
\begin{enumerate}
\item
[(1)] if $T_1$ is the trivial knot then $Q_{T_1}(x)=1$ 
\item
[(2)] $Q_{L_+}(x) + Q_{L_-}(x)= x(Q_{L_0}(x)+ Q_{L_{\infty}}(x))$, where unoriented 
 diagrams of links $L_+, L_-,L_0$ and $L_{\infty}$  are identical outside of the parts
pictured in Fig.~5.2.
\end{enumerate}
\end{theorem}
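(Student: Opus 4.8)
The plan is to follow the architecture of the proof of Theorem~\ref{2:1.2}, with the single symmetric relation (2) playing the combined role that the two operations $|$ and $\star$ played in the oriented case. I would first dispose of uniqueness and the base values. Since the $+$ and $-$ diagrams at a crossing are just the two over/under states, relation (2) lets me write $Q_{L_+}$ as a $\Z[x^{\pm 1}]$-combination of $Q_{L_-},Q_{L_0},Q_{L_{\infty}}$, where (after fixing a traversal of each component) the switched diagram $L_-$ has one fewer ``bad'' crossing and the two smoothings $L_0,L_{\infty}$ each have one fewer crossing. Iterating reduces every diagram to trivial links, so $Q$ is determined once its values on the $T_n$ are fixed; applying (2) to a single Reidemeister~I kink on one component of $T_n$ gives $2Q_{T_n}=x(Q_{T_n}+Q_{T_{n+1}})$, whence $Q_{T_{n+1}}=(2x^{-1}-1)Q_{T_n}$ and $Q_{T_n}=(2x^{-1}-1)^{\,n-1}$. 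This proves uniqueness and pins down the base case; write $\delta=2x^{-1}-1$.

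For existence I would construct a value function by induction on the crossing number $\mbox{cr}(L)$, exactly as in Section~\ref{Section III:2}: set $w_0(L)=\delta^{\,n-1}$ on crossingless diagrams, and formulate a Main Inductive Hypothesis that $w_k$ is defined on diagrams with at most $k$ crossings, equals $\delta^{\,n-1}$ on descending (hence trivial) diagrams, satisfies (2), and is preserved by Reidemeister moves keeping the crossing number $\le k$. Because the links are unoriented, I would equip each component with a base point and an arbitrary direction of traversal (a pseudo-orientation), call a crossing bad when it is first met as an underpass, and define $w_b$ by resolving the first bad crossing via $w_b(L)=x\bigl(w_b(L_0)+w_b(L_{\infty})\bigr)-w_b(L')$, where $L'$ is $L$ with that crossing switched. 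The double induction on the lexicographic pair $(\mbox{cr}(L),b(L))$ terminates at descending diagrams, which are trivial. Note that the symmetry of (2) makes the inversion properties $C6,C7$ automatic and renders the choice of traversal direction immaterial.

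The substantive steps, each mirroring a subsection of Section~\ref{Section III:2}, are: that $w_b$ satisfies (2) at \emph{every} crossing rather than only the first bad one; that $w_b$ is independent of the base points and of the order of components; and that the resulting $w^0$ is invariant under the three Reidemeister moves. The first replaces the entropy axioms $C3$--$C5$: resolving two distinct crossings $p,q$ in either order must yield the same element, but since the two instances of (2) act on disjoint local sites and (2) is linear, this is a finite linear computation in the commutative ring $\Z[x^{\pm 1}]$, wholly analogous to the verification of $C3$ for the Homflypt model in Example~\ref{p:1.8}, and it holds identically in $x$ without constraining it. Reidemeister~I invariance is exactly the computation that fixed $\delta$. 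Reidemeister~II and~III invariance are obtained by choosing base points outside the part of the diagram involved in the move and then using the innermost $f$-gon/$2$-gon reduction of Lemmas~\ref{l:2.15}--\ref{l:2.17}; these are statements about the underlying projection and so transfer verbatim to the unoriented setting.

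The main obstacle will be the Reidemeister~III case together with this two-crossing confluence: one must carry \emph{four} resolutions $L_+,L_-,L_0,L_{\infty}$ (instead of the three of the Conway case) through the nested induction, checking for each local configuration of the crossings in the move that the two orders of resolution, and the two sides of the move, produce the same $\Z[x^{\pm 1}]$-linear combination of trivial-link values. Because (2) is linear and symmetric in $L_+,L_-$, each such check is a bounded algebraic identity rather than the imposition of a new axiom, so once the bookkeeping of the two smoothings is organized as in Section~\ref{Section III:2} the argument closes; setting $Q=w^0=w$ then yields the invariant with the asserted properties.
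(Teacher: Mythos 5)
Your overall architecture coincides with the paper's: the paper proves this theorem by adapting the proof of Theorem \ref{2:1.2} (it does so inside the more general Kauffman-algebra framework of Theorem \ref{2:5.24}), and your uniqueness argument and the normalization $Q_{T_n}=(2x^{-1}-1)^{n-1}$ are correct. However, there is a genuine gap at the step you wave through as ``mirroring'' Section \ref{Section III:2}, namely independence of the base points (and of the auxiliary traversal direction). The crucial case is when a base point is pushed past a self-crossing $p$ of the $i$-th component which is bad for $b$ and good for $b'$, so that $L$ is descending with respect to $b'$. Writing $\delta=2x^{-1}-1$ and letting $L'$ be $L$ with $p$ switched, your recursion gives $w_b(L)=x\bigl(w_k(L^p_0)+w_k(L^p_\infty)\bigr)-w_b(L')$, with $w_{b'}(L)=\delta^{n-1}$, $w_b(L')=\delta^{n-1}$, $w_k(L^p_0)=\delta^{n}$ (that smoothing splits $L_i$ in two and is descending for suitable base points); the desired equality $w_b(L)=w_{b'}(L)$ then \emph{forces} $w_k(L^p_\infty)=\delta^{n-1}$, and this value is what you must prove. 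The problem is that $L^p_\infty$, the smoothing incompatible with the traversal, is \emph{not} descending with respect to any choice of base points or orientations: the arc of $L_i$ between the two passes through $p$ has its direction reversed, so $L^p_\infty$ consists of a descending part and an ascending part. Hence its value can neither be read off the initial conditions nor deduced from the inductive Reidemeister invariance (which at this stage covers only moves keeping at most $k$ crossings, and no crossing-nonincreasing reduction of $L^p_\infty$ is at hand). This is exactly where the unoriented proof differs essentially from the oriented one; the paper resolves it by a trick going back to Brandt--Lickorish--Millett: rotate the ascending part of $L^p_\infty$ by $180$ degrees about a vertical axis (a mutation), obtaining a descending diagram $\tilde{L}$, and then show $w_k(L^p_\infty)=w_k(\tilde{L})$ by building identical resolving trees for the two diagrams, all of whose vertices have at most $k$ crossings.

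By contrast, the items you single out as the main obstacle are in fact the routine ones: the two-crossing confluence replacing $C3$--$C5$ is exactly the polynomial identity you describe (it is the verification of axiom K4 for the linear model in the paper's framework), and the Reidemeister II and III cases do transfer from the oriented proof essentially verbatim. Also, your assertion that the symmetry of relation (2) renders the choice of traversal direction immaterial is unjustified as stated: the set of bad crossings, and hence the entire recursion defining $w_b$, depends on that choice, and its immateriality is proved in the paper by the same Lemma \ref{l:2.15} reduction used for independence of the order of components --- another place where an argument, not mere symmetry, is required.
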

The proof of Theorem 5.1 is similar to that of Theorem \ref{2:1.2} (c.f.~\cite{B-L-M}). 
We will discuss it later in a more general context.

The polynomial $Q_L(x)$ has a number of features similar to these of
the Homflypt (Jones-Conway) polynomial $P_L(x,y)$. The proof of these properties is left for the 
reader as an exercise
\begin{exercise}\label{2:5.2}
Prove that:
\begin{enumerate}
\item
[(a)]
$Q_{T_n}(x)= (\frac{2-x}{x})^{n-1}$. where $T_n$ is the trivial link of $n$ components,
[(b)]
$Q_{L_1\ \#  L_2}(x) = Q_{L_1}(x)Q_{L_2}(x)$ \ ,
\item
[(c)]
$Q_{L_1\sqcup L_2}=\mu Q_{L_1}(x)Q_{L_2}(x)$ \ , 
where $\mu=Q_{T_2}(x)=2x^{-1}-1$ 
trivial link.
\item
[(d)]
$Q_L(x)=Q_{\bar L}(x)$, where $\bar L$ is the mirror image of the link $L$.
\item
[(e)]
$Q_L(x)=Q_{m(L)}(x)$, where $m(L)$ is the mutant of the link $L$.
\item
[(f)] $Q_L(x)(-2)= (-2)^{com(L)-1}$.
\end{enumerate}
\end{exercise}

The polynomial $Q_L(x)$ can sometime distinguish links which are
 ${\sim}_c$ equivalent
\begin{exercise}\label{2:5.3}
\begin{enumerate}
\item
[(a)] Show that $Q_L(x)$ distinguishes knots
$8_8, \bar {10}_{129}$, and $13_{6714}$ (c.f.~Example 3.19).
\item
[(b)] Conclude that the above knots can not be obtained by mutation 
of one another.
\end{enumerate}
\end{exercise}

Now we will discuss Kauffman approach \cite{K-4,K-5} which, in particular, 
allows to generalize the polynomial 
$Q_L(x)$ to an invariant distinguishing mirror images.
This approach is based on the Kauffman's idea that
instead of considering diagrams modulo equivalence (that is diagrams up
to all three Reidemeister moves) one can consider diagrams up to the 
equivalence relation which is based on second and third Reidemeister moves.
This way one does not get an invariant of links but 
often a simple correction/balancing allows to 
construct a true invariant\footnote{Kauffman approach has a good interpretation in terms
of framed links (the approach is used in Chapter X on skein modules of 3-dimensional manifolds).}.

\begin{definition}\label{2:5.4}
Two diagrams are called regularly isotopic if one can be obtained 
from the other via a finite sequence of Reidemeister moves of 
types two and three.
The definition of regular isotopy makes sense for 
orientable as well as for unorientable diagrams.
\end{definition}

While working with regular isotopy invariants
one is able to take into account some properties of diagrams which are 
not preserved by Reidemeister moves of the first type.

\begin{lemma}\label{2:5.5}
Let  the writhe number $w(L)$ of an oriented link diagram $L$ be defined by 
$w(L)={\sum}_{i} sgn(p_i)$, where the sum is taken
over all crossings of $L$.
Then $w(L)$ is regular isotopy invariant and
$w(-L)=w(L)=-w(\bar L)$. The number $w(L)$ is also called the Tait \footnote{Peter 
Guthrie Tait (1831-1901) was a Scottish physicist
who, influenced by vortex theory of atoms by  
W.~Thompson (Lord Kelvin), was tabulating diagrams of links.
The number of crossings and the number $w(L)$ 
were important ``invariants'' of this tabulation.}
number of the diagram of the link $L$ and denoted by $Tait(L)$.
Sometimes this number is also called a twist number of the diagram
 and denoted by $tw(L)$.
\end{lemma}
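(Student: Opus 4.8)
The plan is to verify the three assertions separately, each of which reduces to a purely local analysis of crossing signs. Since $w(L)=\sum_i \sgn(p_i)$ is a sum over crossings of a locally defined quantity, in each case it suffices to understand how the individual signs $\sgn(p)$ behave under the relevant operation and then sum.

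First I would establish regular isotopy invariance by inspecting the two permitted Reidemeister moves. A move of the second type either creates or destroys a pair of crossings, and the key point is that, whatever orientations the two participating arcs carry, this pair always consists of one positive and one negative crossing; their contributions to the sum therefore cancel and $w$ is unchanged. This requires a short case check, splitting according to whether the two strands run parallel or antiparallel across the bigon. A move of the third type neither creates nor destroys crossings and preserves, for each of the three crossings involved, both its over/under structure and the orientations of the two arcs meeting at it; hence every $\sgn(p)$ is individually unchanged and so is $w$. I would also note explicitly that a move of the first type inserts or removes a single kink and so changes $w$ by exactly $\pm 1$, which is precisely the reason $w$ fails to be an ambient isotopy invariant yet survives as a regular isotopy invariant.

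Next, for $w(-L)=w(L)$, I would use that $\sgn(p)$ depends only on the ordered pair of tangent directions of the two arcs at $p$ together with the over/under data, and that reversing \emph{both} tangent directions at once amounts to a $180^{\circ}$ rotation of the local picture, which leaves the sign unchanged. Passing from $L$ to $-L$ reverses the orientation of every component, so at each crossing---whether a self-crossing of a single component or a crossing between two distinct components---both arcs meeting there are reversed simultaneously. Thus each $\sgn(p)$ is preserved and $w(-L)=w(L)$. Finally, $w(L)=-w(\bar L)$ follows from the observation already used in the proof of Lemma~\ref{III:3.21}: passing to the mirror image interchanges the over-arc and the under-arc at every crossing while keeping all orientations fixed, which flips the sign of each crossing, so $w(\bar L)=-w(L)$. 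The remaining statements, that this number is also called $Tait(L)$ or $\tw(L)$, are purely terminological and need nothing further.

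The only genuinely delicate step is the exhaustive local sign computation for the second Reidemeister move; everything else is bookkeeping. The main obstacle will be confirming, across all orientation configurations, that the two crossings created (or destroyed) by an R2 move are always of opposite sign---a fact that is geometrically transparent from the local diagrams but must be checked by cases rather than merely asserted.
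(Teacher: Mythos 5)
Your proposal is correct and follows essentially the same route as the paper: the paper's proof likewise observes that a second Reidemeister move creates or cancels a pair of crossings of opposite signs, that a third Reidemeister move preserves every crossing sign, that reversing the orientation of all components leaves each sign unchanged, and that taking the mirror image flips every sign. Your version merely fills in the local case-checking that the paper leaves implicit.
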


\begin{proof}
It is enough to show that the second Reidemeister move cancels or produces
two crossings with opposite signs and the third Reidemeister move 
preserves signs of all crossing. Moreover, we note 
that passing from the diagram $L$ to the diagram with the opposite orientation
$-L$ does not change the sign of a crossing and passing from $L$
to its mirror image $\bar L$ changes the sign of all crossings.
\end{proof}

The idea of Kauffman is based on the observation that the trivial 
knot can be represented (up to ambient isotopy) by different regular
isotopy classes of diagrams and 
to any such a class we can associate different values of some 
invariant. 
To any diagram $T_1$ representing the trivial knot, Kauffman 
associates the monomial $a^{w(T_1)}$. 
Subsequently, the Kauffman definition
of invariants is similar to the definition of Conway and Jones
polynomials, $P_L(x,y)$ and $Q_L(x)$. 
While passing from regular isotopy invariants to
isotopy invariants, Kauffman applies the following fact.
\begin{lemma}\label{2:5.6}
Let us consider the following elementary move on a diagram of a link, denoted 
by $(R^{\pm 1}_{0.5})$  and called the first weakened (or balanced) Reidemeister move.
That is the move which allows to create or 
to cancel the pair of curls of the opposite signs,
see Fig.~5.3. 
Let us observe that signs of crossings in
curls do not depend on the orientation of the diagram.
\\
\begin{center}
{\psfig{figure=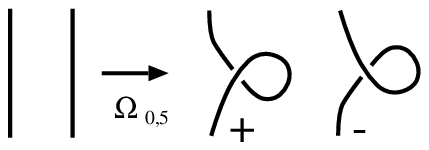,height=3.3cm}}
\end{center}
\vspace*{.2cm}
\begin{center}
Fig.~5.3
\end{center}

Then one can obtain a diagram $L_1$ from another diagram $L_2$ by regular isotopy 
and first weakened Reidemeister move if and only if 
$L_1$ is isotopic to $L_2$ and $w(L_1) = w(L_2)$. 
\end{lemma}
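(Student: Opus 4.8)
The plan is to prove the two implications separately; the forward one is routine, while the backward one carries all the content.

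For the implication ``$\Rightarrow$'', suppose $L_1$ is obtained from $L_2$ by a finite sequence of regular isotopies and first weakened Reidemeister moves. A regular isotopy is by Definition~\ref{2:5.4} a composition of Reidemeister moves of types two and three, and the first weakened move is a composition of two ordinary first Reidemeister moves (one creating a positive curl, one a negative curl). In particular every step is an ambient isotopy, so $L_1$ and $L_2$ are isotopic. For the writhe, Lemma~\ref{2:5.5} gives invariance under the second and third moves, while a first weakened move inserts (or deletes) two curls whose crossings have opposite signs, contributing $(+1)+(-1)=0$ to $\sum_i \mbox{sgn}(p_i)$. Hence every allowed step preserves $w$, and $w(L_1)=w(L_2)$.

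For ``$\Leftarrow$'', assume $L_1$ is isotopic to $L_2$ and $w(L_1)=w(L_2)$. By Reidemeister's theorem there is a finite sequence $L_2=D_0\to D_1\to\cdots\to D_n=L_1$ using moves of types one, two and three. The type-two and type-three moves are already regular isotopies, so the only illegitimate steps are the first Reidemeister moves, each inserting or deleting a single curl and changing $w$ by $\pm 1$. I would use two ingredients. First, a single curl may be slid along its component and past any crossing using only moves of types two and three; since these preserve $w$ and the planar turning of the strand, the sign of the slid curl is unchanged. Second, two adjacent curls of opposite sign are created or cancelled precisely by the first weakened move (Fig.~5.3). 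Classify each first move by its effect on $w$ as a ``$+$''-step or a ``$-$''-step; since $w(L_1)=w(L_2)$ and the other moves fix $w$, the numbers of $+$ and $-$ steps are equal, and I would match them in pairs.

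Each matched pair is then realizable using only regular isotopy and weakened moves: two \emph{insertions} of opposite sign are produced by one weakened move followed by sliding one of the two created curls to its required location; two \emph{deletions} of opposite sign are handled symmetrically, sliding the curls together and applying the inverse weakened move; and an insertion together with a deletion of the \emph{same} sign is just the sliding of one curl from one place to another, a pure regular isotopy. Assembling these replacements converts the Reidemeister sequence into one built from regular isotopy and first weakened moves alone.

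The hard part will be the bookkeeping that legitimizes this reorganization: the first moves occur interspersed with the type-two and type-three moves at various times and places, so one must commute steps past one another (possible because the affected regions can be taken disjoint) and verify that, when two oppositely signed curls are finally brought together, the resulting local picture is genuinely the configuration of Fig.~5.3 and not one differing by the planar turning of the curls. The latter point is controlled by the fact that sliding preserves a curl's sign together with the parity observation that, since a first move changes both the writhe and the rotation datum by $\pm 1$, the quantity $w+r$ is fixed modulo $2$ by all Reidemeister moves; hence for isotopic diagrams with equal writhe any residual turning mismatch is itself an oppositely signed curl pair, absorbed by a further weakened move. Carrying this through completes the proof.
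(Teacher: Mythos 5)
Your proof is correct and follows essentially the same route as the paper, whose entire argument is the single remark that the move $R^{\pm 1}_{0.5}$ ``enables us to carry a twist to any place in the diagram'': your scheme of replacing each first Reidemeister move by a weakened move plus transport of the compensating curl, with equality of writhes forcing the leftover curls to cancel in opposite-sign pairs, is exactly a worked-out version of that remark. Your closing parity discussion of the turning (rotation number) of curls addresses a subtlety that the paper's one-line proof passes over in silence, so it is an elaboration of, not a departure from, the intended argument.
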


The proof of \ref{2:5.6} becomes clear once we note that the move
$R^{\pm 1}_{0.5})$ enables us to carry a twist to any place in the diagram.

Now we show how, using the Kauffman approach,  that 
the Conway polynomial can be generalized to the Homflypt polynomial and the 
polynomial $Q_L$ can be modified to another invariant which we will 
call Kauffman polynomial (or Kauffman polynomial of two variables).

\newpage
\begin{theorem}[\cite{K-4}]\label{2:5.7}
\ 
\begin{enumerate}
\item There exists a uniquely defined invariant of regular isotopy
of oriented diagrams, denoted by $(R_L(a,z))$, which is a polynomial in
$\Z[a^{\pm 1},z^{\pm 1}]$ and which satisfies the following conditions:
\begin{enumerate}
\item $R_{T_1}(a,z) = a^{w (T_a)}$, where $T_1$ is a diagram of a knot
isotopic to the trivial knot.

\item $R_{L_+} (a,z) - R_{L_-}(a,z) = zR_{L_0}(a,z)$.
\end{enumerate}
\item For any diagram $L$ we consider a polynomial 
$G_L(a,z) = a^{-w (L)} R_L(a,z)$.
Then $G_L(a,z)$ is an invariant of ambient isotopy of oriented
links and it is equivalent to the  Homflypt polynomial, that is
$$G_L(a,z) = P_L(x,y) \mbox{ where } x=\frac{a}{z}, y=\frac{-1}{za}.$$
\end{enumerate}
\end{theorem}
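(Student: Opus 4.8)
Part (1): There exists a unique regular-isotopy invariant $R_L(a,z)\in\Z[a^{\pm1},z^{\pm1}]$ with $R_{T_1}=a^{w(T_1)}$ and the skein relation $R_{L_+}-R_{L_-}=zR_{L_0}$.

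Part (2): $G_L=a^{-w(L)}R_L$ is an ambient-isotopy invariant equal to the Homflypt polynomial under $x=a/z$, $y=-1/(za)$.

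Let me sketch a proof.

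---

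The plan is to mirror the proof of Theorem~\ref{2:1.2} as closely as possible, so that the bulk of the work reduces to a single difference: in the Conway-algebra setting the value on a trivial $n$-component diagram was the fixed constant $a_n$, whereas now the value on a descending (trivial) diagram must be allowed to depend on its writhe. The underlying combinatorial engine — resolving trees with descending leaves, the double induction on $(\mathrm{cr}(L), b(L))$, the change-of-basepoint argument, and the verification of invariance under Reidemeister moves — carries over essentially verbatim, so I would not reprove it; I would only flag the points where writhe-dependence intervenes.

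First I would set up the value function on diagrams. The natural model is the Conway-algebra of Example~\ref{p:1.8}: since $R_{L_+}-R_{L_-}=zR_{L_0}$ is exactly the skein relation of the (unnormalized) Homflypt polynomial with $x=1$, $y=-1$ in the sense of Formula~\ref{III:1.9} rescaled by $z$, the operations are $w_2\mid w_0=w_2-zw_0$ and $w_1\star w_0=w_1+zw_0$ (solving $w_1-w_2=zw_0$ for the missing entry). These satisfy the entropy conditions C3--C5 by the same linear-algebra computation as in Formula~\ref{*}, and C6, C7 because they are inverse affine maps in the first argument. The only genuinely new ingredient is the initial data: on a crossingless diagram of $n$ components with writhe $w$ (necessarily $w=0$, since a crossingless diagram has no crossings) I set the value to be $a_n:=\delta^{\,n-1}$ where $\delta=a+a^{-1}$ is the loop value forced by applying the skein relation to a kink together with the constant $a^{\pm1}$ assigned to a curl. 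More precisely, I would define $R$ on descending leaves by tracking the accumulated writhe: a descending diagram is regular-isotopic to a standard diagram whose value I can read off, and the key normalization $R_{T_1}=a^{w(T_1)}$ tells me how curls contribute. The proof that this $R$ is well defined and regular-isotopy invariant then runs through the same IRM/CBP/IOC steps as in Section~\ref{Section III:2}, with one modification: I must check invariance under Reidemeister moves \emph{two and three only}, which is strictly easier than the full Theorem~\ref{2:1.2}, because those two moves preserve writhe and hence do not disturb the curl-counting. The first Reidemeister move is \emph{not} imposed, which is precisely why $R$ is only a regular-isotopy invariant.

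The main obstacle, and where I would spend the real effort, is the consistency of the writhe-twisted initial condition. In Theorem~\ref{2:1.2} the constants $a_n$ were genuinely constant, so two descending diagrams of the same link automatically received the same value. Here I must verify that the assignment $a^{w(T_1)}$ on trivial-knot diagrams, extended multiplicatively over components, is coherent with the skein reductions — i.e. that resolving a kink by the skein relation reproduces the factor $a^{\pm1}$ demanded by the normalization. Concretely: applying $R_{L_+}-R_{L_-}=zR_{L_0}$ at a positive curl expresses the curled diagram in terms of the uncurled one plus a split-off trivial circle, and I must confirm this forces the value $a$ (not merely \emph{some} unit) and that $\delta=a+a^{-1}=(a-a^{-1})/z\cdot z + \ldots$ is consistent with $z$; this pins down the relation between the loop value and $a,z$ and is the one computation I cannot skip. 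Once curl-consistency holds, Lemma~\ref{2:5.5} guarantees $w(L)$ is itself a regular-isotopy invariant, so $R_L$ is well defined and unique, completing Part~(1).

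For Part~(2), the argument is short. Since $w$ is a regular-isotopy invariant (Lemma~\ref{2:5.5}) and changes by exactly $\pm1$ under an $R_1$ move while $R_L$ changes by exactly $a^{\pm1}$ (by the normalization), the product $G_L=a^{-w(L)}R_L$ is unchanged by $R_1$ and, being a product of two regular-isotopy invariants, is unchanged by $R_2,R_3$; hence $G_L$ is an ambient-isotopy invariant. To identify it with Homflypt I verify that $G$ satisfies the defining skein relation of $P$: dividing $R_{L_+}-R_{L_-}=zR_{L_0}$ by the appropriate power of $a$ and using $w(L_\pm)=w(L_0)\pm1$ converts it, after the substitution $x=a/z$, $y=-1/(za)$, into $xP_{L_+}+yP_{L_-}=P_{L_0}$ (condition~(ii) of Example~\ref{p:1.8}); and $G_{T_1}=a^{-w(T_1)}a^{w(T_1)}=1$ matches the initial condition (i). By the uniqueness clause of Theorem~\ref{2:1.2}, $G_L=P_L$ under this substitution, which is the claim.
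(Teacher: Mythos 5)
Your overall strategy---rerunning the machinery of Theorem~\ref{2:1.2} with writhe-dependent initial values---is viable in principle; the paper itself remarks that ``the method of the proof of Theorem~\ref{2:1.2} can be used,'' but then deliberately takes a much shorter route in the opposite direction: since the Homflypt polynomial is already known to exist, it defines $\tilde G_L(a,z)=P_L(\frac{a}{z},\frac{-1}{az})$, sets $\tilde R_L(a,z)=a^{w(L)}\tilde G_L(a,z)$, observes that $\tilde R$ is a regular-isotopy invariant satisfying conditions (a) and (b), and obtains uniqueness from resolving trees. So the paper builds $R$ \emph{from} $P$, whereas you rebuild $R$ from scratch and then recover $P$ from $R$ in Part~(2); your Part~(2) identification of $G_L$ with $P_L$ via the skein relation and the uniqueness clause of Theorem~\ref{2:1.2} is correct, but the bottom-up construction duplicates work the paper avoids, and it also obliges you to handle the balanced first move $R^{\pm 1}_{0.5}$ (not only $R_2,R_3$) in the reduction step, since Lemma~\ref{l:2.15} uses first Reidemeister moves (compare the proof of Theorem~\ref{2:5.24}).

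The genuine problem is the computation you yourself flag as ``the one computation I cannot skip'': the loop value, which you get wrong. You set the value of a crossingless $n$-component diagram to $\delta^{n-1}$ with $\delta=a+a^{-1}$; that is the loop value of the \emph{unoriented} Kauffman-type invariants ($\Lambda$, $F$), not of $R$. Here the normalization (a) together with the skein relation (b) forces a different value: resolving a positive curl on an unknot diagram gives $R_{L_+}=a$, $R_{L_-}=a^{-1}$ by (a), while $L_0$ is the crossingless two-component unlink, so $a-a^{-1}=zR_{T_2}$, i.e.\ $\delta=R_{T_2}=\frac{a-a^{-1}}{z}$; more generally a descending diagram with $n$ components and writhe $j$ must receive the value $a^{j}\bigl(\frac{a-a^{-1}}{z}\bigr)^{n-1}$ (consistent with $x+y=\frac{a}{z}-\frac{1}{az}=\frac{a-a^{-1}}{z}$ in Example~\ref{p:1.8}). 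With this value the curl-consistency condition your induction needs, namely $a_{n,j+1}=a_{n,j-1}+z\,a_{n+1,j}$, reduces to the identity $a=a^{-1}+(a-a^{-1})$, which holds; with your $\delta=a+a^{-1}$ it reduces to $a-a^{-1}=z(a+a^{-1})$, which is false in $\Z[a^{\pm 1},z^{\pm 1}]$, so the initial data you specified is incompatible with (a) and (b) and the construction never gets off the ground. A second, smaller gap: in Part~(2) you assert that an $R_1$ move multiplies $R_L$ by exactly $a^{\pm 1}$ on \emph{arbitrary} diagrams ``by the normalization,'' but the normalization only speaks of unknot diagrams; in your bottom-up setting this curl behavior requires its own induction over a resolving tree, whereas in the paper's top-down proof it is automatic because $\tilde R_L=a^{w(L)}\tilde G_L$ with $\tilde G$ an ambient-isotopy invariant.
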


Proof. The method of the proof of Theorem \ref{2:1.2} can be used
to prove \ref{2:5.7} but we will present a proof based on the existence of 
the Homflypt polynomial, since its existence was already proved.
Namely, we consider a substitution in  $P_L(x,y)$ by setting 
$x=\frac{a}{z}$ and $y=\frac{-1}{az}$. As the result we obtain 
a polynomial invariant of ambient isotopy classes of oriented links
which we call $\tilde{G}_L(a,z)$ and which satisfies the following conditions:
\begin{enumerate}
\item[(a)] $\tilde{G}_{T_1} (a,z)= 1$
\item[(b)]  $a\tilde{G}_{L_+}(a,z) - \frac{1}{a}\tilde{G}_{L_-}(a,z) = z\tilde{G}_{L_0}(a,z)$.
\end{enumerate}

Now to any oriented diagram we associate a polynomial 
$\tilde{R}_L(a,z) = a^{w(L)}\tilde{G}_L(a,z)$. It is easy to see that 
the first Reidemeister move changes the value of 
$\tilde{R}_L(a,z)$ according to the sign of the curl:
$$\tilde{R}_{\parbox{0.2cm}{\psfig{figure=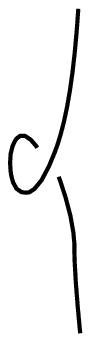,height=0.7cm}}}(a,z)=
a\tilde{R}_{\parbox{0.1cm}{\psfig{figure=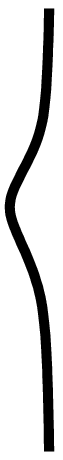,height=0.6cm}}}(a,z)$$
$$\tilde{R}_{\parbox{0.2cm}{\psfig{figure=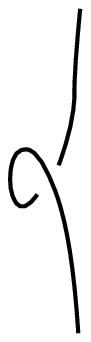,height=0.7cm}}}(a,z)=
a^{-1}\tilde{R}_{\parbox{0.1cm}{\psfig{figure=notwist.eps,height=0.6cm}}}(a,z)$$

Moreover the second and the third Reidemeister move do not change  
$\tilde{R}_L(a,z)$. Thus $\tilde{R}_L(a,z)$ is an invariant of regular
isotopy of oriented diagrams and it satisfies the condition
$\tilde{R}_{T_1}(a,z)= a^{w(T_1)}$. It is also easy to verify
that $\tilde{R}_L(a,z)$ satisfies the condition (b)) of Theorem
\ref{2:5.7}.  Moreover, since any diagram has a resolving tree therefore
the polynomial  $R_L(a,z)$, if it exists, 
is uniquely determined by the conditions (a) and (b). Thus,
setting $R_L(a,z) = \tilde{R}_L(a,z)$ and $G_L(a,z) =
\tilde{G}_L(a,z)$, we complete the proof of Theorem \ref{2:5.7}.

\begin{theorem}[\cite{K-5}]\label{2:5.8}
\ \\
\begin{enumerate}
\item
[(1)] There exists a uniquely defined invariant $\Lambda$ which to any
regular isotopy class attaches a polynomial in $\Z[a^{\pm 1},z^{\pm 1}]$ 
and which satisfies the following conditions:
\item
[(a)] ${\Lambda}_{T_1}(a,z)= a^{w(T_1)}$ ,
\item
[(b)] ${\Lambda}_{L_+} + {\Lambda}_{L_-}= z({\Lambda}_{L_0}+
{\Lambda}_{L_{\infty}})$.
\item 
[(2)]
For any oriented diagram $D$ we define $F_D(a,z)=a^{w(D)}
{\Lambda}_D(a,z)$. Then $F_D(a,z)$ is an invariant of (ambient) isotopy
classes of oriented links and it is a generalization of
the polynomial $Q$, that is $Q_L(x)=F_L(1,x)$.
\end{enumerate}
\end{theorem}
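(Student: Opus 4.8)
The plan is to follow the same two-step strategy that produced the invariant of Theorem~\ref{2:5.7}: first construct a \emph{regular} isotopy invariant $\Lambda$ obeying the four-term relation (b) and the normalization (a), and only afterwards correct it by the writhe to obtain the ambient isotopy invariant $F$. One essential difference must be flagged at the outset: here I cannot shortcut part~(1) by substituting into a previously built polynomial, as was done with $P_L(x,y)$ in Theorem~\ref{2:5.7}, because $F$ is genuinely stronger than $Q$ (it separates mirror images, which $Q$ cannot). Hence $\Lambda$ has to be constructed from scratch, by the inductive machinery of Theorem~\ref{2:1.2} and its $Q$-analogue (Theorem~\ref{2:5.1}), now carried out on \emph{unoriented} diagrams using the four diagrams $L_+,L_-,L_0,L_\infty$.

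For part~(1) I would first dispatch uniqueness: relation (b) lets one switch and smooth crossings, so every diagram admits a resolving tree whose leaves are crossingless diagrams, i.e. disjoint unions of simple closed curves. Their values are fixed by (a) together with the forced value $\delta=(a+a^{-1})/z-1$ of a split trivial circle (obtained by applying (b) at a single curl), so (a) and (b) determine $\Lambda_L$ uniquely. For existence I would build, by induction on the crossing number $k$, functions $\Lambda_k$ defined on diagrams with at most $k$ crossings, in exact parallel with the construction of the $w_k$ in Section~\ref{Section III:2}: fix base data, locate the first bad crossing, resolve it via (b), and perform the Main Inductive Step. The coherence conditions to verify are that $\Lambda_k$ is independent of the order of resolution and that it is preserved by Reidemeister moves of types \emph{two and three only}; the first move is deliberately not imposed, since $\Lambda$ is wanted merely as a regular isotopy invariant. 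Along the way I would record the standard curl rule, that a positive (respectively negative) curl multiplies $\Lambda$ by $a$ (respectively $a^{-1}$), which follows from the normalization (a), relation (b), and the just-established invariance under moves of types two and three.

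For part~(2) I would set $F_D(a,z)=a^{-w(D)}\Lambda_D(a,z)$ and check invariance one move at a time. Under Reidemeister moves two and three both $w(D)$ (by Lemma~\ref{2:5.5}) and $\Lambda_D$ are unchanged, hence so is $F_D$; under a first move the writhe changes by $\pm1$ while $\Lambda_D$ is multiplied by $a^{\pm1}$ by the curl rule, so the two factors cancel and $F_D$ is again unchanged. Thus $F$ is an ambient isotopy invariant. To identify its specialization with $Q$, I would put $a=1$: then $F_D(1,x)=\Lambda_D(1,x)$ is an ambient isotopy invariant satisfying $F_{L_+}+F_{L_-}=x(F_{L_0}+F_{L_\infty})$ and $F_{T_1}(1,x)=1$, which are exactly the defining relations of $Q$ in Theorem~\ref{2:5.1}; by the uniqueness part of that theorem, $F_L(1,x)=Q_L(x)$.

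The hard part will be the well-definedness of $\Lambda$ in part~(1) — concretely, the invariance under the third Reidemeister move, equivalently the independence of the resolving tree. This is the analogue of the IRM and IOC steps in the proof of Theorem~\ref{2:1.2}, but appreciably more delicate: the relation now has four terms instead of three, the diagrams are unoriented so there is no descending/ascending bookkeeping to exploit, and the smoothing $L_\infty$ alters the connectivity of the diagram differently from $L_0$. Consequently the case analysis for two interacting crossings, and for a crossing interacting with the arcs of a third-type move, must be redone carefully while staying inside the induction on crossing number. Managing that case analysis is where the real work lies; the writhe correction of part~(2) and the specialization to $Q$ are then routine.
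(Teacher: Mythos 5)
Your overall architecture---build a regular-isotopy invariant $\Lambda$ by the inductive machinery of Theorem \ref{2:1.2} adapted to the unoriented four-term relation, then correct by the writhe and identify the $a=1$ specialization with $Q$ via the uniqueness clause of Theorem \ref{2:5.1}---is exactly the paper's plan: the paper proves part (1) ``in a more general context,'' namely Theorem \ref{2:5.24} on Kauffman algebras, whose proof is precisely the Theorem \ref{2:1.2} machinery, and its proof of part (2) is the same curl-rule cancellation you give (your $a^{-w(D)}$, rather than the $a^{w(D)}$ printed in the statement, is the normalization that actually makes the cancellation work, consistent with Theorem \ref{2:5.7}). However, part (1) of your proposal has a genuine gap: you defer the crux as ``where the real work lies'' without supplying the idea that makes the induction close, and your framing would steer you away from it. You claim that ``the diagrams are unoriented so there is no descending/ascending bookkeeping to exploit''; in fact the paper's proof (following Brandt--Lickorish--Millett) proceeds exactly by choosing base points \emph{and an auxiliary orientation} on the unoriented diagram, running the descending-diagram induction, and then proving independence of those choices --- for the orientation this uses Lemma \ref{l:2.15}, with $R_1^{\pm 1}$ replaced by the balanced move $R^{\pm 1}_{0.5}$, which is also why the induction must carry $R^{\pm 1}_{0.5}$-invariance along, not just moves of types two and three. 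Your own outline is internally inconsistent on this point, since ``first bad crossing'' has no meaning without precisely that bookkeeping.

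The concrete step that fails without a new idea is the change-of-base-points step in the case $B(L)=1$, $b(L)\neq b'(L)$: there one must evaluate the invariant on $L^p_\infty$, and---this is the genuinely new phenomenon compared to Theorem \ref{2:1.2}---$L^p_\infty$ inherits no orientation from $L$ and need not be descending for \emph{any} choice of base points and orientation; it naturally consists of a descending part and an ascending part. The paper's resolution is a rotation trick: rotate the ascending half by $180^o$ about the vertical axis, reversing its orientation (a mutation), to obtain a descending diagram $\tilde L$ whose value is the prescribed $a'_{n,\,Tait(L)-1}$ (for $\Lambda$, the value $\delta^{\,n-1}a^{Tait(L)-1}$ with $\delta=(a+a^{-1}-z)/z$); since $L^p_\infty$ and $\tilde L$ admit a common resolving tree all of whose diagrams have at most $k$ crossings, the Main Inductive Hypothesis yields $w_k(L^p_\infty)=w_k(\tilde L)$, which is what the inductive step needs. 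Without this device (or an equivalent one) the Main Inductive Step does not go through, so your outline, as it stands, is not a proof of part (1). Part (2), including $Q_L(x)=F_L(1,x)$ by uniqueness, is correct and matches the paper.
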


Proof. Part (1) of the theorem will be proved later in a more general context.

Part (2) follows from (1) if we notice that the first Reidemeister move 
changes the polynomial ${\Lambda}_L(a,z)$ in the following way:
$${\Lambda}_{\parbox{0.2cm}{\psfig{figure=plustwist.eps,height=0.7cm}}}(a,z)=
a{\Lambda}_{\parbox{0.1cm}{\psfig{figure=notwist.eps,height=0.6cm}}}(a,z) \mbox{\ \  and \ \ }
{\Lambda}_{\parbox{0.2cm}{\psfig{figure=minustwist.eps,height=0.7cm}}}(a,z)=
a^{-1}{\Lambda}_{\parbox{0.1cm}{\psfig{figure=notwist.eps,height=0.6cm}}}(a,z)$$

The polynomial $F_L(a,z)$ is called the Kauffman polynomial of the link $L$.

Now we will present some elementary properties of Kauffman polynomial.

\begin{theorem}\label{2:5.9}
\ \\
\begin{enumerate}
\item[(a)] $F_{T_n}=(\frac{a+a^{-1}-z}{z})^{n-1}$,
\item[(b)]
 $F_{L_1\kwad L_2} (a,z) = F_{L_1}(a,z)\cdot F_{L_2}(a,z).$
\item[(c)] $F_{L_1\sqcup L_2}(a,z) = \mu F_{L_1}(a,z)\cdot F_{L_2}(a,z)$
where $\mu = F_{T_2}=\frac{a^{-1}+a}{z}-1$ is the value of the invariant on the trivial 
link with two components.

\item[(d)] $F_L(a,z) = F_{-L}(a,z)$.
\item[(e)] $F_{\overline{L}}(a,z) = F_L(a^{-1},z)$.
\item[(f)] $F_L(a,z) = F_{m(L)}(a,z)$ where $m(L)$ is a mutant 
of the link $L$.
\end{enumerate}
\end{theorem}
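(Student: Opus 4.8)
The plan is to prove Theorem~\ref{2:5.9} by establishing each property through the standard inductive machinery used throughout this chapter, leveraging the already-proven existence and uniqueness of the Kauffman polynomial $F_L(a,z)$ from Theorem~\ref{2:5.8}. The key observation underlying all parts is that $F_L(a,z) = a^{w(L)}\Lambda_L(a,z)$ where $\Lambda_L$ satisfies the skein relation $\Lambda_{L_+} + \Lambda_{L_-} = z(\Lambda_{L_0} + \Lambda_{L_\infty})$, so each property should first be verified for the regular-isotopy invariant $\Lambda_L$ and then corrected by the writhe factor $a^{w(L)}$ using the behavior of $w$ under the relevant operations established in Lemma~\ref{2:5.5}.

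First I would prove part~(a) by induction on the number of crossings. The trivial knot $T_1$ has $F_{T_1}=1$, and the value for $T_n$ follows by resolving the Kauffman relation on a diagram where two trivial components are joined by a clasp-free crossing; alternatively, since $T_n = T_1 \sqcup \cdots \sqcup T_1$, part~(a) follows immediately once part~(c) is known, via $\mu^{n-1} = (\frac{a+a^{-1}-z}{z})^{n-1}$. For parts~(b) and~(c), the approach mirrors Theorem~\ref{Theorem III:3.25} and Corollary~\ref{2:3.27}: I would take a splittable diagram for $L_1 \sqcup L_2$ and a connected-sum diagram for $L_1 \kwad L_2$, then induct on the lexicographically ordered pair $(\crs{L}, b(L))$. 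For the connected sum, multiplicativity of $F$ follows because the writhe is additive ($w(L_1\kwad L_2) = w(L_1)+w(L_2)$) and $\Lambda$ is multiplicative under connected sum; the base case uses part~(a). For the disjoint union, one extra factor of $\mu = F_{T_2}$ appears exactly as in the Homflypt case.

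For the symmetry properties~(d)--(f), I would argue as follows. Part~(d): reversing all orientations does not change the sign of any crossing, hence $w(-L)=w(L)$ by Lemma~\ref{2:5.5}, and the four unoriented diagrams $L_+,L_-,L_0,L_\infty$ are unaffected by orientation reversal, so $\Lambda_{-L}=\Lambda_L$ and therefore $F_{-L}=F_L$. Part~(e): passing to the mirror image $\overline{L}$ sends every crossing to its opposite, so $w(\overline{L}) = -w(L)$ by Lemma~\ref{2:5.5}; the unoriented skein relation is symmetric under mirroring (it exchanges $L_+ \leftrightarrow L_-$ while fixing $L_0, L_\infty$), whence $\Lambda_{\overline L}(a,z) = \Lambda_L(a,z)$ and the writhe correction gives $F_{\overline L}(a,z) = a^{w(\overline L)}\Lambda_{\overline L} = a^{-w(L)}\Lambda_L = F_L(a^{-1},z)$. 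Part~(f): mutation preserves the writhe and leaves the skein-resolution structure intact, so the standard induction on crossing number in the mutated tangle (as in Lemma~\ref{2:3.9}) yields $F_{m(L)} = F_L$.

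I expect the main obstacle to be the careful bookkeeping in parts~(b) and~(c), specifically ensuring that the writhe factor $a^{w(L)}$ interacts correctly with the skein resolutions during the induction. Since the Kauffman skein relation involves the unoriented smoothing $L_\infty$, one must check that all intermediate diagrams in the resolving tree carry consistent writhe contributions and that the induction on $(\crs{L}, b(L))$ genuinely decreases at each step; the presence of $L_\infty$ (which has no canonical orientation) means the writhe must be tracked at the level of the unoriented invariant $\Lambda$ and reinstated only at the end. The symmetry properties~(d)--(f), by contrast, should follow almost formally once the behavior of $w$ under orientation reversal, mirroring, and mutation is recorded from Lemma~\ref{2:5.5}.
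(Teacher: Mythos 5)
Your overall strategy --- establish each property for the regular-isotopy invariant $\Lambda$ and then reinstate the writhe correction --- is exactly the route the paper intends (its ``proof'' is a one-line pointer to the Homflypt analogues: Lemma \ref{III:3.21}, Theorem \ref{Theorem III:3.25}, Corollary \ref{2:3.27}, Lemma \ref{2:3.9}, Exercise \ref{2:5.2}), and parts (a)--(d) and (f) are fine in outline. But your argument for part (e) rests on a false claim. You assert that $\Lambda_{\overline{L}}(a,z)=\Lambda_L(a,z)$ because the unoriented skein relation merely exchanges $L_+\leftrightarrow L_-$. The skein relation is indeed mirror-symmetric, but the \emph{normalization} is not: the initial condition is $\Lambda_{T_1}=a^{w(T_1)}$ and a positive curl multiplies $\Lambda$ by $a$, so mirroring --- which negates the writhe of every trivial-knot diagram and turns positive curls into negative ones --- forces the substitution $a\mapsto a^{-1}$ in the initial data. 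Concretely, for the one-crossing unknot diagram $U$ with a positive curl one has $\Lambda_U=a$ while $\Lambda_{\overline{U}}=a^{-1}$, so your identity already fails there. The correct statement, proved by the same resolving-tree induction as Lemma \ref{III:3.21} (mirroring carries a resolving tree of $L$ to one of $\overline{L}$, swapping the roles of $+$ and $-$ crossings and inverting the power of $a$ at each leaf), is
$$\Lambda_{\overline{L}}(a,z)=\Lambda_L(a^{-1},z).$$

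Moreover, even granting your claim, your chain of equalities does not close: you end with $a^{-w(L)}\Lambda_L(a,z)$ and declare it equal to $F_L(a^{-1},z)$, but substituting $a\mapsto a^{-1}$ into $F_L$ inverts $a$ \emph{inside} $\Lambda_L$ as well, not only in the writhe prefactor; your two expressions agree only if $\Lambda_L(a,z)=\Lambda_L(a^{-1},z)$, which is false in general. With the corrected identity the verification is immediate and works under either sign convention for the prefactor: writing $F_L=a^{\varepsilon w(L)}\Lambda_L$ with $\varepsilon=\pm1$, one gets $F_{\overline{L}}(a,z)=a^{-\varepsilon w(L)}\Lambda_L(a^{-1},z)=F_L(a^{-1},z)$. (Incidentally, the formula $F_D=a^{w(D)}\Lambda_D$ printed in Theorem \ref{2:5.8} is a sign slip --- with it $F$ would not survive a first Reidemeister move; the convention actually used in the proofs of Lemma \ref{2:5.14} and Theorem \ref{5.17a} is $F_D=a^{-w(D)}\Lambda_D$. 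This slip does not harm your parts (a)--(d) and (f), but in (e) one must track where the substitution $a\mapsto a^{-1}$ acts, and that is precisely where your argument breaks.)
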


The proof is rather straightforward, compare it with Lemma \ref{III:3.21},
Theorem \ref{Theorem III:3.25}, Corollary \ref{2:3.27}, Theorem \ref{2:3.9}, 
and Exercise \ref{2:5.2}).

The polynomial ${\Lambda}$ does not depend on the orientation of the diagram $D$. 
Therefore the dependence of the Kauffman polynomial $F$ on the orientation
of components of $D$ is easy to describe --- this is because
$F_D(a,z)$ differs from ${\Lambda}_D(a,z)$ by a power of $a$.

\begin{lemma}\label{2:5.10}
Let $D= \{ D_1, D_2, \ldots, D_i, \ldots, D_n\}$ be a diagram of an
oriented link with $n$ components and let $D' := \{ D_1,
D_2,\ldots, -D_i,\ldots,D_n\}$. We set $\lambda_i = \frac{1}{2}\sum \sgn
p_j$ where the summation is over all crossings
of $D_i$ with $D - D_i$. (The number $\lambda_i$ is called 
the linking number of $D_i$ and $D-D_i$ and it is often denoted by 
$\lk (D_i,D-D_i)$; it is an invariant of ambient isotopy of $D$ ($D_i$ should be kept as $i$ component);
this follows easily from considering Reidemeister moves.
Then
$$F_{D'}(a,z) =a^{-w (D')} a^{w (D)} F_D(a,z) =
a^{4\lambda_i}F_D(a,z).$$
\end{lemma}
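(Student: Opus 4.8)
The plan is to split the claimed double equality into two independent parts: the first equality $F_{D'}=a^{-w(D')}a^{w(D)}F_D$ is a formal consequence of how the ambient-isotopy invariant $F$ is built from the regular-isotopy invariant $\Lambda$, whereas the second equality $a^{-w(D')}a^{w(D)}=a^{4\lambda_i}$ is a purely combinatorial statement about the effect on the writhe of reversing the orientation of a single component. I would take Theorem~\ref{2:5.8} as given and not re-derive the existence of $\Lambda$.

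First I would prove the first equality. By Theorem~\ref{2:5.8} the polynomial $\Lambda_D(a,z)$ depends only on the underlying \emph{unoriented} diagram, and $F$ is its normalization by a power of $a$ determined by the writhe, namely $F_D=a^{-w(D)}\Lambda_D$ (the same normalization pattern as $G_L=a^{-w(L)}R_L$ in Theorem~\ref{2:5.7}). Since reversing the orientation of the single component $D_i$ does not alter which arcs cross which, $D$ and $D'$ have the same underlying unoriented diagram and hence $\Lambda_{D'}=\Lambda_D$. Writing $\Lambda_D=a^{w(D)}F_D$ and substituting gives $F_{D'}=a^{-w(D')}\Lambda_{D'}=a^{-w(D')}\Lambda_D=a^{-w(D')}a^{w(D)}F_D$, which is the first equality.

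The main content is the second equality, i.e.\ the writhe computation $w(D)-w(D')=4\lambda_i$. I would classify the crossings of $D$ into three types and track how reversing $D_i$ affects each sign. A self-crossing of $D_i$ has both of its strands reversed, and a crossing lying entirely within $D-D_i$ has neither strand touched; in both cases the sign $\sgn(p)$ is unchanged (two sign changes cancel in the first case, none occur in the second). A crossing between $D_i$ and another component has exactly one of its two strands reversed, and reversing a single strand at a crossing flips its sign. Thus the only crossings contributing to $w(D')-w(D)$ are those between $D_i$ and $D-D_i$, whose signed total is $\sum\sgn(p_j)=2\lambda_i$; reversing them turns this into $-2\lambda_i$, so $w(D')-w(D)=-4\lambda_i$, equivalently $a^{-w(D')}a^{w(D)}=a^{w(D)-w(D')}=a^{4\lambda_i}$.

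Combining the two equalities yields $F_{D'}(a,z)=a^{4\lambda_i}F_D(a,z)$. The delicate step is the sign bookkeeping --- keeping straight that a self-crossing is unaffected while a mixed crossing flips exactly once --- together with the observation that the factor $\frac{1}{2}$ in the definition of $\lambda_i$ is precisely what converts the signed mixed-crossing count $2\lambda_i$ into the exponent $4\lambda_i$. The parenthetical assertion that $\lambda_i=\lk(D_i,D-D_i)$ is an ambient-isotopy invariant is routine and can be checked move by move: a first Reidemeister move only introduces a self-crossing of one component (leaving the signed mixed count fixed), while second and third moves preserve the signed count of mixed crossings.
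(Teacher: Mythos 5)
Your proof is correct and takes essentially the same route as the paper, whose entire argument is the remark preceding the lemma: $\Lambda$ does not depend on orientation and $F$ differs from $\Lambda$ by a power of $a$ fixed by the writhe, leaving the computation $w(D)-w(D')=4\lambda_i$ implicit, which you carry out explicitly and correctly. One remark: your normalization $F_D=a^{-w(D)}\Lambda_D$ is the one the lemma's formula forces (and the one used in the paper's proofs of Lemma \ref{2:5.14} and Theorem \ref{5.17a}), even though Theorem \ref{2:5.8} as printed writes $F_D=a^{w(D)}\Lambda_D$ --- a sign typo in the paper, so you chose correctly.
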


Let us note that the above lemma implies that the Kauffman polynomial 
gives only as much information about orientations of $D$ as comes
from the linking numbers of its components $D_i$ with complements $D-D_i$
(we coded this information in weighted simplex of global linking numbers; Definition \ref{Definition III:3.44} and 
Exercise \ref{Exercise III:3.46}).

The Kaufman polynomial is much more useful for distinguishing a given link from
its mirror image (achirality of a link). However, we gave already an example of a (pretzel) link of
 two components, which is a mutant of its mirror image and they are not isotopic (Fig.~3.7),
although Kauffman expected initially that such a situation is impossible for knots.

\begin{conjecture}\label{III:5.11}[\cite{K-5}.] 
If a knot $K$ is not isotopic to its mirror image 
($\overline{K}$ or $-\overline{K}$) then
$F_K(a,z)\neq F_{\overline{K}}(a,z)$.
\end{conjecture}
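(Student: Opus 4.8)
The plan is first to reduce the conjecture to a single symmetry of one polynomial. By Theorem~\ref{2:5.9}(e) we have $F_{\overline{K}}(a,z)=F_K(a^{-1},z)$, so the equality $F_K=F_{\overline{K}}$ holds precisely when $F_K(a,z)=F_K(a^{-1},z)$, that is, when $F_K$ is palindromic in the variable $a$. Thus Conjecture~\ref{III:5.11} is equivalent to the assertion that palindromic symmetry of $F_K$ under $a\mapsto a^{-1}$ forces $K$ to be amphicheiral, and a genuine proof would have to exhibit, for \emph{every} chiral knot, a failure of this symmetry.

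First I would attempt the direct route. The defining relation $\Lambda_{L_+}+\Lambda_{L_-}=z(\Lambda_{L_0}+\Lambda_{L_{\infty}})$ of Theorem~\ref{2:5.8} is symmetric under the exchange $L_+\leftrightarrow L_-$, so the $z$-dependence of the Kauffman polynomial is mirror-symmetric and all chirality information sits in its $a$-grading, equivalently in how the writhe-powers of $a$ accumulate across a resolving tree. Proving the conjecture would then amount to showing that for every chiral knot these $a$-exponents cannot be arranged palindromically. This is exactly the main obstacle: $F$ is a state sum with no intrinsic mechanism that would force such an asymmetry, and by Theorem~\ref{2:5.9}(f) it is moreover a mutation invariant, hence insensitive to precisely the kind of local rearrangement that produced the chiral link counterexample of Fig.~3.7.

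For this reason the realistic proposal is a \emph{disproof} strategy, since the statement is offered only as a conjecture and the analogous assertion already fails for links: the two-component pretzel link of Fig.~3.7 is a mutant of its own mirror image, so by Theorem~\ref{2:5.9}(f) it shares its Kauffman polynomial while being chiral. Concretely I would compute $F_K$ over the tabulated knots, isolate those whose Kauffman polynomial is palindromic in $a$, and test each such candidate for chirality by an invariant that $F$ does not control --- the signature and Tristram--Levine signatures of \S4, the hyperbolic volume, or the Khovanov-type homology of Chapter~X. A single chiral knot surviving this filter refutes Conjecture~\ref{III:5.11}.

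The step I expect to be the real obstacle is the last one. Exactly on the knots where $F_K$ is palindromic, $F$ yields no chirality information, so $K\not\simeq\overline{K}$ must be certified by entirely independent means; ruling out rigorously the subtle amphicheirality that the polynomial symmetry superficially suggests, and doing so for a specific higher-crossing candidate, is where the genuine work lies.
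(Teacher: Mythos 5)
Your reading matches the paper's own treatment exactly: the statement is recorded only as Kauffman's conjecture and is refuted immediately after it is stated, the paper exhibiting the knots $9_{42}$ and $10_{71}$ (Fig.~5.4) as chiral knots whose Kauffman polynomials nevertheless coincide with those of their mirror images, with chirality certified --- just as you propose --- by the signature ($\sigma(9_{42})=-2=-\sigma(\overline{9}_{42})$, Remark~\ref{2:5.12}, so $9_{42}$ is not even skein equivalent to its mirror). Your disproof strategy (filter tabulated knots for $F_K$ palindromic in $a$, then certify chirality by an invariant independent of $F$) is precisely the route that yields these counterexamples; the only thing your proposal stops short of is naming the explicit pair.
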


The knots  $9_{42}$ and $10_{71}$ (according to the Rolfsen's notation
\cite{Ro-1}, see Fig.~5.4) contradict the conjecture, since they are 
isotopic to their mirror images still they have the same Kauffman polynomials
as their mirror images.

\begin{center}
\begin{tabular}{c} 
\includegraphics[trim=0mm 0mm 0mm 0mm, width=.65\linewidth]
{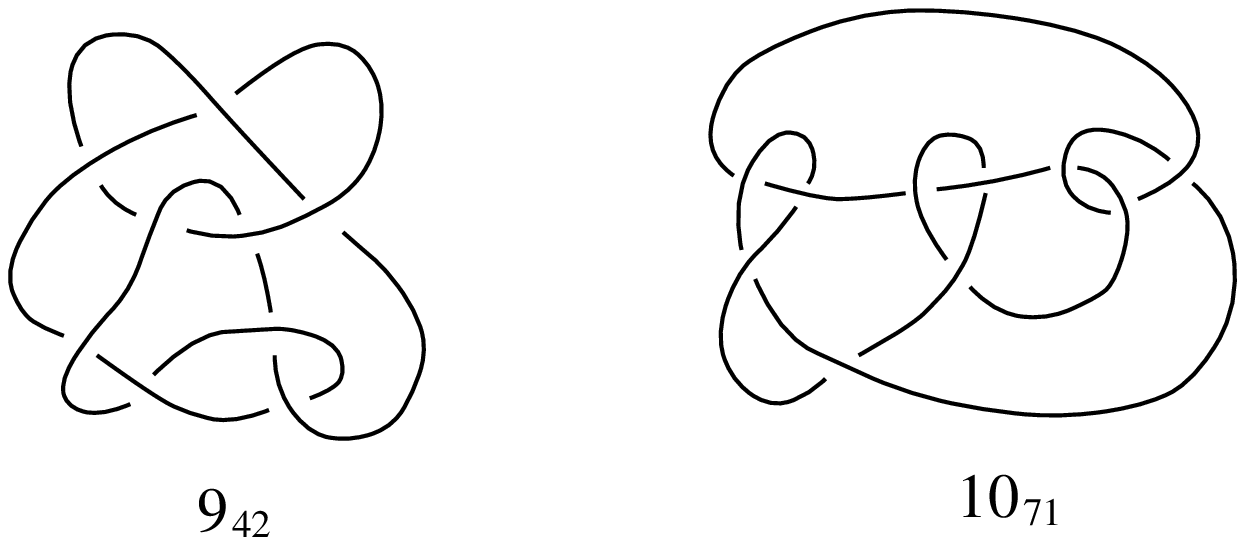}\\
\end{tabular}
\\
Fig.~5.4
\end{center}

\begin{remark}\label{2:5.12}
It was an open problem for a while whether the knot $9_{42}$ is algebraically equivalent to $\overline{9}_{42}$? 
(It is easy to find out that  $\sigma(9_{42})
= -2 = -\sigma(\overline{9_{42}})$ and therefore the two knots are not
\skein \ equivalent). However from Sikora result \cite{Si-2} it follows that no invariant coming from 
a Conway algebra can distinguish $9_{42}$ from $\overline{9}_{42}$.
\end{remark}

The Kauffman polynomial is also a generalization of the Jones polynomial.

\begin{theorem}[\cite{Li-2}]\label{2:5.13}
$$V_L(t)=F_L(t^{3/4},-(t^{-1/4}+t^{1/4}))$$
\end{theorem}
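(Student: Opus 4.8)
The plan is to use the uniqueness of the Jones polynomial. Recall from the introduction that $V_L$ is the unique ambient isotopy invariant of oriented links satisfying $V_{T_1}(t)=1$ together with the skein relation $\frac{1}{t}V_{L_+}-tV_{L_-}=(\sqrt t-\frac{1}{\sqrt t})V_{L_0}$; this uniqueness is an instance of the Conway-type uniqueness underlying Theorem~\ref{2:1.2}, since the relation determines $V_L$ along any resolving tree. Accordingly, I would set $W_L(t):=F_L(t^{3/4},-(t^{-1/4}+t^{1/4}))$. By Theorem~\ref{2:5.8}(2), $F_L$ is an ambient isotopy invariant of oriented links, hence so is $W_L$, and it suffices to verify that $W$ obeys the same initial condition and the same skein relation as $V$; the theorem then follows.

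The initial condition is immediate: by Theorem~\ref{2:5.9}(a) with $n=1$ we have $F_{T_1}=1$, so $W_{T_1}(t)=1$. For the skein relation I would descend to the regular isotopy invariant $\Lambda$ of Theorem~\ref{2:5.8}, which satisfies the unoriented four-term relation $\Lambda_{L_+}+\Lambda_{L_-}=z(\Lambda_{L_0}+\Lambda_{L_\infty})$ together with the first-move behaviour, under which a positive curl multiplies $\Lambda$ by $a$ and a negative curl by $a^{-1}$. Fixing a positive crossing in an oriented diagram, the three diagrams $L_+,L_-,L_0$ carry compatible orientations with writhes $w(L_-)=w(L_+)-2$ and $w(L_0)=w(L_+)-1$, so that after passing from $\Lambda$ to the writhe-normalized $F$ of Theorem~\ref{2:5.8} the three oriented terms acquire the powers of $a$ dictated by these writhes.

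The heart of the argument, and the step I expect to be the main obstacle, is the disoriented smoothing $L_\infty$: it admits no orientation compatible with the rest of the diagram, so it does not appear in the oriented Jones relation and must be eliminated. I would eliminate it by pairing the four-term relation at the crossing with the first-move relations: applying the Kauffman relation to a curl expresses the cost of a disjoint circle as $\frac{a+a^{-1}-z}{z}$ (consistent with Theorem~\ref{2:5.9}(c)), and a second application at an adjacent twisted site produces a further equation containing $\Lambda_{L_\infty}$, allowing $\Lambda_{L_\infty}$ to be cancelled. After substituting $a=t^{3/4}$ and $z=-(t^{-1/4}+t^{1/4})$, the surviving combination of the $a^{w}$ factors must be checked to collapse exactly to $t^{-1}W_{L_+}-tW_{L_-}=(\sqrt t-\frac{1}{\sqrt t})W_{L_0}$; this is a finite but delicate coefficient computation, and verifying that the $L_\infty$ contribution indeed cancels at precisely this value of $(a,z)$ is what makes the substitution work.

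Finally, having matched both defining properties, uniqueness gives $W_L=V_L$ for all oriented links, which is the assertion. I would remark that one cannot shortcut through the Homflypt polynomial, since $F$ and $P$ are genuinely different invariants and $V$ is only a common specialization of each; the orientation-independence of $\Lambda$ recorded after Theorem~\ref{2:5.9} (and Lemma~\ref{2:5.10}) is exactly the feature that both forces the $L_\infty$ complication and, at the special value of $(a,z)$, resolves it.
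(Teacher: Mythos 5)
Your overall frame --- specialize $F$ at $(a,z)=(t^{3/4},-(t^{-1/4}+t^{1/4}))$, verify the Jones axioms for the resulting invariant $W$, and invoke uniqueness --- is reasonable in outline (it is essentially the paper's argument run in the opposite direction: the paper shows that $V$ satisfies the axioms of Lemma~\ref{2:5.14} characterizing $F$, and uses uniqueness of $F$). But the step you yourself flag as the heart of the matter is where the proposal breaks down, and it cannot be repaired by the device you suggest. Applying the four-term relation at a curl only recovers the value $\mu=\frac{a+a^{-1}-z}{z}$ of a disjoint circle; it does not produce a second, independent equation at the \emph{given} crossing involving $\Lambda_{L_\infty}$. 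The diagrams $L_0$ and $L_\infty$ are genuinely different links, and no local manipulation with kinks relates their Kauffman polynomials --- if it did, the four-term relation would degenerate and $F$ would carry no more information than a three-term skein invariant. Moreover, the $L_\infty$ contribution does \emph{not} cancel at the special values of $(a,z)$: after normalizing by writhes, the specialized relation reads
$$t^{3/4}W_{L_+}+t^{-3/4}W_{L_-}=-(t^{-1/4}+t^{1/4})\bigl(W_{L_0}+t^{-3\lambda}W_{L_\infty}\bigr),$$
with a linking-number power coming from Lemma~\ref{2:5.10} (a correction your writeup omits entirely), and $W_{L_\infty}$ survives with a nonzero coefficient.

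What makes the theorem true is not a cancellation but a second identity satisfied by the Jones polynomial itself: Birman's $V_\infty$-formula (Lemma~\ref{2:5.16}),
$$\sqrt{t}\,V_{L_+}-\tfrac{1}{\sqrt{t}}\,V_{L_-}=(\sqrt{t}-\tfrac{1}{\sqrt{t}})\,t^{-3\lambda}V_{L_\infty},$$
whose proof in turn requires the Jones reversing formula $V_{L'}(t)=t^{-3\lambda}V_L(t)$ (Lemma~\ref{2:5.15}), established by the lengthy Lickorish--Millett induction. Adding this formula to the ordinary Jones skein relation reproduces exactly the four-term relation of Lemma~\ref{2:5.14} at the special $(a,z)$; that is the entire content of the comparison, and it is precisely the ingredient missing from your argument. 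Note also that running the argument in your direction (from $F$ toward $V$) gives you no access to these lemmas: you would need to prove the analogue of Birman's formula for $W$ directly from the Kauffman axioms, which is a relation that $F$ does not satisfy at generic $(a,z)$ and which is no easier than the paper's inductions. So the ``finite but delicate coefficient computation'' you defer to has no valid starting point; the gap is a missing key lemma, not a bookkeeping issue.
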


Proof. Let us begin by characterizing the Kauffman polynomial without
using regular isotopy.

\begin{lemma}\label{2:5.14}
The Kauffman polynomial is uniquely determined by the following conditions:
\begin{enumerate}
\item [(1)] $F_{T_1}(a,z) = 1$,
\item[(2)] Consider the Conway skein triple of oriented link diagrams, $L_+$, $L_-$, and $L_0$ of 
Figure 1.1. Let $cp(L)$ denote the number of components of the link $L$. We consider two cases 
dependent on whether the modified crossing in $L_+$ is a selfcrossing or the mixed one.
\begin{enumerate}
\item [(i)]
$\mbox{cp} (L_{+}) = \mbox{cp}(L_0)-1$ (a selfcrossing case):
 $$aF_{L_+}(a,z) +\frac{1}{a}F_{L_-}(a,z) = z(F_{L_0}(a,z) +
a^{-4\lambda}F_{L_\infty}(a,z)),$$
where $\mbox{cp} (L_{\infty})=\mbox{cp} (L_+)$ and $L_\infty$ is given any one of two possible orientations
and $\lambda = \lk (L_i, L_0-L_i)$ with $L_i$ being the new component of
$L_0$, the orientation of which does not agree with the orientation of the
corresponding component of $L_\infty$. 
\item [(ii)] 
$\mbox{cp} (L_{+}) = \mbox{cp}(L_0)+1$ (a mixed crossing case): 
$$aF_{L_+}(a,z) +\frac{1}{a}F_{L_-}(a,z) = z(F_{L_0}(a,z) +
a^{-4\lambda +2}F_{L_\infty}(a,z)),$$
where $\mbox{cp} (L_{\infty})= \mbox{cp} (L_0)= \mbox{cp} (L_+)-1$ and  
 $L_\infty$ is given any one of two possible orientations
and $\lambda = \lk (L_i, L_+-L_i)$ with $L_i$ being the component of 
$L_+$, the orientation of which does not agree with the orientation of the
corresponding component of $L_\infty$.
\end{enumerate}
\end{enumerate}
\end{lemma}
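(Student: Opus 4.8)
The plan is to derive the oriented four-term relation for the ambient-isotopy invariant $F$ from the regular-isotopy relation for $\Lambda$ given in Theorem~\ref{2:5.8}, and then to establish uniqueness by the same descending-diagram induction used to prove Theorem~\ref{2:1.2}. First I would fix the power-of-writhe normalization $\Lambda_D = a^{w(D)}F_D$ relating the two invariants (consistent with Lemma~\ref{2:5.10}), orient $L_+,L_-,L_0$ by the Conway-triple orientation, and write $w_0$ for the sum of the signs of all crossings \emph{other} than the distinguished one. Then $w(L_+)=w_0+1$, $w(L_-)=w_0-1$ and $w(L_0)=w_0$, so substituting into $\Lambda_{L_+}+\Lambda_{L_-}=z(\Lambda_{L_0}+\Lambda_{L_\infty})$ and dividing by $a^{w_0}$ already produces the coefficients $a$, $a^{-1}$ and $z$ in front of $F_{L_+}$, $F_{L_-}$ and $F_{L_0}$, leaving only the $L_\infty$ term to analyse.

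The real work is that term. Since $\Lambda$ is unoriented, $L_\infty$ must be given an orientation $\vec L_\infty$ before it can be read as a value of $F$, and the smoothing producing $L_\infty$ is exactly the one incompatible with the ambient orientation: any $\vec L_\infty$ reverses one component $L_i$ relative to the orientation inherited from $L_+$. The residual crossings of $L_\infty$ are the non-distinguished crossings of $L_+$, but those between $L_i$ and the rest flip sign, so the correction exponent in the $F$-relation is exactly $w(\vec L_\infty)-w_0$, which by Lemma~\ref{2:5.10} is $-4$ times the linking number of the reversed component with its complement. This is where the two cases separate, and the component counts $\mbox{cp}(L_+)=\mbox{cp}(L_0)\mp1$ recorded in the statement are what tell me which applies.

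The step I expect to be the main obstacle is pinning down the precise exponent, i.e.\ keeping the $+2$ and the definition of $\lambda$ straight. In the self-crossing case the two strands lie on one component, $L_0$ has one \emph{more} component, and $\lambda=\lk(L_i,L_0-L_i)$ is computed in $L_0$, where all $L_i$-to-rest crossings are non-distinguished; hence $w(\vec L_\infty)-w_0=-2\cdot 2\lambda=-4\lambda$, giving the factor $a^{-4\lambda}$ with no correction. In the mixed case the crossing joins two components, $L_0$ has one \emph{fewer} component, and $\lambda=\lk(L_i,L_+-L_i)$ is computed in $L_+$, so it \emph{includes} the distinguished crossing (an $L_i$-to-rest crossing of sign $+1$) which is smoothed away in $L_\infty$ and therefore does not flip; writing $\sum_{L_i\text{-rest, non-dist}}\mathrm{sgn}=2\lambda-1$ gives $w(\vec L_\infty)-w_0=-2(2\lambda-1)=-4\lambda+2$, the factor $a^{-4\lambda+2}$. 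This is a finite sign-chase, but it is very easy to be off by the $+2$ or by the sign of $\lambda$ unless one is disciplined about \emph{where} $\lambda$ is measured in each case.

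Finally, for uniqueness I would show conditions (1) and (2) force $F$ on every diagram. The initial value $F_{T_1}=1$ together with the connected-sum and split behaviour (Theorem~\ref{2:5.9}(a)) determines $F_{T_n}$ for all trivial links, and since $a\neq 0$ the relation can be solved for $F_{L_+}$ in terms of $F_{L_-}$ and the two smoothings $F_{L_0},F_{L_\infty}$, both of which have strictly fewer crossings. Running the double induction on the number of crossings and, for fixed crossing number, on the number of ``bad'' crossings needed to render a diagram descending --- exactly the machinery of the proof of Theorem~\ref{2:1.2} --- every value is forced, so any invariant satisfying (1)--(2) coincides with $F$.
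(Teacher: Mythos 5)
Your proposal is correct and follows essentially the same route as the paper: substitute $\Lambda_D=a^{w(D)}F_D$ into the unoriented relation $\Lambda_{L_+}+\Lambda_{L_-}=z(\Lambda_{L_0}+\Lambda_{L_\infty})$, divide by the common writhe power, and identify the exponent on the $L_\infty$ term as the writhe change under reversing the component $L_i$ (i.e.\ Lemma \ref{2:5.10}), with uniqueness coming from the standard resolving-tree induction. Your sign chase is in fact carried out more carefully than the paper's (whose closing formula $w(L_\infty)-w(L_0)=2-2\,\lk(L_i,L_+-L_i)$ contains a factor-of-two slip), and you correctly obtain $-4\lambda$ and $-4\lambda+2$ in the two cases.
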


\begin{proof} In view of the definition of $F_L(a,z)$, Lemma \ref{2:5.14} 
follows easily from Lemma \ref{2:5.10}.  We show the calculation, as an example, 
in the case 2(ii). By the definition we have;
$${\Lambda}_{L_+} + {\Lambda}_{L_-}= z({\Lambda}_{L_0}+ {\Lambda}_{L_{\infty}}), \mbox { therefore}$$
$$a^{w(L_+)}{F}_{L_+} + a^{w(L_-)}{F}_{L_-}= z(a^{w(L_0)}{F}_{L_0}+ 
a^{w(L_{\infty})}{F}_{L_{\infty}}) \mbox { so:}$$
$$ a{F}_{L_+} + a^{-1}{F}_{L_-}= z({F}_{L_0}+ a^{w(L_{\infty})- w(L_0)}{F}_{L_{\infty}})$$
which reduces to the formula of 2(ii) as $w(L_{\infty})- w(L_0)= 2-2lk(L_i,L_+-L_i)$.

\end{proof}

Proceeding with the proof of Theorem \ref{2:5.13} we need an additional
characterization of the Jones polynomial. Let us recall that the  Jones polynomial
is uniquely defined by the following  conditions:
\begin{enumerate}
\item $V_{T_1}(t) = 1$, 
\item $\frac{1}{t}V_{L_+}(t) - tV_{L_-}(t) = (\sqrt{t} -
\frac{1}{\sqrt{t}})V_{L_0}(t)$.
\end{enumerate}

\begin{lemma}[Jones reversing formula]\label{2:5.15}
Suppose that $L_i$ is a component of an oriented link $L$
and $\lambda = \lk (L_i,L-L_i)$. If $L'$ is a link obtained from
 $L$ by reversing the orientation of the component $L_i$ then
$V_{L'}(t) = t^{-3\lambda}V_L(t)$.
\end{lemma}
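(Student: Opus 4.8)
The cleanest route is to realize $V_L$ as a writhe-corrected \emph{unoriented} invariant and to isolate the only orientation-dependent ingredient. Writing $V_L(t) = (-A^3)^{-w(L)}\langle L\rangle$ with the fourth-root convention $A = t^{-1/4}$ (the one forced by the skein relation for $V$), where $\langle L\rangle$ is the orientation-independent Kauffman bracket of the underlying unoriented diagram and $w(L)$ is the writhe of Lemma~\ref{2:5.5}, the plan is to observe that reversing the orientation of $L_i$ leaves $\langle L\rangle$ untouched and alters $w(L)$ by a controlled amount. The factor $t^{-3\lambda}$ then drops out of a one-line computation.

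The key combinatorial step is the writhe change. I would split the crossings of a diagram into three classes: self-crossings of $L_i$, crossings lying entirely within $L - L_i$, and \emph{mixed} crossings between $L_i$ and $L - L_i$. Reversing $L_i$ reverses both strands at a self-crossing of $L_i$ and neither strand at a crossing of $L-L_i$, so in both cases $\sgn p$ is preserved; at a mixed crossing exactly one strand is reversed, so its sign flips. Since the sum of $\sgn p$ over the mixed crossings equals $2\lambda$ by definition of $\lambda = \lk(L_i, L-L_i)$, we obtain $w(L') = w(L) - 4\lambda$. Hence $V_{L'} = (-A^3)^{4\lambda} V_L = A^{12\lambda} V_L$ (as $4\lambda$ is even, the sign disappears), and substituting $A = t^{-1/4}$ yields $V_{L'} = t^{-3\lambda} V_L$, as claimed.

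Because the Kauffman bracket is not developed in this chapter, I would also give a self-contained argument using only the oriented skein relation, proving the slightly stronger statement that reversing an arbitrary \emph{sublink} $L_i$ multiplies $V$ by $t^{-3\lambda}$, $\lambda = \lk(L_i, L-L_i)$. This is carried out by induction on the number of crossings: the base case is a split trivial circle (where $\lambda = 0$ and both orientations give the same value), and at a self-crossing of $L_i$ or a crossing of $L - L_i$ the sign is unchanged, so the skein relation passes the inductive hypothesis to $L_0$, which has fewer crossings and still admits a well-defined sublink reversal with the same $\lambda$. The hard part is the \emph{mixed} crossings: there reversing $L_i$ flips the sign and, worse, replaces the orientation-compatible smoothing $L_0$ by the disoriented smoothing $L_{\infty}$ while fusing a component of $L_i$ with one of $L-L_i$, so ``reverse the sublink'' loses meaning on the smoothed diagram. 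I expect this to be the main obstacle, and would resolve it not by smoothing mixed crossings but by changing them so as to reduce $|\lambda|$, tracking the writhe exactly as in the bracket computation above; this makes transparent that the entire orientation dependence is concentrated in the $-4\lambda$ writhe shift.
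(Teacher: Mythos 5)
Your main argument is correct, and it is a genuinely different route from the text's: it is in fact the proof the chapter alludes to but does not give (``Very simple proof was found by L.~Kauffman (using Kauffman bracket polynomial)''). Your bookkeeping is right: $\sgn p$ is unchanged at self-crossings of $L_i$ and at crossings of $L-L_i$, and flips at mixed crossings, whose signs sum to $2\lambda$; hence $w(L')=w(L)-4\lambda$, and since the bracket of the underlying unoriented diagram is untouched by reorientation, $V_{L'}=(-A^3)^{4\lambda}V_L=t^{-3\lambda}V_L$ with $A=t^{-1/4}$. The text instead presents the original Lickorish--Millett induction: reduce $L_i$ (by the usual double induction on crossings and bad crossings) to a descending, hence unknotted, diagram; span it by a disc meeting $L-L_i$ transversally in $n$ points; induct on $n$ using a skein triple in which $L$ itself plays the role of the $L_0$-term while in $L_{\pm}$ the component $L_i$ is split into two circles $\gamma_{1}^{\pm},\gamma_{2}^{\pm}$ each meeting the disc fewer times; and settle the base cases ($n\le 2$, and $n=3$ with $\lambda=\pm1$) by reducing the relevant $3$-tangle to six basic ones. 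What the text's route buys is self-containment: at this point of the chapter only the skein characterization of $V$ is available, whereas your route imports the bracket machinery (regular-isotopy invariance of the state sum and its identification with $V$). What your route buys is brevity and transparency: the entire orientation dependence of $V$ is isolated in the writhe, and the sublink generalization noted after the lemma comes for free.

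Your fallback ``self-contained'' skein induction, however, does not close as sketched, and the gap is exactly where you expect it. Changing a mixed crossing rather than smoothing it does not avoid the problem, because the skein relation always carries a smoothing term with it: at a mixed crossing the oriented smoothing of $L$ is $L_0$, while the oriented smoothing of $L'$ is the other smoothing $L_\infty$. These are different links, and neither inherits a meaningful ``reversed sublink,'' so the inductive hypothesis cannot be applied to compare the two relations; reducing $|\lambda|$ by crossing changes still leaves these incomparable terms behind. Relating $V_{L_0}$ to $V_{L_\infty}$ is precisely the content of Birman's $V_\infty$-formula (Lemma~\ref{2:5.16}), which the text derives \emph{from} the reversing formula, not conversely. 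So if the bracket is off-limits, some genuinely geometric input is unavoidable, and that is exactly what steps (3)--(5) of the text's proof (the spanning disc and the $3$-tangle analysis) supply.
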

Notice that the Jones reversing formula formula generalizes immediately to the case when 
we reverse the orientation of a sublink $U$ of a link $L$, that is having $\lambda = \lk (U,L-U)$ 
we obtain $V_{L'}(t) = t^{-3\lambda}V_L(t)$ where $L'$ is obtained from $L$ by reversing orientation of $U$.

Proof of Lemma \ref{2:5.15}. We present the original proof by Lickorish and Millett \cite{L-M-3}. 
Very simple proof was found by L.Kauffman (using Kauffman bracket polynomial).

The proof consists of five steps.

\begin{enumerate}
\item[(1)] Simple calculations show that Lemma \ref{2:5.15} is true for links
pictured in Fig.~5.5.

\begin{center}
\begin{tabular}{c} 
\includegraphics[trim=0mm 0mm 0mm 0mm, width=.65\linewidth]
{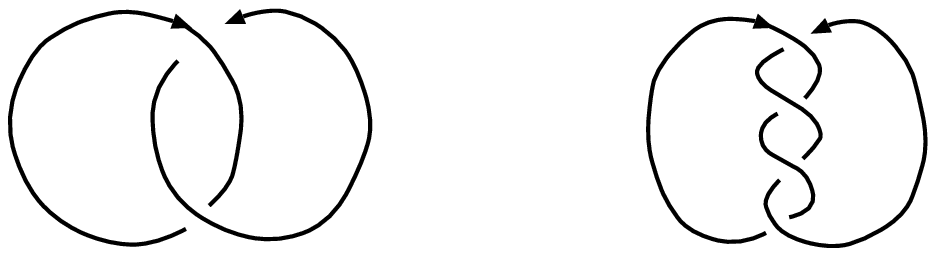}\\
\end{tabular}
\\
Fig.~5.5
\end{center}

\item[(2)] $V_L(t) = V_{-L}(t)$ and moreover, since $V_{L_1\kwad L_2}(t) =
V_{L_1}(t)\cdot V_{L_2}(t)$\  and 
\ $V_{\overline{L}}(t) = V_L(t^{-1})$, thus 
if Lemma \ref{2:5.15} is true for two links $L_1$ and $L_2$ then 
it is true also for $\overline{L}_1$, $L_1\kwad L_2$ and $L_1\kwad -L_2$.

\item[(3)] Fix a diagram of $L$. A standard induction with respect to the number 
of the (self)crossings of $L_i\subset L$  
and with respect to the number of bad (self)crossings of
$L_i$, implies that we can assume that $L_i$ is a descending diagram and
thus, in particular, $L_i$ is a diagram of the trivial knot. 

\item[(4)] Let us consider a 2-dimensional disc $D$ in $S^3$, the boundary
of which is the knot $L_i$.  We may assume that $L-L_i$ meets $D$
transversally in a finite number of points, say in $n$ points.
Now we proceed by using induction with respect to the number $n$.
The initial conditions of the induction will be discussed later in the
step (5). Now we assume that $n\geq 4$. 
Figure 5.6 presents Conway's skein triple of diagrams, $L_+$, $L_-$, and $L_0$,
where $L_0$ represents $L_i$. 
The disc bounded by $L_i$ is cut by $L-L_i$ in points which are marked
by crosses. The knot $L_i$ becomes in $L_-$ a trivial link with two components
$\gamma^-_1$ and $\gamma^-_2$ which bound discs meeting the remainder of  
$L_-$ in $n_1$ and $n_2$ points, respectively. Besides, the linking 
number of $\gamma^-_1$ and $\gamma^-_2$ with the remainder of the link $L_-$ 
is $\lambda_1$ and $\lambda_2$, respectively.
In the case of $L_+$ the situation is similar, only this time 
$\gamma^+_1$ and $\gamma^+_2$ are linked, i.e.~$\lk(\gamma^+_1,\gamma^+_2) = 1$),
see Fig.~5.6.

\begin{center}
\begin{tabular}{c} 
\includegraphics[trim=0mm 0mm 0mm 0mm, width=.65\linewidth]
{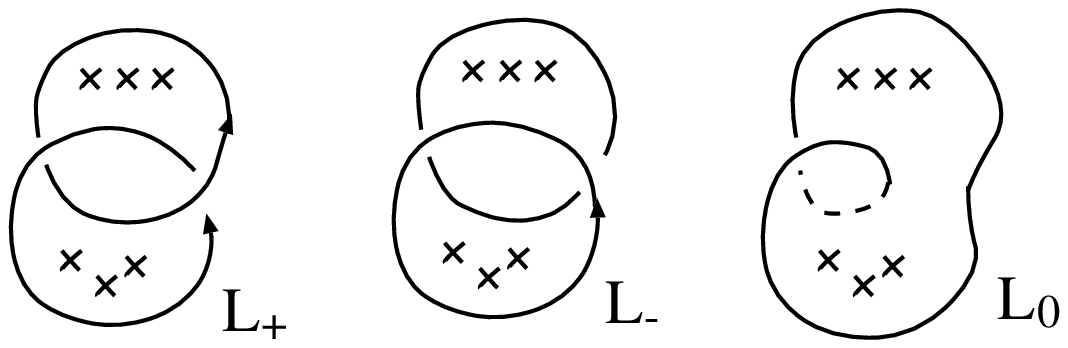}\\
\end{tabular}
\\
Fig.~5.6
\end{center}

Now, we have $n_1+n_2 = n$ and $\lambda_1 + \lambda_2 = \lambda$.
We may assume that both $n_1$ and $n_2$ are 2 at least. 
Now, let $L_+'$, $L_-'$, $L_0'$ be links obtained from links 
$L_+$, $L_-$, $L_0$, respectively, by changing the orientation
of components $(\gamma^+_1,\gamma^+_2)$, $(\gamma^-_1, \gamma^-_2)$ and 
$L_i$, respectively. Then 
$$\frac{1}{t}V_{L_+}(t) - tV_{L_-}(t) =
(\sqrt{t}-\frac{1}{\sqrt{t}})V_{L_0}(t)$$
and 
$$\frac{1}{t}V_{L_+'}(t) - tV_{L_-'}(t) =
(\sqrt{t}-\frac{1}{\sqrt{t}})V_{L_0'}(t).$$

The inductive assumption allows to apply Lemma \ref{2:5.15} to $L_+$
and $L_-$. Changing the orientation of $\gamma^+_1$ and $\gamma^+_2$ in $L_+$
we obtain:
$$V_{L_+'}(t) = t^{-3(\lambda_1 +\lambda_2)}V_{L_+}(t)$$
while changing the orientation of $\gamma^-_1$ and $\gamma^+_2$ in $L_-$ 
we similarly get:
$$V_{L_-'}(t) = t^{-3(\lambda_1 +\lambda_2)}V_{L_-}(t).$$
Therefore we get $t^{-3\lambda}V_L(t)=V_{L'}(t)$ as needed.

If $n<4$ then there are problems with the inductive assumption for
 $L_+$. However, we can extends the above argument for the case
$n=3$, $\lambda =\pm 3$, provided that the lemma is true for 
$n\leq 2$ and for $n=3$, $\lambda = \pm 1$. 

\item[(5)] Now let us consider the case of $n=3$, $\lambda = \pm 1$. 

We are to show that for the link pictured at Fig.~5.6 --- where the  
rectangle has to be replaced by a diagram (called a tangle) with three arcs coming into it
and three arcs  existing out of it,  we have $t^{-3}V_L(t) = V_{L'}(t)$.
We show, in Fig.~5.7 the case of $\lambda = 1$).

\begin{center}
\begin{tabular}{c} 
\includegraphics[trim=0mm 0mm 0mm 0mm, width=.5\linewidth]
{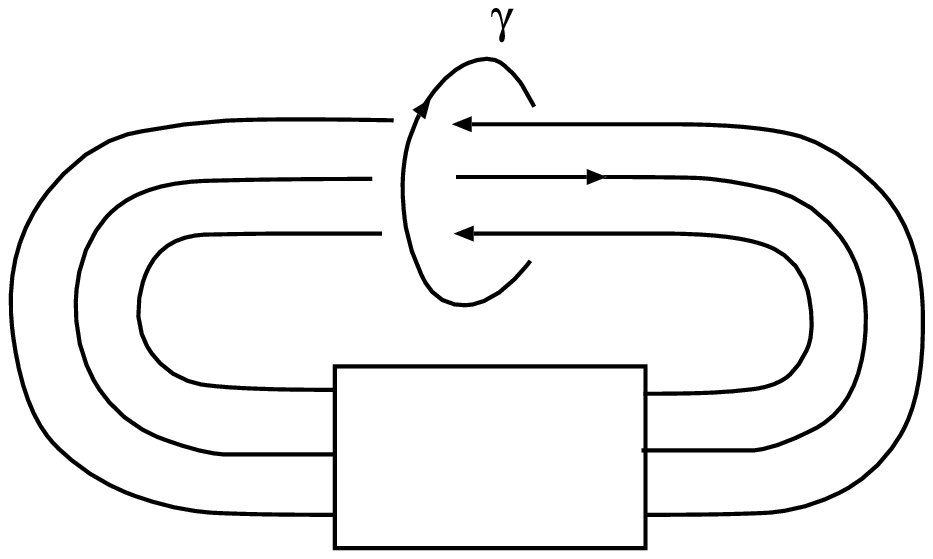}\\
\end{tabular}
\\
Fig.~5.7
\end{center}

Now one can use 
standard induction with respect to the number of crossings lying
in the above rectangle and with respect to the number of bad crossings
(for some choice of base points) in the rectangle 
to reduce the general situation in the rectangle to one of the six cases 
presented in Fig.~5.8.
Subsequently, it is enough to apply steps (1) and (2) of the proof. The case of 
the second rectangle is illustrated in Figure 5.9.

\begin{center}
\begin{tabular}{c} 
\includegraphics[trim=0mm 0mm 0mm 0mm, width=.7\linewidth]
{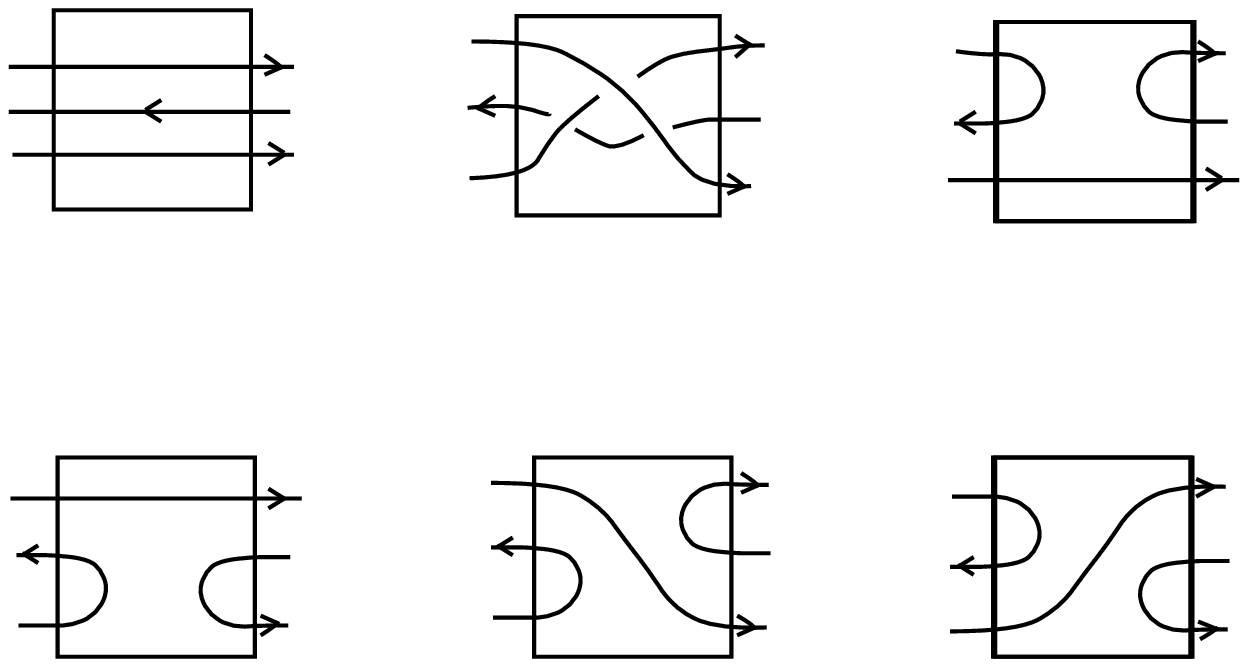}\\
\end{tabular}
\\
Fig.~5.8 six basic 3-tangles
\end{center}

\begin{center}
\psfig{figure=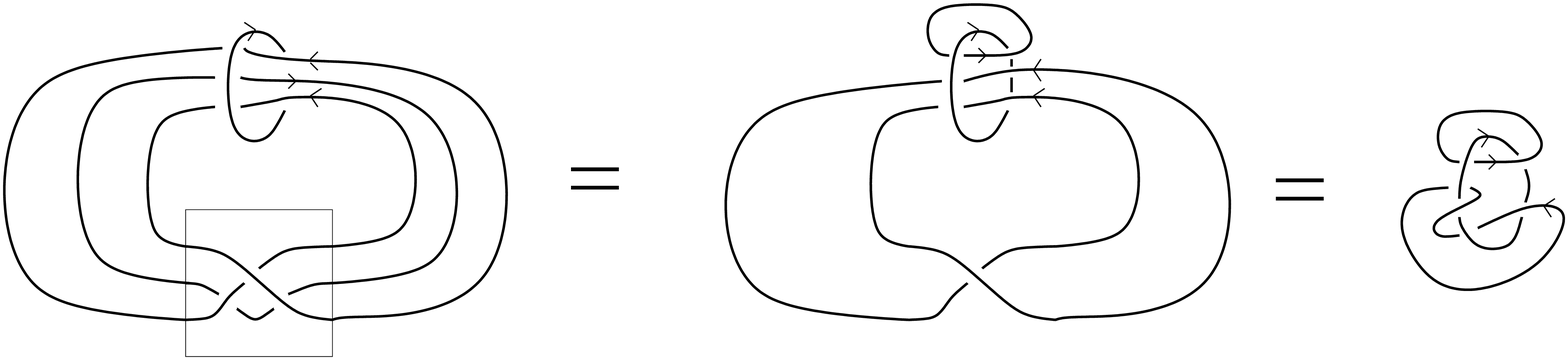,height=2.3cm}
\end{center}
\centerline{Fig. 5.9; Reduction in the case of the second rectangle}

Similarly we deal with the case $n\leq2$.
\end{enumerate}
This completes the proof of Lemma \ref{2:5.15}

The next step in the proof of Theorem \ref{2:5.13} is the so-called
$V_\infty$-formula \cite{Bi-2} in which we compare Jones polynomials of links 
$L_+,L_-$, and $L_{\infty}$ where $L_{\infty}$ has one of possible two orientation which agree 
with orientation on $L_+$ for component not taking part in the modified crossing.

\begin{lemma}[Birman]\label{2:5.16}
 \ 
\begin{enumerate}
\item[(1)] The case $cp(L_+)=cp(L_0)-1$ where $cp(L)$ denotes the number of components.
Let $L_\infty$ be given an orientation (as explained above)  and let 
$\lambda= lk(L_i,L_0-L_i)$ with $L_i$ being the new component of
$L_0$, the orientation of which does not agree with the orientation of the
corresponding component of $L_\infty$.
 Then
$$\sqrt{t}V_{L_+}(t) - \frac{1}{\sqrt{t}}V_{L_-}(t) = (\sqrt{t} -
\frac{1}{\sqrt{t}})t^{-3\lambda}V_{L_\infty}(t).$$

\item[(2)] The case $c(L_+)=c(L_0)+1$: \ \ Let $L_\infty$ be given any orientation (as explained above) 
and let $\lambda = \lk (L_i, L_+-L_i)$ with $L_i$ being the component of
$L_+$, the orientation of which does not agree with the orientation of the
corresponding component of $L_\infty$.
Then:
$$\sqrt{t}V_{L_+}(t) - \frac{1}{\sqrt{t}}V_{L_-}(t) = (\sqrt{t} -
\frac{1}{\sqrt{t}})t^{-3(\lambda-\frac{1}{2})}V_{L_\infty}(t).$$
\end{enumerate}
\end{lemma}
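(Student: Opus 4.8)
The plan is to deduce the $V_\infty$-formula from the Jones skein relation together with the reversing formula (Lemma~\ref{2:5.15}), the idea being that $L_\infty$ is a \emph{disoriented} resolution of the distinguished crossing, and that reversing the orientation of the component $L_i$ singled out in the statement turns it into an honest oriented resolution, to which the skein relation applies; each such reversal contributes a factor $t^{-3\lambda}$ by Lemma~\ref{2:5.15}. First I would fix the local picture of Figure~1.1 and record the elementary fact that reversing the orientation of exactly one of the two strands at a crossing changes the sign of that crossing and interchanges the roles of the oriented smoothing $L_0$ and the disoriented smoothing $L_\infty$: indeed, with one strand reversed the unique Seifert (orientation-compatible) smoothing of the crossing becomes the $L_\infty$-reconnection.

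For the \textbf{mixed case} (part (2), $cp(L_+)=cp(L_0)+1$), the two strands lie on different components, so $L_i$ is a genuine component of $L_+$ and reversing it globally is legitimate. Let $M$ be $L_+$ with $L_i$ reversed; then the distinguished crossing of $M$ is negative and its oriented smoothing is exactly $L_\infty$. Applying the skein relation to $M$ at this crossing gives
$$\frac1t V_{M_+} - t V_{M_-} = \left(\sqrt t - \frac1{\sqrt t}\right)V_{L_\infty},$$
and then Lemma~\ref{2:5.15} supplies $V_{M_-}=t^{-3\lambda}V_{L_+}$ and $V_{M_+}=t^{-3\lambda'}V_{L_-}$, where $\lambda=\lk(L_i,L_+-L_i)$ and $\lambda'=\lk(L_i,L_--L_i)$. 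Since passing from $L_+$ to $L_-$ switches one mixed crossing between $L_i$ and its complement, $\lambda'=\lambda-1$; substituting and clearing powers of $t$ (which is where I must be careful about the crossing-sign and linking-number bookkeeping) collapses the two exponents into the single factor $t^{-3(\lambda-\frac12)}$ of the stated formula.

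For the \textbf{self-crossing case} (part (1), $cp(L_+)=cp(L_0)-1$) the same idea applies, but now the two strands belong to the \emph{same} component $K$, so reversing a single strand of $L_+$ is not a legitimate global operation. The fix is to apply the reversing formula not to $L_+$ but to $L_0$: there the oriented smoothing has split $K$ into two components, one of which is the $L_i$ of the statement, so reversing $L_i$ in $L_0$ is legitimate and gives the factor $t^{-3\lambda}$ with $\lambda=\lk(L_i,L_0-L_i)$. Because the reversed component does not straddle the switched crossing, no linking-number shift occurs, which is exactly why this case carries the cleaner exponent $-3\lambda$; combining this reversal with the skein relation expressing $V_{L_0}$ in terms of $V_{L_+}$ and $V_{L_-}$ yields the claimed identity.

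The hard part will be the precise identification of the reoriented resolution with $L_\infty$ across the distinguished crossing: in the self-crossing case the reintroduced crossing is genuinely \emph{antiparallel}, and for an antiparallel crossing both planar smoothings are orientation-compatible, so one cannot simply quote ``the'' oriented smoothing. I would handle this exactly as in the proof of Lemma~\ref{2:5.15}: span $L_i$ by a disc, reduce by induction on the number of intersection points of that disc with the remaining strands (using steps (1)--(2) of that proof for the base cases), and verify on each elementary local model that the reoriented resolution and $L_\infty$ agree as oriented links. The remaining routine checks are the arithmetic of the exponents and confirming that the two possible orientations allowed for $L_\infty$ correspond to reversing $L_i$ versus reversing its complement, consistent with the sign conventions fixed at the outset.
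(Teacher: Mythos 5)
Your treatment of the mixed-crossing case (2) is essentially the paper's own argument: reverse $L_i$ in $L_+$ and in $L_-$, observe that the oriented smoothing of the reversed positive diagram is exactly $L_\infty$ with the orientation prescribed in the statement, apply the skein relation to this reversed triple, and convert back with Lemma~\ref{2:5.15}, using $\lk(L_i,L_--L_i)=\lambda-1$. Modulo the sign and exponent bookkeeping (which you flag, and about which the paper itself is not entirely consistent), that half is sound.

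The self-crossing case (1), however, has a genuine gap. Your only two ingredients are the reversal identity $V_{L_0'}=t^{-3\lambda}V_{L_0}$ (where $L_0'$ is $L_0$ with $L_i$ reversed) and the skein relation for the triple $(L_+,L_-,L_0)$; neither of these, nor any linear combination of them, involves $V_{L_\infty}$ at all, so no identity about $L_\infty$ can follow. The identification you propose to verify ``on elementary local models'' is false: $L_0'$ has the same underlying unoriented diagram as $L_0$ (hence $cp(L_+)+1$ components), while $L_\infty$ is the other smoothing (with $cp(L_+)$ components), so they are different links. The true obstruction, which your ``antiparallel'' remark brushes against but misdiagnoses, is that after reversing $L_i$ the two arcs at the smoothing site are antiparallel; an \emph{oriented crossing} always has exactly one orientation-compatible smoothing, and no oriented crossing diagram at that site has either $L_0'$ or the oriented $L_\infty$ as its oriented smoothing --- so the skein relation simply cannot be invoked there (for antiparallel \emph{arcs} both reconnections are compatible, but no compatible crossing exists; your claim that ``both smoothings of an antiparallel crossing are compatible'' conflates these two situations). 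The paper's missing idea is an auxiliary diagram $X$ with two crossings $p,q$, obtained by a Reidemeister II move on the two parallel arcs of $L_0$ at the smoothing site, so that $L_0=X^{pq}_{-+}$, $L_+=X^{pq}_{+0}$, $L_-=X^{pq}_{-0}$, and $L_\infty=X^{pq}_{-\infty}$. The crossing $q$ is a \emph{mixed} crossing between $L_i$ and the complementary component of $L_0$, so one can write the skein relation at $q$ twice --- once with the original orientation, relating $V_{L_0}$, $V_{X^{pq}_{--}}$, $V_{L_-}$, and once after reversing $L_i$, where the oriented smoothing at $q$ genuinely is $L_\infty$, converting back via Lemma~\ref{2:5.15} --- then eliminate $V_{X^{pq}_{--}}$ between the two equations and use the defining skein relation at $p$ to eliminate $V_{L_0}$. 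In effect, case (1) is reduced to case (2) applied at the auxiliary crossing $q$; without some such extra crossing your outline cannot close, and the disc-spanning induction from the proof of Lemma~\ref{2:5.15} is no substitute, since it proves the reversing formula you already have rather than the missing skein relation.
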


Proof.
\begin{enumerate}
\item $cp(L_+)=cp(L_0)-1$. 

Let us consider a diagram $X$ with two crossings, $p$ and $q$
(Fig.~5.10), such that $L_0 = X^{p\ q}_{-\ +}$, 
$L_+ = X^{p\ q}_{+\ 0}$ and $L_- = X^{p\ q}_{-\ 0}$. 
Considering the crossing $q$ we obtain:

\item $$ (a)\ \ -tV_{L_0}(t) +\frac{1}{t}V_X(t) = (\sqrt{t} -
\frac{1}{t})V_{L_-}(t).$$

Now let us change the orientation of the component of $L_0$  
which exists at the upper right-hand corner (Fig.~5.10) and call the
the resulting link $L_0'$. Similarly, we change $X$ to $X'$ (Fig.~5.10).

\begin{center}
\begin{tabular}{c} 
\includegraphics[trim=0mm 0mm 0mm 0mm, width=.5\linewidth]
{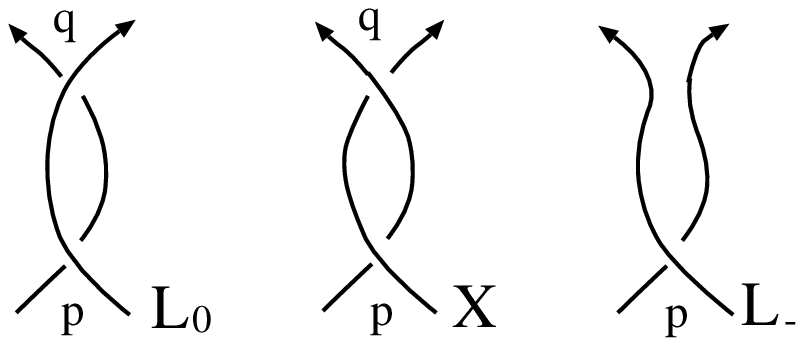}\\
\end{tabular}
\\
Fig.~ 5.10
\end{center}

Now let us choose the orientation of 
$L_\infty = X^{p\ q}_{-\ \infty} = X^{p\ q}_{\infty\ -} = X'^{p\ q}_{+\ 0}$ 
so that it agrees with the orientation of $L_0'$ (Fig.~5.11).

\begin{center}
\begin{tabular}{c} 
\includegraphics[trim=0mm 0mm 0mm 0mm, width=.5\linewidth]
{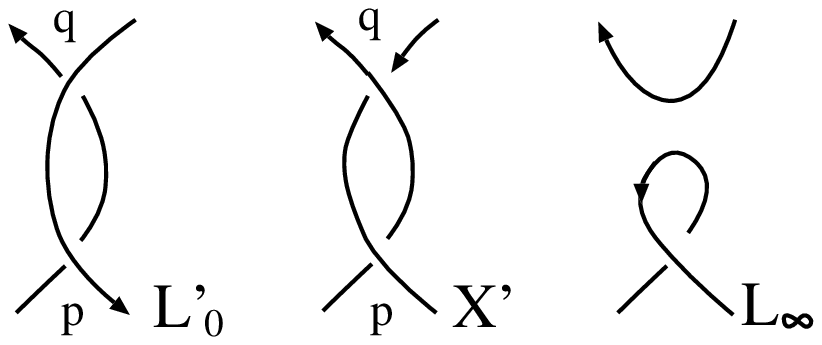}\\
\end{tabular}
\\
Fig.~5.11
\end{center}

Because of the diagrams at Fig.~5.11, considering the crossing $q$, we get 
$-tV_{X'}(t) + \frac{1}{t}V_{L_0'}(t) = (\sqrt{t} -\frac{1}{t})V_{L_\infty}(t)$. 
Moreover, because of Lemma \ref{2:5.15}, we have
$V_{L_0'}(t) = t^{3\lambda}V_{L_0}(t)$ and $V_{X'}(t) = t^{3(\lambda
-1)}V_X(t)$ and it follows that
$$ \mbox{(b)\ \ \ \ \ \ \ } \frac{1}{t}V_{L_0}(t) -\frac{1}{t^2}V_X(t) = (\sqrt{t} -
\frac{1}{\sqrt{t}})t^{-3\lambda}V_{L_\infty}(t).$$ 
The triple consisting of $L_+$, $L_-$ and $L_0$ gives us the equation
$$  \mbox{(c)\ \ \ \ \ \ \ \ } -tV_{L_+}(t) +\frac{1}{t}V_{L_-}(t) = (\sqrt{t} - \frac{1}{\sqrt{t}})V_{L_0}(t).$$

The equation $(b) +\frac{1}{t}(a) - \frac{1}{\sqrt{t}} (c)$ (i.e.~the sum of the
above equations with the respective coefficients) is the $V_\infty$-formula
we looked for.

\item $cp(L_+) =cp(L_0)+1$.

Let $L_+'$ be a link obtained from $L_+$ by changing of the orientation of $L_i$,
and $L_-'$ is obtained from $L_-$ in a similar manner. Now the smoothing of $L_+'$ is 
exactly the link $L_0'=L_{\infty}$.
Let us apply the defining equation of the Jones polynomial
for the triple  $L_-'$, $L_+'$ and $L_0'$. As the result we get:
$$-tV_{L_-'}(t) +\frac{1}{t}V_{L_+'}(t) = (\sqrt{t} -
\frac{1}{\sqrt{t}})V_{L_0'}(t),$$
further, by Lemma \ref{2:5.15}, we have 
$V_{L_+'}(t) = t^{3\lambda}V_{L_+}(t)$ and
$V_{L_-'}(t) = t^{3(\lambda -1)}V_{L_-}(t)$. Therefore
$$\mbox{(d)\ \ \ \ \ \ \ } -\frac{1}{t^2}V_{L_-}(t) +\frac{1}{t}V_{L_+}(t) = (\sqrt{t} -
\frac{1}{\sqrt{t}})t^{-3\lambda}V_{L_\infty}(t),$$
which completes the proof of the part (2) and of the whole
Lemma \ref{2:5.16}.
\end{enumerate}

Now we can conclude the proof of Theorem \ref{2:5.13}. As before,
we have to consider two cases:
\begin{enumerate}
\item[(1)] $pc(L_+) = c(L_0)-1$.

Let us consider the formula defining the Jones polynomial and also the  
$V_\infty$-formula from Lemma \ref{2:5.16}. We get:
$$-tV_{L_+}(t) +\frac{1}{t}V_{L_-}(t) = (\sqrt{t} -
\frac{1}{\sqrt{t}})V_{L_0}(t)$$ and
$$\sqrt{t}V_{L_+}(t) -\frac{1}{\sqrt{t}}V_{L_-}(t) = (\sqrt{t} -
\frac{1}{\sqrt{t}})t^{-3\lambda}V_{L_\infty}(t).$$

Now, it is enough to add the above equations to get the first formula 
of Lemma \ref{2:5.14}(2) for $a = t^\frac{4}{3}$ and $z = -(t^{-\frac{1}{4}}
+t^\frac{1}{4})$.

\item[(2)] $cp(L_+) = cp(L_0)+1$.

Exactly the same argument as in the first case yields the second formula
of Lemma \ref{2:5.14}(2).
\end{enumerate}

This completes the proof of Theorem \ref{2:5.13}.

The proof of Theorem  \ref{2:5.13} which we presented above reflects
the history of understanding the Jones polynomial as in \cite{Jo-1}, 
\cite{Bi-2}, \cite{L-M-3} and \cite{Li-2}. The proof can be actually
shortened if, instead of $V_L(t)$,  we consider the polynomial 
$t^{\frac{3}{4}w(L)}V_L(t)$ which is an invariant of regular isotopy.

Earlier, in Theorems 5.9, 5.13 and Lemmas 5.10, 5.14, we have already presented
some basic properties of the Kauffman polynomial 
the result below is another interesting and unexpected property.

While working on  $t_4$ moves I have suggested (and partially proved in April
of 1986) that $F_K(a,-a-a^{-1})=1$ if $K$ is a knot, \cite{Mo-4}. 
Lickorish and Millett proved this conjecture and at the same time they found
a formula for the Kauffman polynomial of an arbitrary link and for 
$z=-a-a^{-1}$ \cite{L-M-4} (c.f.~\cite{Lip-1}). 
The formula  for  $F_L(a,-a-a^{-1})$ was found independently, 
in a more general context, by Turaev \cite{Tu-1}.
Below we present a proof o the result, which needs very little of
 computations and no previous knowledge (guess) of the formula.
It is based on the idea of presenting an  unoriented link as a sum
of all oriented links which are obtained  from the given one by setting all
possible orientations \cite{P-7}.\footnote{This idea was 
presented for the first
time in \cite{Gol} and it was credited to Dennis Johnson
(compare also with \cite{Tu-2,H-P-2}).} 

\begin{theorem}\label{5.17a}
For any oriented link $L$ we have the following formula
  $$F_L(a,-a-a^{-1}) = ((-1)^{cp(L)-1}/2)\Sigma_{S \subset
L}a^{-4lk(S,L-S)}, $$ where 
the summation is over all sublinks $S$ of
the link $L$ (including $S=\emptyset$), 
and $cp(L)$, $lk(S,L-S)$ denote, respectively, 
the number of components of the link $L$ and 
global linking number of $S$ and $L-S$.

\end{theorem}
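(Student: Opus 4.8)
The plan is to induct on the diagram exactly as in the proofs of Theorems~\ref{2:1.2} and~\ref{Theorem III:3.25}, running the double induction on the lexicographically ordered pair $(\crs{L},b(L))$ and feeding it through the oriented Kauffman skein relations of Lemma~\ref{2:5.14}. Write $n=cp(L)$ and $\Sigma_L=\sum_{S\subseteq L}a^{-4\lk(S,L-S)}$, so the assertion is $F_L(a,-a-a^{-1})=\frac{(-1)^{n-1}}{2}\Sigma_L$. Conceptually this is the ``sum over all orientations'' identity: since $F_L=a^{w(L)}\Lambda_L$ with $\Lambda_L$ orientation independent, reversing a sublink $S$ scales $F_L$ by the power of $a$ of Lemma~\ref{2:5.10}, and hence $\Sigma_L=a^{w(L)}\sum_{\omega}a^{-w(L_\omega)}$, the sum running over all $2^{n}$ orientations $\omega$ of the underlying unoriented diagram; the claim is thus equivalent to $\Lambda_L(a,-a-a^{-1})=\frac{(-1)^{n-1}}{2}\sum_\omega a^{-w(L_\omega)}$. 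For the base case a crossingless diagram gives $F_{T_n}=\mu^{\,n-1}$ with $\mu=F_{T_2}=\frac{a+a^{-1}}{z}-1$ (Theorem~\ref{2:5.9}), which at $z=-a-a^{-1}$ equals $(-2)^{n-1}$; and since every sublink of an unlink links its complement trivially, $\Sigma_{T_n}=2^{n}$ and $\frac{(-1)^{n-1}}{2}\,2^{n}=(-2)^{n-1}$, so the two sides agree.

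For the inductive step let $p$ be the first bad crossing and substitute $z=-a-a^{-1}$ into the appropriate case of Lemma~\ref{2:5.14}. In the self-crossing case~(i) the diagrams $L_+,L_-$ have $n$ components, $L_0$ has $n+1$ and $L_\infty$ has $n$, and the relation becomes $aF_{L_+}+a^{-1}F_{L_-}=-(a+a^{-1})\bigl(F_{L_0}+a^{-4\lambda}F_{L_\infty}\bigr)$. Solving for $F_{L_+}$ and inserting the inductive formulas for $F_{L_-},F_{L_0},F_{L_\infty}$ --- remembering the extra sign that $L_0$ carries through $(-1)^{(n+1)-1}$ --- reduces the step, after cancelling $\frac{(-1)^{n-1}}{2}$, to a purely combinatorial statement about the sublink sums. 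A self-crossing change alters no linking number between distinct components, so $\Sigma_{L_+}=\Sigma_{L_-}$, and the whole step collapses to the single identity
\[
\Sigma_{L_+}=\Sigma_{L_0}-a^{-4\lambda}\Sigma_{L_\infty}.
\]
This is elementary linking-number bookkeeping: writing the component $c$ through $p$ as the union of the two components $c',c''$ into which $L_0$ splits it, and using $\lk(c',c_k)+\lk(c'',c_k)=\lk(c,c_k)$ together with $\lk(\widetilde c,c_k)=\lk(c,c_k)$ for the reconnected $L_\infty$-component $\widetilde c$, one groups the $2^{\,n+1}$ subsets of $L_0$ by their trace on $\{c',c''\}$: the ``both in / both out'' traces reproduce $\Sigma_{L_+}$, while the two mixed traces assemble into $a^{-4\lambda}\Sigma_{L_\infty}$ with $\lambda=\lk(c',L_0-c')$.

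The mixed-crossing case~(ii) is entirely parallel: now $L_0$ has $n-1$ components, $L_\infty$ has $n$, the coefficient of $F_{L_\infty}$ is $a^{-4\lambda+2}$, and a mixed change shifts by one the linking number of the two components meeting at $p$, so $\Sigma_{L_+}$ and $\Sigma_{L_-}$ are no longer equal but differ in a controlled, computable way; the step again reduces to an additivity identity for linking numbers, this time under the merging of two components. The real work --- and the only genuine obstacle --- is this bookkeeping: tracking in each case how the component count, the writhe, and the numbers $\lk(S,\cdot)$ transform under the four resolutions, and checking that the prefactors $\frac12$ and $(-1)^{cp(L)-1}$ exactly absorb the asymmetry that the coherent smoothing matches orientations bijectively whereas the incoherent one is seen by only half of them. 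Once the two linking identities are in hand, the double induction closes as in Theorem~\ref{2:1.2}, and with the initial value $F_{T_1}=1$ and the uniqueness of the resolving-tree computation the formula follows for every oriented link $L$.
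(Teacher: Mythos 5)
Your plan is workable and its two key combinatorial identities are in fact true, but it is a genuinely different proof from the one in the paper. You verify the closed formula by the chapter's standard double induction on $(\crs{L},b(L))$, pushing the inductive hypothesis through the oriented relations of Lemma \ref{2:5.14} and reducing each step to an identity among the sublink sums $\Sigma_L$. The paper runs no such induction. It introduces the elementary regular-isotopy invariant $g(D)=(-1)^{cp(D)}a^{w(D)}$ of oriented diagrams, sums it over all orientations of the underlying unoriented diagram, $G(D)=\sum_{D'\in OR(D)}g(D')$, and observes that $G$ satisfies $G(D_+)+G(D_-)=(-a-a^{-1})(G(D_0)+G(D_\infty))$ for free: orientations of $D_0$ are in bijection with orientations of $D_\pm$ and $D_\infty$ taken together (self-crossing case), and orientations of $D_\pm$ with those of $D_0$ and $D_\infty$ (mixed case). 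Since this relation and the curl behaviour are exactly the defining properties of $\Lambda_D(a,z)$ at $z=-a-a^{-1}$, uniqueness gives $G(D)=-2\Lambda_D(a,-a-a^{-1})$, and the theorem falls out by rewriting $a^{-w(D')}G(D)$ via $w(L_S)-w(L)=-4\lk(S,L-S)$. Your remark that the statement is equivalent to $\Lambda_L(a,-a-a^{-1})=\frac{(-1)^{n-1}}{2}\sum_{\omega}a^{-w(L_\omega)}$ is precisely the paper's pivot; but where you then do linking-number bookkeeping crossing by crossing, the paper hides all of it inside the orientation bijections, leaving a single computation at the very end. What your route buys is uniformity with the other proofs of the chapter (Theorems \ref{2:1.2}, \ref{Theorem III:3.25}) and no auxiliary invariant; what the paper's route buys is that the formula need not be guessed in advance and the skein verification is essentially computation-free.

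One local correction to your bookkeeping. In the self-crossing case you invoke $\lk(\widetilde c,c_k)=\lk(c,c_k)$ for the reconnected $L_\infty$-component $\widetilde c$. This is false: $\widetilde c$ traverses one of the two pieces $c',c''$ with reversed orientation, so $\lk_{L_\infty}(\widetilde c,c_k)=\lk(c',c_k)-\lk(c'',c_k)$, whereas $\lk(c,c_k)=\lk(c',c_k)+\lk(c'',c_k)$. Your displayed identity $\Sigma_{L_+}=\Sigma_{L_0}-a^{-4\lambda}\Sigma_{L_\infty}$ is nevertheless correct: matching the mixed subsets $\{c'\}\cup T$ (resp. $\{c''\}\cup T$) of $L_0$ with the subsets $\{\widetilde c\}\cup T$ (resp. $T$) of $L_\infty$, the exponents differ by the constant $\lk(c'',L_0-c'')$, where $c''$ is the piece whose orientation disagrees with $L_\infty$. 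This also pins down $\lambda$: it must be the linking number of the \emph{reversed} piece with the rest, exactly as in Lemma \ref{2:5.14}, and this matters because $\lk(c',L_0-c')\neq\lk(c'',L_0-c'')$ in general. With that fixed, the mixed-crossing case closes in the parallel way you describe, and your induction is complete.
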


\begin{proof}
Let us consider the following very simple invariant of oriented diagrams
$g(D):=(-1)^{cp(D)}a^{w(D)}\in \Z[a^{\pm 1}]$. Let us note that 
$g(D_+)=-ag(D_0)=a^2g(D_-)$. Moreover, $g$ is an invariant of regular isotopy
and the first Reidemeister move changes $g$ by 
$a^{\pm 1}$. Now, let us extend the definition of $g$ for unoriented
diagrams. To avoid ambiguity the extended invariant is denoted by $G$.
If $D$ is an unoriented diagram then the invariant $G$ is given by the following
formula
$$G(D)=\Sigma_{D'\in OR(D)}g(D')$$ 
where $OR(D)$ is the set of all orientations of the diagram $D$.
We see immediately that $G$ is an invariant of regular isotopy
of unoriented links. Besides, $G$ has the following properties: 
\begin{enumerate}
\item
[(1)] $G({R}_{1+}(D))=aG(D)$ and $G({R}_{1-}(D))=a^{-1}G(D)$,
where ${R}_{1+}$ (respectively, ${R}_{1-}$) is the first Reidemeister move
which introduces a positive (respectively, negative) twist,

\item
[(2)] $G(D_+)=-a(G(D_0)+G(D_{\infty}))$ 
if the crossing of $D_+$ is a positive selfintersection.
\item
[(3)] $G(D_-)=-a^{-1}(G(D_0)+G(D_{\infty}))$ 
if the crossing of $D_-$ is a negative selfintersection.
\item
[(4)] $G(D_+)=-aG(D_0)-a^{-1}G(D_{\infty})$ 
in the case of a mixed crossing (the meaning of $D_+$ (resp. $D_-$) for a mixed crossing is chosen as follows:
We consider an orientation on $D_{\pm 1}$ in such a way that an oriented smoothing gives $D_0$; $D_+$ (resp. $D_-$) 
means that the smoothed crossing is positive (resp. negative).
\item 
[(5)] $G(D_-)=-a^{-1}G(D_0)-aG(D_{\infty})$ in the case of a mixed crossing with $D_-$ explained above.

\end{enumerate}

Formulas (2) and (3) follow from the observation that orientations of $D_0$ are in bijection with 
orientations of $D_{\pm}$ and $D_{\infty}$ (taken together). Similarly 
(4) and (5)  follow from the observation that orientations of $D_{\pm}$ are in bijections with 
orientations of $D_0$ and $D_{\infty}$.

Now, adding side-to-side equations  (2) and (3) as well as (4) and (5) 
we obtain the following formula  for an arbitrary crossing of an  unoriented
diagram $D$:
$$(6)\ \ \ \ \ G(D_+)+G(D_-)=(-a-a^{-1})(G(D_0)+G(D_{\infty})).$$
The properties (1) and (6) are also conditions defining the Kauffman polynomial of
unoriented links, ${\Lambda}_D(a,z)$ for $z=-a-a^{-1}$.
We have yet to compare the initial conditions. Namely, for a trivial circle
$T_1$ we have $G(T_1)=-2=-2{\Lambda}_{T_1}$. Therefore
$G(D)=-2{\Lambda}_D(a,-a-a^{-1})$ and further for a diagram of an oriented link 
we have: 
$$(7)\ \ \ \ \ F_{D'}(a,-a-a^{-1}) = a^{-w(D')}{\Lambda}_D(a,-a-a^{-1})= -\frac{1}{2}a^{-w(D')}G(D),$$ 
where $D$ is an unoriented diagram obtained from $D'$ by forgetting 
about its orientation.
By definition we have 
$$(8)\ \ \ \ \ G(D)=\Sigma_{D'\in OR(D)}g(D')=
(-1)^{cp(D)}\Sigma_{D'\in OR(D)}a^{w(D')}.$$
Now, if $L$ is an oriented diagram of a link and $S$ is its sublink,
and $L_S$ is obtained from $L$ by changing the orientation of $S$, then
$w(L_S)-w(L)=2lk(L_S)-2lk(L)=-4lk(S,L-S)$. This equality and the formula (8)
can be applied for any orientation $D''$ of the unoriented diagram $D$ to get:
$$(9)\ \ \ \ \ a^{-w(D'')}G(D)=(-1)^{cp(L)}
\Sigma_{S \subset D''}a^{-4lk(S,D''-S)}.$$
Finally, formulas (7) and (9) give 
$$F_{D'}(a,-a-a^{-1}) =  -\frac{1}{2}a^{-w(D')}G(D)= \frac{1}{2}(-1)^{w(D')-1}\Sigma_{S\subset D'}a^{-4lk(S,D'-S)},$$
which is the formula of Theorem \ref{5.17a}.

\end{proof}

\begin{exercise} Show that for unoriented diagrams $D_+,D_-,D_0$, and $D_{\infty}$ with conventions like in (2)-(5) 
of the proof of Theorem \ref{5.17a}.
\begin{enumerate}
\item
[(a)] $G(D_+)-G(D_-)=(a^{-1}-a)(G(D_0)+G(D_{\infty}))$\ 
in the case of selfcrossing,
\item
[(b)] $a^{-1}G(D_+)-aG(D_-)=0$\ in the case of selfcrossing,
\item
[(c)] $G(D_+)-G(D_-)=(a^{-1}-a)(G(D_0)-G(D_{\infty}))$\ 
in the case of mixed crossing,
\item
[(d)] $aG(D_+)-a^{-1}G(D_-)=(a^{-2}-a^2)G(D_0),$\
in the case of mixed crossing.
\end{enumerate}
\end{exercise}
\begin{exercise}
Let $F^*(a,z)$ be the Dubrovnik version of the Kauffman polynomial \footnote{Kauffman described 
the polynomial $F^*$ on a postcard to Lickorish sent from Dubrovnik in September
1985. He expected that this was a new polynomial invariant of links, independent
from $F$.}.
The polynomial $F^*$ satisfies a recursive condition 
$$a^{w(D_+)}F^*_{D_+}- a^{w(D_-)}F^*_{D_-} = z(a^{w(D_0)}F^*_{D_0}-a^{w(D_{\infty})}
F^*_{D_{\infty}}).$$
Find out the value of $F^*_L(ia,i(a-a^{-1}))$. 
Find a general relation between
$F^*(a,z)$ and $F(a,z)$ (\cite {Li-3}).
\end{exercise}

\begin{exercise}
The invariant $g$ of oriented diagrams can be extended or, more precisely,
quantized (using popular parlance).
The quantization leads to the following invariant of oriented links which 
generalizes the polynomial $F_L(a,-a-a^{-1})$), namely: 
$$\hat G(a,x)=\Sigma_{S \subset L}x^{cp(S)}a^{-4lk(S,L-S)} \ . $$
Show that $\hat G(a,x)$ is sometimes a better invariant than the original
Kauffman polynomial. For example, it distinguishes 4-component links which
are presented on Fig.~3.1.  Is it true that $\hat G(a,x)$ can distinguish 
3-component links which are not distinguished by the Kauffman polynomial?
\end{exercise}


\begin{problem}\label{2:5.17} By the determinant of an oriented link
we understand a numerical invariant obtained by an appropriate evaluation
of an invariant polynomial (i.e.~either Conway, or Jones, or Jones-Conway,
or Kauffman polynomial) of the link (see Chapter IV for detailed discussion of link determinant). 
Namely, $D_L= \triangle_L(-2i) =
P_L(\frac{1}{2}i,-\frac{1}{2}i) = V_L(-1)$ (more precisely $\sqrt{t} =
-i$) $= F((-i)^{\frac{3}{2}}, -((-i)^{-\frac{1}{2}}+(-i)^\frac{1}{2}))$.
Now, if we use the variant of the Homflypt polynomial which is used
in the definition of  the supersignature 
then $D_L = r(\frac{1}{2},\frac{1}{2})$.
The signature $\sigma (L)$ is a supersignature modeled on the determinant
of the link, $\sigma (L) = \sigma_{\frac{1}{2},\frac{1}{2}}(L)$.
In the Chapter IV we will show that the signature has properties similar to
Jones Lemma  \ref{2:5.15} (also compare with Lemma \ref{2:5.10}). 
Namely, if  $L_i$ is a component of an oriented link $L$ and
$\lambda = \lk (L_i, L-L_i)$ and, moreover, if $L'$ is obtained from $L$ 
by changing the orientation of $L$, then $\sigma(L') = \sigma (L) + 2\lambda$. 
This can not be an accident and a similar formula should be true 
for other supersignatures (especially these modeled on the  Jones polynomial).
We leave this question as a research problem for the reader.
\end{problem}

Below we introduce a new equivalence relation similar to \skein
\ (Conway or skein equivalence). The new relation is motivated by a four term relation of Kauffman 
polynomial, and bound the applicability
of Kauffman method like \skein \ bounds applicability of Conway type
invariants. 

\begin{definition}\label{2:5.18} Let $S$ be the set of partially oriented diagrams
(i.e.~diagrams which may contain some oriented and some unoriented components) modulo regular
isotopy equivalence. The Kauffman equivalence relation $\sim_K$ is the 
smallest equivalence relation on the set $S$ 
which satisfies the following condition:

If $L_1'$ (resp.~$L_2'$) is a diagram representing an element 
$L_1$ (resp.~$L_2$) of $S$ which contains a crossing $p_1$ (resp.~$p_2$) 
and moreover:
\begin{enumerate}
\item[(i)] $(L_1')^{p_1}_{-\sgn p_1} \sim_K (L_2')^{p_2}_{-\sgn p_2}$ where
$L^p_{-\sgn p}$ denotes the link obtained from $L$ by interchanging 
overcrossing and undercrossing in  $p$ (this does not depend on
whether and how $L$ is oriented),

\item[(ii)] $(L_1')^{p_1}_0 \sim_K (L_2')^{p_2}_0$ and $(L_1')^{p_1}_\infty
\sim_K (L_2')^{p_2}_\infty$ provided that $p_1$ is a crossing of oriented
components of $L_1'$ or $p_1$ is a selfcrossing of a component of  $L_1'$ 
(in the latter case $(L_1')^{p_1}_0$ and $(L_1')^{p_1}_\infty$ 
are well defined independently of the orientation of the component),

\item[(iii)] $\{ (L_1')^{p_1}_0, (L_1')^{p_1}_\infty\} = \{ (L_2')^{p_2}_0,
(L_2')^{p_2}_{\infty} \}$ (equality of unordered pairs of links up to Kauffman equivalence) 
if at least one of two components of  $L_1'$, which meet in $p_1$,
is unoriented,
\end{enumerate}

then $L_1 \sim_K L_2$.
\end{definition}

Let us note that the crossings $p_1$ and $p_2$ satisfy the conditions listed
in Definition \ref{2:5.18} only if they have similar properties. For example,
if two components of $L_1$ meeting in $p_1$ are oriented then
components of $L_2$ meeting in $p_2$ are oriented as well and moreover 
$\sgn p_1 = \sgn p_2$.

\begin{exercise}\label{2:5.19}
Formulate conditions for oriented diagrams $L_1$ and $L_2$,
such regular isotopy implies regular isotopy 
(i.e.~the situation when the diagrams which are not regularly isotopic
are also not isotopic).

Hint. Consider the writhe number  $w(\ )$ of any component of the diagram
and another invariant of regular isotopy of oriented diagrams, 
which we define as follows. Let us consider a diagram $L$. 
After smoothening of all crossings of $L$ we obtain a set of oriented 
circles on the plane. Some of them are oriented positively 
({\parbox{0.5cm}{\psfig{figure=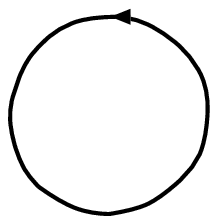,height=0.5cm}}})
some negatively 
({\parbox{0.5cm}{\psfig{figure=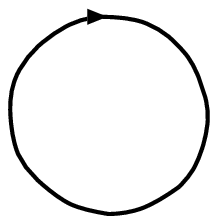,height=0.5cm}}});
the invariant in question is the result of the 
subtraction
of the number of ``negative'' circles from the number of ``positive'' circles. 
This invariant is often called the Whitney degree or rotation number (and defined for any curve or system of curves 
immersed in plane).
\end{exercise}

\begin{remark}\label{2:5.20}
In applications we do not need always such a subtle differentiation between
isotopy and regular isotopy. This is because of the following facts:
\end{remark}

\begin{exercise}\label{2:5.21}
\begin{enumerate}
\item Prove that if $L_2$ is obtained from $L_1$ by the first weakened
Reidemeister move $(R^{\pm 1}_{0.5})$ then $L_1\sim_K L_2$.

\item Prove that if $L_1$ is isotopic with $L_2$ and  $w(L_1) = w(L_2)$ then $L_1\sim_K L_2$ 
(compare Lemma \ref{2:5.6}).
\end{enumerate}
\end{exercise}

\begin{exercise}\label{2:5.22}
Prove that if an oriented diagram $L_1$ is a mutant  
of another oriented diagram $L_2$ then $L_1 \sim_K L_2$.
\end{exercise}

Now we show how invariants of links obtained by the Kauffman approach
can be described by one algebraic structure --- similarly as Conway type
invariants are described by the notion of Conway algebra.
We will also construct an example of a polynomial invariant of oriented
links which is a generalization of both, Jones-Conway and Kaufman
polynomials (however, this invariant  does not bring anything more
than these two polynomial together).
We will proceed similarly as in the case of Conway algebra, with the exception
that we will consider diagrams up to regular isotopy.
Since there is no need to distinguish the crossings of $+$ type from
the crossings of $-$ type, there will be only one operation recovering the value
of the invariant for $L_+$ (resp.~$L_-$) from its values for
$L_-$, $L_0$ and $L_\infty$ (resp.~$L_+$, $L_0$ and $L_\infty$). 
There is yet another another important remark: if a link $L_+$ is modified 
to $L_\infty$ then the new component of $L_\infty$ can not be assigned
any natural orientation, we could have, however, considered the partially 
oriented link $L$.
We do not do it for the practical reason, since in such a case
we would like to use
a method similar to the one used in the proof of Theorem
\ref{2:1.2} and for this we need the following equality:
$L^{p\ q}_{\varepsilon\ \infty} = L^{q\ p}_{\infty\ \varepsilon}$, where
$\varepsilon =+, -$ or $0$. 
This means that we would like 
the result of operations performed on two crossings to be independent
of the order of these operations. 
But this is not always the case, as is presented in Fig.~5.12.
%


\begin{center}
\begin{tabular}{c}
\includegraphics[trim=0mm 0mm 0mm 0mm, width=.65\linewidth]
{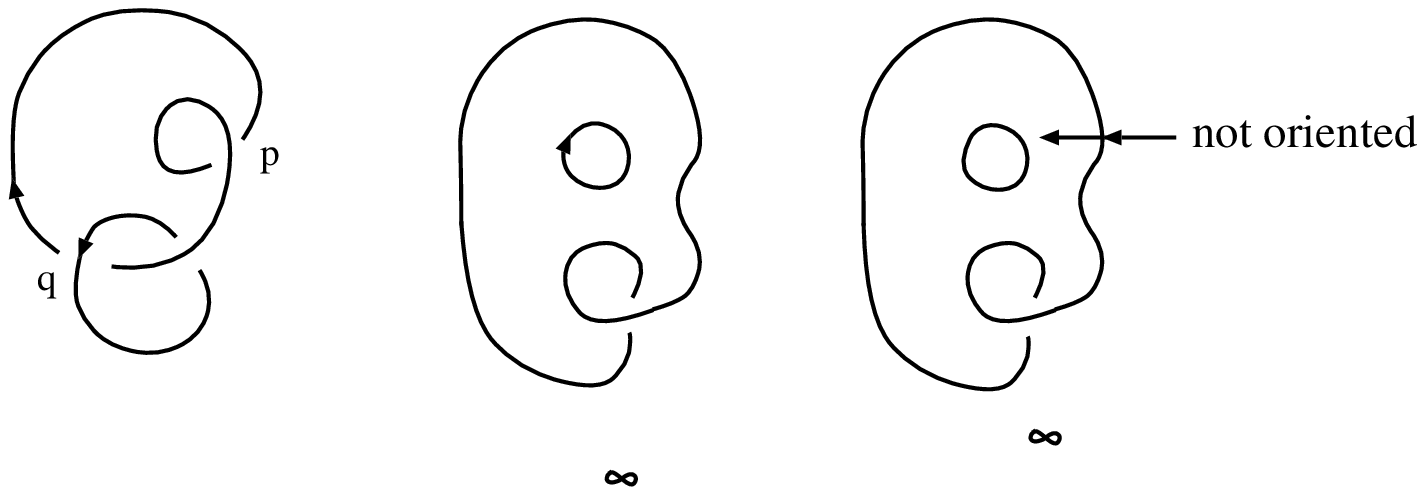}\\
\end{tabular}
\\
Fig.~5.12; diagrams $L$, $L^{pq}_{0,\infty}$ and $L^{qp}_{\infty,0}$ (last two diagrams are not the same as 
partially oriented diagrams)
\end{center}

Therefore we will restrict the consideration to the case of oriented
and unoriented link, that is we will either assume that all components
are oriented or none of them is oriented. In the latter case we will not 
distinguish $L_0$ from $L_\infty$. 

Let us consider the following general situation. Suppose that we are given
an abstract algebra  ${\cal A}$ with two universes, $A$ and $A'$,  and
with a countable (possibly finite) number 
of 0-argument operations in $A$ and $A'$, parameterized by two sequences:
$\{ a_{ij}\}_{i\in N, j\in \Z}$ and $\{ a_{ij}'\}_{i\in N, j\in \Z}$, 
respectively. Moreover, ${\cal A}$ is assumed to have 
two 3-argument operations, namely 
$* :A\times A\times A' \rightarrow A$ and
$*':A'\times A'\times A'\rightarrow A'$, and also 1-argument operation
$\phi:A\rightarrow A'$. 
We would like to construct invariants of classes of regular isotopy of
oriented and unoriented diagrams, which satisfy the following conditions:
\begin{enumerate}
\item If $L$ is an oriented diagram then the invariant $w$ is in $A$,
i.e.~$w_L\in A$ and if $L'$ is unoriented then $w_{L'}\in A'$. 

\item If $L'$ is an unoriented diagram obtained by forgetting the orientation
of an oriented diagram $L$ then $w_{L'} = \phi(w_L)$. 

\item $w_{T_{ij}} = a_{ij}$ where $T_{ij}$ denotes an oriented diagram
of a trivial link with $i$ components and the write number $w(T_{ij}) = j$.

\item $w_{T_{ij}'} = a_{ij}'$.

\item $w_{L^p} = w_{L^p_{-\sgn p}} * (w_{L^p_0},w_{L^p_\infty})$ where
$L^p_{-\sgn p}$ denotes a diagram obtained from $L$ by exchanging the tunnel
to the bridge at the crossing $p$.
\end{enumerate}

\begin{definition}\label{2:5.23}
We say that ${\cal A} =\{ A, A', \{ a_{ij}\}, \{a_{ij}'\}, *, *',
\phi\}$ is a Kauffman algebra if the following conditions are satisfied:
\begin{itemize}
\item[K1] $\phi(a_{ij}) = a_{ij}'$
\item[K2] $\phi (a*(b,c)) = \phi(a)*' (\phi(b),c)$ where the operation $*$
on $(a,b,c)$ is denoted by $a*(b,c)$ and similarly for the operation $*'$.
\item[K3] $a_{i,j-1}*(a_{i+1,j}, a_{i,j}') = a_{i,j+1}$.
\item[K4] $(a*(b,c))*(d*(e,f),g*'(h,i)) = (a*(d,g)*(b*(e,h)),c*'(f,i))$.
\item[K5] $(a*(b,c))*(b,c) = a$.
\item[K6] $a*' (b,c) = a*'(c,b)$
\end{itemize}
\end{definition}

\begin{theorem}\label{2:5.24}
For a given Kauffman algebra ${\cal A}$ there exists a uniquely
determined invariant of regular isotopy $w$ which associates an element
$w_L$ from $A$ to any oriented diagram $L$ and an element $w_{L'}$ from $A'$ 
to any unoriented diagram $L'$. Moreover, the invariant $w$ satisfies
the following conditions:
\begin{enumerate}
\item $w_{T_{i,j}} = a_{i,j}$

\item $w_L' = \phi (w_L)$, where $L$ is an oriented diagram and $L'$
is obtained from $L$ by forgetting its orientation.

\item $w_{L^p} = w_{L^p_{-\sgn p}} *(w_{L^p_0}, w_{L^p_\infty})$. 
\end{enumerate}
\end{theorem}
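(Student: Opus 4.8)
The plan is to follow, essentially verbatim, the architecture of the proof of Theorem~\ref{2:1.2}, replacing ambient isotopy by regular isotopy and the pair of operations $|,\star$ by the single three-argument operation $*$ (together with its unoriented companion $*'$ and the forgetful map $\phi$); the reason one operation now suffices is that in the regular-isotopy setting we recover $w_{L^p}$ from $w_{L^p_{-\sgn p}}$, $w_{L^p_0}$, $w_{L^p_\infty}$ without distinguishing the sign of $p$. First I would build the value function by induction on the crossing number: set $w_0(L)=a_{n,0}$ for a crossingless oriented diagram with $n$ components (writhe $0$), put $w_0(L')=\phi(w_0(L))=a_{n,0}'$ in the unoriented universe, and then formulate a Main Inductive Hypothesis asserting the existence of a $w_k$ on diagrams with at most $k$ crossings, valued in $A$ on oriented and in $A'$ on unoriented diagrams, satisfying: (i) $w_k(U_{i,j})=a_{i,j}$ on descending diagrams with $i$ components and total writhe $j$ (and $a_{i,j}'$ in the unoriented case); (ii) the crossing relation $w_k(L^p)=w_k(L^p_{-\sgn p})*(w_k(L^p_0),w_k(L^p_\infty))$; (iii) $w_k(L')=\phi(w_k(L))$ for the forgetful passage; and (iv) invariance under those Reidemeister moves of types two and three that keep $\crs{L}\le k$. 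Type one is deliberately absent, since we are constructing a regular-isotopy invariant; as the self-writhe of each component is preserved by the second and third moves, the base value may legitimately depend only on $i$ and $j$.

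The core is the Main Inductive Step producing $w_{k+1}$. As in Section~\ref{Section III:2}, I would fix a diagram $D$ with $\crs{D}\le k+1$ and base points $b$, define $w_b$ on the diagrams obtained from $D$ by crossing changes and smoothings by descending induction on the number of bad crossings, using the crossing relation at the first bad crossing, and then re-establish the four pillars in order. For the Conway-type relation for $w_b$ the base case uses the inversion axiom $K5$ (changing a crossing and changing it back), while the inductive double-crossing case uses the entropy axiom $K4$ exactly where $C3$--$C5$ were used before. Base-point independence (the CBP argument) transfers directly, the one new ingredient being that the smoothing which creates an extra component now produces an $L_\infty$ branch landing in $A'$; here $K3$ plays the role that $C1$ played, absorbing the curl/initial-value computation.

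Independence of Reidemeister moves (IRM) is where the regular-isotopy restriction both helps and forces care: only the type two and type three moves must be verified, and the delicate third-move analysis of Lemmas~\ref{l:2.15}--\ref{l:2.17} (reducing an innermost $2$-gon by moves not increasing the crossing number) carries over, except that the reduction now terminates at standard diagrams carrying curls rather than at crossingless ones, since we may not undo kinks. Finally, independence of the order of components (IOC) uses this reduction together with IRM and the MIH. Throughout, the two-universe bookkeeping is governed by $K1$ and $K2$: whenever a computation forgets an orientation it must commute with $*$, and $K2$ guarantees precisely $\phi(a*(b,c))=\phi(a)*'(\phi(b),c)$; the symmetry axiom $K6$ is invoked in the unoriented universe to make $*'$ insensitive to the interchange of the $L_0$ and $L_\infty$ branches, which is forced because an unoriented diagram does not distinguish the two smoothings.

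The hardest part, I expect, will be the entropy step for two crossings when one resolution produces the unoriented branch $L_\infty$: resolving the two crossings in the opposite order must yield the same root value, and this value no longer lives entirely in $A$ but straddles $A$ and $A'$ through $\phi$. The full strength of $K4$, which already mixes $*$ and $*'$, is tailored to exactly this situation, so the obstacle is bookkeeping rather than conceptual: one must check that every intermediate value produced while commuting the two resolutions is admissible and that the mixed associativity encoded in $K4$, together with $K5$ and $K6$, closes the square. Once these are verified, compatibility of the $w_k$ yields a well-defined $w$, its regular-isotopy invariance follows from property (iv), and uniqueness follows as in Theorem~\ref{2:1.2} from the existence of resolving trees with descending leaves; setting $w_{k+1}=w^0$ then completes the induction.
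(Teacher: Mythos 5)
Your architecture is indeed the paper's: induction on $\crs{L}$, a Main Inductive Hypothesis, $w_b$ defined by induction on bad crossings, K4 replacing C3--C5, K5 for the crossing-change relation, then base-point independence, IRM and IOC. But there is a genuine gap at exactly the step you wave through. You claim the change-of-base-point (CBP) argument ``transfers directly,'' with K3 ``absorbing the curl/initial-value computation.'' In fact this is the one place the paper singles out as genuinely harder than in Theorem~\ref{2:1.2}. In the case $B(L)=1$, $b(L)\neq b'(L)$, with $p$ a positive self-crossing of $L_i$, one has $w_{b'}(L)=a_{n,Tait(L)}$ and $w_b(L)=w_b(L^p_-)*(w_b(L^p_0),w_b(L^p_\infty))$; the values $w_b(L^p_-)=a_{n,Tait(L)-2}$ and $w_b(L^p_0)=a_{n+1,Tait(L)-1}$ come for free, but to invoke K3 you must also know that $w_b(L^p_\infty)=a'_{n,Tait(L)-1}$, and this is \emph{not} immediate: the non-orientation-preserving smoothing reverses the direction of travel on part of the strand, so $L^p_\infty$ is in general descending for \emph{no} choice of base points or orientation --- it decomposes into a descending part and an ascending part. (In the Conway-algebra proof the analogous diagram $L^p_0$ \emph{was} descending, which is why C1 could be applied outright; K3 does not play the same automatic role.) The paper's fix is a separate trick: rotate the ascending part of $L^p_\infty$ by $180^\circ$ about the vertical axis while reversing its orientation --- a mutation --- obtaining a descending diagram $\tilde L$ with $w_k(\tilde L)=a'_{n,Tait(L)-1}$, and then show $w_k(L^p_\infty)=w_k(\tilde L)$ by building a common resolving tree all of whose vertices have at most $k$ crossings, so that the MIH applies. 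Without this (or some substitute) your application of K3 is unjustified. Relatedly, you locate the ``hardest part'' in the two-crossing K4 step; the paper treats that as a routine transcription of the C3--C5 argument --- the real novelty is the $L^p_\infty$ evaluation above.

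A second, smaller defect: you drop first-type moves from the inductive hypothesis altogether, arguing that regular isotopy only involves types two and three. The paper instead carries invariance under the \emph{balanced} move $R^{\pm 1}_{0.5}$ (creation/cancellation of a pair of opposite curls) through the whole induction, precisely so that in the IOC step Lemma~\ref{l:2.15} can be reused with $R^{\pm 1}_1$ replaced by $R^{\pm 1}_{0.5}$: the reduction of a descending diagram must reach a diagram with at most $k$ crossings for the MIH to close the argument. Your alternative --- letting the reduction ``terminate at standard diagrams carrying curls'' --- leaves you with diagrams that may still have $k+1$ crossings, out of reach of the inductive hypothesis, and no argument that their values are independent of the ordering of components. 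So you should either prove $R^{\pm 1}_{0.5}$-invariance as part of the MIH (as the paper does; note $R_{0.5}$ preserves the writhe, so it is compatible with values $a_{i,j}$) or supply the missing analysis of curl-bearing terminal diagrams.
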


The proof of Theorem \ref{2:5.24} is similar to that of
Theorem \ref{2:1.2}, therefore we present only these parts of the argument
in which some differences occur.

For any diagram (oriented or not) we can build a resolving tree
such that three edges descend from any vertex which is not a leaf (Fig.~5.13)
and there are descending diagrams at the leaves (the diagrams are
descending for some choice
of base points and orientation --- in case of unoriented diagrams).

\begin{center}
\begin{tabular}{c} 
\includegraphics[trim=0mm 0mm 0mm 0mm, width=.25\linewidth]
{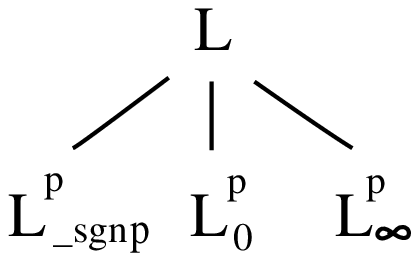}\\
\end{tabular}
\\
Fig.~5.13
\end{center}

Such a tree can be used to compute the value of the invariant at the
root diagram. We begin by constructing the invariant $w$ for
diagrams and subsequently we will show that it is not changed by Reidemeister
moves $R^{\pm 1}_{0.5}$, $R^{\pm 1}_2$ and $R^{\pm 1}_3$. 
We use induction with respect to the number of crossings in the diagram,
denoted now, as in the proof of Theorem  by $\crs{L}$.
For any $k\geq 0$ we define a function $w_k$ which associates an element from
$A$ (reps.~$A'$) to any oriented (resp.~unoriented) diagram with at most
$k$ crossings.
The invariant $w$ is then defined as  $w_L = w_k(L)$ where $k\geq \crs{L}$. 
Similarly as in the proof of Theorem \ref{2:1.2} we set
$w_0(L) = a_{n,0}$ if $L$ is a trivial oriented link with $n$ components
and we put $w_0(L') =a_{n,0}'$ if $L'$ is obtained from $L$ by forgetting
its orientation. Next we formulate the Main Inductive Hypothesis, or MIH.
We assume that we have already defined a function $w_k$ which assigns
an element from $A$ (resp.~$A'$) to any diagram $L$ with $\crs{L}\leq k$ 
and the function $w_k$ has the following properties:

\begin{property}\label{2:5.25}
Suppose that $U_{n,j}$ is an oriented diagram which is descending
for some choice of base points; moreover  $U_{n,j}$ has $n$
components, $\mbox{cr}(U_{n,j})\leq k$ and $Tait(U_{n,j}) = j$ (we use $Tait(D)$ in place of 
$w(D)$ so not to mix the writhe number of an oriented diagram with our invariant $w$).
Then $w_k(U_{n,j}) = a_{n,j}$.
Similarly, if $U_{n,j}'$ is obtained from $U_{n,j}$ 
by forgetting its orientation then $w_k(U_{n,j}') = a_{n,j}'$.   
\end{property}

\begin{property}\label{2:5.26}
$w_k(L) = w_k(L^p_{-\sgn p})*(w_k(L^p_0), w_k(L^p_\infty))$ if $L$
is an oriented diagram and $w_k(L) = w_k(L^p_{-\sgn
p})*'(w_k(L^p_0), w_k(L^p_\infty))$ if $L$ is unoriented.
\end{property}

\begin{property}\label{2:5.27}
$w_k(L) = w_k(R(L))$ where $R$ is a Reidemeister move of one of the following
types:
$R^{\pm 1}_{0.5}$, $R^{\pm 1}_2$, $R^{\pm 1}_3$ and $\crs{R(L)}\leq k$.
\end{property}

Next we want to  prove Main Inductive Step or MIS, that is, we 
want to define the function $w_{k+1}$ with appropriate properties,
which is defined for diagrams with at most $k+1$ crossings.
Similarly as in the case of Theorem \ref{2:1.2}, this will complete
the proof of Theorem \ref{2:5.24}. We begin the proof of MIS, similarly as for
\ref{2:1.2} by defining a function $w_b$ which for a diagram
$L$ with $\crs{L} = k+1$  depends on the choice of the base points 
$b=(b_1, b_2, \ldots, b_n)$ and on the choice of the orientation, if $L$ 
is unoriented. We define the function  $w_b$ by induction with respect
to the number of bad crossings, $b(L)$, and we apply the formula \ref{2:5.26} 
for the first bad crossing --- similarly as in the proof of Theorem \ref{2:1.2}). 
Subsequently, we prove that formula \ref{2:5.26} is satisfied for 
any crossing. The argument is similar to the appropriate one in the course 
of the proof of Theorem \ref{2:1.2}, only the conditions C3--C5 are replaced
by the condition K4.

The next step of the proof is to show that  $w_b$ does not depend on the choice
of base points (assuming the given order and orientation of components).
Again, we deal with this problem similarly as in \ref{2:1.2} 
and we choose base points $b$ and $b' = (b_1, b_2, \ldots, b_i', \ldots, b_n)$, 
where $b_i$ and $b_i'$ are on the opposite sides of a crossing of the
component $L_i$. Here we apply induction with respect to 
$B(L) = \mbox{max} (b(L),b'(L))$. If $B(L) = 0$ then $L$ is a descending
diagram with respect to both $b$ and $b'$ and therefore
$w_b(L) = w_{b'}(L) =a_{n,Tait(L)}$. 
If $B(L) = b(L) = b'(L) =1$ then $L$ has a common bad crossing for both
$b$ and $b'$, hence we can make  the inductive step using this crossing
(and also \ref{2:5.26}). 

Thus, we are left with the case $B(L) = 1$ and 
$b(L)\neq b'(L)$. The argument here is a little more complicated than the
argument in the respective part of the proof of \ref{2:1.2}. Namely:
let $p$ be the bad crossing of  $L$ with respect to either $b$ or $b'$,
then $p$ is a selfintersection of the component $L_i\subset L$.
For simplicity, let us assume that the diagram $L$ is oriented and 
$b(L) = 1$, $b'(L)= 0$ and $\sgn p = +$. Then $L$  is descending with respect
to $b'$, therefore $$w_{b'}(L) = a_{n,Tait(L)}.$$ Because of the property
\ref{2:5.26} we have 
$$w_b(L) = w_b(L^p_-)*(w_b(L^p_0), w_{b'}(L^p_\infty)).$$ 
Moreover $b(L^p_-) =0$ hence $w_b(L^p_-) =a_{n,Tait(L)-2}$. 
The diagram $L^p_0$  is descending for some choice of base points
it has $k$ crossings and $n+1$ 
components, hence $w_b(L^p_0) = w_k(L^p_0) = a_{n+1, Tait(L)-1}$.
Now, in order to apply the axiom $K3$ to get equality 
$w_b(L) = a_{n,Tait(L)} = w_{b'}(L)$ we
have to prove that  $w_b(L^p_\infty) = a_{n, Tait(L)-1}$. 
However, this is not immediate. 
Namely, $L^p_\infty$ does not have to be descending
with respect to any choice of base points or orientation.
We can use the fact that the diagram $L^p_\infty$ has only $k$ 
crossings and it consists of a descending part and an ascending part, and 
these parts may be put on different levels
(Fig.~5.14 illustrates the situation). Now to prove that the value of
the invariant $w$ for $L^p_\infty$ is equal $a_{n, Tait(L)-1}$ we have to
apply the following trick:

\begin{center}
\begin{tabular}{c} 
\includegraphics[trim=0mm 0mm 0mm 0mm, width=.35\linewidth]
{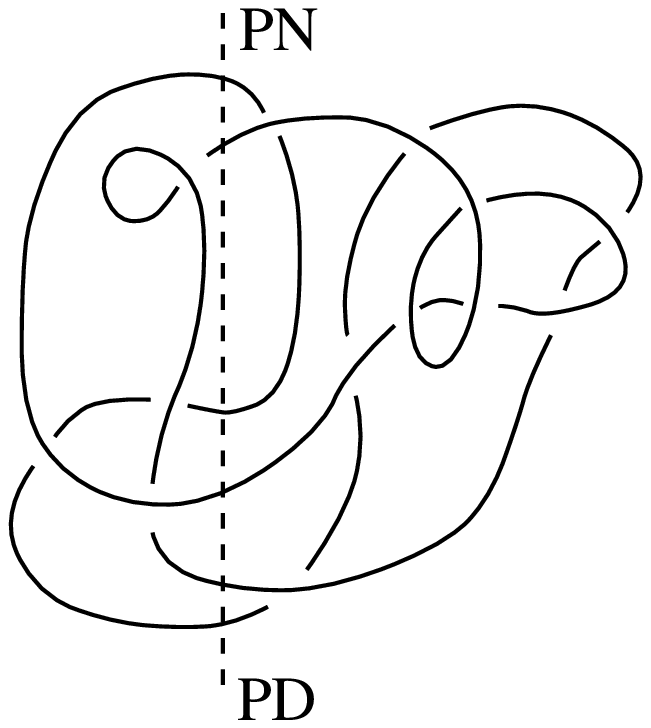}\\
\end{tabular}
\\
Fig.~5.14
\end{center}
Let us rotate the ascending part of the diagram of $L^p_\infty$ by
$180^o$ with respect to the vertical (N-S) axis i
and at the same time let us change 
the orientation of this part to the opposite (with respect to the given
orientation of $L^p_\infty$) --- this way we make some kind of a mutation.
The resulting diagram, call it $\tilde{L}$, is descending.

Therefore $w_k(\tilde{L}) = a_{n,Tait(L)-1}'$. On the other hand, we can build the
same resolving tree for $L^p_\infty$ and $\tilde{L}$ such that all
vertices of the tree correspond to diagrams with at most $k$ crossings
(there is a complete analogy with the operation of mutation).
Now, because of MIH we have 
$w_k(L^p_\infty) = w_k(\tilde{L}) = a_{n,Tait(L)-1}'$, 
which completes the  proof, c.f.~\cite{B-L-M}.

The rest of the proof of Theorem \ref{2:5.24} is almost the repetition
of the respective part of the argument in the proof of Theorem
\ref{2:1.2}. 
We change the Reidemeister move $R^{\pm 1}_1$ to
$R^{\pm 1}_{0.5}$. The proof of the independence of the order of components
in $w_b$ is the same as in \ref{2:1.2} because in Lemma
\ref{l:2.15} Reidemeister move $R^{\pm 1}_1$ can be replaced
by $R^{\pm 1}_{0.5}$. If $L$ is not oriented then applying Lemma
\ref{l:2.15} we show that the definition of $w_b$ does not depend
on the choice of the orientation that we have made.

This completes the proof of Theorem \ref{2:5.24}.

\begin{example}[Jones-Conway-Kauffman polynomial]\label{2:5.28}
\ \\
Let us consider the following algebra ${\cal A}$:
\\
$A = \Z[a^{\pm 1}, t^{\pm 1}, z]$,\ \ $A' = \Z[a^{\pm 1}, t^{\pm 1}]$, \\
$a_{i,j} = (\frac{a^{-1}+a}{t})^{i-1}(1-\frac{z}{t})a^j
+\frac{z}{t}(\frac{a^{-1}+a}{t}-1)^{i-1}a^j$,\\
$a_{i,j}'=(\frac{a^{-1}+a}{t}-1)^{i-1}a^j$, \\
moreover $b*(c,d)$ is defined by the equation $b*(c,d)+b = tc+zd$ 
and $b*'(c,d)$ is defined by the equation
$b*'(c,d)+b = tc +td$; the 1-argument operation
$\phi:A\rightarrow A'$ is defined on generators
$\phi (a) = a, \phi(t) = t, \phi(z) =t$. 

Now we will check that ${\cal A}$ is a Kauffman algebra.
The conditions K1, K2, K5 and K6 follow immediately by the definition
of ${\cal A}$. The condition K3 follows by the equality:
\begin{eqnarray*}
&(\frac{a^{-1}+a}{t})^{i-1}(1-\frac{z}{t})a^{j+1}&+\\
+&\frac{z}{t}(\frac{a^{-1}+a}{t}-1)^{i-1}a^{j+1}&+\\
+&(\frac{a^{-1}+a}{t})^{i-1}(1-\frac{z}{t})a^{j-1}&+\\
+&\frac{z}{t}(\frac{a^{-1}+a}{t}-1)^{i-1}a^{j-1}&=\\
&t((\frac{a^{-1}+a}{t})^{i}(1-\frac{z}{t})a^j
+\frac{z}{t}(\frac{a^{-1}+a}{t}-1)^{i}a^j)&+\\
+&z(\frac{a^{-1}+a}{t}-1)^{i-1}a^j&\\
\end{eqnarray*}

It remains to check the condition K4:
\begin{eqnarray*}
&(a*(b,c))*(d*(e,f),g*'(h,i))& =\\
=&-(a*(b,c)+t(d*(e,f))+z(g*'(h,i))&=\\
=&-(-a+tb+zc)+t(-d+te+zf)+z(-g+th+ti)&=\\
=&a-tb-zc-td+t^2e+tzf-zg+zth+zti&=\\
=&(a*(d,g))*(b*(e,h),c*'(f,i)).&\\
\end{eqnarray*}

The polynomial invariant of regular isotopy of oriented or unoriented diagrams
defined by the algebra ${\cal A}$ is called Jones-Conway-Kauffman polynomial
and it is denoted by  $J_L(a,t,z)$.  It can be modified
to an invariant of oriented links by setting
$\tilde{J}_L(a,t,z) = J_L(a,t,z)a^{-Tait(L)}$. 
\end{example}

\begin{example}\label{2:5.29}
Let us compute the Jones-Conway-Kauffman polynomial for a right-hand-side trefoil 
knot which is presented on Fig.~5.15. If we apply the resolving tree presented
on Fig.~5.15 then we will get the following relation in any Kauffman algebra:
$w_L =a_{1,1}*(a_{2,0}*(a_{1,1},a_{1,-1}'),a_{1,-2}')$.

\begin{center}
\begin{tabular}{c} 
\includegraphics[trim=0mm 0mm 0mm 0mm, width=.65\linewidth]
{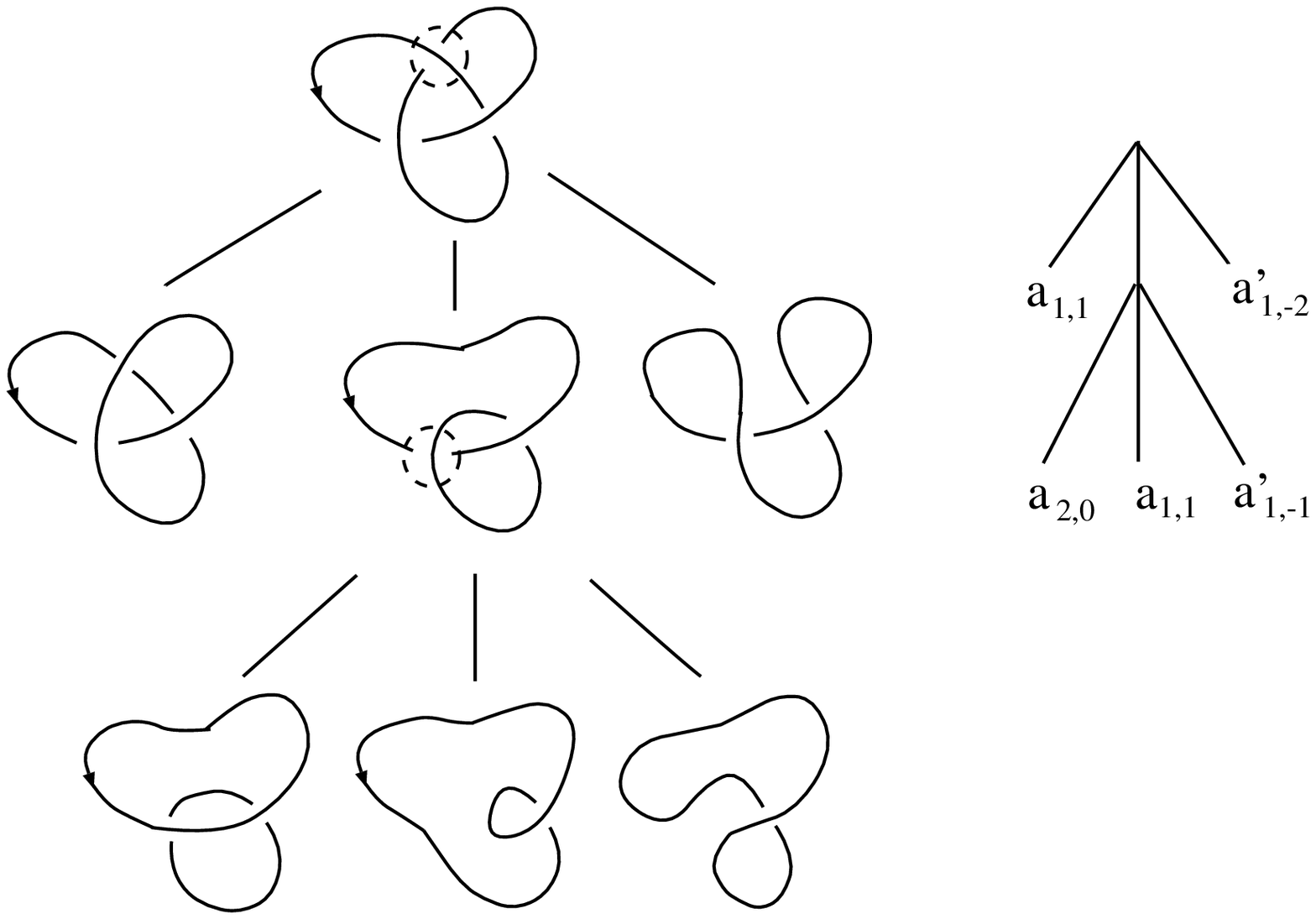}\\
\end{tabular}
\\
Fig.~5.15
\end{center}

Thus we get:
\begin{eqnarray*}
J_L(a,t,z) =&-a+t(\frac{a^{-1}+a-z}{t}+ta+za^{-1})+za^{-2}&=\\
&-a^{-1}-2a+t^2a+z(1+a^{-2}+ta^{-1}).&\\
\tilde{J}_L(a,t,z) = &-a^{-4}-2a^{-2}+t^2a^{-2}+z(a^{-3}+a^{-5}+ta^{-4}.&\\
\end{eqnarray*}
\end{example}

\begin{lemma}\label{2:5.30}
\begin{enumerate}
\item[(1)] $J_L(a,t,z) = J_L(a,t,0) +z(\frac{J_L(a,t,t)-J_L(a,t,0)}{t})$.
\item[(2)] $\tilde{J}(a,t,0) = P(\frac{a}{t},\frac{1}{at})$, 
thus $\tilde J$ is a version of the Homflypt polynomial.
\item[(3)] $J_L(a,t,t) = G_L(a,t)$, so $J$ is the Kauffman polynomial for regular
isotopy.
\end{enumerate}
\end{lemma}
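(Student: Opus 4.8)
The plan is to derive all three parts from one structural observation about the Kauffman algebra $\cal A$ of Example~\ref{2:5.28}: the variable $z$ occurs only linearly in the three-argument operation, since $b*(c,d)=tc+zd-b$, and only affinely in the constants $a_{i,j}$, whereas the second universe $A'$, its operation $b*'(c,d)=tc+td-b$, and its constants $a_{i,j}'$ are entirely free of $z$. Since in any resolving tree for an oriented diagram the third argument of $*$ is always the value attached to the unoriented diagram $L^p_\infty$, hence an element of $A'$, the factor multiplying $z$ never itself contains $z$. First I would make this precise by an induction on $\crs{L}$ running parallel to the construction in Theorem~\ref{2:5.24}: every value attached to an unoriented diagram lies in $A'$ and is $z$-free, while every value attached to an oriented diagram is affine in $z$ with coefficients in $\Z[a^{\pm1},t^{\pm1}]$. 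Writing $J_L(a,t,z)=\alpha_L(a,t)+z\,\beta_L(a,t)$ and evaluating at $z=0$ and $z=t$ then gives $\alpha_L=J_L(a,t,0)$ and $\beta_L=(J_L(a,t,t)-J_L(a,t,0))/t$, which is exactly the identity of part~(1).

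For part~(2) I would specialise $z=0$. The operation collapses to $b*(c,d)=tc-b$, so the $L_\infty$ term disappears and the recursion for $w=J_{\,\cdot}(a,t,0)$ reads $w_{L_+}+w_{L_-}=t\,w_{L_0}$, while the constants become $a_{i,j}|_{z=0}=(\frac{a^{-1}+a}{t})^{i-1}a^{j}$. Passing to $\tilde J_L=a^{-\mbox{Tait}(L)}J_L$ and using $\mbox{Tait}(L_\pm)=\mbox{Tait}(L_0)\pm1$, I would divide the relation by $a^{\mbox{Tait}(L_0)}$ and by $t$ to obtain $\frac{a}{t}\tilde J_{L_+}+\frac{1}{at}\tilde J_{L_-}=\tilde J_{L_0}$, together with $\tilde J_{T_n}(a,t,0)=(\frac{a^{-1}+a}{t})^{n-1}$. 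Since $x+y=\frac{a^{-1}+a}{t}$ for $x=\frac{a}{t}$ and $y=\frac{1}{at}$, these are precisely the defining conditions of the Homflypt polynomial; by its uniqueness one concludes $\tilde J_L(a,t,0)=P_L(\frac{a}{t},\frac{1}{at})$.

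For part~(3) I would specialise $z=t$. Then $*$ and $*'$ become the same operation $tc+td-b$, the homomorphism $\phi$ acts as the substitution $z\mapsto t$ so that oriented and unoriented values agree after setting $z=t$, and $a_{i,j}|_{z=t}=a_{i,j}'=(\frac{a^{-1}+a}{t}-1)^{i-1}a^{j}$. The recursion therefore becomes $w_{L_+}+w_{L_-}=t\,(w_{L_0}+w_{L_\infty})$, which, with these initial values, is exactly the defining data of the regular-isotopy Kauffman polynomial ${\Lambda}_L(a,t)$ of Theorem~\ref{2:5.8}. As $J$ is itself a regular isotopy invariant satisfying these conditions, uniqueness forces $J_L(a,t,t)={\Lambda}_L(a,t)$, the Kauffman polynomial for regular isotopy, which is the content of part~(3).

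The hard part will be the bookkeeping in the affineness induction underlying part~(1): one must verify, at each stage of the construction of Theorem~\ref{2:5.24} (resolving-tree evaluation, independence of the base points, and invariance under $R_2$, $R_3$ and $R^{\pm1}_{0.5}$), that the dichotomy ``oriented value affine in $z$, unoriented value $z$-free'' is preserved, so that the product $z\,w_{L_\infty}$ can never raise the $z$-degree above one. Once that invariant is secured, parts~(2) and~(3) reduce to routine specialisations followed by appeals to the relevant uniqueness theorems, and the explicit computation in Example~\ref{2:5.29} furnishes a convenient consistency check.
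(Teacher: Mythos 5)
Your proposal is correct and follows essentially the same route as the paper's (very terse) proof: the paper establishes (1) by induction on the length of a resolving tree, which is exactly your affine-in-$z$ decomposition plus interpolation at $z=0$ and $z=t$ in repackaged form, and it proves (2) and (3) precisely as you do, by checking the initial conditions and comparing the specialized operations with those defining the Homflypt polynomial and the regular-isotopy Kauffman polynomial $\Lambda$, then invoking uniqueness. One simplification: the ``hard part'' you anticipate is unnecessary, because Theorem \ref{2:5.24} already guarantees that $w$ is well defined and computable from \emph{any} resolving tree, so affineness in $z$ follows from a single tree evaluation (the leaf values $a_{i,j}$ are affine in $z$, the operations $*$ and $*'$ preserve affineness, and $A'$ contains no $z$ at all), with no need to revisit base-point independence or invariance under $R_2$, $R_3$, $R^{\pm 1}_{0.5}$.
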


Proof.
The property (1) is obviously true for diagrams representing trivial links.
Next we apply induction with respect to the length of the resolving tree
of the diagram.

To prove (2) and (3) it is enough to check the initial conditions and to
compare the operations $*$ which are used in the respective definitions
of polynomial invariants.

Therefore the Jones-Conway-Kauffman polynomial is equivalent
to Jones-Conway (Homflypt) and Kauffman polynomials taken together. In this context, there is 
a remarkable similarity with the situation of Exercise \ref{Exercise III:3.43}.

\begin{lemma}\label{2:5.31}
The invariants defined by Kauffman algebras are invariants of 
$\sim_K$ equivalence of oriented or unoriented diagrams.
\end{lemma}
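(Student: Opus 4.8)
The plan is to mirror the structure of the proof of Theorem~\ref{2:5.24} itself, since Lemma~\ref{2:5.31} is essentially the ``bookkeeping'' counterpart of that existence theorem: the point is that the invariant $w$ coming from a Kauffman algebra is computed from exactly the same local data that the $\sim_K$ relation is built to preserve. First I would recall from Definition~\ref{2:5.18} that $\sim_K$ is the \emph{smallest} equivalence relation on partially oriented diagrams (modulo regular isotopy) closed under a skein-type condition at a crossing $p$. Hence it suffices to verify that any invariant $w$ arising from a Kauffman algebra respects the single generating step: given diagrams $L_1',L_2'$ with crossings $p_1,p_2$ of matching type satisfying conditions (i)--(iii) of Definition~\ref{2:5.18}, one must check that $w_{L_1}=w_{L_2}$ follows from the hypotheses that $w$ already agrees on the $(-\sgn p)$-resolutions and on the $0$- and $\infty$-resolutions.

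The key computational input is Property~\ref{2:5.26} (equivalently condition~(3) of Theorem~\ref{2:5.24}): for \emph{any} crossing $p$ one has
$$w_{L^p}=w_{L^p_{-\sgn p}}\ast(w_{L^p_0},w_{L^p_\infty})$$
in the oriented case, and the analogous formula with $\ast'$ in the unoriented case. This is precisely the statement that the value at a crossing is a fixed $3$-argument operation of the values at its three resolutions. So in the oriented subcase (condition (ii) with both strands oriented, forcing $\sgn p_1=\sgn p_2$ as noted after Definition~\ref{2:5.18}), I would write
$$w_{L_1}=w_{(L_1')^{p_1}_{-\sgn p_1}}\ast\bigl(w_{(L_1')^{p_1}_0},w_{(L_1')^{p_1}_\infty}\bigr)
=w_{(L_2')^{p_2}_{-\sgn p_2}}\ast\bigl(w_{(L_2')^{p_2}_0},w_{(L_2')^{p_2}_\infty}\bigr)=w_{L_2},$$
where the middle equality uses the three hypothesized agreements of $w$ on the resolutions and the fact that $\ast$ is a well-defined operation. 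The unoriented subcase (condition (iii)) is handled identically using $\ast'$, except that there one must additionally invoke axiom~K6, $a\ast'(b,c)=a\ast'(c,b)$, to absorb the fact that condition~(iii) only asserts equality of the \emph{unordered} pairs $\{(L_1')^{p_1}_0,(L_1')^{p_1}_\infty\}$ and $\{(L_2')^{p_2}_0,(L_2')^{p_2}_\infty\}$; the symmetry of $\ast'$ makes $w_{L_1}$ independent of which resolution is labelled $0$ and which $\infty$.

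I expect the main obstacle to be the interface between the oriented and unoriented worlds, i.e.~making sure the argument is coherent when $L_1',L_2'$ are only partially oriented and when one passes between a component being oriented and unoriented. Here the role of the homomorphism $\phi$ and axiom~K2 is essential: condition~(2) of Theorem~\ref{2:5.24} says $w_{L'}=\phi(w_L)$ for the unoriented diagram obtained by forgetting orientation, and K2 guarantees that this forgetting is compatible with the skein operations, so that the generating step of $\sim_K$ is respected uniformly across the partially oriented setting. A secondary point requiring care is that $w$ is a priori an invariant of \emph{regular} isotopy rather than ambient isotopy, but since $\sim_K$ is itself defined on diagrams modulo regular isotopy this matches exactly and no correction by a power of $a$ is needed. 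Finally I would remark that because $\sim_K$ is the smallest equivalence relation closed under the generating step, verifying invariance under that single step suffices; the transitivity, symmetry, and reflexivity demanded of an $\sim_K$-invariant are automatic once $w$ is a genuine function on regular-isotopy classes, which Theorem~\ref{2:5.24} already supplies.
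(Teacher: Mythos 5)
Your proposal is correct and takes essentially the approach the paper intends: the paper's entire proof of this lemma is the single sentence ``The proof is immediate,'' and what it treats as immediate is exactly your observation that the defining relation $w_{L^p}=w_{L^p_{-\sgn p}}\ast\bigl(w_{L^p_0},w_{L^p_\infty}\bigr)$ (with $\ast'$ and axiom K6 absorbing the unordered pair in the unoriented case) reproduces the generating step of $\sim_K$, so that minimality of $\sim_K$ finishes the argument. Your additional care about the oriented/unoriented interface via $\phi$ and K2 (and the matching of crossing types noted after the definition of $\sim_K$) is bookkeeping the paper omits entirely, and it is handled correctly.
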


The proof is immediate.

\begin{remark}\label{2:5.32}
The theory of invariants defined by Kauffman algebras can be developed
in parallel  to the theory of invariants given by Conway algebras.
In particular:
\begin{enumerate}
\item One may look for a pair of involutions 
$\tau$ on $A$ and $\tau '$ on $A'$ such that
$\tau(a_{i,j}) = a_{i,-j}$, 
$\phi(\tau(w)) = \tau '(\phi(w))$ and
$\tau(a*(b,c)) =\tau(a)*(\tau(b),\tau '(c))$. Then $A_{\overline{L}} =
\tau (A_L)$ where  $A_L$ is an invariant of an oriented diagram $L$
and $\overline{L}$ is the mirror image of  $L$ (compare with Lemma
\ref{2:3.22}). 
\item It is possible to build a universal Kauffman algebra (the elements 
of which are terms) and to show that for such a universal Kauffman algebra
the involutions $\tau$ and $\tau '$ exist.
\item It is reasonable to look for an operation 
$ o:A\times A\times A'\rightarrow A$
which, for given oriented diagrams, would recover the value of the invariant
for $L_0$ from its values for $L_+$, $L_-$ and $L_\infty$. 
\item One may look for conditions for a Kauffman algebra which should
provide a simple formula for the value of the associated invariant
for connected sum and disjoint sum of diagrams.
\item One may look for conditions for a Kauffman algebra which would allow
a simple modification of the associated invariant of regular isotopy to
an invariant of ambient isotopy of diagrams. 
For example, if there exist bijections $\beta: A\rightarrow A$ and $\beta
':A'\rightarrow A'$ satisfying $\beta (a_{i,j}) = a_{i,j-1}$,
$\phi (\beta(a)) = \beta '(\phi (a))$ and $\beta (a*(b,c)) =
\beta(a)*(\beta(b),\beta '(c))$ then $\underbrace{\beta( \beta(
\ldots\beta(A_L)\ldots))}_{Tait(L) \mbox{ times }}$ is an invariant
of ambient isotopy.
\item One may consider geometrically sufficient partial Kauffman algebras
(similarly to the case of Conway algebras,  see Definition \ref{d:4.2})
which would define invariants of regular isotopies of diagrams.

\item Similarly as in Example \ref{2:4.5}, one may consider 
polynomials of infinite number of variables which generalize 
the Jones-Conway-Kauffman polynomial.

\item It is possible to show that the invariants defined by 
geometrically sufficient partial Kauffman algebras are preserved by 
mutations and, further, that mutations preserve classes of
$\sim_K$ equivalence of diagrams.
\end{enumerate}

 We leave a possible extension of ideas presented above for the reader.
 
\end{remark}

Many problems which we have
formulated for invariants of Conway type can be
extended for invariants obtained via the Kauffman method.

\begin{problem}\label{2:5.33}
\ \\
\begin{enumerate}
\item Do there exist two oriented diagrams which have the same 
Jones-Conway-Kauffman polynomial and which can be distinguished
by another invariant obtained from some Kauffman algebra?

\item Do there exist two oriented diagrams which can not be distinguished
by any invariant defined by  a 
Kauffman algebra but  they can be distinguished
by invariants coming from a geometrically sufficient partial Kauffman algebra?

\item Do there exist two oriented diagrams which are not $\sim_K$ equivalent
and which can not be distinguished by any invariant defined by a geometrically
sufficient partial Kauffman algebra?

\item If an oriented diagram of a knot, $L$, satisfies $L\sim_K\overline{L}$
is it true then that $L$ is isotopic to either $\overline{}L$ or to
$-\overline{L}$?
\end{enumerate}

It seems that a positive answer for (2) can be obtained by applying signature.
The question (4) is a weakened version of Kauffman's conjecture
\ref{III:5.11}.
\end{problem}

\begin{exercise}\label{2:5.34}
Let us consider moves of diagrams: $R_{0.1}$ and $R_{0.2}$, as shown on 
Fig.~5.16. Let $L$ be a diagram with $k$ crossings. Prove that there exists
a choice of base points $b$ of $L$ such that the descending diagram $L^d$,
associated  to $L$ and $b$, can be modified to a diagram with fewer than
$k$ crossings via a sequence of moves consisting of
$R_{0.1}$, $R_{0.2}$, $R^{\pm 1}_{2}$ and $R^{\pm 1}_3$, 
which do not increase the number of crossings.

\begin{center}
{\psfig{figure=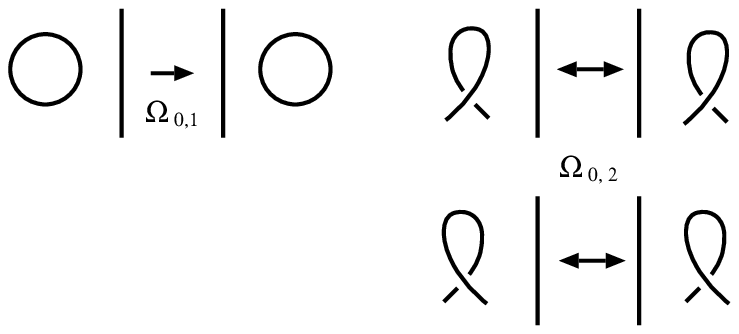,height=6.3cm}}
\end{center}
\begin{center}
Fig.~5.16
\end{center}

Hint. See Lemma \ref{l:2.15}.
\end{exercise}

\begin{problem}\label{III:5.39}
When we were defining invariants of diagrams via Kauffman algebras, or
when we were defining the relation $\sim_K$, we had a problem with 
a natural orientation for the whole diagram  $L^p_\infty$. 
This was because the new component of $L^p_\infty$ was formed
from the pieces of $L$ which had opposite orientations.
(Fig.~5.17).
\begin{center}
{\psfig{figure=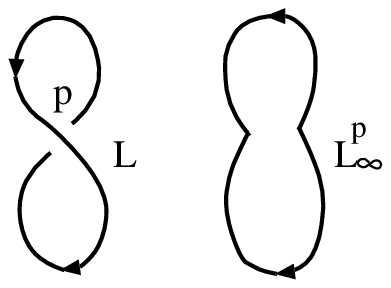,height=4.3cm}}
\end{center}
\begin{center}
Fig.~5.17
\end{center}

Therefore, it seems to be reasonable to consider diagrams, the components 
of which can have different orientations, i.e.~any component is divided 
into arcs and each arc has its orientation. Even a simple diagram 
(Fig.~5.18) presents new difficulties which we have to consider 
(resolve the diagram first starting from $p$, and then from $q$). 
The author tried to compute 
the new invariant and his computations show that the problem is difficult but
not hopeless. We leave it as a research problem for the reader.

\begin{center}
{\psfig{figure=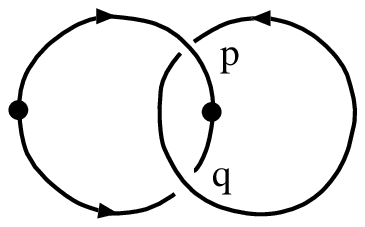,height=4.3cm}} 
\end{center}
\begin{center}
Fig.~5.18
\end{center}

\end{problem}


\end{document}